\newcommand{\ra}{{\mathbf{a}}}
\newcommand{\rb}{{\mathbf{b}}}
\newcommand{\rc}{{\mathbf{c}}}
\newcommand{\rp}{{\mathbf{p}}}
\renewcommand{\rq}{{\mathbf{q}}}
\newcommand{\rr}{{\mathbf{r}}}
\newcommand{\rx}{{\mathbf{x}}}
\newcommand{\ry}{{\mathbf{y}}}
\newcommand{\rz}{{\mathbf{z}}}
\newcommand{\rA}{{\mathbf{A}}}
\newcommand{\rM}{{\mathbf{M}}}
\newcommand{\rT}{{\mathbf{T}}}
\newcommand{\rcM}{\mathbold{\cM}}
\newcommand{\llangle}{\langle\!\!\langle}
\newcommand{\rrangle}{\rangle\!\!\rangle}
\renewcommand{\[}{\llbracket}
\renewcommand{\]}{\rrbracket}
\newcommand{\<}{\llangle}
\renewcommand{\>}{\rrangle}
\begin{document}

\title{On theories of random variables}

\author{Itaï Ben Yaacov}

\address{Itaï \textsc{Ben Yaacov} \\
  Université Claude Bernard -- Lyon 1 \\
  Institut Camille Jordan, CNRS UMR 5208 \\
  43 boulevard du 11 novembre 1918 \\
  69622 Villeurbanne Cedex \\
  France}

\urladdr{\url{http://math.univ-lyon1.fr/~begnac/}}

\thanks{Research supported by
  ANR chaire d'excellence junior THEMODMET (ANR-06-CEXC-007) and
  by Marie Curie research network ModNet}

\thanks{The author wishes to thank the referee for many helpful remarks regarding the structure of the articles and references}

\svnInfo $Id: RandVar.tex 1281 2011-10-25 08:16:50Z begnac $
\thanks{\textit{Revision} {\svnInfoRevision} \textit{of} \today}

\keywords{random variables; continuous logic; metric structures}

\begin{abstract}
  Nous étudions des théories d'espaces de variables aléatoires~:
  en un premier temps, nous considérons les variables aléatoires à
  valeurs dans l'intervalle $[0,1]$, puis à valeur dans des structures
  métriques quelconques,
  généralisant la procédure d'aléatoirisation de structures
  classiques due à Keisler.
  Nous démontrons des résultats de préservation et de non-préservation
  de propriétés modèle-théoriques par cette construction~:
  \begin{enumerate}
  \item L'aléatoirisée d'une structure ou théorie stable est stable.
  \item L'aléatoirisée d'une structure ou théorie simple instable
    n'est pas simple.
  \end{enumerate}
  Nous démontrons également que dans la structure aléatoirisée,
  tout type est un type de Lascar.

  \bigskip \noindent
  We study theories of spaces of random variables:
  first, we consider random variables with values in the interval
  $[0,1]$, then with values in an arbitrary metric structure,
  generalising Keisler's randomisation of classical structures.
  We prove preservation and non-preservation results for model
  theoretic properties under this construction:
  \begin{enumerate}
  \item The randomisation of a stable structure is stable.
  \item The randomisation of a simple unstable structure is not
    simple.
  \end{enumerate}
  We also prove that in the randomised structure, every type is a
  Lascar type.
\end{abstract}

\maketitle

\section*{Introduction}

Mathematical structures arising in the theory of probabilities are among the most natural examples for metric structures which admit a model theoretic treatment, albeit not in the strict setting of classical first order logic.
Examples include the treatment of adapted spaces by Keisler \& Fajardo \cite{Fajardo-Keisler:StochasticProcesses}, in which no logic of any kind appears explicitly (even though many model theoretic notions, such as types, do appear).
Another example, which is the main topic of the present paper, is Keisler's randomisation construction \cite{Keisler:Randomizing}, in which one considers spaces of random variables whose values lie in some given structures.
The randomisation construction was originally set up in the formalism of classical first order logic, representing the probability space underlying the randomisation by its probability algebra, namely, the Boolean algebra of events up to null measure (defined abstractly, a \emph{probability algebra} is a measure algebra of total mass one, see Fremlin \cite{Fremlin:MeasureTheoryVol3}).
We consider that this formalism was not entirely adequate for the purpose, since the class of probability algebras is not elementary in classical first order logic, a fact which restricts considerably what can be done or proved (for example, the randomised structure interprets an atomless Boolean algebra, and can therefore be neither dependent nor simple).
To the best of our knowledge, the first model theoretic treatment of a probabilistic structure in which notions such as stability and model theoretic independence were considered was carried out by the author in \cite{BenYaacov:SchroedingersCat}, for the class of probability algebras, in the formalism of compact abstract theories.
While this latter formalism was adequate, in the sense that it did allow one to show that probability algebras are stable and that the model theoretic independence coincides with the probabilistic one, it was quite cumbersome, and soon to become obsolete.

Continuous first order logic is a relatively new formalism, at least in its present form, proposed by Alexander Usvyatsov and the author \cite{BenYaacov-Usvyatsov:CFO} for model theoretic treatment of (classes of) complete metric structures.
For example, we observe there that the class of probability algebras is elementary, its theory admitting a simple set of axioms, and that the theory of atomless probability algebras admits quantifier elimination, thus simplifying considerably many of the technical considerations contained in \cite{BenYaacov:SchroedingersCat}.
Viewing probability algebras as metric structures in this fashion, rather than as classical structures, allowed Keisler and the author \cite{BenYaacov-Keisler:MetricRandom} to present the randomisation as a metric structure, and we contend that this metric randomisation is the ``correct'' one.
Arguments to this effect include several preservation results which would be false in the formalism of \cite{Keisler:Randomizing}.
For example, in \cite{BenYaacov-Keisler:MetricRandom} we prove that if a structure is stable then so is its randomisation, while preservation of dependence was proved by the author in \cite{BenYaacov:RandomVC}.
Another argument, both æsthetic and practical, is that types in the metric randomisation are very natural objects, namely regular Borel probability measures on the space of types of the original theory, also referred to nowadays as \emph{Keisler measures} \cite{Keisler:MeasuresAndForking}, and which turn out to be particularly useful for the study of dependent theories, e.g., in \cite{Hrushovski-Peterzil-Pillay:GroupsMeasureNIP}.

We still find the current state of knowledge, and existing treatment, of randomisation, wanting on several points.
First, since the randomisation of a discrete structure (or theory) necessarily produces a metric one, the question of randomising metric structures arises quite naturally.
In fact, it is quite easy to construct the randomisation of a metric structure (or theory) indirectly, by letting its type spaces be the spaces of regular Borel probability measures as mentioned above, a fact which was used in \cite{BenYaacov:RandomVC} to point out that the preservation of dependence holds for the randomisation of metric structures as well, even though the latter had not yet been formally defined.
However, the point of view of theories as type spaces, while a  personal favourite of the author (see for example \cite{BenYaacov:PositiveModelTheoryAndCats}), is far from being universally accepted, creating the need for an ``ordinary'' construction of the randomisation of a metric structure, with a natural language, axioms, and all.
A second point is that the treatment of randomisation in \cite{BenYaacov-Keisler:MetricRandom} relies greatly on \cite{Keisler:Randomizing}, many times referring to it for proofs, even though some fundamental aspects of the set-up are different, requiring the reader to continually verify that the arguments do transfer.

The aim of the present paper is to remedy these shortcomings by providing a self-contained treatment of randomisation in the metric setting, and show (or point out) that the preservation results of \cite{BenYaacov-Keisler:MetricRandom,BenYaacov:RandomVC} hold in the metric setting as well.
In addition, we turn the preservation of dependence into a dichotomy by showing that if $T$ is \emph{not} dependent then its randomisation $T^R$ cannot even be simple, and in fact has $TP_2$.
We also improve a corollary of the preservation of stability of \cite{BenYaacov-Keisler:MetricRandom}, namely that in randomised stable structures types over sets are Lascar types, proving the same for arbitrary randomised structures.
As a minor point, we simplify the language (and theory), and rather than name in $\cL^R$ (the randomisation language) the randomisation $\[\varphi\]$ of each $\cL$-formula $\varphi$, we name the function symbols and the randomisations of the relation symbols of $\cL$ alone.

The paper is organised as follows.
In \fref{sec:Proofs} we consider formal deductions in propositional continuous logic, after Rose, Rosser and Church.
These are used in \fref{sec:AxRV} to give axioms for the theory of spaces $[0,1]$-valued random variables, which play the role played by probability algebras in \cite{BenYaacov-Keisler:MetricRandom}.
Model theoretic properties of this theory are deduced from those of the theory of probability algebras, with which it is biïnterpretable.
In \fref{sec:Randomisation} we define and study the randomisations of metric structures, namely spaces of random variables whose values lie in metric structures.
We give axioms for the theory of these random structures, prove quantifier elimination in the appropriate language, characterise types and so on.
We also prove a version of Łoś's Theorem for randomisations, in which the ultra-filter is replaced with an arbitrary integration functional.
In \fref{sec:Preservation} we prove several preservation and non preservation results.
In \fref{sec:LascarTypes} we prove that in random structures, types over sets are Lascar types, so in the stable case they are stationary.

\section{On results of Rose, Rosser and Chang}
\label{sec:Proofs}

In the late 1950s Rose and Rosser \cite{Rose-Rosser:FragmentsManyValuedCalculi} proved the completeness of a proof system for Łukasiewicz's many-valued propositional logic, subsequently improved and simplified by Chang \cite{Chang:ProofOfLukasiewiczAxiom,Chang:AlgebraicAnalysisMVLogic,Chang:NewCompletenessProof}.
This logic is very close to propositional continuous logic.
Syntactically, the notation is quite different, partially stemming from the fact we identify \emph{True} with $0$, rather than with $1$.
Also, the connective $\half$ does not exist in Łukasiewicz's logic.
Semantically, we only allow the standard unit interval $[0,1]$ as a set of truth values, while some fuzzy logicians allow non-standard extensions thereof (namely, they allow infinitesimal truth values).
We should therefore be careful in how we use their results.

In these references, Propositional Łukasiewicz Logic is presented
using Polish (prefix) notation, without parentheses.
A formula is either an atomic proposition, $C\varphi\psi$ or $N\varphi$,
where $\varphi$ and $\psi$ are simpler formulae.
We shall prefer to use the notation of continuous logic, replacing
$C\varphi\psi$ with $\psi \dotminus \varphi$ and $N\varphi$ with $\neg\varphi$.

\begin{dfn}
  \label{dfn:LukasiewiczLogic}
  Let $\cS_0 = \{P_i\colon i \in I\}$ be a set distinct symbols, which we view as atomic proposition.
  Let $\cS$ be freely generated from $\cS_0$ with the formal binary operation $\dotminus$ and unary operation $\neg$.
  Then $\cS$ is a \emph{Łukasiewicz logic}.
\end{dfn}

\begin{dfn}
  Let $\cS$ be a Łukasiewicz logic.
  \begin{enumerate}
  \item
    For every map $v_0\colon \cS_0 \to [0,1]$, let $v\colon \cS \to [0,1]$ be the unique map extending $v_0$ such that $v(\varphi\dotminus\psi) = v(\varphi) \dotminus v(\psi)$ and $v(\neg\varphi) = 1-v(\varphi)$.
    We call $v$ the \emph{truth assignment} defined by $v_0$, and $v(\varphi)$ is the \emph{truth value} of $\varphi$.
  \item
    If $v(\varphi) = v(\psi)$ for every truth assignment $v$, we say that $\varphi$ and $\psi$ are \emph{equivalent}, and write $\varphi \equiv \psi$.
  \item
    If $v(\varphi) = 0$ we say that $v$ is a \emph{model} of $\varphi$, in symbols $v \vDash \varphi$.
    If $\Sigma \subseteq \cS$, then $v \vDash \Sigma$ if $v \vDash \varphi$ for all $\varphi \in \Sigma$.
    We say that $\varphi$ (or $\Sigma$) is \emph{satisfiable} if it has a model.
  \item
    Let $\Sigma \subseteq \cS$ and $\varphi \in \cS$.
    We say that $\Sigma$ \emph{entails} $\varphi$, or that $\varphi$ is a \emph{logical consequence} of $\Sigma$, if every model of $\Sigma$ is a model of $\varphi$.
    This is denoted $\Sigma \vDash \varphi$.
  \end{enumerate}
  When we wish to make the ambient logic explicit we may write $\vDash_\cS$, $\equiv_\cS$, and so on.
\end{dfn}

\begin{ntn}
  \label{ntn:Connectives}
  \begin{enumerate}
  \item
    We shall follow the convention that $\dotminus$, like $-$, binds from left to right, and define $\psi \dotminus n\varphi$ by induction on $n$:
    \begin{align*}
      & \psi \dotminus 0\varphi  = \psi,
      && \psi \dotminus (n+1)\varphi = \psi \dotminus n\varphi \dotminus \varphi = (\psi \dotminus n\varphi) \dotminus \varphi.
    \end{align*}
  \item We use $1$ as abbreviation for $\neg(\varphi_0 \dotminus \varphi_0)$, where $\varphi_0$ is any formula.
  \item We also define
    \begin{align*}
      & \varphi \wedge \psi = \varphi \dotminus (\varphi \dotminus \psi), && \varphi \vee \psi = \neg( \neg \varphi \wedge \neg \psi),
    \end{align*}
    observing that $v(\varphi \wedge \psi) = \min v(\varphi), v(\psi)$ and $v(\varphi \vee \psi) = \max v(\varphi), v(\psi)$ for all $v$.
  \end{enumerate}
\end{ntn}

\begin{rmk}
  \label{rmk:InfImpl}
  Logical implication in Łukasiewicz logic can be infinitary by nature.
  Indeed, let $\varphi_n = 1 \dotminus 2(1 \dotminus nP)$.
  Then $\varphi_n = 0$ if and only if $1 \dotminus nP \geq \half$, i.e., if and only if $P \leq \frac{1}{2n}$.
  Letting $\Sigma = \{\varphi_n\}_{n \in \bN}$ we have $\Sigma \vDash P$ even though there is no finite $\Sigma_0 \subseteq \Sigma$ such that $\Sigma_0 \vDash P$.
\end{rmk}

Let $\cS$ be a Łukasiewicz logic generated by $\{P_i\colon i \in I\}$, and $\varphi \in \cS$.
Then the truth assignments to $\cS$ are in bijection with $[0,1]^I$, and every formula $\varphi \in \cS$ can be identified with a continuous function $\hat \varphi\colon [0,1]^I \to [0,1]$ by $\hat \varphi(v) = v(\varphi)$.

\begin{lem}
  \label{lem:LukCpct}
  Let $\cS$ be a Łukasiewicz logic, and assume that $\Sigma \subseteq \cS$ has no model.
  Then there are $n,m \in \bN$ and $\varphi_i \in \Sigma$ for $i < m$ such that $\vDash 1 \dotminus n\varphi_0 \dotminus \ldots \dotminus n\varphi_{m-1}$.
\end{lem}
\begin{proof}
  For every $n,m \in \bN$ and $\bar \varphi \in \Sigma^m$, let $\psi_{n,\bar \varphi} =
  1 \dotminus n\varphi_0 \dotminus \ldots \dotminus n\varphi_{m-1}$, and assume that $\not\vDash \psi_{n,\bar \varphi}$ for all $n$ and $\bar \varphi$.
  In particular, for all $n,m$ and $\bar \varphi \in \Sigma^m$ there is $v$ such that $v(\psi_{2n,\bar \varphi}) > 0$, whereby $\sum v(\varphi_i) < \frac{1}{2n}$ and thus $v(\psi_{n,\bar \varphi}) > \half$.
  Call this $v_{n,\bar \varphi}$.
  Note that if $n \leq n'$ and $\bar \varphi \subseteq \bar \varphi'$ then $v_{n',\bar \varphi'}(\psi_{n,\bar \varphi}) > \half$ as well.
  Since $[0,1]^I$ is compact, we obtain an accumulation point $v \in [0,1]^I$ such that $v(\psi_{n,\bar \varphi}) \geq \half$ for all $n,m \in \bN$ and $\bar \varphi \in \Sigma^m$.
  It follows that $v(\varphi) = 0$ for all $\varphi \in \Sigma$.
\end{proof}

The proof of \fref{lem:LukCpct} only uses the presence of the connectives $\dotminus$ and $\neg$ (the latter in order to obtain $1$) in the language, and the fact that the evaluation $\hat \varphi\colon v \mapsto v(\varphi)$ is continuous for all $\varphi$.
Thus, if we allowed additional continuous functions $f\colon [0,1]^n \to [0,1]$ as connectives in \fref{dfn:LukasiewiczLogic}, e.g., the unary connective $\half\colon x \mapsto \half[x]$, the same proof would hold.

Let us now consider formal deductions in Łukasiewicz logic.
Recall that by \fref{ntn:Connectives}, $\varphi\wedge\psi$ is abbreviation for $\varphi \dotminus (\varphi \dotminus \psi)$ (which would be $A\psi\varphi$ in the notation of \cite{Rose-Rosser:FragmentsManyValuedCalculi}).
Thus, the four axiom schemes which, according to \cite{Rose-Rosser:FragmentsManyValuedCalculi,Chang:ProofOfLukasiewiczAxiom}, form a complete deduction system, are:
\begin{align*}
  \tag{A1} & \varphi \dotminus \psi \dotminus \varphi \\
  \tag{A2} & (\rho \dotminus \varphi) \dotminus (\rho \dotminus \psi) \dotminus (\psi \dotminus \varphi) \\
  \tag{A3} & (\varphi\wedge\psi)\dotminus(\psi\wedge\varphi) \\
  \tag{A4} & (\varphi \dotminus \psi) \dotminus (\neg\psi \dotminus \neg\varphi)
\end{align*}

While Modus Ponens becomes:
\begin{gather*}
  \tag{MP} \frac{\varphi, \psi \dotminus \varphi}{\psi}
\end{gather*}

A \emph{deduction sequence} from a set of premises $\Sigma$ in this deduction system is a sequence of formulae, each of which is either a premise (i.e., a member of $\Sigma$), an axiom (i.e., an instance of A1-4, where $\varphi$, $\psi$ and $\rho$ can be any formulae), or is deduced by Modus Ponens from two earlier formulae in the sequence.
We say that $\varphi$ is \emph{deducible} from $\Sigma$, in symbols  $\Sigma \vdash \varphi$ (or $\Sigma \vdash_\cS \varphi$ if we wish to be explicit) if there exists a deduction sequence from $\Sigma$ containing $\varphi$.
Soundness of this deduction system (i.e., $\Sigma \vdash \varphi \Longrightarrow \Sigma \vDash \varphi$) is easy to verify.
A subset $\Sigma \subseteq \cS$ is \emph{contradictory} if $\Sigma \vdash \varphi$ for all $\varphi \in \cS$.
Otherwise it is \emph{consistent}.
The completeness result we referred to can be now stated as:
\begin{fct}[\cite{Rose-Rosser:FragmentsManyValuedCalculi,Chang:NewCompletenessProof}]
  \label{fct:LukCmpl}
  Let $\cS$ be a Łukasiewicz logic, and $\varphi \in \cS$.
  Then $\vDash \varphi$ if and only if $\vdash \varphi$.
\end{fct}



\begin{prp}
  \label{prp:LukCmplPrem}
  Let $\cS$ be a Łukasiewicz logic, and let $\Sigma \subseteq \cS$.
  Then $\Sigma$ is consistent if and only if it has a model.
\end{prp}
\begin{proof}
  One direction is by soundness.
  For the other, assume that $\Sigma$ has no model.
  Then by \fref{lem:LukCpct} there are $n$ and $\varphi_i \in \Sigma$ such that letting $\psi
  = 1 \dotminus n\varphi_0 \dotminus \ldots \dotminus n\varphi_{m-1}$ we have $\vDash \psi$.
  By \fref{fct:LukCmpl} we have $\vdash \psi$, and by Modus Ponens $\Sigma \vdash 1$.
  By \fref{fct:LukCmpl} we also have $\vdash \varphi \dotminus 1$ for every formula $\varphi$, so $\Sigma \vdash \varphi$ and $\Sigma$ is contradictory.
\end{proof}

Unfortunately, this is not quite what we need, and we shall require the following modifications:
\begin{enumerate}
\item We wish to allow non-free logics, i.e., logics which are not necessarily freely generated from a set of atomic propositions.
  In particular, such logics need not be well-founded (i.e., we may have an infinite sequence $\{\varphi_n\}_{n\in\bN}$ such that each $\varphi_{i+1}$ is a ``proper sub-formula'' of $\varphi_i$).
\item The set of connectives $\{\neg,\dotminus\}$ is not full in the sense of \cite{BenYaacov-Usvyatsov:CFO}.
  We should therefore like to introduce an additional unary connective, denoted $\half$, which consists of multiplying the truth value by one half.
\end{enumerate}

\begin{dfn}
  \label{dfn:ContLogic}
  A \emph{continuous propositional logic} is a non empty structure $(\cS,\neg,\half,\dotminus)$, where $\dotminus$ is a binary function symbol and $\neg,\half$ are unary function symbols.

  A \emph{homomorphism} of continuous propositional logics is a map which respects $\neg$, $\half$ and $\dotminus$.

  A \emph{truth assignment} to a continuous propositional logic $\cS$ is a homomorphism $v\colon \cS \to [0,1]$, where $[0,1]$ is equipped with the natural interpretation of the connectives.
  Models and logical entailment are defined in the same manner as above.

  We say that $\cS$ is \emph{free (over $\cS_0$)} if there exists a subset $\cS_0 \subseteq \cS$ such that $\cS$ if freely generated from $\cS_0$ by the connectives $\{\neg,\half,\dotminus\}$.
  In that case every map $v_0\colon \cS_0 \to [0,1]$ extends to a unique truth assignment.
\end{dfn}

The new connective $\half$ requires two more axioms:
\begin{align*}
  \tag{A5} & \half\varphi \dotminus (\varphi \dotminus \half\varphi) \\
  \tag{A6} & (\varphi \dotminus \half\varphi) \dotminus \half\varphi
\end{align*}
Formal deductions in the sense of continuous propositional logic are defined as earlier, allowing A1-6 as logical axiom schemes.

\begin{lem}
  \label{lem:Proof}
  For every continuous propositional logic $\cS$ (not necessarily free), $\varphi,\psi \in \cS$, $\Sigma \subseteq \cS$ and $n \in \bN$:
  \begin{enumerate}
  \item $\vdash \varphi \dotminus \varphi$.
  \item $\vdash (\varphi\dotminus  \psi) \dotminus
    (1 \dotminus n(\psi \dotminus \varphi))$.
  \item If $\Sigma,\varphi\dotminus \psi$ is contradictory then $\Sigma \vdash \psi \dotminus \varphi$.
  \end{enumerate}
\end{lem}
\begin{proof}
  \begin{enumerate}
  \item In Łukasiewicz logic we have $\vDash P \dotminus P$, and by \fref{fct:LukCmpl}, $\vdash P \dotminus P$.
    By substitution of $\varphi$ for $P$ we get a deduction for $\varphi \dotminus \varphi$ in $\cS$.
  \item Same argument.
  \item If $\Sigma,\varphi \dotminus \psi$ is contradictory then it is has no model.
    By the proof of \fref{prp:LukCmplPrem} there is $n \in \bN$ such that $\Sigma \vdash 1 \dotminus n(\varphi \dotminus \psi)$.
    Therefore $\Sigma \vdash \psi \dotminus \varphi$.
    \qedhere
  \end{enumerate}
\end{proof}

\begin{thm}
  \label{thm:ContCmpl}
  Let $\cS$ be a continuous propositional logic, not necessarily free, and let $\Sigma \subseteq \cS$.
  Then $\Sigma$ is consistent if and only if it is satisfiable.
\end{thm}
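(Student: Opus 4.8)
The easy direction is soundness, valid in any continuous propositional logic: each axiom A1--A6 has truth value $0$ under every truth assignment, and MP preserves being modelled by a fixed $v$, so $\Sigma\vdash\gamma$ entails $\Sigma\models\gamma$. In particular a contradictory set proves $1$, which no $v$ models since $v(1)=1$; hence every satisfiable set is consistent. For the converse I would prove the contrapositive: if $\Sigma$ has no model then $\Sigma$ is contradictory. The difficulty is that $\cS$ need not be free, hence need not be well-founded, so one can neither induct on the structure of formulas nor eliminate $\half$ by the subformula replacement used in \fref{thm:ContCmplFinPrem}. The plan is therefore to pass to a free \emph{Łukasiewicz} cover of $\cS$ and to encode both the relations holding in $\cS$ and the behaviour of $\half$ as extra Łukasiewicz premises, so as to reduce the statement to \fref{prp:LukCmplPrem}.

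Concretely, let $\tilde\cS$ be the Łukasiewicz logic freely generated by a family of distinct atoms $\{P_\varphi\colon\varphi\in\cS\}$, one for each element of the underlying set of $\cS$, and let $\pi\colon\tilde\cS\to\cS$ be the unique $\{\neg,\dotminus\}$-homomorphism sending $P_\varphi$ to $\varphi$; it is surjective. Set $\hat\Sigma=\{P_\varphi\colon\varphi\in\Sigma\}$, and let $\Theta_0\subseteq\tilde\cS$ collect, for all $\varphi,\psi\in\cS$, the formulae $\neg P_\varphi\dotminus P_{\neg\varphi}$, $P_{\neg\varphi}\dotminus\neg P_\varphi$, $(P_\varphi\dotminus P_\psi)\dotminus P_{\varphi\dotminus\psi}$, $P_{\varphi\dotminus\psi}\dotminus(P_\varphi\dotminus P_\psi)$, together with $P_\varphi\dotminus 2P_{\half\varphi}$ and $P_{\half\varphi}\dotminus(P_\varphi\dotminus P_{\half\varphi})$.

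The lemma to verify is that a truth assignment $w$ on $\tilde\cS$ models $\Theta_0$ if and only if it factors as $w=v\circ\pi$ for a (necessarily unique) truth assignment $v$ on $\cS$, given by $v(\varphi)=w(P_\varphi)$. The clauses for $\neg$ and $\dotminus$ are immediate; the delicate point is that the last two premises force $w(P_{\half\varphi})=\tfrac12 w(P_\varphi)$, which follows by combining the inequalities $w(P_\varphi)\le 2w(P_{\half\varphi})$ and $w(P_{\half\varphi})\le w(P_\varphi)\dotminus w(P_{\half\varphi})$ that they express. Granting this, $v\mapsto v\circ\pi$ is a bijection between the models of $\Sigma$ in $\cS$ and those of $\hat\Sigma\cup\Theta_0$ in $\tilde\cS$; so from the assumption that $\Sigma$ has no model we get that $\hat\Sigma\cup\Theta_0$ has none either. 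Since $\tilde\cS$ is a Łukasiewicz logic, \fref{prp:LukCmplPrem} then shows that $\hat\Sigma\cup\Theta_0$ is contradictory in $\tilde\cS$ under A1--A4.

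It remains to transfer this contradiction along $\pi$. Applying $\pi$ line by line to a deduction in $\tilde\cS$, every A1--A4 instance maps to an A1--A4 instance of $\cS$ and every MP step to an MP step; each premise from $\hat\Sigma$ maps to the corresponding member of $\Sigma$; and each premise from $\Theta_0$ maps to a theorem of $\cS$ --- those for $\neg$ and $\dotminus$ to instances of $\chi\dotminus\chi$, provable by \fref{lem:Prf}(1), while $P_\varphi\dotminus 2P_{\half\varphi}$ and $P_{\half\varphi}\dotminus(P_\varphi\dotminus P_{\half\varphi})$ map exactly to the axioms A6 and A5. Splicing in proofs of these theorems converts the deduction into one from $\Sigma$ in the full system A1--A6, giving $\Sigma\vdash_{\cS}\pi(\gamma)$ for every $\gamma\in\tilde\cS$; as $\pi$ is surjective, $\Sigma$ proves every formula of $\cS$ and is contradictory. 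I expect the crux to be this faithful encoding of $\cS$ in its free cover: verifying that $\Theta_0$ captures precisely the assignments factoring through $\pi$ (the two-sided inequality for $\half$ being the one non-formal computation), and dually that $\pi$ returns every premise of $\Theta_0$ to a provable formula --- which works only because the $\half$-premises are arranged to become A5 and A6.
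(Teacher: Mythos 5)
Your proposal is correct and follows essentially the same route as the paper: pass to the Łukasiewicz logic freely generated by atoms $P_\varphi$, encode the structure of $\cS$ (including the behaviour of $\half$) as premises whose models correspond exactly to truth assignments on $\cS$, invoke \fref{prp:LukCmplPrem}, and translate the resulting deduction back along $P_\varphi \mapsto \varphi$, with the $\neg$/$\dotminus$ premises becoming provable formulae of the form $\chi \dotminus \chi$ and the $\half$-premises becoming instances of A5--A6. The only difference is cosmetic: the paper encodes $\half$ via the atom $P_{\varphi\dotminus\half\varphi}$ while you write out the compound $P_\varphi \dotminus P_{\half\varphi}$, and both reduce to the same arithmetic fact forcing $w(P_{\half\varphi}) = \tfrac12 w(P_\varphi)$.
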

\begin{proof}
  Let $\cS^f$ be the Łukasiewicz logic freely generated by
  $\{P_\varphi\colon \varphi \in \cS\}$, and let:
  \begin{align*}
    \Sigma_0^f =
    & \{P_{\neg\varphi} \dotminus \neg P_\varphi, \neg P_\varphi\dotminus P_{\neg\varphi}\colon \varphi \in \cS\} \\
    & \cup \{ P_{\varphi\dotminus\psi} \dotminus (P_\varphi\dotminus P_\psi), (P_\varphi\dotminus P_\psi)\dotminus P_{\varphi\dotminus\psi} \colon \varphi,\psi \in \cS \} \\
    & \cup \{ P_{\half\varphi} \dotminus P_{\varphi \dotminus \half\varphi}, P_{\varphi \dotminus \half\varphi} \dotminus P_{\half\varphi} \colon \varphi \in \cS \} \\
    \Sigma^f = & \{P_\varphi\colon \varphi \in \Sigma\} \cup \Sigma_0^f.
  \end{align*}

  Assume that $\Sigma^f$ has a model $v^f$.
  Define $v\colon \cS \to [0,1]$ by $v(\varphi) = v^f(P_\varphi)$.
  Since $v^f \vDash \Sigma_0^f$, $v$ is a truth assignment in the sense of $\cS$, and is clearly a model of $\Sigma$.

  Thus, if $\Sigma$ has no model, neither does $\Sigma^f$.
  By \fref{prp:LukCmplPrem} $\Sigma^f$ is contradictory.
  Thus, for every $\psi \in \cS$ we have $\Sigma^f \vdash P_\psi$.
  Take any deduction sequence witnessing this, replacing every atomic proposition $P_\varphi$ with $\varphi$.
  If a formula was obtained from previous ones using Modus Ponens, the same holds after this translation.
  Premises from $\Sigma^f$ become translated to one of several cases:
  \begin{enumerate}
  \item Premises of the form $P_\varphi$ for $\varphi \in \Sigma$ are replaced with $\varphi \in \Sigma$.
  \item Premises of the first two kinds from $\Sigma_0^f$ are replaced with something of the form $\varphi \dotminus \varphi$, which we know is deducible without premises.
  \item Premises of the last kind from $\Sigma_0^f$ are translated to instances of the axioms schemes A5-6.
  \end{enumerate}
  We conclude that $\Sigma \vdash \psi$ for all $\psi \in \cS$, and $\Sigma$ is contradictory.
  The other direction is by easy soundness.
\end{proof}

Let $2^{-n}$ be abbreviation for $\half\cdots\half 1$ ($n$ times), where $1$ is still as per \fref{ntn:Connectives}, so $v(2^{-n}) = 2^{-n}$ for any truth assignment $v$.

\begin{cor}
  \label{cor:ContCmpl}
  Let $\cS$ be a continuous propositional logic, not necessarily free, $\Sigma \subseteq \cS$ and $\varphi \in \cS$.
  Then $\Sigma \vDash \varphi$ if and only if $\Sigma \vdash \varphi \dotminus 2^{-n}$ for all $n$.
\end{cor}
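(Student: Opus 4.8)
The plan is to prove both directions, reducing the left-to-right direction to the satisfiability/consistency equivalence of \fref{thm:ContCmpl} combined with the contradiction-to-deduction mechanism of \fref{lem:Prf}(3). The right-to-left direction is pure soundness: assuming $\Sigma \vdash \varphi \dotminus 2^{-n}$ for all $n$, soundness gives $\Sigma \models \varphi \dotminus 2^{-n}$ for all $n$, so any model $v$ of $\Sigma$ satisfies $v(\varphi) \dotminus 2^{-n} = 0$, i.e. $v(\varphi) \leq 2^{-n}$, for every $n$; letting $n \to \infty$ forces $v(\varphi) = 0$, whence $v \models \varphi$ and therefore $\Sigma \models \varphi$.

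For left to right, fix $n$ and work with the single auxiliary formula $2^{-n} \dotminus \varphi$, which lies in $\cS$ because $\cS$ is nonempty and hence contains $1 = \neg(\varphi_0 \dotminus \varphi_0)$ together with all of its halves. First I would observe that $\Sigma \cup \{2^{-n} \dotminus \varphi\}$ has no model: any such $v$ would satisfy $v \models \Sigma$, whence $v(\varphi) = 0$ by the hypothesis $\Sigma \models \varphi$, while the condition $v(2^{-n} \dotminus \varphi) = 0$ forces $2^{-n} \leq v(\varphi)$, a contradiction. By \fref{thm:ContCmpl} unsatisfiability is equivalent to being contradictory, so $\Sigma \cup \{2^{-n} \dotminus \varphi\}$ is contradictory. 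Applying \fref{lem:Prf}(3) with its two formulae instantiated so that the added premise is $2^{-n} \dotminus \varphi$ then yields exactly $\Sigma \vdash \varphi \dotminus 2^{-n}$. Since $n$ was arbitrary, this gives the desired conclusion.

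The only point requiring care is the bookkeeping in matching \fref{lem:Prf}(3): that statement deduces $\psi \dotminus \varphi$ from the contradictoriness of $\Sigma, \varphi \dotminus \psi$, so one must add the premise $2^{-n} \dotminus \varphi$ (not $\varphi \dotminus 2^{-n}$) in order to obtain $\varphi \dotminus 2^{-n}$, and one should recall $v(2^{-n}) = 2^{-n}$, as noted just before the statement, so that the semantic computation above is correct. I expect no genuine obstacle here: the weakening from $\Sigma \vdash \varphi$ to $\Sigma \vdash \varphi \dotminus 2^{-n}$ for every $n$ is precisely what circumvents the failure of finitary deduction illustrated in \fref{rmk:InfImpl}. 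For a fixed $n$ we never need a single deduction of $\varphi$ from all of $\Sigma$; we only need the contradictoriness of $\Sigma$ augmented by one extra premise, which \fref{thm:ContCmpl} supplies from unsatisfiability for arbitrary (possibly infinite) $\Sigma$ and non-free $\cS$.
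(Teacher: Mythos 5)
Your proof is correct and follows exactly the paper's own argument: right to left by soundness, and left to right by observing that $\Sigma \cup \{2^{-n} \dotminus \varphi\}$ is unsatisfiable, hence contradictory by \fref{thm:ContCmpl}, and then applying \fref{lem:Prf}(3) to obtain $\Sigma \vdash \varphi \dotminus 2^{-n}$. The only difference is that you spell out the bookkeeping (the semantic computation and the orientation of the added premise) which the paper leaves implicit.
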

\begin{proof}
  Right to left is clear, so assume that $\Sigma \vDash \varphi$.
  Then $\Sigma \cup \{2^{-n}\dotminus \varphi\}$ is non-satisfiable, and therefore contradictory by \fref{thm:ContCmpl}.
  By \fref{lem:Proof}: $\Sigma \vdash \varphi \dotminus 2^{-n}$.
\end{proof}

\begin{rmk}
  With some more effort, one can prove that if $\cS$ is free and $\Sigma$ is finite, then $\Sigma \vDash \varphi$ if and only if $\Sigma \vdash \varphi$.
  This can be shown to fail if we drop either additional hypothesis, and in any case will not be required for our present purposes.
\end{rmk}

These completeness results are extended to the full continuous first order logic in \cite{BenYaacov-Pedersen:CompleteDeduction}.
We conclude with a word regarding the semantics of continuous propositional logics.

\begin{dfn}
  Let $\cS$ be a continuous propositional logic.
  Its \emph{Stone space} is defined to be the set $\widetilde \cS = \Hom(\cS,[0,1])$, namely the space of truth assignments to $\cS$.
  We equip $\cS$ with the induced topology as a subset of $[0,1]^\cS$ (i.e., with the point-wise convergence topology).

  For each $\varphi \in \cS$ we define a function $\hat\varphi\colon \widetilde \cS \to [0,1]$ by $\hat \varphi(v) = v(\varphi)$.
\end{dfn}

\begin{prp}
  \label{prp:SemanticsCPL}
  Let $\cS$ be a continuous propositional logic, $\widetilde \cS$ its Stone space, and let $\theta_\cS$ denote the map $\varphi \mapsto \hat \varphi$.
  \begin{enumerate}
  \item The space $\widetilde \cS$ is compact and Hausdorff.
  \item $\theta_\cS \in \Hom\bigl( \cS,C(\widetilde \cS,[0,1]) \bigr)$.
    In particular, each $\hat \varphi$ is continuous.
  \item For $\varphi,\psi \in \cS$ we have $\theta_\cS(\varphi) = \theta_\cS(\psi)$ if and only if $\varphi \equiv \psi$.
  \item The image of $\theta_\cS$ is dense in the uniform convergence topology on $C(\widetilde\cS,[0,1])$.
  \end{enumerate}
  Moreover, the properties characterise the pair $(\widetilde \cS,\theta_\cS)$ up to a unique homeomorphism.
\end{prp}
\begin{proof}
  That the image is dense is a direct application of a variant of the
  Stone-Weierstrass theorem proved in
  \cite[Proposition~1.4]{BenYaacov-Usvyatsov:CFO}.
  The other properties are immediate from the construction.

  We are left with showing uniqueness.
  Indeed, assume that $X$ is a compact Hausdorff space and $\theta\colon \cS \to C(X,[0,1])$ satisfies all the properties above.
  Define $\zeta\colon X \to \widetilde \cS$ by $\zeta(x)(\varphi) = \theta(\varphi)(x)$.
  Thus $\zeta$ is the unique map satisfying $\theta_\cS(\varphi) \circ \zeta = \theta(\varphi)$, and we need to show that it is a homeomorphism.
  Continuity is immediate.
  The image of $\theta$ is dense in uniform convergence and therefore separates points, so $\zeta$ is injective.
  Since $X$ is compact and Hausdorff $\zeta$ must be a topological embedding.
  In order to see that $\zeta$ is surjective it will be enough to show that its image is dense.
  So let $U \subseteq \widetilde \cS$ be a non empty open set, which must contain a non empty set of the form $\{ v \in \widetilde \cS\colon f(v) > 0 \}$ for some $f \in C(\widetilde \cS,[0,1])$.
  For $n$ big enough there is $v_0 \in \widetilde \cS$ such that $f(v_0) > 2^{-n+1}$.
  By density find $\varphi_0 \in \cS$ such that $\|\hat \varphi_0 - f\|_\infty < 2^{-n}$.
  and let $\varphi = \varphi_0 - 2^{-n} \in \cS$.
  Then $\{ v \in \widetilde \cS\colon v(\varphi) > 0 \} \subseteq U$ and $v_0(\varphi) \neq 0$.
  Since $\varphi \not\equiv 0$ there is $x \in X$ such that $\zeta(x)(\varphi) = \theta(\varphi)(x) \neq 0$, i.e., $\zeta(x) \in U$.
  This concludes the proof.
\end{proof}

\section{The theory of $[0,1]$-valued random variables}
\label{sec:AxRV}

From this point and through the end of this paper, we switch to the setting of continuous first order logic.
This means that structures, formulae, theory and so on, unless explicitly qualified otherwise, should be understood in the sense of \cite{BenYaacov-Usvyatsov:CFO} (or \cite{BenYaacov-Berenstein-Henson-Usvyatsov:NewtonMS}).

Let $(\Omega,\sF,\mu)$ be a probability space.
In \cite{BenYaacov:SchroedingersCat} we considered such a space via its probability algebra $\bar \sF$, namely the Boolean algebra of events $\sF$ modulo null measure difference.
Equivalently, the probability algebra $\bar \sF$ can be viewed as the space of $\{0,1\}$-valued random variables (up to equality a.e.).
Here we shall consider a very similar object, namely the space of $[0,1]$-valued random variables.
This space will be denoted $L^1\bigl( (\Omega,\sF,\mu),[0,1] \bigr)$, or simply $L^1(\sF,[0,1])$, where we consider that the measure $\mu$ is implicitly part of the structure of $\sF$.
We equip this space with the natural interpretation of the connectives $\neg$, $\half$ and $\dotminus$ (e.g., $(X \dotminus Y)(\omega) = X(\omega) \dotminus Y(\omega)$), as well as with the $L^1$ distance $d(X,Y) = \bE[ |X-Y| ]$, for which it is a complete metric space.
It is thus naturally a structure in the random variable language
\begin{gather*}
  \cL_{RV} = \{0,\neg,\half,\dotminus\}.
\end{gather*}
Throughout, we shall use $1$ as an abbreviation for $\neg0$ and $E(x)$ as an abbreviation for $d(x,0)$.
The intended interpretation of $E(x)$ is the expectation.
Notice that by definition, if $\cM$ is any $\cL_{RV}$-structure and $a \in M$ then $a = 0 \Longleftrightarrow d(a,0) = 0 \Longleftrightarrow E(a) = 0$.

\subsection{The theories $RV$ and $ARV$}

We shall use the results of \fref{sec:Proofs} to give axioms for the theory of $[0,1]$-valued random variables equipped with the $L^1$ metric, in the language $\cL_{RV}$ given above.

The term algebra $\cT_{RV}$ of $\cL_{RV}$ is a free propositional continuous logic (freely generated by the variables of the language together with the symbol $0$).
Similarly, forgetting everything but the operations $\neg$, $\half$ and $\dotminus$, any $\cL_{RV}$-structure $\cM$ is a (ordinarily, non-free) continuous propositional logic.
Translating proofs from $\cT_{RV}$ to $\cM$ we have $\vdash_{\cT_{RV}} \tau \Longrightarrow {\vdash_\cM \tau(\bar a)}$ for all $\bar a \in \cM$.

We define the theory $RV$ to consist of the following axioms.
In each axiom we quantify universally on all free variables.
Keep in mind also that $x \wedge y$ is abbreviation for $x \dotminus (x \dotminus y)$.

\begin{align*}
  \tag{RV1} & E(x) = E(x\dotminus y) + E(y\wedge x) \\
  \tag{RV2} & E(1) = 1 \\
  \tag{RV3} & d(x,y) = E(x\dotminus y)+E(y\dotminus x) \\
  \tag{RV4} & \tau = 0 &\text{whenever } \vdash_{\cT_{RV}} \tau \\
  \intertext{$ARV$ is defined by adding the following axiom:}
  \tag{ARV} & \inf_y \left(
    E(y\wedge\neg y) \vee \left| E(y\wedge x) - \half[E(x)] \right|
  \right) = 0.
\end{align*}

\begin{lem}
  \label{lem:ExpectationDifference}
  Let $\cM$ be a model of RV1.
  Then for every $a,b \in M$:
  $$E(a) - E(b) \leq E(a\dotminus b) \leq E(a).$$
  In particular $\cM$ respects Modus Ponens: if $b = 0$ and $a\dotminus b = 0$ then $a = 0$.
\end{lem}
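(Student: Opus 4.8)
The plan is to read RV1 directly as the defining relation and to exploit that $E$ is nonnegative. Since $E(x)$ is by definition the distance $d(x,0)$, the metric axioms force $E(x) \geq 0$ for every $x \in M$, with equality exactly when $x = 0$ (as already noted in the text). Written out for the elements at hand, RV1 asserts $E(a) = E(a \dotminus b) + E(b \wedge a)$, where recall $b \wedge a = b \dotminus (b \dotminus a)$.

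The upper bound is then immediate: since $E(b \wedge a) \geq 0$, RV1 gives $E(a \dotminus b) = E(a) - E(b \wedge a) \leq E(a)$. For the lower bound $E(a) - E(b) \leq E(a \dotminus b)$, the same rearrangement shows it is equivalent to the inequality $E(b \wedge a) \leq E(b)$. To obtain this I would apply RV1 a second time, now with $x = b$ and $y = b \dotminus a$: using $b \dotminus (b \dotminus a) = b \wedge a$, this reads $E(b) = E(b \wedge a) + E\bigl((b \dotminus a) \wedge b\bigr)$, and dropping the nonnegative last summand yields $E(b \wedge a) \leq E(b)$, as desired. The only point needing a little care is picking this substitution correctly so that the term $b \wedge a$ reappears on the right-hand side.

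Finally, Modus Ponens follows from the lower bound. If $b = 0$ and $a \dotminus b = 0$, then $E(b) = 0$ and $E(a \dotminus b) = 0$ by the equivalence $x = 0 \Leftrightarrow E(x) = 0$. The lower bound gives $E(a) \leq E(b) + E(a \dotminus b) = 0$, and since $E(a) \geq 0$ we conclude $E(a) = 0$, i.e.\ $a = 0$. No genuine obstacle arises here: the whole argument is two applications of RV1 together with the nonnegativity of $E$.
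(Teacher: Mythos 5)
Your proof is correct and follows essentially the same route as the paper: the upper bound comes from RV1 with $x=a$, $y=b$ plus nonnegativity of $E$, and the lower bound from a second application of RV1 with $x=b$, $y=b\dotminus a$, which is exactly the substitution the paper uses (phrased there as applying the derived inequality $E(x)\geq E(x\dotminus y)$ to get $E(b)\geq E(b\dotminus(b\dotminus a))=E(b\wedge a)$). Your spelled-out deduction of Modus Ponens from the lower bound, via $a=0\Leftrightarrow E(a)=0$, is precisely what the paper leaves implicit in ``Modus Ponens follows.''
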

\begin{proof}
  Axiom RV1 implies first that $E(a) \geq E(a\dotminus b)$.
  But then $E(b) \geq E(b\dotminus(b\dotminus a)) = E(b\wedge a)$ whereby $E(a) - E(b) \leq E(a) - E(b\wedge a) = E(a\dotminus b)$.
  Modus Ponens follows.
\end{proof}

Thus, modulo RV1, the axiom scheme RV4 is equivalent to the finite set:
\begin{align*}
  \tag{RV4.1} & (x \dotminus y) \dotminus x = 0 \\
  \tag{RV4.2} & ((x \dotminus z) \dotminus (x \dotminus y))
  \dotminus (y \dotminus z) = 0 \\
  \tag{RV4.3} & (x \wedge y) \dotminus (y \wedge x) = 0 \\
  \tag{RV4.4} & (x \dotminus y) \dotminus (\neg y \dotminus \neg x) = 0 \\
  \tag{RV4.5} & \half x \dotminus (x \dotminus \half x) = 0 \\
  \tag{RV4.6} & (x \dotminus \half x) \dotminus \half x = 0
\end{align*}
Furthermore, modulo RV1, RV3 and RV4.1-4, axioms RV4.5-6 are further equivalent to:
\begin{align*}
  \tag{RV5} & \half x = x \dotminus \half x
\end{align*}
Indeed, left to right is by RV3.
Axioms RV1 and RV4.1-4 imply that $x \dotminus x = 0$ (by \fref{fct:LukCmpl}), giving right to left.

The following is fairly obvious:
\begin{fct}
  \label{fct:AxRV}
  Let $(\Omega,\sF,\mu)$ be a probability space and let $\cM = L^1(\sF,[0,1])$.
  Then $\cM \vDash RV$, and if $(\Omega,\sF,\mu)$ is atomless then $\cM \vDash ARV$.
\end{fct}

We now aim to prove the converse of \fref{fct:AxRV}.
\begin{lem}
  \label{lem:ConsequencesRV}
  Let $\cM \vDash RV$, $a,b \in M$.
  Then:
  \begin{enumerate}
  \item
    $d(a,a \dotminus b) = E(a\wedge b) \leq E(b)$.
    In particular, $a \dotminus 0 = a$.
  \item
    $a \dotminus a = 0$.
    In particular, the meaning of $1$ here agrees with \fref{ntn:Connectives}(ii).
  \item
    $a \dotminus \half a = \half a$, $\half a \dotminus a = 0$ and $E(\half a) = \half E(a)$.
  \item
    Define by induction $2^0 = 1$ (i.e., $2^0 = \neg0$) and $2^{-(n+1)} = \half 2^{-n}$.
    Then for all $n \in \bN$: $E(2^{-n}) = 2^{-n}$.
  \item
    $a = 0
    \Longleftrightarrow {\vdash_\cM a}
    \Longleftrightarrow {\vDash_\cM a}$.
  \item \label{item:ConsequencesRVEquiv}
    $a = b \Longleftrightarrow a \equiv_\cM b$.
  \end{enumerate}
\end{lem}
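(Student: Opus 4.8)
The plan is to treat the six clauses in order, reducing the first four to direct manipulations of RV1--RV4 together with the fact (from RV4) that every propositionally valid term vanishes, and then to invoke the completeness machinery of \fref{sec:Proofs} for the last two. Throughout I will use that $E(x) = d(x,0) \geq 0$ with $E(0) = 0$, and that for a valid term $\tau(\bar x)$ the universally quantified axiom RV4 gives $\tau(\bar a) = 0$ for all $\bar a \in M$.

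For (1), I would expand $d(a, a \dotminus b)$ by RV3 into $E\bigl(a \dotminus (a\dotminus b)\bigr) + E\bigl((a\dotminus b)\dotminus a\bigr)$; the first summand is $E(a\wedge b)$ by definition of $\wedge$, and the second is $0$ by RV4.1, giving the asserted equality. Applying RV1 with $x = b$, $y = a$ yields $E(b) = E(b\dotminus a) + E(a\wedge b)$, so $E(a\wedge b) \leq E(b)$ since $E \geq 0$; taking $b = 0$ forces $d(a, a\dotminus 0) = 0$, i.e. $a\dotminus 0 = a$. Clause (2) is immediate from RV4, since $\models_{\cT_{RV}} x\dotminus x$. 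For (3), the terms $(x\dotminus\half x)\dotminus\half x$, $\half x\dotminus(x\dotminus\half x)$ and $\half x\dotminus x$ are all valid, so RV4 together with RV3 gives $a\dotminus\half a = \half a$ and $\half a\dotminus a = 0$; feeding these into RV1 with $x = a$, $y = \half a$ (and using clause (1) to rewrite $\half a\wedge a = \half a\dotminus 0 = \half a$) gives $E(a) = 2E(\half a)$, hence $E(\half a) = \half E(a)$. Clause (4) then follows by induction on $n$, the base case being RV2 and the inductive step clause (3).

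The main work is clause (5), where the point is to connect the element-level identity $a = 0$ (equivalently $E(a) = 0$) with the purely propositional notions $\vdash_\cM$ and $\models_\cM$ obtained by viewing $\cM$ as a non-free continuous propositional logic. First I would record two ``soundness at the level of $E$'' observations: every instance of A1--A6 equals $0$ in $\cM$ (again by RV4, as each axiom is valid), and $\cM$ respects Modus Ponens by \fref{lem:ExpectationDifference}; hence any $\cM$-deducible formula equals $0$ as an element, giving $\vdash_\cM a \Rightarrow a = 0$. Conversely $a = 0$ gives $\vdash_\cM a$ by translating a deduction of $x\dotminus x$ and substituting $0$, so $a = 0 \Leftrightarrow \vdash_\cM a$; and $a = 0$ trivially gives $\models_\cM a$, since every truth assignment kills $0$. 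The remaining and most delicate implication is $\models_\cM a \Rightarrow a = 0$: because $\cM$ is not free, completeness yields only the weak conclusion of \fref{cor:ContCmpl}, namely $\vdash_\cM a\dotminus 2^{-n}$ for every $n$. By the deduction-to-element observation this gives $a \dotminus 2^{-n} = 0$, so by \fref{lem:ExpectationDifference} and clause (4) we obtain $E(a) \leq E(2^{-n}) = 2^{-n}$ for all $n$, whence $E(a) = 0$ and $a = 0$. This Archimedean passage through \fref{cor:ContCmpl} is exactly where the failure of finitary completeness for non-free logics is absorbed, and is the step I expect to be the real obstacle.

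Finally, clause (6) reduces to (5): if $a \equiv_\cM b$ then every truth assignment identifies $a$ and $b$, so $\models_\cM a\dotminus b$ and $\models_\cM b\dotminus a$, whence $a\dotminus b = b\dotminus a = 0$ by (5), and RV3 gives $d(a,b) = 0$, i.e. $a = b$; the converse is trivial.
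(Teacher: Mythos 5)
Your proof is correct and takes essentially the same route as the paper's: clauses (1)--(4) by the same direct manipulations of RV1--RV4, clause (5) by the same combination of soundness, Modus Ponens (via \fref{lem:ExpectationDifference}) and the weak completeness statement \fref{cor:ContCmpl}, yielding $a \dotminus 2^{-n} = 0$ and hence $E(a) \leq 2^{-n}$ for all $n$, and clause (6) by the same reduction to (5). The only cosmetic difference is that you bound $E(a)$ by $E(2^{-n})$ through \fref{lem:ExpectationDifference} where the paper redoes the RV1 computation $E(a) = E(a \wedge 2^{-n}) \leq 2^{-n}$ directly.
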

\begin{proof}
  \begin{enumerate}
  \item From RV4 we have $(a \dotminus b) \dotminus a = 0$ and using RV3 we obtain $d(a,a\dotminus b) = E(a\wedge b)$.
    By RV1 $E(a\wedge b) \leq E(b)$.
    The rest follows.
  \item This was already observed earlier, using \fref{fct:LukCmpl}.
  \item That $\half a = a \dotminus \half a$ was observed above (RV5).
    It follows that $a \wedge \half a = \half a$, so $\half a \dotminus a = 0$ by RV1 (with $x = \half a$, $y = a$).
    Again by RV1 (now with $x = a$, $y = \half a$) we obtain $E(a) = 2E\bigl( \half a \bigr)$.
  \item Immediate from the previous item.
  \item Assume that $\vdash_\cM a$.
    Then by RV1 (which implies Modus Ponens) and RV4.1-6 we have $a = 0$.
    Thus $a = 0 \Longleftrightarrow {\vdash_\cM a}$.
    The implication $\vdash_\cM a \Longrightarrow {\vDash_\cM a}$ is by soundness.
    Finally assume that $\vDash_\cM a$.
    Then for all $n$ we have $\vdash_\cM a \dotminus 2^{-n}$, whereby $a \dotminus 2^{-n} = 0$.
    Thus $E(a) = E(a\wedge2^{-n}) \leq E(2^{-n})= 2^{-n}$, for arbitrary $n$.
    It follows that $E(a) = 0$, i.e., that $a = 0$.
  \item Assume that $a \equiv_\cM b$, i.e., that $\vDash_\cM a \dotminus b$ and $\vDash_\cM b \dotminus a$.
    Be the previous item $a \dotminus b = b \dotminus a = 0$ whereby $a = b$.
    \qedhere
  \end{enumerate}
\end{proof}

Let $\widetilde \cM$ be the Stone space of $\cM$, viewed as a continuous propositional logic, and let $\theta_\cM\colon \cM \to C(\widetilde \cM,[0,1])$ be as in \fref{prp:SemanticsCPL}.
Recall the notation $\hat a = \theta_\cM(a)$.
By \fref{lem:ConsequencesRV}\ref{item:ConsequencesRVEquiv} and \fref{prp:SemanticsCPL}, $\theta_\cM$ is injective.

The space $C(\widetilde \cM,[0,1])$ is naturally equipped with the supremum metric, denoted $\|f-g\|_\infty$.
We aim to show now that $d^\cM$ is an $L^1$ distance, i.e., that for an appropriate measure we have $d^\cM(a,b) = \|\hat a-\hat b\|_1$, which need not be equal to $\| \hat a - \hat b\|_\infty$.
Nonetheless, we can relate the two metrics as follows (we essentially say that $L^\infty$-convergence of random variables implies $L^1$-convergence).

\begin{lem}
  Assume that $\{a_n\}_{n\in\bN} \subseteq M$
  is such that
  $\{\hat a_n\}_{n\in\bN} \subseteq C(\widetilde \cM,[0,1])$
  is a Cauchy sequence in the supremum metric.
  Then $\{a_n\}_{n\in\bN}$ converges in $\cM$ and
  $\lim \hat a_n = \widehat{\lim a_n}$.
\end{lem}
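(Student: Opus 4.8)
The plan is to first show that $\{a_n\}$ is Cauchy for $d$ (hence convergent, $\cM$ being complete) and then to identify its limit; the bridge between the two metrics is the one-directional estimate that smallness of $\|\hat c\|_\infty$ forces smallness of $E(c)$. Concretely, I would first record that if $\|\hat c\|_\infty \leq 2^{-n}$ then $E(c) \leq 2^{-n}$: since $\theta_\cM$ is a homomorphism and $v(2^{-n}) = 2^{-n}$ for every truth assignment, we get $\widehat{c \dotminus 2^{-n}}(v) = \hat c(v) \dotminus 2^{-n} = 0$ for all $v$, so $c \dotminus 2^{-n} = 0$ by injectivity of $\theta_\cM$ (\fref{lem:ConsequencesRV}.\ref{item:ConsequencesRVEquiv}); then $E(c) = d(c,0) = d(c, c \dotminus 2^{-n}) = E(c \wedge 2^{-n}) \leq E(2^{-n}) = 2^{-n}$ by \fref{lem:ConsequencesRV}(1). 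Applying this to $c = a_n \dotminus a_m$ and $c = a_m \dotminus a_n$, whose hats are dominated pointwise by $|\hat a_n - \hat a_m|$, turns the supremum-Cauchy hypothesis into $d(a_n, a_m) \leq 2^{-k+1}$ as soon as $\|\hat a_n - \hat a_m\|_\infty \leq 2^{-k}$. Hence $\{a_n\}$ is $d$-Cauchy and converges to some $a \in M$; write $g = \lim_n \hat a_n$ for the (existing) supremum limit in $C(\widetilde\cM,[0,1])$.

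It remains to prove $\hat a = g$, which is where the real work lies. The same estimate, together with \fref{lem:ExpectationDifference}, shows that $\hat a \mapsto E(a)$ is uniformly $\|\cdot\|_\infty$-continuous (indeed $1$-Lipschitz) on $\theta_\cM(M)$; since this image is $\|\cdot\|_\infty$-dense by \fref{prp:SemanticsCPL}, it extends to a unique $\|\cdot\|_\infty$-continuous monotone functional $\Lambda\colon C(\widetilde\cM,[0,1]) \to [0,1]$ with $\Lambda(\hat b) = E(b)$. The crux is the \emph{strict positivity} of $\Lambda$: if $h \geq 0$ is continuous and $h \not\equiv 0$, then $\Lambda(h) > 0$. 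To see this I would pick $v_0$ with $h(v_0) = s > 0$, apply the construction in the proof of \fref{prp:SemanticsCPL} to the open set $U = \{h > s/2\}$ to obtain $b \in M$ with $b \neq 0$ and $\{\hat b > 0\} \subseteq U$, and then replace $b$ by $b \wedge t$ for a dyadic constant $t \in (0,s/2]$ to arrange $0 \leq \widehat{b \wedge t} \leq h$ while keeping $b \wedge t \neq 0$; monotonicity then gives $\Lambda(h) \geq E(b \wedge t) > 0$, using $E(c) = 0 \Leftrightarrow c = 0$ from \fref{lem:ConsequencesRV}(5).

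With $\Lambda$ in hand the identification is quick. Since $\dotminus$ is continuous and $\hat a_n \to g$ uniformly, $\widehat{a \dotminus a_n} = \hat a \dotminus \hat a_n \to \hat a \dotminus g$ and $\widehat{a_n \dotminus a} = \hat a_n \dotminus \hat a \to g \dotminus \hat a$ uniformly; applying the continuous $\Lambda$ and using $E(a \dotminus a_n), E(a_n \dotminus a) \leq d(a, a_n) \to 0$, I obtain $\Lambda(\hat a \dotminus g) = \Lambda(g \dotminus \hat a) = 0$. Both $\hat a \dotminus g$ and $g \dotminus \hat a$ are nonnegative and continuous, so strict positivity forces them to vanish identically, i.e. $\hat a \leq g$ and $g \leq \hat a$ pointwise, whence $\hat a = g = \widehat{\lim a_n}$. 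The main obstacle is precisely this middle step: $d$-convergence does not control $\|\cdot\|_\infty$ (indeed $L^1$-convergence is genuinely weaker than $L^\infty$-convergence), so one cannot identify the limit by a naive triangle inequality and must instead route through the strictly positive expectation functional.
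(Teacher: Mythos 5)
Your proof is correct, and its first half (sup-norm smallness of $\hat c$ forces $c \dotminus 2^{-k} = 0$ by injectivity of $\theta_\cM$, hence $E(c) \leq 2^{-k}$, hence $d$-Cauchyness and a limit $a$ by completeness) is exactly the paper's. Where you genuinely diverge is the identification of the limit, and there the paper's route is much lighter than yours: it never tries to transfer a metric estimate from $d$ to $\|\cdot\|_\infty$ (which, as you rightly note, is impossible in general), but instead passes an \emph{exact identity} to the limit. Fixing $n \geq N_k$, one has $a_m \dotminus a_n \dotminus 2^{-k} = 0$ for all large $m$; since $\dotminus$ is a function symbol of the metric structure $\cM$ and hence uniformly $d$-continuous, letting $m \to \infty$ gives $a \dotminus a_n \dotminus 2^{-k} = 0$, which under $\theta_\cM$ reads $\hat a \dotminus \hat a_n \leq 2^{-k}$ pointwise; symmetrically $\hat a_n \dotminus \hat a \leq 2^{-k}$, so $\|\hat a - \hat a_n\|_\infty \leq 2^{-k}$ and $\hat a_n \to \hat a$ uniformly, with no new objects introduced. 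Your route instead extends $E$ to a sup-norm continuous monotone functional $\Lambda$ on all of $C(\widetilde \cM,[0,1])$ and proves it strictly positive via the support construction from \fref{prp:SemanticsCPL}; this works (your capping of the resulting element at a small dyadic constant is the right fix to get below $h$), but it in effect anticipates a sizeable part of what \fref{thm:RV} does next through the Riesz Representation Theorem: strict positivity of $\Lambda$ is precisely faithfulness of the representing measure on continuous functions, i.e.\ injectivity of $\rho_\mu$. What your approach buys is that reusable fact, obtained without invoking Riesz; what it costs is that most of your proof goes into verifying properties of $\Lambda$ — in particular monotonicity of the extension needs its own small density argument which you only assert. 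Two cosmetic slips, neither a gap: $E(c) = 0 \Leftrightarrow c = 0$ is by definition of $E(x) = d(x,0)$ (as noted when $\cL_{RV}$ is introduced), not \fref{lem:ConsequencesRV}(5); and your ``$1$-Lipschitz'' is really ``uniformly continuous via dyadic bounds'', which is all you need.
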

\begin{proof}
  By assumption, for every $k < \omega$ there is $N_k$ such that for all
  $\|\hat a_n - \hat a_m\|_\infty \leq 2^{-k}$
  for all $n,m < N_k$.
  Therefore $(\hat a_n \dotminus \hat a_m) \dotminus 2^{-k} = 0$, and
  since $\theta_\cM$ is injective:
  $a_n \dotminus a_m \dotminus 2^{-k} = 0$.
  Thus
  $E(a_n \dotminus a_m) = E((a_n \dotminus a_m)\wedge2^{-k})
  \leq E(2^{-k}) = 2^{-k}$.
  Similarly $E(a_m \dotminus a_n) \leq 2^{-k}$, whereby
  $d(a_n,a_m) \leq 2^{-k+1}$.
  Since $\cM$ is a (complete) $\cL$-structure, it contains a limit $a$.

  Now fix $n \geq N_k$ and let $m \to \infty$.
  Then $a_m \to a$, and therefore
  $a_m \dotminus a_n \dotminus 2^{-k}
  \to a \dotminus a_n \dotminus 2^{-k}$.
  Thus
  $a \dotminus a_n \dotminus 2^{-k} = 0$, and by a similar argument
  $a_n \dotminus a \dotminus 2^{-k} = 0$.
  We have thus shown that
  $\hat a_n \to \hat a$ uniformly as desired.
\end{proof}

\begin{cor}
  The map $\theta_\cM\colon \cM \to C(\widetilde \cM,[0,1])$ is bijective.
\end{cor}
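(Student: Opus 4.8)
The plan is to observe that injectivity of $\theta_\cM$ has already been recorded (it follows from \fref{lem:ConsequencesRV}.\ref{item:ConsequencesRVEquiv} together with \fref{prp:SemanticsCPL}), so the only thing left to establish is surjectivity. Thus I would fix an arbitrary $f \in C(\widetilde \cM,[0,1])$ and aim to produce $a \in M$ with $\hat a = f$.

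First I would invoke the density clause of \fref{prp:SemanticsCPL}, namely that the image of $\theta_\cM$ is dense in $C(\widetilde \cM,[0,1])$ for the uniform (supremum) metric. This yields a sequence $\{a_n\}_{n\in\bN} \subseteq M$ with $\|\hat a_n - f\|_\infty \to 0$. Since $f$ is fixed, the sequence $\{\hat a_n\}$ is in particular Cauchy in the supremum metric on $C(\widetilde\cM,[0,1])$.

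Now the work is done by the previous lemma, which says precisely that supremum-Cauchyness of $\{\hat a_n\}$ forces $\{a_n\}$ to converge in $\cM$, say to some $a \in M$, and moreover that $\widehat{\lim a_n} = \lim \hat a_n$ in the supremum metric. Since $\hat a_n \to f$ uniformly and limits in $C(\widetilde\cM,[0,1])$ are unique, I would conclude $\hat a = \lim \hat a_n = f$. Hence $f$ lies in the image of $\theta_\cM$, and as $f$ was arbitrary, $\theta_\cM$ is surjective; combined with injectivity it is bijective.

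There is no serious obstacle here: the corollary is essentially an immediate packaging of two facts that have already been isolated — density of the image (the Stone–Weierstrass input of \fref{prp:SemanticsCPL}) and the passage from $L^\infty$-convergence to convergence in $\cM$ with matching hats (the preceding lemma). The one point that must be handled cleanly is that the completeness of $\cM$ as a metric $\cL$-structure is what allows the approximating sequence to have a genuine limit in $M$; but this is exactly what the previous lemma already supplies, so the present argument only needs to combine the two statements rather than re-prove anything.
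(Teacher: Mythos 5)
Your proof is correct and follows exactly the paper's route: the paper also combines injectivity (already established), density of the image from \fref{prp:SemanticsCPL}, and the preceding lemma, which it summarises as ``the image is complete, so it is onto.'' Your write-up merely unpacks that completeness step into the explicit Cauchy-sequence argument, so there is nothing to add or correct.
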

\begin{proof}
  We already know it is injective, and by
  \fref{prp:SemanticsCPL} its image is dense.
  By the previous lemma its image is complete, so it is onto.
\end{proof}

We shall identify $\cM$ with $C(\widetilde \cM,[0,1])$.
\begin{lem}
  \label{lem:LinearE}
  For all $a,b \in \cM$ and $r \in \bR^+$:
  \begin{enumerate}
  \item If $a+b \in \cM$ (i.e., $\|a+b\|_\infty \leq 1$) then
    $E(a+b) = E(a)+E(b)$.
  \item If $ra \in \cM$ (i.e., $r\|a\|_\infty \leq 1$) then $E(ra) = rE(a)$.
  \end{enumerate}
\end{lem}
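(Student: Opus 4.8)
The plan is to work throughout with the identification of $\cM$ with $C(\widetilde\cM,[0,1])$, under which $\dotminus$, $\wedge$ and $\half$ are the pointwise operations $\max(\,\cdot-\cdot\,,0)$, $\min$ and halving. Additivity then falls straight out of RV1, and scalar homogeneity is obtained by bootstrapping from additivity and from $E(\half a)=\half E(a)$, followed by a single limiting argument.

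For the first item, suppose $a+b\in\cM$, so that $a$, $b$ and $c:=a+b$ all take values in $[0,1]$. I would record the two pointwise identities $c\dotminus b=a$ (since $c-b=a\geq 0$) and $b\wedge c=b$ (since $b\leq c$), which therefore hold as identities in $\cM$. Instantiating RV1 at $x=c$, $y=b$ gives $E(c)=E(c\dotminus b)+E(b\wedge c)=E(a)+E(b)$, which is exactly $E(a+b)=E(a)+E(b)$.

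For the second item I would combine the additivity just proved with the identity $E(\half a)=\half E(a)$ from \fref{lem:ConsequencesRV}. First, whenever $ka\in\cM$ for a positive integer $k$, an induction on $k$ via item (1) (applied to $(k-1)a$ and $a$, whose sum is $ka$) yields $E(ka)=kE(a)$. Iterating $E(\half\,\cdot)=\half E(\cdot)$ gives $E(2^{-n}a)=2^{-n}E(a)$ whenever the left side is defined. Combining these, for a dyadic $r=k2^{-n}$ with $r\|a\|_\infty\leq 1$ I would write $ra=k(2^{-n}a)$ and compute $E(ra)=kE(2^{-n}a)=k2^{-n}E(a)=rE(a)$.

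The remaining point, and the only genuinely analytic step, is the passage from dyadic to arbitrary real $r$. Here I would use that $E=d(\cdot,0)$ is $1$-Lipschitz for $d$ by the triangle inequality, together with the estimate from the proof of the preceding lemma (that $\|a-b\|_\infty\leq 2^{-k}$ forces $E(a\dotminus b),E(b\dotminus a)\leq 2^{-k}$, hence $d(a,b)\leq 2\|a-b\|_\infty$); these show that $E$ is continuous for $\|\cdot\|_\infty$. Given $r$ with $r\|a\|_\infty\leq 1$, I would choose dyadics $r_n\nearrow r$ with $r_n\leq r$, so that each $r_na\in\cM$ and $r_na\to ra$ in sup-norm, and conclude $E(ra)=\lim_n E(r_na)=\lim_n r_nE(a)=rE(a)$. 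The only care required is to approximate from below, so as to remain inside $\cM$ even in the boundary case $r\|a\|_\infty=1$.
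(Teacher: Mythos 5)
Your proof is correct and takes essentially the same route as the paper's: additivity is extracted from RV1 via the identities $c \dotminus b = a$ and $b \wedge c = b$ for $c = a+b$, and homogeneity is bootstrapped through integer and dyadic (the paper uses rational) multiples, then completed by continuity of $E$. If anything, your handling of the limiting step is the more careful one: the paper merely asserts that $r_n a \to ra$ ``in $\cM^*$ and \emph{a fortiori} in $\cM$,'' without addressing the need to approximate $r$ from below so that each $r_n a$ stays in $\cM$ when $r\|a\|_\infty = 1$.
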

\begin{proof}
  \begin{enumerate}
  \item Let $c = a+b$.
    Then $c \dotminus b = a$ and $b \dotminus c = 0$, whereby:
    \begin{align*}
      E(c) & = E(c \dotminus b) + E(b \dotminus (b \dotminus c))
      = E(a) + E(b\dotminus 0) = E(a) + E(b).
    \end{align*}
  \item For integer $r$ this follows from the previous item, and the rational case follows.
    If $r_n \to r$ then $r_na \to ra$ in $C(\widetilde \cM,[0,1])$ and \emph{a fortiori} in $\cM$, so the general case follows by continuity of $E$.
    \qedhere
  \end{enumerate}
\end{proof}

\begin{thm}
  \label{thm:RV}
  Let $\cM \vDash RV$, $\widetilde \cM$ its Stone space and
  $\theta_\cM\colon \cM \to C(\widetilde \cM,[0,1])$
  as in \fref{prp:SemanticsCPL}.
  \begin{enumerate}
  \item As a topological space, $\widetilde \cM$ is compact and Hausdorff.
  \item The map
    $\theta_\cM\colon \cM \to C(\widetilde \cM,[0,1])$
    is bijective and respects the operations $\neg$, $\half$ and
    $\dotminus$ (i.e., it is an isomorphism of continuous
    propositional logics).
  \item There exists a regular Borel probability measure
    $\mu$ on $\widetilde \cM$ such that the natural map
    $\rho_\mu\colon C(\widetilde \cM,[0,1]) \to L^1(\mu,[0,1])$
    is bijective as well, and the composition
    $\rho_\mu \circ \theta_\cM\colon \cM \to L^1(\mu,[0,1])$
    is an isomorphism of $L_{RV}$-structures.
  \end{enumerate}
  Moreover, these properties characterise
  $(\widetilde \cM,\mu,\theta_\cM)$ up to a unique measure preserving
  homeomorphism.
\end{thm}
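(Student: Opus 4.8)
The plan is to dispatch the first two items from results already in hand and to reserve the real effort for the third. For item~(1), the compactness and Hausdorffness of $\widetilde\cM$ are exactly \fref{prp:SemanticsCPL}.(1). For item~(2), \fref{prp:SemanticsCPL} already gives that $\theta_\cM$ is a homomorphism of continuous propositional logics with dense image; injectivity is \fref{lem:ConsequencesRV}.\ref{item:ConsequencesRVEquiv} together with \fref{prp:SemanticsCPL}.(3), and surjectivity is the Corollary preceding \fref{lem:LinearE}. Hence $\theta_\cM$ is an isomorphism of continuous propositional logics, and from here on I identify $\cM$ with $C(\widetilde\cM,[0,1])$ as announced.

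For item~(3) the first task is to produce the measure. By \fref{lem:LinearE}, $E$ is additive and positively homogeneous on $C(\widetilde\cM,[0,1])$ wherever sums and scalar multiples remain in $[0,1]$. I would extend $E$ to a functional $\bar E$ on all of $C(\widetilde\cM,\bR)$ by writing an arbitrary $f$ as $r(a-b)$ with $a,b \in C(\widetilde\cM,[0,1])$ and $r \geq 0$, setting $\bar E(f) = r\bigl(E(a)-E(b)\bigr)$, and checking well-definedness and linearity from \fref{lem:LinearE}. Since $E \geq 0$, the functional $\bar E$ is positive and linear, so the Riesz representation theorem yields a regular Borel measure $\mu$ with $\bar E(f) = \int f\,d\mu$; axiom RV2 gives $\mu(\widetilde\cM) = E(1) = 1$, so $\mu$ is a probability measure.

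The second task is to identify the metric and establish bijectivity. Pointwise on $\widetilde\cM$ one has $(a\dotminus b)+(b\dotminus a) = |a-b|$, so axiom RV3 gives $d^\cM(a,b) = E\bigl(|a-b|\bigr) = \int |\hat a-\hat b|\,d\mu = \|\hat a-\hat b\|_1$. Thus $\rho_\mu\circ\theta_\cM$ is an isometry of $(\cM,d^\cM)$ into $L^1(\mu,[0,1])$, and it respects $0,\neg,\half,\dotminus$ because both $\theta_\cM$ and the passage to $L^1$-classes respect these pointwise operations. Injectivity of $\rho_\mu$ is automatic since $d^\cM$ is a metric: if $\|\hat a-\hat b\|_1 = 0$ then $d^\cM(a,b)=0$, so $a=b$. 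For surjectivity I would use that $\cM$ is a complete metric structure, so its isometric image in $L^1(\mu,[0,1])$ is complete, hence closed; as $C(\widetilde\cM,[0,1])$-classes are dense in $L^1(\mu,[0,1])$ for a regular Borel measure on a compact Hausdorff space, the image is also dense, hence all of $L^1(\mu,[0,1])$. Therefore $\rho_\mu\circ\theta_\cM$ is onto, and since $\theta_\cM$ is a bijection so is $\rho_\mu$; the composite is then an isomorphism of $\cL_{RV}$-structures.

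For the ``moreover'' clause I would invoke the uniqueness statement of \fref{prp:SemanticsCPL}: any competitor $(\widetilde\cM',\mu',\theta')$ satisfying (1)--(2) yields a unique homeomorphism $\zeta\colon\widetilde\cM'\to\widetilde\cM$ with $\theta_\cM(\varphi)\circ\zeta = \theta'(\varphi)$. Because $E = d(\,\cdot\,,0)$ is intrinsic to $\cM$, both $\mu$ and the pushforward $\zeta_*\mu'$ represent the same positive functional $\bar E$, so by uniqueness in Riesz representation they coincide, whence $\zeta$ is measure preserving. The main obstacle, and the genuinely new analysis here, is the passage to $L^1$: extending $E$ to an honest positive linear functional so that Riesz applies, and then combining the isometry with completeness of $\cM$ and density of continuous functions to conclude that $\rho_\mu$ is a bijection rather than merely a dense isometric embedding.
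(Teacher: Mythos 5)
Your proof is correct and takes essentially the same route as the paper's: items (1)--(2) are quoted from \fref{prp:SemanticsCPL}, \fref{lem:ConsequencesRV} and the bijectivity corollary, then $E$ is extended by linearity via \fref{lem:LinearE} and the Riesz Representation Theorem produces $\mu$, with bijectivity of $\rho_\mu$ obtained from the isometric dense image together with completeness of $\cM$, and the moreover clause from the uniqueness parts of \fref{prp:SemanticsCPL} and of Riesz. The details you add --- well-definedness of the extension $f = r(a-b)$, and the pointwise identity $(a\dotminus b)+(b\dotminus a)=|a-b|$ behind the isometry claim --- are exactly the routine verifications the paper leaves implicit.
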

\begin{proof}
  The first two properties are already known.
  By \fref{lem:LinearE} we can extend $E$ by linearity from $C(\widetilde \cM,[0,1])$ to $C(\widetilde \cM,\bR)$, yielding a positive linear functional.
  By the Riesz Representation Theorem \cite[Theorem~2.14]{Rudin:RealAndComplexAnalysis} there exists a unique regular Borel measure $\mu$ on $\widetilde \cM$ such that $E(f) = \int f\, d\mu$.
  Since $E(1) = 1$, $\mu$ is a probability measure.

  The map $\cM \to L^1(\mu,[0,1])$ is isometric and in particular injective.
  Its image is dense (continuous functions are always dense in the $L^1$ space of a regular Borel measure).
  Moreover, since $\cM$ is a complete metric space the image must be all of $L^1(\mu,[0,1])$, whence the last item.

  The uniqueness of $\widetilde \cM$ as a topological space verifying the first two properties follows from \fref{prp:SemanticsCPL} and \fref{lem:ConsequencesRV}.\ref{item:ConsequencesRVEquiv}.
  The Riesz Representation Theorem then yields the uniqueness of $\mu$.
\end{proof}

We may refer to $(\widetilde \cM,\mu)$ (viewed as a topological space equipped with a Borel measure) as the \emph{Stone space} of $\cM$ or say that $\cM$ is \emph{based on} $(\widetilde \cM,\mu)$.

\begin{cor}
  \label{cor:RV}
  Let $\cM$ be an $\cL_{RV}$-structure.
  Then:
  \begin{enumerate}
  \item The structure $\cM$ is a model of $RV$ if and only
    if it is isomorphic to some $L^1(\sF,[0,1])$.
  \item A structure of the form $L^1(\sF,[0,1])$ is a model of $ARV$
    if and only if $(\Omega,\sF,\mu)$ is an atomless probability space.
  \end{enumerate}
\end{cor}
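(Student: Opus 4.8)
The plan is to derive both items from the two results already in hand, \fref{fct:AxRV} and \fref{thm:RV}, so that the only genuinely new content is a short computation showing that an atom defeats the axiom $ARV$.

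For the first item, the backward direction is free: satisfaction of an $\cL_{RV}$-sentence is invariant under isomorphism, so if $\cM \cong L^1(\sF,[0,1])$ then $\cM \models RV$ by \fref{fct:AxRV}. For the forward direction I would simply invoke \fref{thm:RV}: when $\cM \models RV$ it supplies a regular Borel probability measure $\mu$ on the Stone space $\widetilde\cM$ together with an isomorphism of $\cL_{RV}$-structures $\cM \cong L^1(\mu,[0,1])$. Taking $\Omega = \widetilde\cM$ with its Borel $\sigma$-algebra and the measure $\mu$ then exhibits $\cM$ as isomorphic to a structure of the form $L^1(\sF,[0,1])$, as required.

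For the second item, the direction ``atomless $\Rightarrow ARV$'' is again exactly \fref{fct:AxRV}, so only its converse remains, which I would prove by contraposition. Suppose $(\Omega,\sF,\mu)$ carries an atom $B$ with $\mu(B) = \epsilon > 0$, and set $x = \mathbbm{1}_B$, so that $E(x) = \epsilon$. Any $y \in L^1(\sF,[0,1])$ is, on the atom $B$, almost everywhere equal to a single constant $c \in [0,1]$. Since $x$ equals $1$ on $B$ and $0$ off $B$ and $y \leq 1$, we get $y \wedge x = c\,\mathbbm{1}_B$ almost everywhere, hence $E(y \wedge x) = c\epsilon$; and because $y \wedge \neg y = \min(y,1-y) \geq 0$ everywhere, we have $E(y\wedge\neg y) \geq \int_B \min(y,1-y)\,d\mu = \min(c,1-c)\,\epsilon$.

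It then suffices to bound the expression under the infimum in $ARV$ away from $0$ uniformly in $y$. Writing $t = |c-\half| \in [0,\half]$, so that $\min(c,1-c) = \half - t$ and $|E(y\wedge x) - \half[E(x)]| = \epsilon t$, the quantity $E(y\wedge\neg y) \vee |E(y\wedge x) - \half[E(x)]|$ is at least $\epsilon\,\max(\half - t, t)$, whose minimum over $t \in [0,\half]$ equals $\epsilon/4$, attained at $t = 1/4$. Hence the infimum over $y$ is at least $\epsilon/4 > 0$, so $ARV$ fails. This atom computation is the only step requiring actual argument; everything else is pure assembly of \fref{fct:AxRV} and \fref{thm:RV} together with the observation that the measure space produced by the latter is of the required form.
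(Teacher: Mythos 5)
Your proof is correct and follows exactly the route the paper intends: the corollary is stated without proof as an immediate consequence of \fref{thm:RV} (for the forward direction of the first item) and \fref{fct:AxRV} (for the two ``backward'' directions), leaving only the converse of the second item to check. Your atom computation --- taking $x = \mathbbm{1}_B$ for an atom $B$ of measure $\epsilon$, noting any $y$ is a.e.\ constant $c$ on $B$, and bounding $E(y\wedge\neg y) \vee \left| E(y\wedge x) - \half[E(x)] \right| \geq \epsilon\max(\half - t, t) \geq \epsilon/4$ with $t = |c-\half|$ --- is precisely the detail the paper leaves implicit, and it is sound.
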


\begin{cor}
  Let $\cM \vDash RV$ be based on $(\widetilde \cM,\mu)$.
  Then every Borel function
  $\widetilde \cM \to [0,1]$ is equal almost
  everywhere to a unique continuous function.
\end{cor}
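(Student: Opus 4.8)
The plan is to read the statement off the bijectivity of the natural map $\rho_\mu\colon C(\widetilde\cM,[0,1]) \to L^1(\mu,[0,1])$ established in \fref{thm:RV}: existence will come from surjectivity of $\rho_\mu$, and uniqueness from its injectivity. So the whole corollary is essentially a reformulation of the third clause of \fref{thm:RV} together with the identification of $\cM$ with $C(\widetilde\cM,[0,1])$.

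For existence, I would start with an arbitrary Borel function $f\colon \widetilde\cM \to [0,1]$. Since $f$ is bounded and $\mu$ is a probability (in particular finite) measure, $f$ is $\mu$-integrable and determines a well-defined class $[f] \in L^1(\mu,[0,1])$. As \fref{thm:RV} asserts that $\rho_\mu$ is onto, there is a continuous $g \in C(\widetilde\cM,[0,1])$ with $\rho_\mu(g) = [f]$, which is exactly to say $g = f$ almost everywhere. The only routine measure-theoretic point to record here is that a bounded Borel function does define an element of $L^1(\mu,[0,1])$, so that surjectivity of $\rho_\mu$ may be applied.

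For uniqueness, suppose $g, g'$ are continuous and each equals $f$ almost everywhere. Then $g = g'$ a.e., so $\rho_\mu(g) = \rho_\mu(g')$ in $L^1(\mu,[0,1])$, and the injectivity of $\rho_\mu$ from \fref{thm:RV} forces $g = g'$. Equivalently, one can phrase this as the observation that injectivity of $\rho_\mu$ is precisely the statement that $\mu$ has full support: if two distinct continuous functions agreed $\mu$-a.e., the open set on which they differ would be a nonempty $\mu$-null set, contradicting injectivity.

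There is no genuine obstacle in this argument, as all the work has already been done in \fref{thm:RV}. The substantive content sitting behind it is that the $L^1$-distance $d^\cM$ on $\cM \cong C(\widetilde\cM,[0,1])$ is nondegenerate, which is what guarantees that $\mu$ has full support and hence that the continuous representative, once it exists, is unique; but this nondegeneracy is exactly what the bijectivity of $\rho_\mu$ encodes, so I would simply invoke \fref{thm:RV} rather than re-establish it.
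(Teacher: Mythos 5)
Your proposal is correct and is exactly the intended argument: the paper states this corollary without proof as an immediate consequence of \fref{thm:RV}, existence being surjectivity of $\rho_\mu$ applied to the $L^1$-class of the bounded Borel function, and uniqueness being injectivity of $\rho_\mu$ (equivalently, that $\mu$ has full support). Nothing is missing.
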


\subsection{Interpreting random variables in events and vice versa}

In the previous section we attached to every probability space
$(\Omega,\sF,\mu)$ the space $L^1(\sF,[0,1])$
of $[0,1]$-valued random variables
and axiomatised the class of metric structures arising in this manner.
While we cannot quite recover the original space $\Omega$ from
$L^1(\sF,[0,1])$ we do consider that $L^1(\sF,[0,1])$ retains all the
pertinent information

An alternative approach to coding a probability space in a metric
structure goes through its probability algebra, namely the space of
$\{0,1\}$-valued random variables.
It can be constructed directly as the Boolean algebra quotient
$\bar \sF = \sF/\sF_0$ where $\sF_0$ is the ideal of null measure sets.
In addition to the Boolean algebra structure, it is equipped with the
induced measure function $\mu\colon \bar \sF \to [0,1]$ and the metric
$d(a,b) = \mu(a \triangle b)$ (in fact, the measure $\mu$ is superfluous and can
be recovered as $\mu(x) = d(x,0)$).
The metric is always complete, so a probability algebra is
a structure in the language
$\cL_{Pr} = \{0,1,\cap,\cup,\cdot^c,\mu\}$.

Let us define the theory $Pr$ to consist of the following axioms,
quantified universally:
\begin{align*}
  \tag{Bool} & \text{The theory of Boolean algebras: }
  (x\cap y)^c = x^c\cup y^c,\ldots \\
  \tag{Pr1} & \mu(1) = 1 \\
  \tag{Pr2} & \mu(x)+\mu(y)=\mu(x \cup y)+\mu(x \cap y) \\
  \tag{Pr3} & d(x,y)=\mu(x \triangle y).
  \intertext{The theory $APr$ (atomless probability algebras) consists of $PA_0$ along with:}
  \tag{APr} & \sup_x \inf_y \left| \mu(y\wedge x)-\frac{\mu(x)}{2} \right| = 0
\end{align*}

\begin{fct}
  The class of probability algebras is elementary, axiomatised
  by $Pr$.
  The class of atomless probability algebras is elementary as well,
  axiomatised by $APr$.

  Moreover, the theory $APr$ eliminates quantifiers
  (it is the model completion of $Pr$).
  It is  $\aleph_0$-categorical (there is a unique complete separable
  atomless probability algebra), and admits no compact model,
  whereby it is complete.
  It is $\aleph_0$-stable and its notion of independence
  coincides with probabilistic independence.
  All types over sets (in the real sort) are stationary.
\end{fct}
\begin{proof}
  Most of this is shown in \cite[Example~4.3]{BenYaacov-Usvyatsov:CFO}.
  The fact regarding stability and independence were shown in \cite{BenYaacov:SchroedingersCat} in the setting of compact abstract theories.
  The arguments carry nonetheless to models of $APr$ in continuous logic.
\end{proof}

We wish to show that these two ways of coding a probability space in a metric structure are equivalent.
Specifically we shall show that for any probability space $(\Omega,\sF,\mu)$, the probability algebra $\bar \sF$ and the space $L^1(\sF,[0,1])$ of $[0,1]$-valued random variables are (uniformly) interpretable in one another.

\begin{prp}
  \label{prp:ProbabilityAlgebraDefinable}
  Let $\cM$ be a model of $RV$, say $\cM = L^1(\sF,[0,1])$.
  Then the $\cL_{Pr}$-structure $\bar \sF$ is quantifier-free definable in
  $\cM$ in a manner which does not depend on $\Omega$.
  More precisely:
  \begin{enumerate}
  \item We may identify an event $a \in \bar \sF$ with its characteristic
    function $\mathbf{1}_a \in M$.
    This identifies $\bar \sF$ with
    the subset $L^1(\sF,\{0,1\}) \subseteq M$ consisting of all
    $\{0,1\}$-valued random variables over $(\Omega,\sF,\mu)$.
  \item Under the identification of the previous item,
    $\bar \sF$ is a quantifier-free
    definable subset of $\cM$, that is, the predicate
    $d(x,\bar \sF)$ is quantifier-free definable in $\cM$.
    Moreover, the Boolean algebra operations of $\bar \sF$ are definable by
    terms in $\cM$, and the predicates of $\bar \sF$
    (measure and distance) are quantifier-free definable in $\cM$.
  \end{enumerate}
  (See the first section of \cite{BenYaacov:DefinabilityOfGroups} for
  facts regarding definable sets in continuous logic.)
\end{prp}
\begin{proof}
  The first item is a standard fact.
  For the second item, let $g \in L^1(\sF,[0,1])$, and let
  $a = \{g\geq\half\}$ (i.e., $a = \{\omega\colon g(\omega) \geq \half\}$).
  Then:
  \begin{gather*}
    d(g,\bar \sF) = d(g,\mathbf{1}_a) = E(g\wedge\neg g).
  \end{gather*}
  Given $a,b \in \bar \sF$ we have
  $\mathbf{1}_{a^c} = \neg\mathbf{1}_a$ and
  $\mathbf{1}_{a \setminus b} = \mathbf{1}_a \dotminus \mathbf{1}_b$, from
  which the rest of the Boolean algebra structure can be recovered.
  In addition $d^{\bar \sF}(a,b) = d^\cM(\mathbf{1}_a,\mathbf{1}_b)$ and
  $\mu(a) = E(\mathbf{1}_a)$.
\end{proof}

Since $\bar \sF$ is (uniformly) definable we may quantify over it.
Thus, modulo the theory $RV$, axiom ARV can be written more elegantly
as:
\begin{align*}
  \tag{ARV$'$} &
  \inf_{y\in\bar \sF} \left| E(y\wedge x) - \half[E(x)] \right| = 0.
\end{align*}

The converse is a little more technical, since the interpretation of $L^1(\sF,[0,1])$ in the structure $\bar \sF$ will necessarily be in an imaginary sort.
A similar interpretation of the space of $[0,\infty]$-valued random variables in a \emph{hyper}imaginary sort has already been discussed in \cite[Section~3]{BenYaacov:SchroedingersCat}.
The result we prove here is a little stronger and easier to work with, using the notion of an imaginary sort in a metric structure, introduced in \cite[Section~5]{BenYaacov-Usvyatsov:CFO}.

Let $D = \{k/2^n\colon n \in \bN, 0 < k < 2^n\}$ denote the set of all dyadic fractions in ${]}0,1{[}$, $D' = D \cup \{0,1\}$.
For $r \in D'$, let $n(r)$ be the least $n$ such that $2^nr$ is an integer (so $n(0) = 0$, and for $r \neq 0$, $n = n(r)$ is unique such that $2^nr$ is an odd integer).
We shall now construct by induction on $n(r)$ a family of $\cL_{Pr}$-terms $(\tau_r)_{r\in D'}$ in a sequence of distinct variables $X = (x_r)_{r\in D}$.
We start with $\tau_0 = 1$ and $\tau_1 = 0$.
If $n(r) = m > 0$ then $n(r \pm 2^{-m}) < m$ and we define:
\begin{gather*}
  \tau_r =
  \bigl( x_r \cup \tau_{r-2^{-m}} \bigr) \cap \tau_{r+2^{-m}}.
\end{gather*}
We may write such a term as $\tau_r(X)$, where it is understood
that only a finite subset of $X$ appears in $\tau_r$.
Let $\bar \sF$ be a probability algebra.
Let $(a_s)_{s\in D} \subseteq \bar \sF$ be any sequence of events, and let $b_r = \tau_r(a_s)_{s\in D}$.
Then the sequence $(b_r)_{r\in D}$ is necessarily decreasing, and if the original sequence
$(a_s)_{s\in D}$ is decreasing then the two sequences coincide.

Let us also define:
\begin{align*}
  \varphi_n(y,X) & = \sum_{k<2^n} 2^{-n} \mu(y \cap \tau_{k/2^n}), & \varphi & = \lim_n \varphi_n.
\end{align*}
Since $0 \leq \varphi_n - \varphi_{n+1} \leq 2^{-n-1}$, the limit exists uniformly and $\varphi$ is an $\cL_{Pr}$-definable predicate.

\begin{prp}
  \label{prp:RandomVariablesInterpretable}
  Let $(\Omega,\sF,\mu)$ be a probability space, $\cM = L^1(\sF,[0,1])$.
  Let $\bar \sF_\varphi$ be the sort of canonical parameters for instances $\varphi(y,X)$ over $\bar \sF$.
  For each random variable $f\in M$, let $f_r = \{f \leq r\}$ for $r \in D$ and let $\tilde f \in \bar \sF_\varphi$ be the canonical parameter of $\varphi(y,f_r)_{r\in D}$.
  \begin{enumerate}
  \item For every event $c \in \bar \sF$: \qquad $\varphi(c,\tilde f) = \varphi(c,f_r)_{r\in D} = \int_c f$.
  \item 
    The map $f \mapsto \tilde f$ is a bijection between $M$ and $\bar \sF_\varphi$.
  \item 
    Identifying $M$ with $\bar \sF$ in this manner, the $\cL_{RV}$-structure on $\cM$ is definable in $\bar \sF$ in a manner which does not depend on $\Omega$.
  \end{enumerate}
  Moreover, if we compose this interpretation of $L^1(\sF,[0,1])$ in $\bar \sF$ with the definition of $\bar \sF$ in $L^1(\sF,[0,1])$ discussed in \fref{prp:ProbabilityAlgebraDefinable} above in either order, there is a definable bijection between the original structure and its interpreted copy in a manner which is uniform in $\Omega$.
\end{prp}
\begin{proof}
  For the first item, the sequence $(f_r)_{r\in D}$ is decreasing so $\tau_r(f_s)_{s\in D} = f_r$.
  It follows that $|\varphi_n(c,f_r)_{r\in D} - \int_c f| < 2^{-n}$ and $\varphi(x,f_r)_{r\in D} = \int_c f$.

  We now show the second item.
  To see that $f \mapsto \tilde f$ is injective assume that $\tilde f = \tilde g$.
  By the previous item this means that $\int_c f = \int_c g$ for every $c \in \bar \sF$, whereby $f=g$.
  To see it is surjective let $\varphi(x,A)$ be any instance of $\varphi$.
  Define:
  \begin{align*}
    b_r  & = \tau_r(A) \in \bar \sF & r \in D, \\
    f_n & = \sum_{k<2^n} 2^{-n} \mathbf{1}_{b_{k/2^n}} \in M & n \in \bN.
  \end{align*}
  One readily checks that $d(f_n,f_m) < 2^{-\min(n,m)}$, so the sequence $f_n$ converges to a limit $g \in M$ with $d(f_n,g) \leq 2^{-n}$.
  For every event $c \in \bar \sF$ we have $\varphi_n(c,A) = \int_c f_n$.
  It follows that $|\varphi_n(c,A) - \int_c g| \leq 2^{-n}$ and therefore $\varphi(c,A) = \int_c g$.
  In other words, $\tilde g$ is a canonical parameter for $\varphi(x,A)$.

  Let us now prove the third item.
  In order to prove that $(\tilde f,\tilde g) \mapsto \widetilde{f \dotminus g}$ is definable it is enough to show that we can define the predicate $\varphi\left( x,\widetilde{f \dotminus g} \right)$ uniformly from $\tilde f$ and $\tilde g$.
  Indeed:
  \begin{align*}
    \varphi\left( x,\widetilde{f \dotminus g} \right) &
    =
    \int_x (f \dotminus g)
    =
    \sup_y \, \left[
      \int_{x\cap y} f \dotminus \int_{x\cap y} g
    \right]
    \\ &
    =
    \sup_y \, \left[
      \varphi(x\cap y,\tilde f) \dotminus \varphi(x\cap y,\tilde g)
    \right].
  \end{align*}
  Similarly:
  \begin{gather*}
    \int_x 0 = 0,
    \qquad
    \int_x \neg f
    =
    \neg \int_x f,
    \qquad
    \int_x \half f
    =
    \half \int_x f.
  \end{gather*}
  It follows that all the connectives which one can construct from these primitives are definable, and in particular $(x,y) \mapsto |x-y|$.
  Thus the distance $d(f,g) = \varphi\left( 1,\widetilde{|f-g|} \right)$ is definable.

  We leave the moreover part to the reader.
\end{proof}

The intrinsic distance on the imaginary sort $\bar \sF_\varphi$ is by definition:
\begin{gather*}
  d_\varphi(f,g) = \sup_{b \in \bar \sF} \left|\int_b(f-g)\right|
  = \max\bigl( \|f \dotminus g\|_1, \|g \dotminus f\|_1 \bigr).
\end{gather*}
The distance $d_\varphi$ is easily verified to be uniformly equivalent to
the $L^1$ metric on the space of $[0,1]$-valued random variables.
This is a special case of the general fact that any two definable
distance functions on a sort are uniformly equivalent.
At the cost of additional technical complexity we could have arranged
to recover $L^1(\sF,[0,1])$ on an imaginary sort in which the intrinsic
distance is already the one coming from $L^1$.
Indeed, we could have defined a formula $\psi(y,z,x_r)_{r\in D}$ such that
$\psi(b,c,f_r)_{r\in D} = \int_b f + \int_{c \setminus b} \neg f$,
obtaining further down the road:
\begin{gather*}
  d_\psi(f,g)
  =
  \sup_{b,c \in \bar \sF} \left|\int_b(f-g) + \int_{c \setminus b} (g-f)\right|
  = \|f-g\|_1.
\end{gather*}

\subsection{Additional properties of $RV$ and $ARV$}

Models of $RV$ admits quantifier-free definable continuous functional calculus.
\begin{lem}
  \label{lem:ContinuousFunctionalCalculus}
  If $\theta\colon [0,1]^\ell \to [0,1]$ is a continuous function, then the function $\bar f \mapsto \theta \circ (\bar f)$ is uniformly quantifier-free definable in models of $RV$.
  By ``quantifier-free definable'' we mean that for every definable predicate $P(\bar y,z)$, the definable predicate $P(\bar y,\theta\circ(\bar x))$ is definable with the same quantifier complexity.
  Specifically, $d(y,\theta\circ(\bar x))$ is quantifier-free definable.
\end{lem}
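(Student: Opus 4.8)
The plan is to reduce the whole statement to a single uniform approximation of $\theta$ by $\cL_{RV}$-terms. Since the connectives $\{\neg,\half,\dotminus\}$ are full in the sense of \cite{BenYaacov-Usvyatsov:CFO} (this fullness is precisely why $\half$ was adjoined), the variant of the Stone--Weierstrass theorem of \cite[Proposition~1.4]{BenYaacov-Usvyatsov:CFO} supplies, for each $n$, an $\cL_{RV}$-term $\tau_n(\bar x)$ in the $\ell$ variables $\bar x$ whose associated truth function $\hat\tau_n\colon[0,1]^\ell\to[0,1]$ satisfies $\|\theta-\hat\tau_n\|_\infty\le 2^{-n}$. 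These terms are purely syntactic, hence available uniformly in every model of $RV$, which is what will give ``uniformly definable''.

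Next I would fix the meaning of $\theta\circ\bar f$ and transfer the sup-norm estimate into $\cM$. By \fref{cor:RV} we may take $\cM=L^1(\sF,[0,1])$, where every connective acts pointwise; since $\theta$ is continuous, $\theta\circ\bar f$ is a measurable $[0,1]$-valued function and so lies in $M$ for each $\bar f\in M^\ell$. Pointwise interpretation gives $(\tau_n\circ\bar f)(\omega)=\hat\tau_n(\bar f(\omega))$ and $(\theta\circ\bar f)(\omega)=\theta(\bar f(\omega))$ for a.e.\ $\omega$, so the pointwise bound $|\theta(\bar f(\omega))-\hat\tau_n(\bar f(\omega))|\le 2^{-n}$ integrates to $d(\theta\circ\bar f,\tau_n\circ\bar f)=E(|\theta\circ\bar f-\tau_n\circ\bar f|)\le 2^{-n}$. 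The essential feature is that this estimate is uniform both in the tuple $\bar f$ and across all models $\cM$, so the sup-norm approximation of $\theta$ becomes a uniform approximation of the function $\bar f\mapsto\theta\circ\bar f$ by the quantifier-free (indeed term-defined) functions $\bar f\mapsto\tau_n\circ\bar f$.

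It then remains to invoke the standard fact that a uniform limit of quantifier-free definable predicates, of a fixed variable complexity, is again quantifier-free definable of that complexity. Given any definable predicate $P(\bar y,z)$ with modulus of uniform continuity $\omega_P$ in $z$, the predicates $P(\bar y,\tau_n\circ\bar x)$ arise by substituting terms for $z$, so they introduce no quantifiers and retain the variable complexity of $P$; moreover $|P(\bar y,\theta\circ\bar x)-P(\bar y,\tau_n\circ\bar x)|\le\omega_P(2^{-n})\to 0$ uniformly. Hence $P(\bar y,\theta\circ\bar x)$ is a uniform limit of such predicates and is therefore quantifier-free definable with the same complexity. Specialising to $P(y,z)=d(y,z)$ shows that $d(y,\theta\circ\bar x)$ is the uniform limit of the $d(y,\tau_n\circ\bar x)$, hence quantifier-free definable, which is exactly the assertion that $\bar f\mapsto\theta\circ\bar f$ is a quantifier-free definable function.

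I expect the only conceptually delicate point to be the density step, namely that $\{\neg,\half,\dotminus\}$ generates a uniformly dense family of continuous functions on $[0,1]^\ell$; but this is precisely the fullness result of \cite{BenYaacov-Usvyatsov:CFO}, so in the present setting the remaining work is the bookkeeping of converting a uniform ($L^\infty$) estimate on $[0,1]^\ell$ into an $L^1$ estimate in $\cM$ that is uniform in $\bar f$ and in the model.
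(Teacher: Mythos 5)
Your proposal is correct and follows essentially the same route as the paper's proof: uniformly approximate $\theta$ by $\cL_{RV}$-terms $\tau_n$ via the Stone--Weierstrass variant of \cite{BenYaacov-Usvyatsov:CFO}, then obtain $P(\bar y,\theta\circ(\bar x))$ as a (forced) limit of the quantifier-free predicates $P(\bar y,\tau_n\circ(\bar x))$. The paper compresses the limit step into a single invocation of $\flim$ with a fast-converging sequence, whereas you spell out the transfer of the $L^\infty$ bound into a model-uniform estimate via the uniform continuity modulus of $P$ -- the same argument, just with the bookkeeping made explicit.
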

\begin{proof}
  We can uniformly approximate $\theta$ by a sequence of terms $\tau_n(\bar x)$ in $\neg,\half,\dotminus$, in which case $P(\bar y,\theta\circ(\bar x)) = \lim P(\bar y,\tau_n(\bar x))$ uniformly.
\end{proof}
For example, the predicates $E(x^p)$ or $E( |x-y|^p )$ are definable for every $p \in [1,\infty{[}$, and thus the $L^p$ distance $\|x - y\|_p = E( |x-y|^p)^{1/p}$ is definable as well, all the definitions being quantifier-free and uniform.

For $A \subseteq L^1(\sF,[0,1])$, let $\sigma(A) \subseteq \sF$ denote the minimal
$\sigma$-sub-algebra by which every member of $A$ is measurable, i.e., such
that $A \subseteq L^1(\sigma(A),[0,1])$
(For this to be entirely well-defined we may require $\sigma(A)$ to contain
the null measure ideal of $\sF$.)
\begin{lem}
  \label{lem:SubModelRV}
  Let $\cM$ be a model of $RV$, say $\cM = L^1(\sF,[0,1])$.
  Then for every $\sigma$-sub-algebra $\sF_1 \subseteq \sF$, the space
  $L^1(\sF',[0,1])$ is a sub-structure of $\cM$.
  Conversely, every sub-structure $\cN \subseteq \cM$
  arises in this manner as $L^1(\sigma(N),[0,1])$.
\end{lem}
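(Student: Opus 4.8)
The plan is to treat the two directions separately, the forward one being essentially formal and the converse carrying the real content.

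For the forward direction I would simply observe that $L^1(\sF_1,[0,1])$ is closed under each of the connectives $0,\neg,\half,\dotminus$ (each operates pointwise and preserves $\sF_1$-measurability) and is complete for the $L^1$ metric, hence is a closed subset of $\cM$ stable under the operations of $\cL_{RV}$, i.e.\ a substructure. For the converse, fix a substructure $\cN\subseteq\cM$; being a substructure it is metrically closed and stable under $\neg,\half,\dotminus$. The inclusion $\cN\subseteq L^1(\sigma(N),[0,1])$ is immediate, since by definition of $\sigma(N)$ every $f\in N$ is $\sigma(N)$-measurable, so the substance lies in the reverse inclusion. The first key observation is that $\cN$ is closed under continuous functional calculus: by \fref{lem:ContinuousFunctionalCalculus} any continuous $\theta\colon[0,1]^\ell\to[0,1]$ is a uniform limit of $\cL_{RV}$-terms $\tau_n$, so for $\bar f\in N^\ell$ the elements $\tau_n\circ\bar f$ lie in $N$ and converge in $L^1$ to $\theta\circ\bar f$ (uniform convergence dominates $L^1$ convergence on a probability space); since $\cN$ is closed, $\theta\circ\bar f\in N$.

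Next I would extract events from $\cN$. Write $\bar\sF_N=\cN\cap\bar\sF$ for the set of $\{0,1\}$-valued members of $N$; by \fref{prp:ProbabilityAlgebraDefinable} the Boolean operations are given by $\cL_{RV}$-terms, so $\bar\sF_N$ is a metrically closed sub-probability-algebra of $\bar\sF$. For $f\in N$ and $r\in[0,1]$ I claim $\mathbbm{1}_{\{f>r\}}\in N$: approximate the indicator of ${]}r,1{]}$ by the continuous ramps $g_\epsilon$ (equal to $0$ on $[0,r]$, to $1$ on $[r+\epsilon,1]$, affine between); then $g_\epsilon\circ f\in N$ by functional calculus, and $\|g_\epsilon\circ f-\mathbbm{1}_{\{f>r\}}\|_1\le\mu(\{r<f<r+\epsilon\})\to 0$ as $\epsilon\to 0$, so $\mathbbm{1}_{\{f>r\}}\in N$ by closedness. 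Since the events $\{f>r\}$ for $f\in N$, $r\in[0,1]$ generate $\sigma(N)$ and all lie in the metrically closed sub-algebra $\bar\sF_N$, we obtain $\bar\sF_N=\overline{\sigma(N)}$.

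Finally I would run the standard density argument: any $\sigma(N)$-measurable $g\colon\Omega\to[0,1]$ is an $L^1$-limit of simple functions $\sum_k c_k\mathbbm{1}_{A_k}$ with disjoint $A_k\in\overline{\sigma(N)}=\bar\sF_N$ and $c_k\in[0,1]$; each such simple function equals $\theta(\mathbbm{1}_{A_1},\dots,\mathbbm{1}_{A_n})$ for the continuous map $\theta(\bar t)=\sum_k c_k t_k$, hence lies in $N$ by functional calculus, and therefore $g\in N$ by closedness. This gives $L^1(\sigma(N),[0,1])\subseteq\cN$ and thus equality. I expect the step producing genuine events $\{f>r\}\in N$ from a random variable $f\in N$ to be the main obstacle, precisely because the indicator is not a continuous function of $f$; the ramp approximation together with the completeness of $\cN$ is exactly what circumvents it.
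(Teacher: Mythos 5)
Your proof is correct and follows essentially the same route as the paper's: the paper extracts indicators via $\dot{m}f = f \dotplus \cdots \dotplus f \to \mathbbm{1}_{\{f>0\}}$ in $L^1$ (your ramp $g_\epsilon\circ f$ at threshold $r=0$ is exactly this function, and your general-$r$ ramps correspond to applying it to $f\dotminus r$), then generates the algebra of indicators and approximates by dyadic simple functions built with $\dotplus$, concluding by completeness of $N$ at the same two points where you do. One cosmetic repair: your $\theta(\bar t)=\sum_k c_k t_k$ need not map $[0,1]^n$ into $[0,1]$, so the functional-calculus lemma does not literally apply to it; replace it by $\theta(\bar t)=\min\bigl(\sum_k c_k t_k,1\bigr)$, which is continuous into $[0,1]$ and agrees with the intended simple function on tuples of indicators of disjoint sets.
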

\begin{proof}
  The first assertion is clear, so we prove the converse.
  It is also clear that
  $N \subseteq L^1(\sigma(N),[0,1])$.

  Let $f \in N$, and define
  $\dot{m}f = f \dotplus \ldots \dotplus f$ ($m$ times).
  Then $\dot{m}f \in N$, and as $m \to \infty$ we have
  $\dot{m}f\to\mathbf{1}_{\{f>0\}}$ in
  $L^1$, so $\mathbf{1}_{\{f>0\}} \in N$.
  Since $N$ is complete and closed under $\neg$ and $\dotminus$,
  it follows that $\mathbf{1}_A \in N$ for every $A \in \sigma(N)$.
  Considering finite sums of the form
  $(\half)^k\mathbf{1}_{A_0} \dotplus \cdots
  \dotplus (\half)^k\mathbf{1}_{A_{n-1}}$ we see that every simple
  function in $L^1(\sigma(N),[0,1])$ whose range consists solely of dyadic
  fractions belongs to $N$.
  Using the completeness of $N$ one last time we may conclude that
  $L^1(\sigma(N),[0,1]) \subseteq  N$.
\end{proof}

\begin{lem}
  \label{lem:QFTypeJointDistribution}
  Let $\cM$ and $\cN$ be two models of $RV$, say
  $\cM = L^1(\sF,[0,1])$, $\cN = L^1(\Omega',[0,1])$.
  Then two $\ell$-tuples $\bar f \in M^\ell$ and $\bar g \in N^\ell$ have the same
  quantifier-free type in $\cL_{RV}$ if and only if they have the same
  joint distribution as random variables.
\end{lem}
\begin{proof}
  Assume that $\bar f \equiv^{qf} \bar g$.
  By the previous Lemma we have $E(\theta\circ(\bar f)) = E(\theta\circ(\bar g))$ for
  every continuous function $\theta\colon [0,1]^\ell \to [0,1]$, which is enough in
  order to conclude that $\bar f$ and $\bar g$ have the same joint
  distribution.
  Conversely, assume that $\bar f$ and $\bar g$ have the same
  joint distribution.
  Then $E(\tau(\bar f)) = E(\tau(\bar g))$ for every term $\tau(\bar x)$.
  It follows that $\bar f \equiv^{qf} \bar g$.
\end{proof}

Let $\bar \sF_a$ denote the set of atoms in $\bar \sF$,
which we may enumerate as $\{A_i\colon i\in I\}$.
Then $I$ is necessarily countable and
every $f \in L^1(\sF,[0,1])$ can be written uniquely as
$f_0 + \sum_{i\in I} \alpha_i\mathbf{1}_{A_i}$, where $f_0$ is over the atomless
part and $\alpha_i \in [0,1]$.

\begin{lem}
  \label{lem:DefinableAtoms}
  The set $\bar \sF_a\cup\{0\}$ is uniformly definable in $\bar \sF$.
  In   $L^1(\sF,[0,1])$, both the sets 
  $\bar \sF_a\cup\{0\}$ (i.e.,
  $\{\mathbf{1}_A\colon A\in\bar \sF_a\} \cup \{0\}$) and 
  $\{\alpha\mathbf{1}_A\colon \alpha\in [0,1], A\in\bar \sF_a\}$ are uniformly definable.
\end{lem}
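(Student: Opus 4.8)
The plan is to handle all three sets by the same device: for each I exhibit an explicit $\sup$-predicate $P\geq 0$ that vanishes exactly on the set $D$ in question and that dominates the distance to $D$ by a uniform linear modulus, i.e. $d(x,D)\leq 3\,P(x)$ in every model. By the standard criterion for definable sets (see \cite{BenYaacov:DefinabilityOfGroups}), a closed set $D=\{P=0\}$ cut out by a definable predicate is uniformly definable as soon as $d(x,D)\leq\gamma(P(x))$ for a single continuous $\gamma$ with $\gamma(0)=0$, uniformly over models. Each predicate below is manifestly definable, so all the content lies in the domination estimate, which is the same combinatorial lemma each time.

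First I treat $\bar\sF_a\cup\{0\}$ inside $\bar\sF$, using the $\cL_{Pr}$-definable predicate
\begin{gather*}
  P(x)=\sup_y\,\min\bigl(\mu(x\cap y),\,\mu(x\setminus y)\bigr).
\end{gather*}
Since $x$ is an atom or $0$ precisely when no $y$ splits it into two pieces of positive measure, one checks at once that $P(x)=0$ iff $x\in\bar\sF_a\cup\{0\}$. For the domination, suppose $P(x)\leq\delta$ and write $c=\mu(x)$; the hypothesis says every $y\leq x$ has $\mu(y)\leq\delta$ or $\mu(y)\geq c-\delta$. If $c\leq 3\delta$ then $d(x,0)=c\leq 3\delta$. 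Otherwise $c>3\delta$, and I argue that $x$ is $\delta$-close to a single atom: the atomless part of $x$ is freely divisible, hence has measure $\leq\delta$ (else carve a piece of measure in $(\delta,c-\delta)$); and among the atoms below $x$, greedily summing shows that either some atom $A$ has $\mu(A)\geq c-\delta$ (so $d(x,A)=\mu(x\setminus A)\leq\delta$) or, all atoms being $\leq\delta$, a partial union lands in $(\delta,2\delta]\subseteq(\delta,c-\delta)$, contradicting the gap. Either way $d(x,\bar\sF_a\cup\{0\})\leq 3\,P(x)$.

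For the first set in $L^1(\sF,[0,1])$, namely $\{\mathbbm{1}_A\colon A\in\bar\sF_a\}\cup\{0\}$, I transport the previous result through \fref{prp:ProbabilityAlgebraDefinable}: there $\bar\sF$ is a quantifier-free definable subset of $\cM$, with $\mu(u\cap v)=E(u\wedge v)$ and $\mu(u\setminus v)=E(u\dotminus v)$ for characteristic functions. Thus $P$ becomes a definable predicate of $\cM$ quantified over the definable set $\bar\sF$, and a definable subset of a definable set is definable in the ambient structure. For the scaled atoms $S=\{\alpha\mathbbm{1}_A\colon\alpha\in[0,1],\,A\in\bar\sF_a\}$ I use the parallel predicate
\begin{gather*}
  Q(f)=\sup_{c\in\bar\sF}\,\min\bigl(E(f\wedge\mathbbm{1}_c),\,E(f\dotminus\mathbbm{1}_c)\bigr),
\end{gather*}
noting $f\wedge\mathbbm{1}_c=f\cdot\mathbbm{1}_c$ and $f\dotminus\mathbbm{1}_c=f\cdot\mathbbm{1}_{c^c}$, so that $Q$ measures balanced splittings of the finite measure $\nu(c)=\int_c f=E(f\wedge\mathbbm{1}_c)$. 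Then $Q(f)=0$ iff every $c$ annihilates one of $f\mathbbm{1}_c,f\mathbbm{1}_{c^c}$, which forces $\{f>0\}$ to be an atom or null; since a random variable is a.e.\ constant on an atom, this is exactly $f\in S$.

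The domination for $Q$ is the same greedy argument applied to $\nu$ in place of $\mu$, and this is the step I expect to carry the weight. If $Q(f)\leq\delta$ and $s=E(f)>3\delta$, the argument produces a $\mu$-atom $A$ with $\nu(A)=\int_A f\geq s-\delta$; writing $\alpha$ for the a.e.\ constant value of $f$ on $A$, the clean estimate $\|f-\alpha\mathbbm{1}_A\|_1=\int_{A^c}f=s-\nu(A)\leq\delta$ gives $d(f,S)\leq\delta$, while $s\leq 3\delta$ yields $d(f,0)\leq 3\delta$; hence $d(f,S)\leq 3\,Q(f)$. The main obstacle throughout is precisely this combinatorial estimate — that an event (resp.\ mass distribution) which cannot be split into two parts each of size exceeding $\delta$ must sit within a linear multiple of $\delta$ of a genuine atom — together with verifying that the bound is uniform across all models, with no atomlessness assumed; the passage from a $\nu$-atom to an element $\alpha\mathbbm{1}_A$ of $S$ relies on constancy of $f$ on $\mu$-atoms, which is what makes the last distance computation exact.
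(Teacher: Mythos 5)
Your proof is correct, and for two of the three assertions it essentially matches the paper. For $\bar\sF_a\cup\{0\}$ in $\bar\sF$ you use the very same predicate $\sup_y\,\mu(x\cap y)\wedge\mu(x\setminus y)$ as the paper; the paper argues the domination in the contrapositive direction, explicitly constructing a near-balanced partition of a non-atom $A$ (assigning atoms greedily to the lighter side and halving the atomless part) to get $\varphi(A)\geq\half\, d(A,\bar\sF_a\cup\{0\})$, whereas you assume $P(x)\leq\delta$ and extract a structure theorem from the gap $(\delta,c-\delta)$ (atomless part $\leq\delta$, greedy summation forces a dominant atom), getting $d\leq 3P$; these are dual formulations of the same combinatorial fact, and both are fine. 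The transport of the first set into $L^1(\sF,[0,1])$ via \fref{prp:ProbabilityAlgebraDefinable} is exactly the paper's step. Where you genuinely diverge is the scaled-atom set: the paper, having the atom set definable in $L^1(\sF,[0,1])$, simply quantifies over it together with dyadic scalars, setting $\psi_n(x)=\inf_{A\in\bar\sF_a\cup\{0\}}\bigwedge_{k\leq2^n}d\bigl(x,\frac{k}{2^n}\mathbbm{1}_A\bigr)$ and taking $\flim_n\psi_n$, which \emph{is} the distance predicate — no second combinatorial estimate is needed. You instead introduce the new zero-set predicate $Q(f)=\sup_{c\in\bar\sF}\min\bigl(E(f\wedge\mathbbm{1}_c),E(f\dotminus\mathbbm{1}_c)\bigr)$ and rerun the gap argument for the measure $\nu(c)=\int_c f$. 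This is valid, but note one point you pass over quickly: to apply ``the same greedy argument'' to $\nu$ you need that $\nu\ll\mu$ makes $\nu$ atomless on the $\mu$-atomless part (so the carving step is legitimate and the dominant $\nu$-atom really is a $\mu$-atom); this is true and easy — approximate by $\{f>1/n\}$ and split by $\mu$ — but it is the hinge of your reduction and deserves a line. On the trade-off: the paper's route is shorter once part two is done and computes the distance exactly, at the cost of a forced limit of infima; yours is self-contained, gives a single $\sup$-formula with an explicit linear modulus $d\leq 3Q$, and makes the uniformity across models completely transparent.
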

\begin{proof}
  For the first assertion
  let $\varphi(x)$ be the $\cL_{Pr}$-formula
  $\sup_y \, \bigl( \mu(x\cap y)\wedge\mu(x\setminus y) \bigr)$.
  If $A$ is an atom or zero then clearly $\varphi(A) = 0$.
  If $A$ is an event which is not an atom then the nearest atom to $A$
  is the biggest atom in $A$ (or any of them if there are several of
  largest measure, or $0$ if $A$ contains no atoms).
  Let us construct a partition of $A$ into two events $A_1$ and $A_2$
  by assigning the atoms in $A$ (if any) sequentially to $A_1$ or to
  $A_2$, whichever has so far the lesser measure, and by splitting the
  atomless part of $A$ equally between $A_1$ and $A_2$.
  If $B \subseteq A$ is an atom of greatest measure (or zero if there are
  none) then $|\mu(A_1) - \mu(A_2)| \leq \mu(B)$ and:
  \begin{align*}
    \varphi(A) & \geq \mu(A_1)\wedge \mu(A_2) \geq \half \mu(A) - \half \mu(B)
    = \half \mu(A\setminus B) \\ &
    = \half d\bigl( A,\bar \sF_a\cup\{0\} \bigr).
  \end{align*}
  Thus $\bar \sF_a\cup\{0\}$ is definable.

  For the second assertion, $\bar \sF_a\cup\{0\}$ is relatively definable in
  $\bar \sF$ which is in turn definable in $L^1(\sF,[0,1])$, so
  $\bar \sF_a\cup\{0\}$ is definable in $L^1(\sF,[0,1])$.
  We may therefore quantify over $\bar \sF_a\cup\{0\}$, and define:
  \begin{gather*}
    \psi_n(x) = \inf_{A\in\bar \sF_a\cup\{0\}} \bigwedge_{k\leq2^n}
    d\left( x, \hbox{$\frac{k}{2^n}$}\mathbf{1}_A \right).
  \end{gather*}
  Then $\lim \psi_n$ defines the distance to the last set.
\end{proof}

If follows that for each $n$, the set of events which can be written
as the union of at most $n$ atoms is definable, as is the set of all
finite sums $\sum_{i<n} \alpha\mathbf{1}_{A_i}$ where each $A_i$ is an atom
(or zero).
These definitions cannot be uniform in $n$, though.
Indeed, an easy ultra-product argument shows that the set of all atomic
events (i.e., which are unions of atoms) cannot be definable or even
type-definable, and similarly for the set of all random
variables whose support is atomic.

The atoms of a probability space always belong to the algebraic closure of the empty set (to the definable closure if no other atom has the same measure).
They are therefore rather uninteresting from a model theoretic point of view, and we shall mostly consider atomless probability spaces.

\begin{thm}
  \label{thm:ARV}
  \begin{enumerate}
  \item The theory $ARV$ is complete and $\aleph_0$-categorical.
  \item The theory $ARV$ eliminates quantifiers.
  \item The universal part of $ARV$ is $RV$, and
    $ARV$ is the model completion of $RV$.
  \item If $\cM = L^1(\sF,[0,1]) \vDash ARV$
    and $A \subseteq M$ then
    $\dcl(A) = \acl(A) = L^1(\sigma(A),[0,1]) \subseteq M$.
  \item Two tuples $\bar f$ and $\bar g$ have the same type over a set
    $A$ (all in a model of $ARV$) if and only if they have the same
    joint conditional distribution over $\sigma(A)$.
  \item The theory $ARV$ is $\aleph_0$-stable,
    independence coinciding with probabilistic
    independence, i.e.: $A \ind_B C \Longleftrightarrow \sigma(A) \ind_{\sigma(B)} \sigma(C)$.
    Moreover, types over any sets in the home sort (i.e., not over
    imaginary elements) are stationary.
  \end{enumerate}
\end{thm}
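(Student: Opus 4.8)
The plan is to transfer the model-theoretic content from the theory $APr$ of atomless probability algebras through the mutual interpretability established in \fref{prp:ProbabilityAlgebraDefinable} and \fref{prp:RandomVariablesInterpretable}, supplemented by the explicit description of substructures in \fref{lem:SubModelRV} and of quantifier-free types in \fref{lem:QFTypeJointDistribution}.

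I would start with item~(3). By \fref{cor:RV} every model of $RV$ has the form $L^1(\sF,[0,1])$, and by \fref{lem:SubModelRV} its substructures are precisely the $L^1(\sF_1,[0,1])$ for sub-$\sigma$-algebras $\sF_1 \subseteq \sF$; each of these is again a model of $RV$ and embeds into a model of $ARV$ (replace $(\Omega,\sF,\mu)$ by its product with an atomless space), so $RV$ is the universal theory of $ARV$. For the model-completion assertion I would verify amalgamation: given $\cA \models RV$ embedded in $\cM_1,\cM_2 \models ARV$, pass to the definable probability algebras, amalgamate them over the common part (probability algebras amalgamate, via an independent tensor amalgam), embed the result into an atomless algebra and interpret back using \fref{prp:RandomVariablesInterpretable}. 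Together with the identification of the universal theory this shows $ARV$ is the model completion of $RV$, which is item~(3) and yields item~(2), quantifier elimination. Item~(1) then follows: $\aleph_0$-categoricity because a separable $L^1(\sF,[0,1]) \models ARV$ has $\bar\sF$ the unique separable atomless probability algebra, which pins down $L^1(\sF,[0,1])$ up to isomorphism; and completeness from $\aleph_0$-categoricity together with the absence of compact models, the atomless spaces being non-totally-bounded.

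For item~(4), $L^1(\sigma(A),[0,1])$ is the substructure generated by $A$ by \fref{lem:SubModelRV}, and since its elements are $L^1$-limits of terms in $A$ it lies in $\dcl(A)$; for the reverse inclusion $\acl(A) \subseteq L^1(\sigma(A),[0,1])$ I would use item~(5) to exhibit, for any $f$ that is not $\sigma(A)$-measurable, continuum-many $A$-conjugates, obtained by realising the conditional distribution of $f$ over $\sigma(A)$ in independent copies---this is exactly where atomlessness is needed. Item~(5) is the heart of the matter. One direction is that for $a \in \sigma(A)$ and continuous $\theta$ the quantity $\int_a \theta(\bar f)$ is definable over $A$ (by \fref{lem:ContinuousFunctionalCalculus} and definability of $\sigma(A)$), so equal types force equal conditional distributions. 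The converse invokes quantifier elimination with parameters together with $\dcl(A) = L^1(\sigma(A),[0,1])$: this reduces $\tp(\bar f/A)$ to the quantifier-free type of $\bar f$ over the substructure $L^1(\sigma(A),[0,1])$, which by \fref{lem:QFTypeJointDistribution} is precisely the joint distribution of $\bar f$ together with $L^1(\sigma(A),[0,1])$, i.e.\ the joint conditional distribution of $\bar f$ over $\sigma(A)$.

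Item~(6) I would obtain by transfer: $\aleph_0$-stability passes through the bi-interpretation with the $\aleph_0$-stable theory $APr$, and under the correspondence $A \leftrightarrow \sigma(A)$ the forking independence of $APr$---known there to be probabilistic independence---translates to $A \ind_B C \Longleftrightarrow \sigma(A) \ind_{\sigma(B)} \sigma(C)$; stationarity of types over arbitrary home-sort sets follows from item~(5), because the conditional distribution of $\bar f$ over $\sigma(A)$ has a unique probabilistically independent---hence nonforking---extension to any larger parameter set. I expect the principal difficulty to be the converse direction of item~(5), equivalently the transfer of quantifier elimination: since $L^1(\sF,[0,1])$ is recovered inside $\bar\sF$ only through the forced-limit definable predicate $\varphi$ of \fref{prp:RandomVariablesInterpretable}, and not through a literal quantifier-free formula, I must use that $APr$ itself eliminates quantifiers so that $\varphi$ becomes quantifier-free up to uniform approximation, and then match the quantifier-free $\cL_{RV}$-type over $L^1(\sigma(A),[0,1])$ with the conditional-distribution data while carefully tracking the passage between the home sort and the imaginary sort $\bar\sF_\varphi$.
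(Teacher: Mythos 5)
Your handling of items (1), (4), (5) and (6) is essentially sound, and in places genuinely different from the paper: your $\acl$ argument via conditionally independent copies of a non-$\sigma(A)$-measurable element is a legitimate alternative to the paper's transfer of the fact that complete subalgebras of models of $APr$ are algebraically closed. The genuine gap is in your derivation of items (3) and (2), which is where all the real content lies. You establish that $RV = ARV_\forall$ and that $RV$ has the amalgamation property (via relatively independent amalgamation of probability algebras), and you conclude from these two facts alone that $ARV$ is the model completion of $RV$. That implication is false in general: being a co-theory of a universal theory with AP does not make a theory a model companion, let alone a model completion. (Classical counterexample: over the pure identity theory, which is universal and has AP, the theory ``there exist at least two elements'' has the identity theory as its universal part, yet it is not model complete and is not the model completion; that role is played by the theory of infinite sets.) What is missing is precisely model completeness of $ARV$, equivalently that every model of $ARV$ is existentially closed among models of $RV$ --- and this is not a routine verification. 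Concretely, take $\cM \models ARV$ generated by a single variable $a$ with $\sigma(a) = \sF$ (Lebesgue measure on $[0,1]$ with $a = \mathrm{id}$), and $\cN \supseteq \cM$ obtained by adjoining a variable $b$ independent of $a$: every $b' \in M$ is a function $h(a)$, so the joint law of $(b',a)$ is carried by a graph, and one must prove that it can nonetheless weakly approximate the product law --- an equidistribution-type argument (e.g.\ $h(t) = 2^n t \bmod 1$) that your amalgamation step nowhere provides. Since quantifier-free types are exactly joint distributions (\fref{lem:QFTypeJointDistribution}), this weak-approximation statement \emph{is} the heart of quantifier elimination; it cannot be obtained for free from AP.

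The paper runs the argument in the opposite, and logically safe, direction: it proves quantifier elimination first, and then item (3) is immediate, since a theory eliminating quantifiers is the model completion of its universal part. The QE proof is a transfer from $APr$ that bypasses the existential-closedness issue entirely: if $\bar f \equiv^{qf} \bar g$, then by \fref{lem:QFTypeJointDistribution} they have the same joint distribution; hence the dyadic level-set families $a_{r,i} = \{f_i \leq r\}$ and $b_{r,i} = \{g_i \leq r\}$ have the same quantifier-free type in the probability algebra; quantifier elimination for $APr$ upgrades this to equality of full types; and since $f_i \in \dcl\bigl( (a_{r,i})_{r} \bigr)$ by \fref{prp:RandomVariablesInterpretable}, one concludes $\bar f \equiv \bar g$. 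If you want to keep your architecture (model completion first, QE as a corollary), you must supplement the amalgamation argument with a proof that models of $ARV$ are existentially closed models of $RV$, which amounts to exactly the approximation statement above; as it stands, the step ``AP plus $RV = ARV_\forall$ implies model completion'' is unjustified and is the one place where your proposal would fail.
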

\begin{proof}
  Categoricity and completeness of $ARV$ follow from the
  analogous properties for $APr$.

  Assume that $\bar f$ and $\bar g$ are two $\ell$-tuples in models of
  $ARV$, $\bar f =^{qf} \bar g$.
  By \fref{lem:QFTypeJointDistribution} they have the same joint
  distribution.
  For every $(r,i) \in D\times\ell$ define events
  $a_{f,i} = \{f_i \leq r\}$, $b_{r,i} = \{g_i \leq r\}$.
  The joint distribution implies that
  $(a_{r,i})_{(r,i) \in D \times \ell} \equiv^{qf} (b_{r,i})_{(r,i) \in D \times \ell}$,
  and since $APr$ eliminates quantifiers:
  $(a_{r,i})_{(r,i) \in D \times \ell} \equiv (b_{r,i})_{(r,i) \in D \times \ell}$.
  Under the interpretation of \fref{prp:RandomVariablesInterpretable}
  we have $f_i \in \dcl\bigl( (a_{r,i})_{r\in D} \bigr)$,
  $g_i \in \dcl\bigl( (b_{r,i})_{r\in D} \bigr)$, so
  $\bar f \equiv \bar g$.
  In other words, the quantifier-free type of a type determines its
  type, whence quantifier elimination.

  The theory $RV$ is universal and all its models embed in models of
  $ARV$, whereby $RV = ARV_\forall$.
  Since $ARV$ eliminates quantifiers it is the model completion of its
  universal part.

  Let now $\cM = L^1(\sF,[0,1]) \vDash ARV$, and let $A \subseteq M$.
  By \fref{lem:SubModelRV},
  $\langle A\rangle$ (the sub-structure generated by $A$ in $\cM$)
  is $L^1(\sigma(A),[0,1])$.
  Identifying $\bar \sF$ with its definable copy in $\cM$ we obtain
  $\langle A\rangle = \langle\sigma(A)\rangle= L^1(\sigma(A),[0,1])$ and
  $\dcl(A) = \dcl(\sigma(A)) \supseteq L^1(\sigma(A),[0,1])$.
  On the other hand, $\sigma(A)$ is a complete sub-algebra of
  $\bar \sF \vDash APr$ and therefore definably and even algebraically
  closed there.
  By our biïnterpretability result, $\sigma(A)$ is relatively
  algebraically closed in the definable copy of $\bar \sF$ in $\cM$.
  Therefore, if $g \in \acl(A) = \acl(\sigma(A))$ then $\sigma(g) \subseteq \sigma(A)$,
  i.e., $g \in L^1(\sigma(A),[0,1])$.

  Let us identify $\bar \sF$ with its definable copy in $\cM$,
  and let $\sA = \sigma(A)$.
  By the previous item we have
  $\tp(\bar f/A) \equiv \tp(\bar f/\sA)$.
  When $\sA = \{a_0,\ldots,a_{m-1}\}$ is finite sub-algebra,
  it is easy to verify that the joint
  conditional distribution of $\bar f$ over $\sA$ is the same as the
  joint distribution of the $(n+m)$-tuple $\bar f,\mathbf{1}_{\bar a}$.
  The result for types over infinite algebras follows.

  Stability and the characterisation of independence for $ARV$ follow
  from the analogous properties for $APr$
  via biïnterpretability.
\end{proof}

\section{Keisler randomisation}
\label{sec:Randomisation}

In this section we use earlier results to extends H.\ Jerome Keisler's
notion of a \emph{randomisation} of a classical
structure, or of a classical theory, to continuous ones.
For the original constructions we refer the reader to
\cite{Keisler:Randomizing,BenYaacov-Keisler:MetricRandom}.
Throughout, we work with a fixed continuous signature $\cL$.
For the sake of simplicity we shall assume that $\cL$ is
single-sorted, but everything we do can be carried out in a multi
sorted setting.

\subsection{Randomisation}

We shall want to consider some notion of probability integration of functions on a space $\Omega$, which is going to be additive, although not always $\sigma$-additive (i.e., possibly failing the Monotone Convergence Theorem and its consequences).
We do this by replacing the usual measure space apparatus with an abstract integration functional.

\begin{dfn}
  \label{dfn:IntegrationSpace}
  A \emph{finitely additive probability integration space}, or simply an \emph{integration space}, is a triplet $(\Omega,\sA,E)$ where $\Omega$ is any set, $\sA \subseteq [0,1]^\Omega$ is non empty and closed under the connectives $\neg$, $\half$ and $\dotminus$, and $E\colon \sA \to [0,1]$ satisfies $E(1) = 1$ and $E(X+Y) = E(X) + E(Y)$ whenever $X$, $Y$ and $X+Y$ are all in $\sA$.

  In this case we also say that $E$ is a \emph{finitely additive probability integration functional}, or simply an \emph{integration functional}, on $\sA$.
\end{dfn}

\begin{fct}
  Let $(\Omega,\sF,\mu)$ be a probability space.
  Let $\sA \subseteq [0,1]^\Omega$ consist of all $\sF$-measurable functions and let $E\colon \sA \to [0,1]$ be integration $d\mu$.
  Then $(\Omega,\sA,E)$ is an integration space.

  Similarly if $\sF$ is a mere Boolean algebra, $\mu$ is additive, and $\sA$ consists of all simple $\sF$-measurable functions.
\end{fct}

\begin{lem}
  Let $(\Omega,\sA,E)$ be an integration space.
  Equip $\sA$ with the distance $d(X,Y) = E(|X-Y|)$.
  Then $E(X) = d(X,0)$ for all $X \in \sA$ and $(\sA,0,\neg,\half,\dotminus,d)$ is a pre-model of $RV$.
\end{lem}
\begin{proof}
  Indeed, $d(X,0) = E(|X|) = E(X)$.
  Now RV1,2 follows from the hypothesis and the fact that $X = (X \dotminus Y) + (X \wedge Y)$.
  RV3 holds by definition.
  It follows from the hypothesis that $E(0) = 2E(0) = 0$, whence RV4.
\end{proof}

  In this situation we shall say that $(\sA,E)$ is a pre-model of $RV$, or that $E$ \emph{renders} $\sA$ a pre-model of $RV$.
If $E$ renders $\sA$ a pre-model of $ARV$ then we say that $(\Omega,\sA,E)$ is \emph{atomless}.

Let $\Omega$ be an arbitrary set and let $\sM = \sM_\Omega = \{\cM_\omega\}_{\omega\in\Omega}$ be a family of $\cL$-structures.
The product $\prod \sM = \prod_{\omega\in\Omega} M_\omega$ consists of all functions $\ra\colon \Omega \to \bigcup M_\omega$ which satisfy $\ra(\omega) \in M_\omega$ for all $\omega \in \Omega$.
Function symbols and terms of $\cL$ are interpreted naturally on $\prod \sM$.
For an $\cL$-formula $\varphi(\bar x)$ we define
\begin{gather*}
  \< \varphi(\bar \ra) \> \in \Omega^{[0,1]}, \qquad
  \< \varphi(\bar \ra) \>\colon \omega
  \mapsto \varphi^{\cM_\omega}(\bar \ra(\omega)).
\end{gather*}

\begin{dfn}
  \label{dfn:Randomisation}
  Let $\Omega$ be a set,
  $\sM_\Omega = \{\cM_\omega\}_{\omega \in \Omega}$
  a family of $\cL$-structures.
  Let also
  $\rM \subseteq \prod \sM$,
  $\sA \subseteq [0,1]^\Omega$ and
  $E\colon \sA \to [0,1]$.
  We say that $(\rM,\sA,E)$ is a
  \emph{randomisation based on $\sM_\Omega$} if
  \begin{enumerate}
  \item The triplet $(\Omega,\sA,E)$ is an integration space.
  \item The subset $\rM \subseteq \prod \sM$ is non empty, closed
    under function symbols, and
    $\< P(\bar \ra) \> \in \sA$
    for every $n$-ary predicate symbol
    $P \in \cL$ and $\bar \ra \in \rM^n$.
  \end{enumerate}
  We equip $\rM$ with the pseudo-metric
  $d( \ra,\rb ) = E\< d(\ra,\rb) \>$
  and $\sA$
  with the $L^1$ pseudo-metric
  $d(X,Y) = E\bigl( |X-Y| \bigr)$.

  We may choose to consider $E$ as part of the structure on
  $\sA$, in which case the randomisation is denoted by the pair
  $(\rM,\sA)$ alone.

  If $(\Omega,\sF,\mu)$ is a probability space,
  every $X \in \sA$ is $\sF$-measurable and
  $E[X] = \int X\,d\mu$ then we say that
  $(\rM,\sA)$ is based on the \emph{random family}
  $\sM_{(\Omega,\sF,\mu)}$ (and then we almost always omit $E$ from
  the notation).
\end{dfn}

The \emph{randomisation signature} $\cL^R$ is defined as follows:
\begin{itemize}
\item The sorts of $\cL^R$ include the sorts of $\cL$, referred to as \emph{main sorts}, plus a new \emph{auxiliary sort}.
\item Every function symbol of $\cL$ is present in $\cL^R$, between the corresponding main sorts.
  It is equipped with the same uniform continuity moduli as in $\cL$.
\item For every predicate symbol $P$ of $\cL$, $\cL^R$ contains a function symbol $\[ P \]$ from the corresponding main sorts into the auxiliary sort.
  It is equipped with the same uniform continuity moduli as $P$ in $\cL$.
\item The auxiliary sort is equipped with the signature $\cL_{RV}$.
\end{itemize}
A randomisation $(\rM,\sA)$ admits a natural interpretation as an $\cL^R$-pre-structure $(\rcM,\sA)$.
The corresponding structure will be denoted $(\widehat \rcM, \widehat \sA)$, and the canonical map $[\cdot]\colon (\rcM,\sA) \to (\widehat \rcM,\widehat \sA)$.
We also say that the randomisation $(\rM,\sA)$ is a \emph{representation} of the structure $(\widehat \rcM, \widehat \sA)$.

\begin{exm}
  \label{exm:UltraProduct}
  A special case of a randomisation is when $\rM = \prod \sM$ (i.e., the set of all sections from $\Omega$ into $\sM_\Omega$), $\sA = [0,1]^\Omega$, $\cU$ is an ultra-filter on $\Omega$, and $E_\cU \colon [0,1]^\Omega \to [0,1]$ is the integration functional corresponding to limits modulo $\cU$, i.e., $E_\cU(X) = \lim_{\omega \to \cU} X(\omega)$.
  In this case $\widehat \sA = [0,1]$ and $\widehat \rcM$ can be identified with the ultra-product $\prod_\cU \sM$.
\end{exm}

\begin{dfn}
  We say that a randomisation $(\rM,\sA)$ is \emph{full} if for every $\ra,\rb \in \rM$ and $X \in \sA$, there is a function $\rc \in \rM$ satisfying:
  \begin{gather*}
    \rc(\omega) =
    \begin{cases}
      \ra(\omega) & X(\omega) = 1, \\
      \rb(\omega) & X(\omega) = 0, \\
      \text{anything} & \text{otherwise.}
    \end{cases}
  \end{gather*}
  We shall sometimes write $\bc = \langle X,\ra,\rb\rangle$, even though there is no uniqueness here.

  We say that $(\rM,\sA)$ is \emph{atomless} if $\sA$ is a pre-model of $ARV$ (i.e., if $(\Omega,\sA,E)$ is atomless).
\end{dfn}

\begin{exm}[Randomisation of a single structure]
  \label{exm:RandomisationSingleStructure}
  Let $\cM$ be a structure, $(\Omega,\sF,\mu)$ an atomless probability space.
  Let $\rM_c \subseteq M^\Omega$ consist of all functions $\ra\colon \Omega \to M$ which take at most countably many values in $M$, each on a measurable set.
  Define $\sA_c \subseteq [0,1]^\Omega$ similarly, equipping it with integration with respect to $\mu$.
  Then $(\rM_c,\sA_c)$ is a full atomless randomisation.

  Assume now that $(\Omega,\sF,\mu)$ is merely a finitely additive probability space, namely that $\sF$ is a mere Boolean algebra and $\mu$ is finitely additive.
  Let $\rM_f \subseteq M^\Omega$ and $\sA_f \subseteq [0,1]^\Omega$ consist of functions which take at most finitely many values, each on a measurable set.
  Again, $(\rM_f,\sA_f)$ is an atomless, full randomisation.

  If $(\Omega,\sF,\mu)$ is a true (i.e., $\sigma$-additive) probability space then both constructions are possible and $(\rcM_f,\sA_f) \subseteq (\rcM_c,\sA_c)$.
  It is not difficult to check that they have the same completion $(\widehat \rcM_f,\widehat \sA_f) =
  (\widehat \rcM_c,\widehat \sA_c)$.
  In particular, $\widehat \sA_f =\widehat \sA_c =
  L^1(\sF,[0,1])$.

  Moreover, the resulting structure only depends on $\sA = L^1(\sF,[0,1])$, and we denote it by $(\cM^\sA,\sA)$ (or just $\cM^\sA$).
\end{exm}

\subsection{The randomisation theory}

Our first task is to axiomatise the class of $\cL^R$-structures which
can be obtained from full atomless randomisations (and in particular
show that it is elementary).
We shall use $x,y,\ldots$ to denote variables of $\cL$,
$\rx,\ry,\ldots$ to denote the corresponding variables in the main sort
of $\cL^R$ and
$U,V,\ldots$ to denote variables in the auxiliary sort of $\cL^R$.
For simplicity of notation, an $\cL^R$-structure
$(\rcM,\sA)$ may be denoted by $\rcM$ alone.
In this case, the auxiliary sort will be denoted by $\sA^\rcM$ and we
may write somewhat informally $\rcM = (\rcM,\sA^\rcM)$.
When $\sA^\rcM \vDash RV$ we shall refer to the probability algebra
of $\sA^\rcM$ as $\sF^\rcM$ (so $\sA^\rcM  = L^1(\sF^\rcM,[0,1])$).

The ``base theory'' for randomisation, which will be denoted by
$T_0^{Ra}$, consists of the theory $RV$ for the auxiliary sort along
with the following additional axioms (we recall that $a \dotminus b \dotminus c = (a \dotminus b) \dotminus c$):
\begin{align*}
  \tag{R1$_f$} &
  \bigl( \delta_{f,i}(\varepsilon) \dotminus \[ d(\rx,\ry)\] \bigr)
  \wedge
  \bigl( \[ d(f(\bar \rx',\rx,\bar \ry'), f(\bar \rx',\ry,\bar \ry'))\]
  \dotminus \varepsilon \bigr) = 0 \\
  \tag{R1$_P$} &
  \bigl( \delta_{P,i}(\varepsilon) \dotminus \[ d(\rx,\ry)\] \bigr)
  \wedge
  \bigl( \[ P(\bar \rx',\rx,\bar \ry')\] \dotminus \[ P(\bar \rx',\ry,\bar \ry')\]
  \dotminus \varepsilon \bigr) = 0 \\
  \tag{R2} &
  d(\rx,\ry) = E\[ d(\rx,\ry) \]
  \\
  \tag{R3} &
  \sup_{U\in\sF} \inf_{\rz} \,
  E\Bigl[
  \bigl( \[ d(\rx,\rz)\]\wedge U \bigr) \vee \bigl( \[ d(\ry,\rz)\]\wedge\neg U \bigr)
  \Bigr]
\end{align*}
In axiom R1, $\delta_{s,i}$ denotes the uniform continuity modulus of the
symbol $s$ with respect to its $i$th argument, with
$|\bar \rx'| = i$ and $|\bar \ry'| = n_s -i -1$.
In axiom R3, $\sF$ denotes the probability algebra of the auxiliary
sort, over which, modulo $RV$, we may quantify.

The role of axiom R1 is to ensure that the values of
$\[ P(\bar \ra)\](\omega)$, $f(\bar \ra)(\omega)$
only depends on $\bar \ra(\omega)$ and respect the uniform continuity
moduli prescribed by $\cL$.
Axiom R2 is straightforward, requiring the distance in the main sort
of be the expectation of the random variable associated to
$\cL$-distance.
Axiom R3 is a gluing property, corresponding to fullness of
a randomisation.
It can be informally stated as
\begin{align*}
  \tag{R3'} &
  (\forall \rx\ry)(\forall U\in\sF)(\exists \rz) \Bigl(
  \[ d(\rx,\rz)\]\wedge U = \[ d(\ry,\rz)\]\wedge\neg U = 0
  \Bigr),
\end{align*}
where the existential
quantifier is understood to hold in the approximate sense.
We prove in \fref{lem:PreciseR3} below that it actually holds in the
precise sense.

\begin{lem}
  \label{lem:RandomisationT0R}
  Let $(\rM,\sA)$ be a randomisation.
  Then $(\rcM,\sA)$ is a pre-model of $RV$ (in the auxiliary sort) and of {\rm R1,2}.
  If $(\rM,\sA)$ is full then $(\rcM,\sA)$ is a pre-model of $T_0^{Ra}$.
\end{lem}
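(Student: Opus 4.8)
The plan is to verify the three groups of axioms separately, observing that $RV$ and R1,2 hold for every randomisation, while R3 is precisely a reformulation of fullness. First I would dispose of the auxiliary sort: by hypothesis $(\Omega,\sA,E)$ is an integration space, so the lemma showing that an integration functional renders $\sA$ a pre-model of $RV$ applies verbatim, and the $\cL_{RV}$-reduct of $(\rcM,\sA)$ satisfies $RV$ with nothing more to check. Axiom R2 is then purely definitional: on $(\rcM,\sA)$ the symbol $\[d(\rx,\ry)\]$ is interpreted as $\<d(\ra,\rb)\>\in\sA$, and the pseudo-metric on $\rM$ was set up as $d(\ra,\rb)=E\<d(\ra,\rb)\>$, so $d(\ra,\rb)=E\[d(\ra,\rb)\]$ holds on the nose.

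For R1 I would argue pointwise, exploiting that every $\cM_\omega$ carries the uniform continuity moduli prescribed by $\cL$. Fix $f$ (resp.\ $P$), an index $i$, a rational $\varepsilon$, and sections $\bar\ra',\ra,\rb,\bar\rb'\in\rM$, and for each $\omega$ put $t=\<d(\rx,\ry)\>(\omega)=d^{\cM_\omega}(\ra(\omega),\rb(\omega))$. If $t\geq\delta_{f,i}(\varepsilon)$ the first factor $\delta_{f,i}(\varepsilon)\dotminus t$ vanishes; if $t<\delta_{f,i}(\varepsilon)$ then uniform continuity of $f^{\cM_\omega}$ in its $i$th argument gives $d^{\cM_\omega}\bigl(f(\ldots\ra(\omega)\ldots),f(\ldots\rb(\omega)\ldots)\bigr)\leq\varepsilon$, so the second factor $\<d(f(\ldots),f(\ldots))\>(\omega)\dotminus\varepsilon$ vanishes. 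Since $\wedge$ is the pointwise minimum, the random variable of R1$_f$ is identically $0$ and hence has $E$-value $0$. The argument for R1$_P$ is identical, using that $P^{\cM_\omega}$ has modulus $\delta_{P,i}$ and that $|P(\ldots\ra\ldots)-P(\ldots\rb\ldots)|\leq\varepsilon$ forces $\<P(\ldots\ra\ldots)\>\dotminus\<P(\ldots\rb\ldots)\>\dotminus\varepsilon=0$ at each $\omega$.

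Finally, assuming $(\rM,\sA)$ full, I would verify R3 and in fact in the exact rather than approximate sense. An element $U\in\sF$ of the probability algebra of $\sA$ is a $\{0,1\}$-valued member of $\sA$, so fullness applies with $X=U$: given $\ra,\rb\in\rM$ there is $\rc=\langle U,\ra,\rb\rangle\in\rM$ with $\rc(\omega)=\ra(\omega)$ where $U(\omega)=1$ and $\rc(\omega)=\rb(\omega)$ where $U(\omega)=0$. Then $\<d(\ra,\rc)\>$ vanishes on $\{U=1\}$ and $\<d(\rb,\rc)\>$ vanishes on $\{U=0\}$, so $\<d(\ra,\rc)\>\wedge U$ and $\<d(\rb,\rc)\>\wedge\neg U$ are identically $0$, whence the integrand $(\<d(\ra,\rc)\>\wedge U)\vee(\<d(\rb,\rc)\>\wedge\neg U)$ is $0$ and the infimum over $\rz$ is attained at $0$. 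As this holds for all $\ra,\rb$ and all $U\in\sF$, the outer supremum is $0$, giving R3 and thus all of $T_0^{Ra}$; this is the precise gluing anticipated in \fref{lem:PreciseR3}.

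I do not expect a serious obstacle, since the lemma is essentially a sequence of pointwise verifications. The only points that demand care are the correct reading of $\dotminus$ and $\wedge$ as truncated subtraction and minimum in the analysis of R1 (so that the case split $t\geq\delta$ versus $t<\delta$ matches the uniform continuity statement), and the remark in R3 that the quantifier $U\in\sF$ ranges over the $\{0,1\}$-valued elements of $\sA$, which is exactly the data to which fullness applies.
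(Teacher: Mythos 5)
Your treatment of the auxiliary sort, R1 and R2 is correct and is exactly how the paper disposes of them (it does so in one line: $RV$ comes from the lemma on integration functionals, and R1, R2 hold pointwise just as you argue). The gap is in R3, and it is precisely the point on which the paper's proof dwells. Being a pre-model of $T_0^{Ra}$ means that the associated structure $(\widehat\rcM,\widehat\sA)$ is a model, and R3 quantifies over $\sF$, the probability algebra of the auxiliary sort, which is a \emph{definable set}. In a mere pre-structure this quantifier does not have the naive semantics you assign to it: it must be evaluated in the completion, where it ranges over the probability algebra $\widehat\sF$ of $\widehat\sA$, and an event $A \in \widehat\sF$ need not be (the image of) a $\{0,1\}$-valued member of $\sA$. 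As the paper remarks, the algebra of characteristic functions in $\sA$ may well be trivial --- for instance $\sA$ may consist of continuous $[0,1]$-valued functions on a connected space, whose only $\{0,1\}$-valued members are the constants, while $\widehat\sF$ is a rich probability algebra. Your argument verifies the gluing only for those $U$ which literally are characteristic functions in $\sA$, i.e., possibly only for $U = 0,1$, which says nothing about R3 in $(\widehat\rcM,\widehat\sA)$. Your closing remark, that ``the quantifier $U\in\sF$ ranges over the $\{0,1\}$-valued elements of $\sA$'', is exactly the misreading the paper warns against.

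What is missing is an approximation step, which is the actual content of the paper's proof: given $A \in \widehat\sF$ and $\ra,\rb \in \rM$, choose $X \in \sA$ with $\widehat X$ close to $\mathbbm{1}_A$ in $L^1$, set $Y = \dot 2(X \dotminus 1/4)$ and $\rc = \langle Y,\ra,\rb\rangle$ by fullness (invoked for the $[0,1]$-valued $Y$, the ``anything'' clause absorbing the region $0 < Y < 1$). The error $W = \bigl( \[ d(\ra,\rc)\]\wedge Y \bigr) \vee \bigl( \[ d(\rb,\rc)\]\wedge\neg Y \bigr)$ is supported in $\{1/4 < X < 3/4\}$, so $W \leq (\dot 4 X)\wedge(\dot 4 \neg X)$, which is small in $\widehat\sA$ when $\widehat X \sim \mathbbm{1}_A$; replacing $\widehat Y$ by $\mathbbm{1}_A$ perturbs the expression by little, so the infimum over $\rz$ in R3 vanishes --- in the approximate sense only. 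The exact gluing you appeal to is \fref{lem:PreciseR3}, but that lemma is proved \emph{for models of} $T_0^{Ra}$, after this one, by a Cauchy-sequence argument using completeness; it is not available for an arbitrary randomisation (where $E$ is merely finitely additive and $\sA$ need not be complete), and cannot be used to verify R3 here without circularity.
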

\begin{proof}
  All we have to show is that if $(\rM,\sA)$ is full then $(\rcM,\sA)$ verifies R3, or equivalently, $(\widehat \rcM,\widehat \sA)$ does.
  However, we chose to write R3 using a quantifier over a definable set, a construct which need not have the apparent semantics in a pre-structure such as $(\rcM,\sA)$, and we find ourselves forced to work with $(\widehat \rcM,\widehat \sA)$.
  (Indeed, since $\sA$ is a mere \emph{pre}-model of $RV$, the algebra of characteristic functions in $\sA$ may well be trivial.)

  Let $\widehat \sF$ denote the probability algebra of $\widehat \sA$ and let $A \in \widehat \sF$, $\ra, \rb \in \widehat \rcM$.
  First, choose $X \in \sA$ and $\ra',\rb' \in \rM$ such that $[\ra']$ and $[\rb']$ are very close to $\bone_A$, $\ra$ and $\rb$, respectively.
  Define (recalling that for $t \in [0,1]$ and $n \in \bN$, $\dot n t = (nt) \wedge 1$):
  \begin{align*}
    Y & = \dot 2(X \dotminus 1/4) \in \sA, \\
    \rc & = \langle Y,\ra',\rb'\rangle \in \rM && \text{(by fullness)}, \\
    W & = \bigl( \[ d(\ra',\rc)\]\wedge Y \bigr) \vee \bigl( \[ d(\rb',\rc)\]\wedge \neg Y \bigr) \in \sA.
  \end{align*}
  For every $\omega \in \Omega$ we have $Y(\omega) \in \{0,1\} \Longrightarrow W(\omega) = 0$, or in other words, $W(\omega) \neq 0 \Longrightarrow 0 < Y(\omega) < 1 \Longrightarrow 1/4 < X(\omega) < 3/4$.
  Thus $W \leq (\dot 4 X) \wedge (\dot 4 \neg X)$.
  Having chosen our approximations good enough (we allow ourselves to skip the detailed epsilon chase here), we see that $[W] \leq (\dot 4 [X]) \wedge (\dot 4 \neg [X])$ is arbitrarily close to $0$ and $[Y]$ close to $\bone_A$.
  We conclude that $\bigl( \[ d(\ra,[\rc])\]\wedge A \bigr) \vee \bigl( \[ d(\rb,[\rc])\]\wedge \neg A \bigr)$ can be arbitrarily close to $0$ in $\widehat \sA$, which is what we needed to prove.
\end{proof}

In order to prove a converse we need to construct, for every model
$\rcM \vDash T^{Ra}_0$, a corresponding randomisation.

\begin{dfn}
  Assume $(\rcM,\sA) \vDash T_0^{Ra}$.
  Let $(\Omega,\mu) = (\Omega^\rcM,\mu^\rcM)$ be the Stone space of $\sA$ as per \fref{thm:RV}.
  Then we say that $(\rcM,\sA)$ is \emph{based} on $(\Omega,\mu)$.
\end{dfn}

We recall that $\Omega$ is a compact Hausdorff topological space, $\mu$ is a regular Borel probability measure and we may identify $\sA = C(\Omega,[0,1]) = L^1(\mu,[0,1])$.
Under this identification $\int_\Omega X \,d\mu = E(X)$ for all $X \in \sA$.

For each $\omega \in \Omega$ we define an $\cL$-pre-structure $\cM_{0,\omega}$.
Its underlying set is $M_{0,\omega} = \rM$ and the interpretations of the symbols are inherited naturally from $\rcM$:
\begin{gather*}
  f^{\cM_{0,\omega}} = f^\rcM \colon \rM^n \to \rM,
  \qquad
  P^{\cM_{0,\omega}}(\bar \ra) = \[ P(\bar \ra)\](\omega) \in [0,1].
\end{gather*}
Notice that axiom R1$_d$ implies that
$\[ d(\rx,\ry)\]
\dotminus \[ d(\rx,\rz)\]
\leq \[ d(\ry,\rz)\]$ and
axiom R2 implies $\[ d(\rx,\rx)\] = 0$.
Symmetry of $\[ d(\rx,\ry)\]$ and the usual form of the triangle
inequality follow, so $d^{\cM_{0,\omega}}$ is a pseudo-metric for every
$\omega$.
Other instances of axiom R1 imply that $\cM_{0,\omega}$ respects
uniform continuity moduli prescribed by $\cL$.
Thus $\cM_{0,\omega}$ is indeed an $\cL$-pre-structure.
The structure associated to $\cM_{0,\omega}$ will be denoted
$\cM_\omega$.
Let $\sM$ denote the family $\{\cM_\omega\}_{\omega\in\Omega}$
and let $a_\omega$ denote the image of $\ra$ in $\cM_\omega$.

Assume that $\ra,\rb \in \rM$ are distinct.
Then $E\[ d(\ra,\rb)\] > 0$, whereby
$\[ d(\ra,\rb)\](\omega) > 0$
for some $\omega \in \Omega$.
Thus $a_\omega \neq b_\omega$ and the maps
$\omega \mapsto a_\omega$, $\omega \mapsto b_\omega$ are distinct.
In other words, we may identify $\ra \in \rM$ with
the map $\ra\colon \omega \mapsto a_\omega$.
Viewed in this manner we have $\rM \subseteq \prod \sM$.
By construction, if $f \in \cL$ is a function symbol then its
coordinate-wise action on $\rM$ as a subset of $\prod \sM$ coincides
with $f^\rcM$.
Similarly, if $P \in \cL$ is a predicate symbol then
$\< P(\bar \ra) \>
= \bigl( \omega \mapsto P^{\cM_\omega}(\bar \ra(\omega)) \bigr)
= \[ P(\bar \ra) \] \in \sA$.
We have thus identified $(\rM,\sA)$ with a randomisation
base on $(\Omega,\mu)$.
This randomisation is called the \emph{canonical representation} of
$(\rcM,\sA)$.

\begin{lem}
  \label{lem:PreciseR3}
  Let $\rcM \vDash T_0^{Ra}$, $\ra,\rb \in \rM$ and
  $A \in \sF^\rcM$.
  Then there exists (a unique)
  $\rc = \langle A,\ra,\rb\rangle \in \rM$ which is
  equal to $\ra$ over $A$ and to $\rb$ elsewhere:
  \begin{gather*}
    \[ d(\ra,\rc)\] \wedge A = \[ d(\rb,\rc)\] \wedge \neg A = 0.
  \end{gather*}
  Identifying $(\rcM,\sA^\rcM)$ with its canonical representation
  based on $\Omega$,
  $A$ is identified with a (unique) clopen set $A \subseteq \Omega$
  and we have:
  \begin{gather*}
    \rc(\omega) =
    \begin{cases}
      \ra(\omega) & \omega \in A, \\
      \rb(\omega) & \omega \notin A.
    \end{cases}
  \end{gather*}
\end{lem}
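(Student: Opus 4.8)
The plan is to produce $\rc$ as a genuine limit of the approximate solutions guaranteed by axiom R3, and then to check that the limit satisfies the two equations on the nose. First I would specialise R3 to $U = A$, $\rx = \ra$, $\ry = \rb$. Since the inner expectation is nonnegative and the outer supremum over $U \in \sF$ vanishes, each inner infimum must vanish, so
\begin{equation*}
  \inf_{\rz} E\Bigl[ \bigl( \[ d(\ra,\rz)\] \wedge A \bigr) \vee \bigl( \[ d(\rb,\rz)\] \wedge \neg A \bigr) \Bigr] = 0 .
\end{equation*}
Thus there is a sequence $(\rc_n) \subseteq \rM$ with $\epsilon_n := E\bigl[ (\[ d(\ra,\rc_n)\]\wedge A)\vee(\[ d(\rb,\rc_n)\]\wedge\neg A)\bigr] \to 0$. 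Writing $\int_A X$ for $E[X\wedge A]$, and noting that the two joined random variables are supported on the complementary events $A$ and $\neg A$, their join equals their sum, so $\epsilon_n = \int_A \[ d(\ra,\rc_n)\] + \int_{\neg A}\[ d(\rb,\rc_n)\]$; in particular $\int_A\[ d(\ra,\rc_n)\]\leq\epsilon_n$ and $\int_{\neg A}\[ d(\rb,\rc_n)\]\leq\epsilon_n$.

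The key step is to show $(\rc_n)$ is Cauchy. Using the triangle inequality for $\[ d\]$ (a consequence of R1 and R2 recorded just before the lemma) together with monotonicity and linearity of $\int_A$, I get $\int_A\[ d(\rc_n,\rc_m)\]\leq\int_A\[ d(\rc_n,\ra)\]+\int_A\[ d(\ra,\rc_m)\]\leq\epsilon_n+\epsilon_m$, and symmetrically $\int_{\neg A}\[ d(\rc_n,\rc_m)\]\leq\epsilon_n+\epsilon_m$ using $\rb$ in place of $\ra$. Adding the two and applying R2 gives $d(\rc_n,\rc_m)=E\[ d(\rc_n,\rc_m)\]\leq 2(\epsilon_n+\epsilon_m)\to 0$. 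Since $\rcM$ is a model, its main sort is complete, so $\rc_n\to\rc$ for some $\rc\in\rM$. The functional $\rz\mapsto E[(\[ d(\ra,\rz)\]\wedge A)\vee(\[ d(\rb,\rz)\]\wedge\neg A)]$ is continuous, being built from the uniformly continuous symbol $\[ d\]$, the $1$-Lipschitz lattice operations on $\sA$, and the $1$-Lipschitz functional $E$; hence its value at $\rc$ is $\lim\epsilon_n=0$. As this integrand equals $\[ d(\ra,\rc)\]$ on $A$ and $\[ d(\rb,\rc)\]$ on $\neg A$, both nonnegative, each part integrates to zero, which is precisely $\[ d(\ra,\rc)\]\wedge A = \[ d(\rb,\rc)\]\wedge\neg A = 0$. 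Uniqueness is the same triangle-inequality computation: if $\rc'$ also works then $\int_A\[ d(\rc,\rc')\]\leq\int_A\[ d(\rc,\ra)\]+\int_A\[ d(\ra,\rc')\]=0$ and likewise over $\neg A$, so $d(\rc,\rc')=0$, i.e.\ $\rc=\rc'$.

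For the concrete description I would pass to the canonical representation based on the Stone space $\Omega$ of $\sA$ from \fref{thm:RV}, under which $\sA = C(\Omega,[0,1])$ and ``$X = 0$ in $\sA$'' means that the continuous representative of $X$ vanishes identically on $\Omega$. The characteristic function $\mathbbm{1}_A$ of $A\in\sF^\rcM$ is $\{0,1\}$-valued almost everywhere, so its continuous representative agrees a.e.\ with a $\{0,1\}$-valued function and is therefore genuinely $\{0,1\}$-valued; hence $A=\{\omega : \mathbbm{1}_A(\omega)=1\}$ is clopen. Reading the equation $\[ d(\ra,\rc)\]\wedge A=0$ pointwise then yields $\[ d(\ra,\rc)\](\omega)=d^{\cM_\omega}(\ra(\omega),\rc(\omega))=0$ for every $\omega\in A$, that is $\rc(\omega)=\ra(\omega)$ there, and symmetrically $\rc(\omega)=\rb(\omega)$ for $\omega\notin A$.

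The main obstacle is exactly the passage from the \emph{approximate} existence delivered by R3 to an \emph{exact} solution; everything turns on the estimate $d(\rc_n,\rc_m)\leq 2(\epsilon_n+\epsilon_m)$, after which completeness of $\rcM$ and continuity of the relevant functionals finish the existence claim, and the identification of ``$=0$ in $\sA$'' with ``vanishing everywhere'' on $\Omega$ finishes the pointwise description.
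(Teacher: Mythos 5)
Your proof is correct and takes essentially the same route as the paper's: extract approximate solutions $\rc_\varepsilon$ from axiom R3, show they form a Cauchy sequence via the triangle inequality for $\[ d \]$ computed in the canonical representation, and pass to the limit using completeness of the main sort, with uniqueness following from the same triangle-inequality computation. (The paper records the slightly sharper Cauchy bound $d(\rc_\varepsilon,\rc_{\varepsilon'}) \leq \varepsilon+\varepsilon'$, obtained by keeping the contributions over $A$ and $\neg A$ separate rather than bounding each by the full error, but your factor of $2$ is immaterial.)
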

\begin{proof}
  By axiom R3, for every $\varepsilon > 0$, there is
  $\rc_\varepsilon$ such that:
  \begin{gather*}
    E\Bigl[
    \bigl( \[ d(\ra,\rc_\varepsilon)\]\wedge A \bigr)
    \vee
    \bigl( \[ d(\rb,\rc_\varepsilon)\]\wedge\neg A \bigr)
    \Bigr]
    < \varepsilon.
  \end{gather*}
  Passing to the canonical representation it is easy to check that
  $d(\rc_\varepsilon,\rc_{\varepsilon'}) < \varepsilon+\varepsilon'$
  for any $\varepsilon,\varepsilon' > 0$.
  Thus
  $(\rc_\varepsilon)_{\varepsilon\to 0^+}$ is a Cauchy
  sequence whose limit
  $\rc = \langle A,\ra,\rb\rangle$ is as desired.
  Uniqueness is clear.
\end{proof}

\begin{thm}
  \label{thm:ModelsT0R}
  An $\cL^R$-structure is a model of $T_0^{Ra}$ if and only if it has a
  full representation, i.e., if and only if it is
  isomorphic to a structure $(\widehat \rcM,\widehat \sA)$ associated
  to a full randomisation $(\rM,\sA)$.

  Moreover, let $(\rcM,\sA)$ be a model of $T_0^{Ra}$.
  Then the canonical representation of $(\rcM,\sA)$ is full,
  and as an $\cL^R$-pre-structure it is isomorphic to
  $(\rcM,\sA)$.
  In particular, the $\cL^R$-pre-structure associated to the
  canonical representation is already a structure.
\end{thm}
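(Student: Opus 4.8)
The plan is to prove the equivalence via the two tools already in place: \fref{lem:RandomisationT0R} for the direction ``representable $\Rightarrow$ model'', and the canonical representation constructed just before the statement for the converse. The real content lies in the \emph{moreover} part, and within it in verifying \emph{fullness} of the canonical representation; everything else is either bookkeeping or already done.

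For the easy direction, suppose the given $\cL^R$-structure is isomorphic to $(\widehat\rcM,\widehat\sA)$ for some full randomisation $(\rM,\sA)$. By \fref{lem:RandomisationT0R} the pre-structure $(\rcM,\sA)$ is a pre-model of $T_0^{Ra}$; since satisfaction of a continuous theory is read off the completion, this says precisely that $(\widehat\rcM,\widehat\sA)\models T_0^{Ra}$. (For $RV$, R1 and R2 this is immediate as they are universal conditions preserved under completion; for R3 the proof of \fref{lem:RandomisationT0R} already verifies it directly in $(\widehat\rcM,\widehat\sA)$.) Hence the given structure models $T_0^{Ra}$.

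For the converse, let $(\rcM,\sA)\models T_0^{Ra}$ and pass to its canonical representation based on the Stone space $(\Omega,\mu)$ of $\sA$. By the construction preceding the statement the underlying set of the main sort is $\rM$, the function symbols act as $f^\rcM$, each predicate gives $\<P(\bar\ra)\>=\[P(\bar\ra)\]$, and by R2 the main-sort metric is $d(\ra,\rb)=E\[d(\ra,\rb)\]=d^\rcM(\ra,\rb)$; the auxiliary sort is $\sA$ itself. Thus the $\cL^R$-pre-structure associated to the canonical representation is, symbol by symbol, equal to $(\rcM,\sA)$, so the identity is the required isomorphism. As $(\rcM,\sA)$ is by hypothesis a complete structure, this pre-structure is already complete, i.e.\ $(\widehat\rcM,\widehat\sA)=(\rcM,\sA)$. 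Once fullness is established this exhibits $(\rcM,\sA)$ as the structure associated to a full randomisation, which simultaneously settles the direction ``model $\Rightarrow$ representable''.

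It remains to prove that the canonical representation is full, which I expect to be the main obstacle. Fix $\ra,\rb\in\rM$ and $X\in\sA$; identifying $\sA=C(\Omega,[0,1])$, the level sets $\{X=1\}$ and $\{X=0\}$ are disjoint closed subsets of the compact Hausdorff space $\Omega$. The key point is that $\Omega$ is totally disconnected: it is (homeomorphic to) the Stone space of the probability algebra $\sF^\rcM$, and, extending $E$ by linearity as in \fref{lem:LinearE}, $C(\Omega,\bR)$ is the uniform closure of the linear span of the characteristic functions of clopen sets (simple functions are uniformly dense), so clopen sets separate points and hence form a basis. Consequently the two disjoint closed sets are separated by a clopen set $A\in\sF^\rcM$ with $\{X=1\}\subseteq A$ and $\{X=0\}\subseteq\Omega\setminus A$. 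Applying \fref{lem:PreciseR3} to $A$ produces $\rc=\langle A,\ra,\rb\rangle\in\rM$ equal to $\ra$ on $A$ and to $\rb$ on $\Omega\setminus A$; in particular $\rc(\omega)=\ra(\omega)$ whenever $X(\omega)=1$ and $\rc(\omega)=\rb(\omega)$ whenever $X(\omega)=0$, which is exactly the fullness requirement. The delicate step is thus the passage from the approximate gluing of axiom R3 (available for arbitrary $X$) to the exact gluing of \fref{lem:PreciseR3} (available only along clopen sets), bridged by the total disconnectedness of $\Omega$.
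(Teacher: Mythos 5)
Your proof is correct, and its skeleton matches the paper's: the easy direction is \fref{lem:RandomisationT0R}, the identity map gives the isomorphism with the canonical representation, and fullness is reduced to \fref{lem:PreciseR3} by producing a clopen set $A$ with $\{X=1\}\subseteq A$ and $A\cap\{X=0\}=\emptyset$. Where you genuinely diverge is in how that clopen set is produced. The paper stays measure-theoretic: it takes the Borel set $\{X\geq\half\}$, invokes the corollary to \fref{thm:RV} (every Borel function is a.e.\ equal to a continuous one) to replace it up to a null set by a clopen $U$, and then extracts the two containments from the inequalities $X\dotminus\mathbbm{1}_U\leq\half$ and $\mathbbm{1}_U\dotminus X\leq\half$, which pass from a.e.\ to everywhere because both sides are continuous and $\mu$ has full support. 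You instead argue topologically: $\Omega$ is zero-dimensional, so the disjoint closed sets $\{X=1\}$ and $\{X=0\}$ are separated by a clopen set by a standard compactness argument. Your route isolates a clean topological statement (the Stone space of a model of $RV$ is totally disconnected, a fact the paper never states explicitly though it follows from its results), and the separation step then involves no null sets or full-support considerations; the cost is that zero-dimensionality itself must be justified, and your justification — uniform density of the span of clopen indicators — when unwound rests on the same identification $C(\Omega,[0,1])\cong L^1(\sF,[0,1])$ that underlies the paper's corollary (indicators of measurable sets correspond to $\{0,1\}$-valued continuous functions, since the condition $a\wedge\neg a=0$ is preserved by the isomorphism); your citation of \fref{lem:LinearE} at that point is tangential, as that lemma concerns linearity of $E$ rather than density. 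So the two arguments draw on the same underlying structure theory of \fref{sec:AxRV}, but your separation step is a genuinely different, and arguably more transparent, way to finish.
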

\begin{proof}
  One direction is \fref{lem:RandomisationT0R}, so it is enough to
  prove the moreover part.
  It is clear that the identity map is an isomorphism between the
  structure $(\rcM,\sA)$ and the pre-structure associated to the
  canonical representation, so all that is left to show is that the
  latter is full.

  Let $\ra,\rb \in \rM$, $X \in \sA$.
  The set $\{X \leq \half\} \subseteq \Omega$ is Borel
  and therefore equal outside a null measure set to
  some clopen set $U \subseteq \Omega$.
  We now have
  \begin{gather*}
    X \dotminus \mathbf{1}_{\{X\geq1/2\}} \leq \half
    \quad \Longrightarrow \quad
    X \dotminus \mathbf{1}_U \leq \half,
    \quad \Longrightarrow \quad
    U \supseteq \{X=1\},
    \\
    \mathbf{1}_{\{X\geq1/2\}} \dotminus X \leq \half
    \quad \Longrightarrow \quad
    \mathbf{1}_U \dotminus X \leq \half,
    \quad \Longrightarrow \quad
    U \cap \{X=0\} = \emptyset.
  \end{gather*}
  Thus $\rc = \langle U,\ra,\rb\rangle$ will do as
  $\langle X, \ra, \rb \rangle$.
\end{proof}

From now on we shall identify a model of $T_0^{Ra}$ with its
canonical representation whenever that is convenient and without
further mention.

\subsection{Quantifiers}

It is a classical fact that $\sA = L^1(\sF,[0,1])$ is a complete lattice.
More precisely, let $\cA \subseteq \sA$ be any subset.
We may assume that $\cA$ is closed under $\wedge$.
Let $r = \inf \{E(X)\colon X \in \cA\}$ and let $(X_n)_{n\in\bN} \subseteq \cA$
satisfy $E(X_n) \to r$.
By hypothesis $E(X_n \wedge X_m) \geq r$ whereby
$d(X_n,X_m) \leq |E(X_n)-r| + |E(X_m)-r|$.
The sequence $(X_n)_{n\in\bN}$ is therefore Cauchy and its limit is
$\inf \cA$.

Let now $(\rcM,\sA)$ be a model of $T_0^{Ra}$,

\begin{dfn}
  Let $(\rcM,\sA) \vDash T_0^{Ra}$, $t\colon \rM^n \to \sA$ a function.
  We say that $t$ is \emph{local} if it is always true that:
  \begin{gather*}
    t(\ldots,\langle A,\ra,\rb\rangle,\ldots) = 
    t(\ldots,\ra,\ldots) \wedge A + t(\ldots,\rb,\ldots) \wedge \neg A.
  \end{gather*}

  For a function $t\colon \rM^{n+1} \to \sA$
  we define $\sinf_\ry t(\bar \rx,\ry) \colon \rM^n \to \sA$ by
  \begin{gather*}
    \sinf_\ry t(\bar a,\ry)
    = \inf \{t(\bar \ra,\rb)\colon \rb \in \rM\} \in \sA.
  \end{gather*}
\end{dfn}

\begin{lem}
  \label{lem:RandomVarQuantifier}
  Let $t(\bar \rx,\ry)$ be a uniformly definable local function in models of
  $T_0^{Ra}$ from the main sort into the auxiliary sort.
  Then the function
  $s(\bar \rx) = \inf_{\ry} t(\bar \rx,\ry)$ is uniformly definable
  and local as well, and $T_0^{Ra}$ implies that:
  \begin{gather*}
    \sinf_\rz
    d\bigl( \sinf_\ry t(\bar \rx,\ry), t(\bar \rx,\rz) \bigr) = 0.
  \end{gather*}
  Moreover, for every $\bar \ra$ in a model of $T_0^{Ra}$ and $\varepsilon > 0$
  there is $\rb$ such that:
  \begin{gather*}
    t(\bar \ra,\rb) \leq \sinf_\ry t(\bar \ra,\ry) + \varepsilon
  \end{gather*}
  (Similarly for $\sup_\ry t$.)
\end{lem}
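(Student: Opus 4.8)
The plan is to exploit the complete-lattice structure of $\sA = L^1(\mu,[0,1])$ together with the exact gluing $\langle A,\ra,\rb\rangle$ of \fref{lem:PreciseR3}, which lets me treat $\{t(\bar\ra,\rb):\rb\in\rM\}$ as closed under pointwise minima. First I would show this family is \emph{directed downwards}: given $\rb_1,\rb_2$, put $A=\{t(\bar\ra,\rb_1)\leq t(\bar\ra,\rb_2)\}\in\sF$ and $\rb_3=\langle A,\rb_1,\rb_2\rangle$; locality of $t$ gives $t(\bar\ra,\rb_3)=t(\bar\ra,\rb_1)\wedge A+t(\bar\ra,\rb_2)\wedge\neg A=t(\bar\ra,\rb_1)\wedge t(\bar\ra,\rb_2)$. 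By the complete-lattice argument recalled at the start of this subsection, $Z:=\sinf_\ry t(\bar\ra,\ry)$ is then the genuine lattice infimum in $\sA$ and is the $L^1$-limit of a decreasing sequence $t(\bar\ra,\rb_n)\downarrow Z$ almost everywhere.

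Next I would prove the approximate-minimiser clause, from which the identity $\sinf_\rz d\bigl(Z,t(\bar\rx,\rz)\bigr)=0$ is immediate: since $Z\leq t(\bar\ra,\rb)$ always, $d(Z,t(\bar\ra,\rb))=E[t(\bar\ra,\rb)-Z]\leq\varepsilon$ as soon as $t(\bar\ra,\rb)\leq Z+\varepsilon$. To construct such a $\rb$, note that as $t(\bar\ra,\rb_n)\downarrow Z$ the events $A_n=\{t(\bar\ra,\rb_n)\leq Z+\varepsilon\}$ increase to $\Omega$ modulo null sets; writing $B_n=A_n\setminus A_{n-1}$ I would glue the $\rb_n$ along the partition $(B_n)$. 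The finite gluings $\rb^{(N)}$, equal to $\rb_n$ on $B_n$ for $n\leq N$, lie in $\rM$ and are Cauchy, since consecutive terms differ only on $\Omega\setminus A_N$, whose measure tends to $0$ while the main-sort distance stays bounded; their limit $\rb\in\rM$ agrees with $\rb_n$ on each $B_n$, so locality gives $t(\bar\ra,\rb)=t(\bar\ra,\rb_n)\leq Z+\varepsilon$ on $B_n$, hence everywhere.

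For locality of $s(\bar\rx)=\sinf_\ry t(\bar\rx,\ry)$ I would prove both inequalities by hand. The direction $\geq$ is automatic: for every $\rc$, locality of $t$ in the glued coordinate gives $t(\ldots,\langle A,\ra,\rb\rangle,\ldots,\rc)=t(\ldots,\ra,\ldots,\rc)\wedge A+t(\ldots,\rb,\ldots,\rc)\wedge\neg A\geq s(\ldots,\ra,\ldots)\wedge A+s(\ldots,\rb,\ldots)\wedge\neg A$, and taking $\inf_\rc$ preserves it. For $\leq$, take near-minimisers $\rc_1,\rc_2$ (by the previous paragraph) for the $\ra$- and $\rb$-instances and test with $\rc=\langle A,\rc_1,\rc_2\rangle$; applying locality of $t$ in $\ry$ and then in the glued coordinate, with the \emph{same} $A$, kills the cross terms and yields
\[
t(\ldots,\langle A,\ra,\rb\rangle,\ldots,\rc)=t(\ldots,\ra,\ldots,\rc_1)\wedge A+t(\ldots,\rb,\ldots,\rc_2)\wedge\neg A\leq s(\ldots,\ra,\ldots)\wedge A+s(\ldots,\rb,\ldots)\wedge\neg A+\varepsilon .
\]

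The main obstacle is uniform definability of $s$, where one must recover the infimum \emph{as a random variable}, not merely its expectation. Here I would relativise to every event: for $U\in\sF$ the real predicate $E[t(\bar\rx,\ry)\wedge U]$ is definable, hence so is $\inf_\ry E[t(\bar\rx,\ry)\wedge U]$, and I claim it equals $E[s(\bar\rx)\wedge U]$. The inequality $\geq$ is clear from $t\geq s$, and $\leq$ follows from the decreasing sequence by dominated convergence, legitimate because $\sA=L^1(\mu,[0,1])$ for a $\sigma$-additive $\mu$ by \fref{thm:RV}. Finally, using $E[f\dotminus g]=\sup_{U\in\sF}\bigl(E[f\wedge U]-E[g\wedge U]\bigr)$ from the proof of \fref{prp:RandomVariablesInterpretable}, I recover $d\bigl(V,s(\bar\rx)\bigr)=E[V\dotminus s(\bar\rx)]+E[s(\bar\rx)\dotminus V]$ as a definable predicate in the auxiliary variable $V$, which is precisely the required uniform definability; the companion statement for $\sup_\ry t$ follows by applying the result to $\neg t$.
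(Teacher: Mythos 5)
Your proof is correct, and it rests on the same two pillars as the paper's own argument --- the complete lattice structure of $\sA$ and the exact gluing $\langle A,\ra,\rb\rangle$ of \fref{lem:PreciseR3} combined with locality --- but the execution genuinely differs at two points. For the approximate minimiser, the paper runs a one-pass iteration: given $\rc_n$ with $\inf_n t(\bar\ra,\rc_n)=\inf_\ry t(\bar\ra,\ry)$, it glues in $\rc_{n+1}$ only on the event where this improves $t$ by more than $\varepsilon$, so the total measure of the gluing events is at most $1/\varepsilon$ and the sequence is summably Cauchy; this is purely metric and never invokes $\sigma$-additivity or a.e.\ convergence. You instead first make the family $\{t(\bar\ra,\rb)\}$ directed (a pleasant observation the paper leaves implicit), extract a decreasing sequence converging a.e., and glue along the partition $\{B_n\}$ --- equally valid, provided you note that events such as $\{t(\bar\ra,\rb_1)\leq t(\bar\ra,\rb_2)\}$ and $A_n$ are merely Borel and must be replaced by their a.e.-clopen representatives (the same manoeuvre as in the proof of \fref{thm:ModelsT0R}), and that your Cauchy estimate really uses that \emph{all} terms beyond $\rb^{(N)}$ agree with it on $A_N$, not just consecutive ones. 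For uniform definability the routes diverge: the paper characterises the graph of $\inf_\ry t$ by the two conditions $\sup_\rz E\bigl(X\dotminus t(\bar\rx,\rz)\bigr)=0$ and $\inf_\rz E\bigl(t(\bar\rx,\rz)\dotminus X\bigr)=0$, both justified by the moreover part, whereas you define the distance predicate $d\bigl(V,s(\bar\rx)\bigr)$ directly from the relativised expectations $E[t(\bar\rx,\ry)\wedge U]$, the identity $E[f\dotminus g]=\sup_{U\in\sF}\bigl(E[f\wedge U]-E[g\wedge U]\bigr)$ and RV3; your version yields an explicit formula for the distance to the graph at the cost of quantifying over the definable set $\sF$, while the paper's is shorter. (One could simplify your convergence step by noting that $X\mapsto X\wedge U$ is $1$-Lipschitz in $L^1$, so no dominated convergence is needed.) Finally, your hands-on proof that $s$ is local, via near-minimisers glued over the same event, is correct but slightly more than necessary: as the paper remarks, locality of $\inf_\ry t$ follows from the definition alone, since restriction to an event preserves lattice infima, so neither definability nor the moreover part is required there.
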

\begin{proof}
  It follows directly from the definition that if $t$ is local then so
  is $\inf_\ry t$ (no definability is needed here).

  We start by proving the moreover part.
  Let $(\rcM,\sA) \vDash T_0^{Ra}$, $\bar \ra \in \rM^n$.
  Following the discussion of the completeness of the lattice
  structure on $\sA$ there is a sequence
  $\{\rc_n\}_{n\in\bN}$ such that
  $\inf_\ry t(\bar \ra,\ry) = \inf_n t(\bar \ra,\rc_n)$.
  Let us define a sequence $\{\rb_n\}$ by:
  \begin{align*}
    & \rb_0 = \rc_0, &
    & \rb_{n+1} = \big\langle \{ t(\bar \ra,\rb_n) - t(\bar \ra,\rc_{n+1}) > \varepsilon \}, \rc_{n+1},\rb_n \big\rangle.
  \end{align*}
  In other words, when passing from $\rb_n$ to $\rb_{n+1}$ we use $\rc_{n+1}$ only where this means a decrease of more than $\varepsilon$, and elsewhere keep $\rb_n$.

  Clearly $\sum_n \mu\{ t(\bar \ra,\rb_n) - t(\bar \ra,\rc_{n+1}) > \varepsilon \} \leq 1/\varepsilon$.
  By construction, $d(\rb_n,\rb_{n+1}) \leq \mu\{ t(\bar \ra,\rb_n) - t(\bar \ra,\rc_{n+1}) > \varepsilon \}$, so the sequence $\{\rb_n\}$ converges to some $\rb$.
  Since $t$ is local, we have $t(\bar \ra,\rb) \leq t(\bar \ra,\rc_n) + \varepsilon$, whence $t(\bar \ra,\rb) \leq \inf_\ry t(\bar \ra,\ry) + \varepsilon$, as desired.

  We can now prove the first assertion.
  Indeed, it follows from the moreover part that the graph of $\inf_\ry t$ is uniformly definable as:
  \begin{gather*}
    X = \sinf_\ry t(\bar \ra,\ry)
    \quad \Longleftrightarrow \quad
    \begin{cases}
      \ssup_\rz E\bigl( X \dotminus t(\bar \ra,\rz) \bigr) = 0, \\
      \sinf_\rz E\bigl( t(\bar \ra,\rz) \dotminus X \bigr) = 0.
    \end{cases}
  \end{gather*}
  Once we know that $\inf_\ry f$ is definable, the sentence in the second assertion is expressible, and holds true by the moreover part.
\end{proof}

We now proceed to define by induction, for every $\cL$-formula $\varphi(\bar x)$, a $T_0^{Ra}$-definable local function $\[ \varphi(\bar \rx) \]$ to the auxiliary sort, in the following natural manner:
\begin{itemize}
\item \emph{Atomic formulae:}
  $\[ P(\bar \tau) \] = \[ P\] \circ (\bar \tau)$ is a term, the composition of the function symbol $\[ P \]$ with the $\cL$-terms $\bar \tau$, which are also $\cL^R$-terms.
  These are local by \fref{thm:ModelsT0R}.
\item \emph{Connectives:}
  $\[ \varphi \dotminus \psi \] = \[\varphi\] \dotminus \[\psi\]$, and so on.
  Locality is clear.
\item \emph{Quantifiers:}
  $\[ \inf_y \varphi(\bar \rx,y) \] = \inf_\ry \[ \varphi(\bar \rx,\ry) \]$, $\[ \sup_y \varphi(\bar \rx,y) \] = \sup_\ry \[ \varphi(\bar \rx,\ry) \]$.
  Locality follows from \fref{lem:RandomVarQuantifier}.
\end{itemize}

Our somewhat minimalist approach differs from that of Keisler, who introduces a function symbol $\[ \varphi(\bar \rx) \]$ for every $\cL$-formula $\varphi$ (see \cite{Keisler:Randomizing,BenYaacov-Keisler:MetricRandom}).
Keisler's \emph{Boolean Axioms} and \emph{Fullness Axiom} are valid in our setting by definition of $\[\varphi\]$ (using \fref{lem:RandomVarQuantifier} for
fullness).
Keisler's \emph{Distance Axiom} for the main sort is our R2.
While not entirely equivalent, Keisler's \emph{Event Axiom} corresponds to our axiom R3.
(More precisely, Keisler's Event Axiom is equivalent to R3 plus $\sup_{\rx,\ry} d(\rx,\ry) = 1$.
We do not find it necessary or desirable to assume the latter.)
Other axioms related to the auxiliary sort, with the exception of atomlessness, are coded in $RV$.
We shall add atomlessness later on, when it is needed for \fref{thm:TRTypes}.
We are left with the \emph{Validity Axioms} which we also claim follow from $T_0^{Ra}$.

\begin{thm}
  \label{thm:Satisfaction}
  Let $(\rcM,\sA)$ be a model of $T_0^{Ra}$ which we identify as usual with its canonical representation, based on $(\Omega,\mu)$.
  Then for every formula $\varphi(\bar x)$ and tuple $\bar \ra$ of the appropriate length we have $\< \varphi(\bar \ra) \> = \[ \varphi(\bar \ra) \]$ as functions on $\Omega$ (and not merely up to a null measure set).
\end{thm}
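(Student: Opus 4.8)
The plan is to argue by induction on the structure of the formula $\varphi$, throughout identifying $(\rcM,\sA)$ with its canonical representation and using the identification $\sA = C(\Omega,[0,1]) = L^1(\mu,[0,1])$ furnished by \fref{thm:RV}. A preliminary observation I would record is that $\mu$ has full support: since $\rho_\mu\colon C(\Omega,[0,1]) \to L^1(\mu,[0,1])$ is injective, no non-empty open set can be $\mu$-null, for otherwise Urysohn's lemma would produce a non-zero continuous function vanishing $\mu$-almost everywhere. Consequently any inequality between elements of $\sA$, read as an inequality in $L^1$ and hence holding $\mu$-a.e., in fact holds at \emph{every} point of $\Omega$, because both sides are continuous. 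This is precisely what will let me upgrade the a.e.\ statements coming from $RV$ to the pointwise statements demanded by the theorem.

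For the base case of atomic formulae $P(\bar\tau)$, the equality $\<P(\bar\ra)\> = \[P(\bar\ra)\]$ holds by the very construction of the canonical representation: $\cM_\omega$ was defined so that $P^{\cM_\omega}(\bar\ra(\omega)) = \[P(\bar\ra)\](\omega)$, and terms are interpreted coordinate-wise, so the value of $\bar\tau$ in $\cM_\omega$ at $\bar\ra(\omega)$ is $\bar\tau(\bar\ra)(\omega)$. For the connective steps, both $\<\cdot\>$ and $\[\cdot\]$ are computed by applying the connective pointwise on $\Omega$ --- on one side because $\sA \subseteq [0,1]^\Omega$ carries the pointwise operations, on the other because $(\varphi\dotminus\psi)^{\cM_\omega} = \varphi^{\cM_\omega}\dotminus\psi^{\cM_\omega}$ and likewise for $\neg,\half$ --- so the induction hypothesis yields the equality at each $\omega$ immediately.

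The only real work is the quantifier case, say $\varphi = \inf_y\psi(\bar x,y)$, and this is where I expect the main obstacle. By the semantics of continuous logic, $\<\inf_y\psi(\bar\ra)\>(\omega) = \inf_{b\in M_\omega}\psi^{\cM_\omega}(\bar\ra(\omega),b)$. Since the elements $\{c_\omega\colon\rc\in\rM\}$ are dense in $\cM_\omega$ and $\psi^{\cM_\omega}(\bar\ra(\omega),\cdot)$ is uniformly continuous, this infimum equals $\inf_{\rc\in\rM}\psi^{\cM_\omega}(\bar\ra(\omega),c_\omega)$, which by the induction hypothesis is $\inf_{\rc\in\rM}\[\psi(\bar\ra,\rc)\](\omega)$, the \emph{pointwise} infimum of the family $g_\rc = \[\psi(\bar\ra,\rc)\]$. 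On the other side, $\[\inf_y\psi(\bar\ra)\] = \sinf_\ry\[\psi(\bar\ra,\ry)\]$ is the \emph{lattice} infimum $h$ of this family in $\sA$. The relation $h \leq g_\rc$ for every $\rc$ gives $h(\omega) \leq \inf_{\rc}g_\rc(\omega)$ pointwise at once.

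For the reverse inequality I would invoke the ``moreover'' part of \fref{lem:RandomVarQuantifier}: for each $\varepsilon>0$ there is $\rb\in\rM$ with $g_\rb \leq h + \varepsilon$ in $\sA$, hence, by the full-support remark above, $g_\rb(\omega) \leq h(\omega)+\varepsilon$ at every point $\omega$; therefore $\inf_{\rc}g_\rc(\omega) \leq h(\omega)+\varepsilon$, and letting $\varepsilon\to0$ yields $\inf_{\rc}g_\rc(\omega)\leq h(\omega)$. Thus the pointwise and lattice infima agree everywhere, which is exactly $\<\inf_y\psi(\bar\ra)\> = \[\inf_y\psi(\bar\ra)\]$; the $\sup$ case is dual. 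The crux, and the one place where the precise (rather than a.e.) conclusion could conceivably fail, is precisely this passage from the lattice infimum to the pointwise infimum, and it is salvaged by the approximate-selection lemma together with the fact that all functions involved are honest continuous representatives on a space carrying a measure of full support.
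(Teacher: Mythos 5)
Your proof is correct and follows essentially the same route as the paper's: induction on the formula, with the quantifier step reduced to comparing the pointwise infimum over sections (via density of $\{\rc(\omega)\colon \rc\in\rM\}$ in $\cM_\omega$) with the lattice infimum in $\sA$, the nontrivial inequality coming from the approximate-selection (``moreover'') part of \fref{lem:RandomVarQuantifier}. Your explicit full-support observation is a nice touch, but it is only making explicit what the paper's identification $\sA = C(\Omega,[0,1]) = L^1(\mu,[0,1])$ from \fref{thm:RV} already encodes, namely that the order on $\sA$ is the genuine pointwise order on continuous functions.
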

\begin{proof}
  We prove by induction on $\varphi$.
  If $\varphi$ is atomic this is known by construction and the
  induction step for connectives is immediate.
  We are left with the case of a formula $\inf_x \varphi(x,\bar y)$.
  First of all, by construction, we have:
  \begin{gather*}
    \< \sinf_x \varphi(x,\bar \ra) \>
    = \sinf^s \bigl\{ \< \varphi(\rb,\bar \ra) \> \colon \rb \in \rM \bigr\},
    \\
    \[ \sinf_x \varphi(x,\bar \ra) \]
    = \sinf_\rx \[ \varphi(\rx,\bar \ra) \]
    = \sinf^{L^1} \bigl\{ \[ \varphi(\rb,\bar \ra) \] \colon \rb \in \rM \bigr\}.
  \end{gather*}
  Here $\inf^s$ means the simple, or point-wise, infimum of functions
  on $\Omega$.
  By definition
  $\[ \inf_x \varphi(x,\bar \ra) \] \leq \[ \varphi(\rb,\bar \ra) \]$
  for all $\rb$,
  and by  the induction hypothesis for $\varphi$ we have
  $\[ \inf_x \varphi(x,\bar \ra) \] \leq \< \varphi(\rb,\bar \ra) \>$.
  It follows that
  $\[ \inf_x \varphi(x,\bar \ra) \] \leq
  \< \inf_x \varphi(x,\bar \ra) \>$.
  Conversely, by \fref{lem:RandomVarQuantifier},
  for every $\varepsilon > 0$ there exists $\rb$ such that
  $\[ \inf_x \varphi(x,\bar \ra) \] + \varepsilon \geq \[ \varphi(\rb,\bar \ra) \]$.
  Using the induction hypothesis again we obtain:
  \begin{gather*}
    \[ \sinf_x \varphi(x,\bar \ra) \] + \varepsilon
    \geq \< \varphi(\rb,\bar \ra) \>
    \geq \< \sinf_x \varphi(x,\bar \ra) \>.
  \end{gather*}
  Equality follows.
\end{proof}

\begin{cor}
  Let $\rcM \vDash T_0^{Ra}$ and assume its canonical representation
  is based on the family $\sM = \{\cM_\omega\}_{\omega\in\Omega}$.
  Then for every $\cL$-sentence $\varphi$:
  \begin{gather*}
    \rcM \vDash \[\varphi\]=0
    \quad \Longleftrightarrow \quad
    \cM_\omega \vDash \varphi \text{ for all } \omega \in \Omega.
  \end{gather*}
\end{cor}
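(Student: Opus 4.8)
The plan is to read off this corollary directly from \fref{thm:Satisfaction}, applied to the sentence $\varphi$ regarded as a formula with an empty tuple of free variables. That theorem gives the pointwise identity $\[\varphi\] = \<\varphi\>$ of functions on $\Omega$, where by definition $\<\varphi\>(\omega) = \varphi^{\cM_\omega}$. The whole content is then a matter of correctly unwinding what the two sides of the claimed equivalence mean.

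First I would handle the right-hand side. By the convention of continuous logic that identifies \emph{True} with $0$, we have $\cM_\omega \models \varphi$ if and only if $\varphi^{\cM_\omega} = 0$, i.e.\ $\<\varphi\>(\omega) = 0$; so ``$\cM_\omega \models \varphi$ for all $\omega$'' is precisely ``$\<\varphi\>$ is identically $0$ on $\Omega$''. For the left-hand side, the condition $\[\varphi\] = 0$ holds in $\rcM$ exactly when the element $\[\varphi\]$ of the auxiliary sort is at distance $0$ from $0$, that is $E(\[\varphi\]) = \int_\Omega \[\varphi\]\,d\mu = 0$; since $\[\varphi\] \geq 0$ this says exactly that $\[\varphi\]$ is the zero element of $L^1(\mu,[0,1])$, i.e.\ that $\[\varphi\] = 0$ almost everywhere.

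The heart of the argument, and where I expect the only real subtlety, is bridging the gap between ``$= 0$ almost everywhere'' and ``$= 0$ everywhere on $\Omega$''. This is supplied by the exact (pointwise, not merely a.e.) form of \fref{thm:Satisfaction} together with the identification of $\sA$ with $C(\Omega,[0,1])$ from \fref{thm:RV}: under that identification $\[\varphi\]$ is a genuine continuous function, and the map $\rho_\mu\colon C(\Omega,[0,1]) \to L^1(\mu,[0,1])$ is injective, so a continuous function vanishing $\mu$-almost everywhere vanishes identically (equivalently, $\mu$ has full support). Hence $\rcM \models \[\varphi\] = 0$ forces $\[\varphi\]$ to vanish at every point of $\Omega$, and by the pointwise identity $\[\varphi\] = \<\varphi\>$ this is the same as $\<\varphi\>$ vanishing everywhere, i.e.\ $\cM_\omega \models \varphi$ for every $\omega$; the converse is immediate by running the same chain of equivalences backwards. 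Without the strengthening of \fref{thm:Satisfaction} to a pointwise-everywhere equality we could only recover satisfaction in $\cM_\omega$ for $\mu$-almost every $\omega$, so that strengthening is exactly what makes the clean ``for all $\omega$'' statement true.
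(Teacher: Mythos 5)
Your proposal is correct and takes essentially the same route as the paper, whose entire proof is that the corollary is immediate from the pointwise identity $\[ \varphi \] = \< \varphi \>$ on $\Omega$ established in \fref{thm:Satisfaction}. Your write-up merely makes explicit what the paper leaves implicit: that $\rcM \models \[ \varphi \] = 0$ unwinds to $E\[ \varphi \] = 0$, i.e.\ vanishing $\mu$-almost everywhere, and that the identification of $\sA$ with $C(\Omega,[0,1])$ together with the injectivity of $\rho_\mu$ from \fref{thm:RV} upgrades this to vanishing at every point of $\Omega$, which is exactly where the pointwise (rather than a.e.) form of \fref{thm:Satisfaction} is needed.
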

\begin{proof}
  Immediate from the fact that
  $\[ \varphi \] = \< \varphi \>$ on $\Omega$.
\end{proof}

\begin{dfn}
  Let $T$ be a set of $\cL$-sentences.
  We define its randomisation $T^{Ra}$ to be the
  $\cL^R$-theory consisting of the base theory along with the
  translation of $T$ (Keisler's \emph{Transfer Axioms}):
  \begin{gather*}
    T^{Ra} = T_0^{Ra} \cup\{\[\varphi\] = 0\}_{\varphi\in T}.
  \end{gather*}
\end{dfn}

\begin{cor}
  Let $T$ be arbitrary set of sentences, $\varphi$ a sentence.
  Then $T \vdash \varphi \Longleftrightarrow T^{Ra} \vdash \[\varphi\] = 0$.
\end{cor}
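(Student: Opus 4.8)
The plan is to reduce the two occurrences of $\vdash$ to semantic entailment by means of the completeness theorem for continuous first order logic of \cite{BenYaacov-Pedersen:CompleteDeduction}, reading ``$T \vdash \varphi$'' as ``$\varphi^{\cM} = 0$ in every $\cM \models T$'' and ``$T^{Ra} \vdash \[\varphi\] = 0$'' as ``$\[\varphi\]^{\rcM} = 0$ in every $\rcM \models T^{Ra}$''. The entire content then becomes the equivalence of these two semantic statements, and both implications fall out of the corollary above relating $\rcM \models \[\psi\]=0$ to $\cM_\omega \models \psi$ in the fibres of the canonical representation.

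The single observation doing all the work is this: for any $\rcM \models T_0^{Ra}$ with fibre family $\{\cM_\omega\}_{\omega\in\Omega}$, one has $\rcM \models T^{Ra}$ if and only if $\cM_\omega \models T$ for every $\omega$. This is immediate from $T^{Ra} = T_0^{Ra} \cup \{\[\psi\]=0\}_{\psi\in T}$ together with the fibre corollary applied to each $\psi \in T$. I would record this first and use it in both directions.

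For $T \vdash \varphi \Rightarrow T^{Ra} \vdash \[\varphi\]=0$, I would take an arbitrary $\rcM \models T^{Ra}$. By the observation each fibre $\cM_\omega$ is a model of $T$, so $\varphi^{\cM_\omega}=0$ for all $\omega$; applying the fibre corollary once more, now to $\varphi$, gives $\[\varphi\]^{\rcM}=0$. Since $\rcM$ was arbitrary this is $T^{Ra} \models \[\varphi\]=0$, hence $T^{Ra}\vdash\[\varphi\]=0$ by completeness. For the converse, given an arbitrary $\cM \models T$ I would manufacture a model of $T^{Ra}$ whose fibres are all equal to $\cM$: the constant randomisation over a one-point base — or, equivalently after completion, the single-structure randomisation $\cM^\sA$ of \fref{exm:RandomisationSingleStructure} — is full, hence a model of $T_0^{Ra}$ by \fref{thm:ModelsT0R}, and every fibre is a copy of $\cM$. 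As $\cM \models T$, the observation yields $\rcM \models T^{Ra}$, so the hypothesis forces $\[\varphi\]^{\rcM}=0$, and the fibre corollary then gives $\varphi^{\cM}=0$. Thus $T \models \varphi$, and $T \vdash \varphi$ by completeness.

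The two appeals to the fibre corollary and the construction of the constant-fibre randomisation are routine. The only point genuinely requiring care is the syntax--semantics bridge: one must fix the convention that ``$\vdash \varphi$'' abbreviates provability of $\varphi = 0$ (consistent with the \emph{True} $= 0$ convention used throughout), and then invoke the continuous completeness theorem symmetrically on the $\cL$-side and the $\cL^R$-side. I expect this book-keeping, rather than any substantive argument, to be the main obstacle, since all the model-theoretic weight has already been absorbed into \fref{thm:ModelsT0R} and the fibre corollary.
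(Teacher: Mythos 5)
Your proof is correct and is essentially the paper's own argument: the paper disposes of this corollary with the single word ``Immediate'', meaning exactly your two applications of the preceding fibre corollary (the fibres of any model of $T^{Ra}$ are models of $T$, giving left to right; and a constant-fibre randomisation of an arbitrary $\cM \models T$, as in your one-point or $\cM^\sA$ construction, is a model of $T^{Ra}$ with $E\[\varphi\] = \varphi^{\cM}$, giving right to left). Your only addition is the explicit syntax--semantics bookkeeping via the completeness theorem of \cite{BenYaacov-Pedersen:CompleteDeduction}, which the paper leaves implicit in its use of $\vdash$.
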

\begin{proof}
  Immediate.
\end{proof}

\begin{cor}[Keisler's \emph{Validity Axiom}]
  Assume $\varphi$ is a valid $\cL$-sentence.
  Then $T_0^{Ra} \vdash \[\varphi\] = 0$.
\end{cor}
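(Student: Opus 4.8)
The plan is to read the statement off as the special case $T = \emptyset$ of the corollary immediately preceding it. By the definition of the randomisation of a theory, the set of transfer axioms $\{\[\psi\] = 0\}_{\psi \in T}$ is empty when $T = \emptyset$, so $\emptyset^{Ra} = T_0^{Ra}$. A valid $\cL$-sentence $\varphi$ is, by definition, one that holds in every $\cL$-structure, i.e.\ $\emptyset \models \varphi$; equivalently, by the completeness theorem for continuous first-order logic (\cite{BenYaacov-Pedersen:CompleteDeduction}), $\emptyset \vdash \varphi$. Feeding $T = \emptyset$ into the preceding corollary, which asserts $T \vdash \varphi \Longleftrightarrow T^{Ra} \vdash \[\varphi\] = 0$, the left-hand side holds and therefore $T_0^{Ra} \vdash \[\varphi\] = 0$, as required.

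Should one prefer not to route through that corollary (which is itself dispatched with ``Immediate'' and rests on the satisfaction corollary), I would instead argue semantically and invoke completeness for the randomisation theory directly. Let $(\rcM,\sA)$ be any model of $T_0^{Ra}$, identified with its canonical representation based on a family $\{\cM_\omega\}_{\omega\in\Omega}$. Since $\varphi$ is valid, $\cM_\omega \models \varphi$ for every $\omega \in \Omega$, so the satisfaction corollary (which gives $\rcM \models \[\varphi\] = 0$ exactly when $\cM_\omega \models \varphi$ for all $\omega$) yields $\rcM \models \[\varphi\] = 0$. As this holds in every model of $T_0^{Ra}$, soundness together with completeness gives $T_0^{Ra} \vdash \[\varphi\] = 0$.

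The argument is almost entirely definitional, so there is no genuine obstacle; the one non-bookkeeping ingredient is the passage from validity, a semantic notion, to provability, which is supplied by the completeness theorem cited above. In particular, no further use of the internal structure of $T_0^{Ra}$ is needed beyond the two corollaries already established, and the two routes above differ only in whether completeness is applied in $\cL$ (for the original theory) or in $\cL^R$ (for $T_0^{Ra}$).
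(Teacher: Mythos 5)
Your proof is correct and is essentially the paper's own (implicit) argument: the paper supplies no separate proof for this corollary, which is exactly the preceding corollary specialised to $T = \emptyset$ (so that $T^{Ra} = T_0^{Ra}$) — your first route — while your second route, through the satisfaction corollary, is precisely why that preceding corollary is itself ``Immediate''. The one step I would strike is the appeal to the completeness theorem of \cite{BenYaacov-Pedersen:CompleteDeduction}. In this section of the paper, $T \vdash \varphi$ is used for semantic consequence (it has to be: no proof calculus for first-order continuous logic is developed here, and the preceding corollary is dispatched without one), so ``$\varphi$ is valid'' already \emph{is} the hypothesis $\emptyset \vdash \varphi$, and no completeness theorem is needed on either route. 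Moreover, under a genuinely proof-theoretic reading, the cited completeness theorem is only approximate — it yields deductions of $\varphi \dotminus 2^{-n}$ for every $n$, not of the condition $\varphi = 0$ itself — so the step you single out as ``the one non-bookkeeping ingredient'' would, if taken literally, be the one step that is not licensed; fortunately it is also the one step that is not needed.
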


\subsection{A variant of Łoś's Theorem}

\begin{thm}[Łoś's Theorem for randomisation]
  \label{thm:RandomLos}
  Let $\sM_\Omega$ be a family of structures, $\rM = \prod \sM$, and let $E$ be an integration functional on $\sA = [0,1]^\Omega$.
  Let $(\widehat \rcM,\widehat \sA)$ denote the structure associated to the randomisation $(\rM,\sA)$.

  Then $(\rM,\sA)$ is full and for every formula $\varphi(\bar x)$ and every $\bar \ra \in \rM^n$:
    \begin{gather*}
      \bigl[ \< \varphi(\bar \ra) \> \bigr] =
      \[ \varphi([\bar \ra]) \].
    \end{gather*}
\end{thm}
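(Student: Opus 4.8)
The plan is to prove fullness directly and then establish the satisfaction identity by induction on $\varphi$, following the inductive definition of $\[\varphi\]$. Fullness is immediate: given $\ra,\rb \in \rM$ and $X \in \sA = [0,1]^\Omega$, the section defined by $\rc(\omega) = \ra(\omega)$ where $X(\omega) = 1$ and $\rc(\omega) = \rb(\omega)$ otherwise lies in $\prod \sM = \rM$, since there is no constraint whatsoever on sections. Consequently $(\rcM,\sA)$ is a pre-model of $T_0^{Ra}$ by \fref{lem:RandomisationT0R}, so $(\widehat\rcM,\widehat\sA) \models T_0^{Ra}$ and the right-hand side $\[\varphi([\bar\ra])\]$ is meaningful, its quantifiers being computed as the $L^1$-lattice operations $\sinf_\ry$, $\ssup_\ry$ of \fref{lem:RandomVarQuantifier}.

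For the identity I would induct on $\varphi$. In the atomic case $\varphi = P(\bar\tau)$ the interpretation of the symbol $\[P\]$ in the pre-structure $(\rcM,\sA)$ is by definition $\bar\rb \mapsto \<P(\bar\rb)\>$, and since function symbols act coordinate-wise, $\<\varphi(\bar\ra)\>$ and $\[\varphi\]^{\rcM}(\bar\ra)$ are literally the same element of $\sA$; applying the homomorphism $[\cdot]$ and using that it commutes with terms gives $[\<\varphi(\bar\ra)\>] = \[\varphi([\bar\ra])\]$. The connective steps are equally routine: $\<\cdot\>$ realises $\neg,\half,\dotminus$ point-wise while $[\cdot]$ respects them on the auxiliary sort, so the claim propagates through the induction hypothesis.

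The only real work is the quantifier step, say $\varphi = \inf_y \psi$ (the $\sup$ case being dual). Here I would first record the point-wise identity $\<\inf_y\psi(\bar\ra)\> = \sinf^s\{\<\psi(\rb,\bar\ra)\>\colon \rb\in\rM\}$: at each $\omega$ the continuous-logic value is $\inf_{b\in M_\omega}\psi^{\cM_\omega}(b,\bar\ra(\omega))$, and because $\rM$ is the \emph{full} product every $b \in M_\omega$ is $\rb(\omega)$ for some section $\rb$. Writing $Z$ for this point-wise infimum and $L = \sinf_\ry\[\psi([\bar\ra],\ry)\]$ for the right-hand side, the goal is $[Z] = L$. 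The inequality $[Z] \le L$ follows because, by the induction hypothesis, $[Z] \le [\<\psi(\rb,\bar\ra)\>] = \[\psi([\bar\ra],[\rb])\]$ for every $\rb\in\rM$, so $[Z]$ is a lower bound of the family whose greatest lower bound is $L$ (using density of $[\rM]$ in $\widehat\rM$ together with the uniform continuity of the definable predicate $\[\psi\]$ to pass from sections to arbitrary $\ry\in\widehat\rM$).

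The reverse inequality $L \le [Z]$ is where the full-product hypothesis is indispensable, and is the main obstacle. Given $\varepsilon > 0$, I would choose for each $\omega$ an element $b_\omega \in M_\omega$ with $\psi^{\cM_\omega}(b_\omega,\bar\ra(\omega)) \le Z(\omega) + \varepsilon$ and assemble them into a single section $\rb \in \prod\sM = \rM$; this point-wise selection is legitimate precisely because sections are unconstrained (in a proper randomisation $\rM$ strictly smaller than $\prod\sM$ it could fail, which is exactly why the statement is special to $\rM = \prod\sM$). Then $\<\psi(\rb,\bar\ra)\> \dotminus Z \le \varepsilon$ everywhere, whence, using $L \le [\<\psi(\rb,\bar\ra)\>]$ and monotonicity of $E$, one gets $E\bigl(L \dotminus [Z]\bigr) \le E\bigl([\<\psi(\rb,\bar\ra)\>] \dotminus [Z]\bigr) \le \varepsilon$; letting $\varepsilon \to 0$ gives $L \le [Z]$. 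Combining the two inequalities yields $[Z] = L$ and closes the induction. I expect the bookkeeping around the lattice infimum — reconciling the simple (point-wise) infimum $Z$ with the $L^1$-infimum $L$ through the $\varepsilon$-optimal section and the monotonicity of $E$ — to be the delicate part, mirroring the corresponding step of \fref{thm:Satisfaction}.
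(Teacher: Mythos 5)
Your proof is correct and is essentially the paper's own argument: in both, the heart of the matter is the identity $\bigl[ \< \inf_y \psi(\bar \ra, y) \> \bigr] = \inf_{\rb\in\rM} \bigl[ \< \psi(\bar \ra, \rb) \> \bigr]$ (lattice infimum in $\widehat\sA$), proved exactly as you do via an Axiom-of-Choice selection of $\varepsilon$-optimal sections — legitimate precisely because $\rM = \prod\sM$ — together with density of the image of $\rM$ in $\widehat\rM$ to identify this infimum with $\[ \inf_y \psi([\bar\ra],y) \]$. The only divergence is organisational: you run a structural induction on $\varphi$, whereas the paper first replaces $\varphi$ by an equivalent prenex formula and inducts on the number of quantifiers, so your variant merely spares the (easy) remark that $\[\varphi\] = \[\psi\]$ for equivalent formulae $\varphi$ and $\psi$.
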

\begin{proof}
  Fullness is immediate.
  We claim that
  $\bigl[ \< \sinf_y \varphi(\bar \ra,y) \> \bigr]
  = \inf_{\rb\in\rM}
  \bigl[ \< \varphi(\bar \ra,\rb) \> \bigr]$
  for every formula $\varphi(\bar x,y)$ and every
  $\bar \ra \in \rM^n$,
  where the infimum on the right hand side is in the sense of the
  lattice
  $\widehat \sA$.
  Indeed, the inequality $\leq$ is immediate.
  For $\geq$ observe
  that using the Axiom of Choice,
  for every $\varepsilon > 0$ we can find $\rb \in \rM$ such that
  $\< \sinf_y \varphi(\bar \ra,y) \> + \varepsilon \geq
  \< \varphi(\bar \ra,\rb) \>$,
  whereby
  $\bigl[ \< \sinf_y \varphi(\bar \ra,y) \> \bigr]
  + \varepsilon \geq
  \bigl[ \< \varphi(\bar \ra,\rb) \> \bigr]$.

  We now prove the main assertion.
  First of all, we may replace $\varphi$ with an equivalent formula
  $\psi$.
  Indeed, on the left hand side we have immediately
  $\< \varphi(\bar \ra) \> = \< \psi(\bar \ra) \>$.
  For the right hand side, we have
  $|\[\varphi\]-\[\psi\]| = \[|\varphi-\psi|\]$, whereby $T_0^{Ra} \vdash \[\varphi\] = \[\psi\]$.
  We may therefore assume that $\varphi$ is in prenex form.
  We now proceed by induction on the number of quantifiers.
  If $\varphi$ is quantifier-free then
  $\bigl[ \< \varphi(\bar \ra) \> \bigr] =
  \[ \varphi([\bar \ra]) \]$
  by construction.
  For the induction step, recall that
  \begin{gather*}
    \[ \sinf_y \varphi([\bar \ra],y) \]
    = \sinf_\ry \[ \varphi([\bar \ra],\ry) \]
    = \inf_{\rb\in\widehat\rM} \[ \varphi([\bar \ra],\rb) \]
    = \inf_{\rb\in\rM} \[ \varphi([\bar \ra],[\rb]) \].
  \end{gather*}
  We conclude using the claim and the induction hypothesis.
\end{proof}

Let us go back to the ultra-product example (\fref{exm:UltraProduct}),
where $\rM = \prod \sM$ and
$\widehat \rcM = \prod_\cU \sM$.
By construction
$E\[ \sinf_y \varphi([\bar \ra],y) \]
= \inf_{\rb\in\rM} E\[ \varphi([\bar \ra],[\rb]) \]$.
One also always has
$E\[ \neg\varphi([\bar \ra]) \]
= \neg E\[ \varphi([\bar \ra]) \]$,
$E\[ \half\varphi([\bar \ra]) \]
= \half E\[ \varphi([\bar \ra]) \]$.
Since $E = E_\cU$ is given by an ultra-filter,
we have moreover
$E\[ \varphi([\bar \ra]) \dotminus \psi([\bar \ra]) \]
= E\[ \varphi([\bar \ra]) \] \dotminus
E\[ \psi([\bar \ra]) \]$.
Thus the truth value of $\varphi([\bar \ra])$ in the ultra-product is
precisely $E\[ \varphi([\bar \ra]) \]$ in the sense of the randomised
structure.
Now the last item of \fref{thm:RandomLos} yields the classical
version of Łoś's Theorem:
\begin{gather*}
  \varphi([\bar \ra])
  = E_\cU \bigl[ \< \varphi(\bar \ra) \> \bigr]
  = \lim_\cU \varphi(\ra(\omega)).
\end{gather*}

Let us pursue a little further this analogy with classical
ultra-products.
Classical ultra-product constructions consist of fixing a family
$\sM_\Omega$ and a filter $\cF$ on $\Omega$ with certain desired
properties, then extending this filter to an ultra-filter and taking
the ultra-product.
A filter on $\Omega$ can be viewed as a partial $0/1$ measure:
some sets have measure zero, some measure one, and for some the
measure is not known.
The $[0,1]$-valued analogue is a partial integration
functional on $[0,1]^\Omega$.

\begin{dfn}
  \label{dfn:PartialIntegrationSpace}
  A \emph{partial integration space}
  is a triplet
  $(\Omega,\sA_0,E_0)$ where $\Omega$ is a set,
  $\sA_0 \subseteq [0,1]^\Omega$ is any subset,
  and $E_0 \colon \sA_0 \to [0,1]$
  is a functional satisfying that for every finite sequence
  $\{(X_i,m_i)\}_{i<\ell} \subseteq \sA_0 \times \bZ$ and $k \in \bZ$:
  \begin{gather*}
    \sum m_iX_i \geq k
    \quad \Longrightarrow \quad
    \sum m_iE_0(X_i) \geq k.
  \end{gather*}
  In this case we say that $E_0$ is a
  \emph{partial integration functional}.
\end{dfn}

Clearly every integration functional is a partial integration
functional.
Conversely,

\begin{fct}
  \label{fct:PartialIntegrationFunctional}
  Let $(\Omega,\sA_0,E_0)$ be a partial integration space.
  Then $E_0$ can be extended to a total integration functional $E$ on
  $\sA = [0,1]^\Omega$, rendering
  $(\Omega,\sA,E)$ a (total) integration space.

  Moreover, if $(\Omega,\sA_0,E_0)$ is an integration space, and
  atomless as such, then $(\Omega,\sA,E)$ is atomless as well.
\end{fct}
\begin{proof}
  See \cite[Section~5]{BenYaacov-Keisler:MetricRandom}.
\end{proof}

\begin{dfn}
  \label{dfn:PartialRandomisation}
  A \emph{partial randomisation} based on a family $\sM_\Omega$ is a triplet $(\rM,\sA_0,E_0)$ satisfying all the properties of an ordinary (total) randomisation, with the exception that we do not require that $\< P(\bar \ra) \> \in \sA_0$.
  We say that a partial randomisation is \emph{atomless} if $(\sA_0,E_0) \vDash ARV$.

  By \fref{fct:PartialIntegrationFunctional} we may extend $E_0$ to an integration functional $E$ on $\sA = [0,1]^\Omega$.
  We say that the (full) randomisation $(\rM,\sA) = (\rM,\sA,E)$ is a \emph{totalisation} of $(\rM,\sA_0,E_0)$ and that the associated structure $(\widehat \rcM,\widehat \sA)$ is a structure associated to $(\rM,\sA_0,E_0)$.
  (It is \emph{an} associated structure rather than \emph{the} associated structure because of the arbitrary choices involved.)
\end{dfn}

\begin{dfn}
  \label{dfn:RandomUltraProduct}
  We recall that a \emph{random family of structures}
  $\sM_{(\Omega,\sF,\mu)}$ consists of a family of structures
  $\sM_\Omega = \{\cM_\omega\}_{\omega\in\Omega}$ indexed by a
  probability space $(\Omega,\sF,\mu)$.
  To every such random family we associate a
  natural partial randomisation
  $(\rM,\sA_0,E_0)$ where
  $\rM = \prod \sM$ and $(\Omega,\sA_0,E_0)$ is the integration space
  of $\sF$-measurable functions on $\Omega$.
  It is atomless if and only if
  $(\Omega,\sF,\mu)$ is an atomless probability space.

  If $(\widehat \rcM,\widehat \sA)$ is a structure associated to $(\rM,\sA_0,E_0)$ then we also say that it is a structure associated to the random family $\sM_{(\Omega,\sF,\mu)}$.
\end{dfn}

\begin{cor}
  \label{cor:RandomFamily}
  Let $\sM_{(\Omega,\sF,\mu)}$ be a random family of structures and let $(\rcM,\sA)$ be an associated structure.
  Then for every $\bar \ra$ in $\prod \sM$ and every formula $\varphi(\bar x)$, if $\< \varphi(\bar \ra) \> \in [0,1]^\Omega$ is $\sF$-measurable then
  \begin{gather*}
    E\[ \varphi([\bar \ra]) \]^\rcM
    =
    \int_\Omega \< \varphi(\bar \ra) \>\, d\mu
  \end{gather*}
\end{cor}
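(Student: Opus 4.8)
The plan is to derive the identity directly from Łoś's Theorem for randomisation (\fref{thm:RandomLos}) together with the defining property of the totalised integration functional. The key observation is that $E\[\varphi([\bar\ra])\]^\rcM$ is nothing more than the integration functional evaluated on a single explicit element of the auxiliary sort, and that the measurability hypothesis is exactly what forces that value to coincide with the $\mu$-integral.

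First I would unwind the set-up. By \fref{dfn:PartialRandomisation} and \fref{dfn:RandomUltraProduct}, the structure $(\rcM,\sA)$ is the one associated to a totalisation of the partial randomisation $(\rM,\sA_0,E_0)$ attached to the random family: here $\rM = \prod\sM$, the pre-structure auxiliary sort is $[0,1]^\Omega$ carrying an integration functional $E$ that extends $E_0$ by \fref{fct:PartialIntegrationFunctional}, and $\sA_0$ is precisely the set of $\sF$-measurable functions on $\Omega$. Because $\rM$ is the \emph{full} product, \fref{thm:RandomLos} applies verbatim to this totalisation and gives $\[\varphi([\bar\ra])\] = \bigl[\<\varphi(\bar\ra)\>\bigr]$ in the auxiliary sort of $(\rcM,\sA)$. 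Applying the functional $E$, and using that the canonical map $[\cdot]$ is isometric (so that $E\bigl([X]\bigr) = d\bigl([X],0\bigr) = d(X,0) = E(X)$ for every $X \in [0,1]^\Omega$), I obtain the first two equalities in
\begin{gather*}
  E\[\varphi([\bar\ra])\]^\rcM
  = E\Bigl( \bigl[ \<\varphi(\bar\ra)\> \bigr] \Bigr)
  = E\bigl( \<\varphi(\bar\ra)\> \bigr)
  = E_0\bigl( \<\varphi(\bar\ra)\> \bigr)
  = \int_\Omega \<\varphi(\bar\ra)\>\, d\mu,
\end{gather*}
where in the last two terms $E$ and $E_0$ denote the functionals on $[0,1]^\Omega$ and on the $\sF$-measurable functions respectively.

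It then remains to justify the third and fourth equalities, and this is the only place the hypothesis enters. By assumption $\<\varphi(\bar\ra)\>$ is $\sF$-measurable, hence lies in the domain $\sA_0$ of $E_0$; since $E$ extends $E_0$ we get $E\bigl(\<\varphi(\bar\ra)\>\bigr) = E_0\bigl(\<\varphi(\bar\ra)\>\bigr)$, and the final equality is just the definition of $E_0$ as integration $d\mu$. This completes the argument, and I do not expect any serious obstacle once Łoś's Theorem is in hand. The one subtlety worth flagging is precisely why the measurability assumption cannot be dropped: for a non-measurable $\<\varphi(\bar\ra)\>$ the value $E\bigl(\<\varphi(\bar\ra)\>\bigr)$ depends on the arbitrary choices made when totalising $E_0$ via \fref{fct:PartialIntegrationFunctional}, and need not equal the integral — so the hypothesis is genuinely needed to place the element inside $\sA_0$, where $E$ and $E_0$ are forced to agree.
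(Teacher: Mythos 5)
Your proof is correct and is essentially the paper's own argument: the paper proves this corollary in one line ("immediate from Theorem \ref{thm:RandomLos} and the construction"), and you have simply spelled out exactly those details — applying Łoś's Theorem to the totalisation $(\rM,\sA,E)$ of the partial randomisation attached to the random family, noting that the canonical map preserves $E$ since $E(x)=d(x,0)$, and using that $E$ extends $E_0$ on the $\sF$-measurable functions. Your closing remark on why measurability is needed (the totalisation involves arbitrary choices off $\sA_0$) is a correct and worthwhile observation, consistent with the paper's framework.
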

\begin{proof}
  Immediate from \fref{thm:RandomLos} and the construction.
\end{proof}

This can be improved to construct extensions containing elements with
desired properties.

\begin{dfn}
  An embedding
  $\sigma\colon(\rcM,\sA) \to (\rcM_1,\sA_1)$ will be called a
  $\[\cdot\]$-embedding if
  $\sigma\[ \varphi(\bar \ra) \]^\rcM
  = \[ \varphi(\sigma\bar \ra) \]^{\rcM_1}$
  for every
  $\bar \ra \in \rM^n$ and formula $\varphi(\bar x)$.
\end{dfn}

\begin{dfn}
  A \emph{morphism} of integration spaces
  $\pi\colon (\Omega',\sA',E') \to (\Omega,\sA,E)$ is a projection
  $\pi \colon \Omega' \twoheadrightarrow \Omega$
  such that
  $X \circ \pi \in \sA'$ and $E'(X \circ \pi) = E(X)$
  for all $X \in \sA$.
\end{dfn}

\begin{cor}
  \label{cor:RandomFamilyExtension}
  Let $(\rcM,\sA) \vDash T_0^{Ra}$ with canonical representation $(\rM,\sA)$ based on $\sM_{(\Omega,\sF,\mu)}$, so in particular $\sA = C(\Omega,[0,1])$.

  Let $\pi\colon (\Omega',\sA_0',E_0') \to (\Omega,\sA,E)$ be a morphism of integration spaces and let $\sM'_{\Omega'} = \{\cM'_{\omega'}\}_{\omega'\in\Omega'}$ be a family of elementary extensions $\cM_{\pi\omega'} \preceq \cM'_{\omega'}$.
  Set $\rM' = \prod \sM'_{\Omega'}$, $\sA' = [0,1]^{\Omega'}$, and for $\ra \in \rM$ and $X \in \sA$ define
  \begin{gather*}
    \sigma\ra = \ra \circ \pi = (\omega' \mapsto \ra(\pi\omega')) \in \rM',
    \qquad
    \sigma X = X \circ \pi \in \sA'.
  \end{gather*}
  Let $(\widehat{\rcM'},\widehat{\sA'})$ be an associated structure to the partial randomisation $(\rM',\sA_0',E_0')$, and let $[\sigma] \colon (\rcM,\sA) \to (\widehat{\rcM'},\widehat{\sA'})$ be the map $\ra \mapsto [\sigma\ra]$, $X \mapsto [\sigma X]$.
  Then
  \begin{enumerate}
  \item
    The map $[\sigma]$ is a $\[\cdot\]$-embedding.
  \item For every $\bar \ra$ in $\prod \sM'$ and every $\varphi(\bar x)$, if $\< \varphi(\bar \ra) \> \in \sA_0'$ then
    \begin{gather*}
      E\[ \varphi([\bar \ra]) \]^{\widehat{\rcM'}} = E_0' \< \varphi(\bar \ra) \>.
    \end{gather*}
  \end{enumerate}
\end{cor}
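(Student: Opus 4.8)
The plan is to reduce both assertions to \fref{thm:RandomLos} (Łoś's Theorem) applied to the totalisation $(\rM',\sA')$, where $\rM' = \prod\sM'_{\Omega'}$ and $\sA' = [0,1]^{\Omega'}$, combined with \fref{thm:Satisfaction} for $(\rcM,\sA)$ and the morphism identity $E'(X\circ\pi) = E(X)$ for $X \in \sA$ (where $E'$ denotes the integration functional totalising $E_0'$). The one step carrying genuine content — and the place where elementariness of the extensions is used — is a pullback identity for truth-value functions: for every $\cL$-formula $\varphi$ and every $\bar\ra \in \rM^n$,
\begin{gather*}
  \< \varphi(\sigma\bar\ra) \> = \< \varphi(\bar\ra) \> \circ \pi
  \quad\text{as functions on } \Omega'.
\end{gather*}
I would prove this by unwinding definitions: for $\omega' \in \Omega'$ we have $(\sigma\ra)(\omega') = \ra(\pi\omega') \in \cM_{\pi\omega'}$, so $\< \varphi(\sigma\bar\ra) \>(\omega') = \varphi^{\cM'_{\omega'}}(\bar\ra(\pi\omega'))$; since $\cM_{\pi\omega'} \preceq \cM'_{\omega'}$ and the arguments lie in $\cM_{\pi\omega'}$, this equals $\varphi^{\cM_{\pi\omega'}}(\bar\ra(\pi\omega')) = \< \varphi(\bar\ra) \>(\pi\omega')$. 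Everything else is bookkeeping around this identity.

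For the first item I verify the $\[\cdot\]$-embedding equation $[\sigma]\[ \varphi(\bar\ra) \]^\rcM = \[ \varphi([\sigma\bar\ra]) \]^{\widehat{\rcM'}}$ for $\bar\ra \in \rM^n$. By \fref{thm:RandomLos} the right-hand side equals $\bigl[ \< \varphi(\sigma\bar\ra) \> \bigr]$, which by the pullback identity is $\bigl[ \< \varphi(\bar\ra) \> \circ \pi \bigr]$; on the left, \fref{thm:Satisfaction} identifies $\[ \varphi(\bar\ra) \]^\rcM$ with the element $\< \varphi(\bar\ra) \> \in \sA$, and $[\sigma]$ sends it to $\bigl[ \< \varphi(\bar\ra) \> \circ \pi \bigr]$, so the two sides agree. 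Taking $\varphi$ atomic shows that $[\sigma]$ respects the symbols $\[ P \]$ and, via the distance predicate, is isometric on the main sort, because R2 and the morphism identity give $d([\sigma\ra],[\sigma\rb]) = E'(\< d(\ra,\rb) \>\circ\pi) = E\< d(\ra,\rb) \> = d(\ra,\rb)$; commutation with the $\cL$-function symbols again uses $\cM_{\pi\omega'} \preceq \cM'_{\omega'}$, and on the auxiliary sort $[\sigma]$ is an isometric $\cL_{RV}$-embedding directly from the morphism identity. Hence $[\sigma]$ is an embedding, and by the displayed equation a $\[\cdot\]$-embedding.

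For the second item, now allowing $\bar\ra$ to range over all of $\prod\sM'$, suppose $\< \varphi(\bar\ra) \> \in \sA_0'$. \fref{thm:RandomLos} for $(\rM',\sA')$ gives $\[ \varphi([\bar\ra]) \]^{\widehat{\rcM'}} = \bigl[ \< \varphi(\bar\ra) \> \bigr]$; applying the expectation functional of $\widehat{\sA'}$, which is the one induced by $E'$, yields $E\[ \varphi([\bar\ra]) \]^{\widehat{\rcM'}} = E'\< \varphi(\bar\ra) \>$. Since $\< \varphi(\bar\ra) \>$ lies in $\sA_0'$ and $E'$ extends $E_0'$, the right-hand side equals $E_0'\< \varphi(\bar\ra) \>$, as required. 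I do not expect any real obstacle beyond the pullback identity, the only subtlety being to keep the three integration functionals $E$, $E_0'$ and its totalisation $E'$ carefully distinguished.
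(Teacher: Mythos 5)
Your proposal is correct and takes essentially the same route as the paper: the paper's proof is exactly the chain $\bigl[\sigma\[\varphi(\bar\ra)\]\bigr] = \bigl[\sigma\<\varphi(\bar\ra)\>\bigr] = \bigl[\<\varphi(\sigma\bar\ra)\>\bigr] = \[\varphi([\sigma\bar\ra])\]$, i.e., \fref{thm:Satisfaction}, your pullback identity, and \fref{thm:RandomLos}, with item (2) likewise read off directly from Łoś's theorem. The only difference is that you spell out what the paper dismisses as ``easy to check'' (that $[\sigma]$ is an embedding) and the bookkeeping among $E$, $E_0'$ and its totalisation $E'$.
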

\begin{proof}
  For the first item it is easy to check that $[\sigma]$ is indeed an embedding.
  In order to see that $[\sigma]$ is a $\[\cdot\]$-embedding let $\bar \ra \in \rM^n$ and let $\varphi(\bar x)$ be a formula.
  Then $\< \varphi(\bar \ra) \>
  = \[ \varphi(\bar \ra) \] \in \sA
  \subseteq [0,1]^\Omega$ by \fref{thm:Satisfaction}, so
  \begin{gather*}
    \bigl[ \sigma\[ \varphi(\bar \ra) \] \bigr]
    = \bigl[ \sigma\< \varphi(\bar \ra) \> \bigr]
    = \bigl[ \< \varphi(\sigma \bar \ra) \> \bigr]
    = \[ \varphi([\sigma \bar \ra]) \].
  \end{gather*}
  The second item is an immediate consequence of \fref{thm:RandomLos}.
\end{proof}

\subsection{Quantifier elimination and types}

Let $T_0^R$ consist of $T_0^{Ra}$ along with the atomlessness axiom ARV.
In other words, $T_0^R$ consists of the theory $ARV$ for the auxiliary sort plus axioms R1-3.
Similarly, we define $T^R = T^{Ra} + {\rm ARV} = T_0^R \cup \{\[\varphi\] = 0\}_{\varphi\in T}$.

\begin{exm}
  \label{exm:AssociatedModelTR}
  Let $\cM \vDash T$ and let $(\Omega,\sF,\mu)$ be any atomless probability space.
  Let $(\rcM,\sA)$ be an associated structure to the constant random family $\sM_{(\Omega,\sF,\mu)} = \{\cM\}_{\omega \in \Omega}$.
  Then $(\rcM,\sA) \vDash T^{Ra}$ by \fref{cor:RandomFamily} and $\sA$ is atomless, whereby $(\rcM,\sA) \vDash T^R$.
\end{exm}

\begin{lem}
  \label{lem:StrongCompanions}
  Every model $(\rcM,\sA) \vDash T^{Ra}$ admits a $\[\cdot\]$-embedding $\sigma\colon (\rcM,\sA) \to (\rcM^1,\sA^1) \vDash T^R$.
  In particular, $T^{Ra}$ and $T^R$ are companions (which, as in classical logic, means that every model of one embeds in a model of the other, or equivalently, that the two theories have the same universal consequences $\sup_{\bar x} \varphi$ for quantifier-free $\varphi$).
\end{lem}
\begin{proof}
  Let $([0,1],\sB,\lambda)$ denote the Lebesgue measure on $[0,1]$.
  Apply \fref{cor:RandomFamilyExtension} to $(\Omega',\sF',\mu')
  = (\Omega,\sF,\mu) \times ([0,1],\sB,\lambda)$ and $\cM'_{\omega,r} = \cM_\omega$.
  The resulting embedding $\sigma\colon (\rcM,\sA) \to (\rcM^1,\sA^1)$ is a $\[\cdot\]$-embedding and $\sA^1$ is atomless.
  If $\varphi \in T$ is a sentence then $\[ \varphi \]^{\rcM^1}
  = \sigma\[ \varphi \]^\rcM = \sigma 0 = 0$.
  Thus $(\rcM^1,\sA^1) \vDash T^R$, as desired.
\end{proof}

Let us now fix an $\cL$-theory $T$.
As usual, $\tS_n(T)$ (or sometimes $\tS_{\bar x}(T)$) denotes the space of $n$-types of $T$.
Similarly, $\tS_n(T^R)$ (or $\tS_{\bar \rx}(T^R)$) denotes the space of $n$-types of the $\cL^R$-theory $T^R$.

Let us fix some additional notation.
For a compact Hausdorff space $X$, let $\fR(X)$ denote the space of regular Borel probability measures on $X$.
For $\varphi \in C(X,\bC)$ and $\mu \in \fR(X)$ let $\langle\varphi,\mu\rangle = \int \varphi \, d\mu$ and equip $\fR(X)$ with the weak topology, namely $\mu_s \to \mu$ if $\langle\varphi,\mu_s\rangle \to \langle\varphi,\mu\rangle$ for all $\varphi$.
It is a classical (and easy) fact that this renders $\fR(X)$ a compact Hausdorff space as well.

Let $\rp(\bar \rx) \in \tS_n(T^R)$.
It is not difficult to verify (e.g., using the Riesz Representation
Theorem) that there exists a unique regular Borel probability measure
$\nu_\rp \in \fR(\tS_n(T))$ characterised by the identity
$E\[\varphi(\bar \rx)\]^\rp
= \langle\varphi,\nu_\rp\rangle$
for every $\cL$-formula $\varphi(\bar x)$.
The map $\rp \mapsto \nu_\rp$ is continuous by definition of the topology
on $\fR(\tS_n(T))$.

We next claim that $\rp \mapsto \nu_\rp$ is surjective.
Indeed, let $\mu \in \fR(\tS_n(T))$.
For each $p \in \tS_n(T)$
choose a model $\cM_p$ and a realisation
$\bar a_p \in M_p^n$ of $p$
(we do not assume that $T$ is
complete so $\cM_p$ may have to vary with $p$).
Let $(\rcM,\sA)$ be a structure associated to the
random family
$\sM = \sM_{(\tS_n(T),\mu)} = \{\cM_p\}_{p\in\tS_n(T)}$.
Let $\bar \ra \in \prod \sM$ be given by
$\bar \ra(p) = \bar a_p$.
By \fref{cor:RandomFamily}, for every formula
$\varphi(\bar x)$:
\begin{gather*}
  E\[\varphi([\bar \ra])\]
  = E[ \< \varphi(\bar \ra) \> ]
  = \langle\varphi,\mu\rangle.
\end{gather*}
In particular, if $\varphi \in T$ is a sentence then $E\[\varphi\] = 0$, so $(\rcM,\sA) \vDash T^{Ra}$.
By \fref{lem:StrongCompanions} we can embed $(\rcM,\sA)$ in a model $(\rcM^1,\sA^1) \vDash T^R$, and if $\rp = \tp^{\rcM^1}(\bar \ra)$ then $\nu_\rp = \mu$.
We argued above for types in finitely many variables, but in exactly the same manner we associate to each $\rp \in \tS_I(T^R)$ a regular Borel probability measure $\nu_\rp \in \fR(\tS_I(T))$ and this map is surjective, for an arbitrary index set $I$.

For quantifier elimination we shall require the following fact from \cite{Bollobas-Varopoulos:Representation}.

\begin{fct}
  \label{fct:ProbabilityHall}
  Let $S$ be any set, $(\Omega,\sF,\mu)$ an atomless probability space.
  For each $x \in S$ let us be given a \emph{weight} $w_x \geq 0$ and an event $C_x \in \sF$.
  For $T \subseteq S$ let $w_T = \sum_{x\in T} w_x$, $C_T = \bigcup_{x\in T} C_x$.
  Then the following are equivalent:
  \begin{enumerate}
  \item For all $T \subseteq S$: $\mu(C_T) \geq w_T$.
  \item There exists a disjoint family $\{D_x\}_{x\in S}$ such that $D_x \subseteq C_x$ and $\mu(D_x) = w_x$.
  \end{enumerate}
  If $w_S = 1$ then $\{D_x\}_{x\in S}$ is a partition of $\Omega$ (up to null measure).
\end{fct}

\begin{lem}
  \label{lem:MeasureVariableExtension}
  Let $(\rcM,\sA) \vDash T_0^R$ be $\aleph_0$-saturated, $\bar \ra \in \rM^n$,
  and let $\nu_{\bar \ra}$ be an abbreviation for $\nu_{\tp(\bar \ra)}$.
  Let $\theta\colon \tS_{n+1}(\cL) \to \tS_n(\cL)$
  be the restriction to the first
  $n$ variables.
  Then:
  \begin{enumerate}
  \item For every $\rb \in \rM$, $\nu_{\bar \ra}$ is the image measure of
    $\nu_{\bar \ra,\rb}$ under $\theta$.
  \item Conversely, let $\eta \in \fR(\tS_{n+1}(\cL))$
    by such that its image measure under $\theta$ is $\nu_{\bar \ra}$.
    Then there is $\rb \in \rM$ such that $\eta = \nu_{\bar \ra,\rb}$.
  \end{enumerate}
\end{lem}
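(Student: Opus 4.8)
The plan is to treat the two parts separately: the first is a direct computation from the defining property of $\nu_\rp$, while the second carries all the weight and is where the probabilistic Hall theorem enters.

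For the first part, recall that $\nu_\rp$ is characterised by $E\[\varphi(\bar\rx)\]^\rp = \langle\varphi,\nu_\rp\rangle$ for every $\cL$-formula $\varphi$. Fix $\rb\in\rM$ and a formula $\psi(\bar x)$ in the first $n$ variables. Viewed as a formula in $n+1$ variables, $\psi$ is exactly the pullback $\psi\circ\theta$ on $\tS_{n+1}(\cL)$, and $\psi(\bar\ra,\rb)=\psi(\bar\ra)$ as random variables, so
\begin{gather*}
  \langle\psi,\theta_*\nu_{\bar\ra,\rb}\rangle
  = \langle\psi\circ\theta,\nu_{\bar\ra,\rb}\rangle
  = E\[\psi(\bar\ra,\rb)\]
  = E\[\psi(\bar\ra)\]
  = \langle\psi,\nu_{\bar\ra}\rangle,
\end{gather*}
where $\theta_*$ denotes image measure. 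As this holds for every $\psi\in C(\tS_n(\cL))$, it says precisely that the image of $\nu_{\bar\ra,\rb}$ under $\theta$ is $\nu_{\bar\ra}$.

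For the second part, it is enough to realise the partial type $\Phi(\ry)$ over $\bar\ra$ consisting of the conditions $E\[\psi(\bar\ra,\ry)\]=\langle\psi,\eta\rangle$, one for each formula $\psi(\bar x,y)$; any realisation $\rb$ satisfies $\nu_{\bar\ra,\rb}=\eta$ by the defining identity. Since $\Phi$ is a partial type over the finite tuple $\bar\ra$ and $(\rcM,\sA)$ is $\aleph_0$-saturated, I only need $\Phi$ to be approximately finitely satisfiable. So fix $\psi_1,\dots,\psi_k$ and $\varepsilon>0$, identify $(\rcM,\sA)$ with its canonical representation based on $\sM_{(\Omega,\mu)}$, and write $\tau\colon\Omega\to\tS_n(\cL)$, $\omega\mapsto\tp\bigl(\bar\ra(\omega)\bigr)$, so that $\nu_{\bar\ra}=\tau_*\mu$ by \fref{cor:RandomFamily}. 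Choose a finite Borel partition $\{G_l\}_{l<m}$ of $\tS_{n+1}(\cL)$ so fine that each $\psi_i$ oscillates by less than $\varepsilon$ on each $G_l$, pick $q_l\in G_l$ and put $w_l=\eta(G_l)$. Let $C_l\subseteq\Omega$ be the event of those $\omega$ for which $\tp\bigl(\bar\ra(\omega)\bigr)$ is close enough to $\theta(q_l)$ that $q_l$ is realised to within $\varepsilon$ over $\bar\ra(\omega)$; for a fine enough partition $C_l$ contains $\tau^{-1}$ of a neighbourhood of $\theta(q_l)\supseteq\theta(G_l)$. The Hall condition then follows from the hypothesis $\theta_*\eta=\nu_{\bar\ra}$: for $T\subseteq m$,
\begin{gather*}
  \mu\Bigl(\bigcup_{l\in T}C_l\Bigr)
  \ \geq\ \nu_{\bar\ra}\Bigl(\theta\bigl(\textstyle\bigcup_{l\in T}G_l\bigr)\Bigr)
  \ =\ \theta_*\eta\Bigl(\theta\bigl(\textstyle\bigcup_{l\in T}G_l\bigr)\Bigr)
  \ \geq\ \eta\Bigl(\bigcup_{l\in T}G_l\Bigr)
  = \sum_{l\in T}w_l .
\end{gather*}
As $(\Omega,\mu)$ is atomless, \fref{fct:ProbabilityHall} yields a partition $\{D_l\}_{l<m}$ of $\Omega$ with $D_l\subseteq C_l$ and $\mu(D_l)=w_l$. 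On each $D_l$ the choice of $C_l$ provides $\rb_l\in\rM$ with $\<\psi_i(\bar\ra,\rb_l)\>$ within $\varepsilon$ of the value of $\psi_i$ at $q_l$ almost everywhere on $D_l$; gluing the $\rb_l$ along the events $D_l$ by fullness (\fref{lem:PreciseR3}) gives a single $\rb\in\rM$ agreeing with $\rb_l$ on $D_l$. Integrating, $E\[\psi_i(\bar\ra,\rb)\]=\sum_l\int_{D_l}\<\psi_i(\bar\ra,\rb_l)\>\,d\mu$ lies within $O(\varepsilon)$ of $\sum_l w_l\,\psi_i(q_l)$, which is within $\varepsilon$ of $\langle\psi_i,\eta\rangle$ by the oscillation bound. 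Rescaling $\varepsilon$ gives the required approximate realisation, and $\aleph_0$-saturation then produces an exact realisation of $\Phi$.

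The main obstacle is the per-cell step: securing for each event $D_l$ a single section $\rb_l\in\rM$ realising the extension $q_l$ uniformly (almost everywhere) over $\bar\ra$ on $D_l$. This is where $\aleph_0$-saturation of $(\rcM,\sA)$ interacts with the realizability of $q_l$ over the base types occurring on $D_l$ — alternatively one first realises $q_l$ in a suitable extension via \fref{cor:RandomFamilyExtension} and pulls it back — and it is also what forces the argument to proceed through approximations rather than an exact construction. By contrast, verifying the Hall inequality from $\theta_*\eta=\nu_{\bar\ra}$ is the clean combinatorial core that makes the fibre measures match, and I expect it to be routine once the partition is set up correctly.
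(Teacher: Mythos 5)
Your overall architecture is the same as the paper's: reduce by $\aleph_0$-saturation to approximate satisfaction of finitely many conditions, partition into cells of small oscillation, derive a Hall condition from $\theta_*\eta = \nu_{\bar\ra}$, apply \fref{fct:ProbabilityHall}, and glue witnesses along the resulting partition (part (1) is fine and the paper indeed dismisses it as immediate). But there is a genuine gap at exactly the point you flag as ``the main obstacle'': the existence of a \emph{single} section $\rb_l \in \rM$ approximating the values of $q_l$ on all of $C_l$ at once. The definition of $C_l$ only gives, for each $\omega \in C_l$ \emph{separately}, some $b_\omega \in \cM_\omega$ with the desired approximation; an arbitrary choice function $\omega \mapsto b_\omega$ is not an element of $\rM$, since in the canonical representation $\rM$ is a proper subset of $\prod\sM$ (it consists only of the sections coming from elements of $\rcM$). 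So the sentence ``the choice of $C_l$ provides $\rb_l\in\rM$'' is precisely the statement requiring proof, and neither of your two suggested escape routes is carried out. The paper fills this hole with \fref{lem:RandomVarQuantifier}: setting $\chi_l(\bar x,y) = \bigvee_i |\psi_i(\bar x,y) - \psi_i^{q_l}|$, the function $\[\chi_l(\bar\rx,\ry)\]$ is local and uniformly definable, so by the moreover part of that lemma (together with saturation, for exact attainment) there is $\rb_l \in \rM$ with $\[\chi_l(\bar\ra,\rb_l)\] = \[\inf_y \chi_l(\bar\ra,y)\]$, which is $< \varepsilon$ simultaneously on all of $C_l$. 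With that lemma invoked your construction goes through; without it, the proof never produces the sections it proposes to glue.

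A secondary flaw lies in your verification of the Hall condition, which passes through forward images: $\mu\bigl(\bigcup_{l\in T}C_l\bigr) \geq \nu_{\bar\ra}\bigl(\theta\bigl(\bigcup_{l\in T}G_l\bigr)\bigr) \geq \eta\bigl(\bigcup_{l\in T}G_l\bigr)$. The set $\theta(G_l)$ is the continuous image of a Borel set, which need not be Borel (nor, outside the metrizable, countable-language case, universally measurable), so $\nu_{\bar\ra}$ and $\theta_*\eta$ cannot simply be evaluated on it. The paper's formulation uses only preimages and avoids the issue: the oscillation bound gives $G_l \subseteq \theta^{-1}(U_l)$, where $U_l = \{p \in \tS_n(\cL)\colon (\inf_y \chi_l)^p < \varepsilon\}$ is open and $C_l$ is the event $\bigl\{\[\inf_y \chi_l(\bar\ra,y)\] < \varepsilon\bigr\}$, whence
\begin{gather*}
  \sum_{l\in T} w_l
  = \eta\Bigl(\bigcup_{l\in T} G_l\Bigr)
  \leq \eta\Bigl(\theta^{-1}\Bigl(\bigcup_{l\in T} U_l\Bigr)\Bigr)
  = \nu_{\bar\ra}\Bigl(\bigcup_{l\in T} U_l\Bigr)
  = \mu\Bigl(\bigcup_{l\in T} C_l\Bigr),
\end{gather*}
using only $\eta$-measurability of preimages of open sets. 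This is the same computation you intend, routed so that every set appearing is Borel.
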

\begin{proof}
  The first item is immediate.
  For the second, it is enough to show that for every finite family
  $\varphi_i(\bar x,y)$, $i < \ell$, and for every $\varepsilon > 0$, there is
  $\rb \in \rM$ such that
  $\big|
  \langle\varphi_i,\eta\rangle
  - E\[\varphi_i(\bar \ra,\rb)\]
  \big|
  < \varepsilon$ for $i < \ell$.

  Let $S = \{s_j\}_{j<k}$ be a partition of $[0,1]^\ell$ into finitely many
  Borel subsets, $\diam(s_i) < \varepsilon$.
  For $j < k$ let $w_j = \eta\{ \bar \varphi \in s_j \}$.
  Choose also $\bar t_j \in s_j$ and let
  $\psi_j = \bigvee_{i<\ell} |\varphi_i-t_{j,i}|$.
  Notice that
  \begin{gather*}
    \left| \langle\varphi_i,\eta\rangle - \sum_{j<k} w_jt_{j,i} \right|
    \leq \sum_{j<k} w_j\diam(s_j) < \varepsilon.
  \end{gather*}
  Let $C_j \in \sF$ be the event
  $\bigl\{ \[ \inf_y \psi_j(\bar \ra,y) \] < \varepsilon \}$.
  Following the notations of \fref{fct:ProbabilityHall},
  we claim that $\mu(C_T) \geq w_T$ for all $T \subseteq k$.
  Indeed, notice that
  $\{\bar \varphi \in s_j\}
  \subseteq \theta^{-1}\{\psi_j < \varepsilon\}$,
  whereby:
  \begin{align*}
    w_T &
    = \sum_{j\in T} \eta\{\bar \varphi \in s_j\}
    = \eta\left( \bigcup_{j\in T} \{\bar \varphi \in s_j\} \right)
    \leq \eta\left( \bigcup_{j\in T} \theta^{-1}\{\psi_j < \varepsilon\} \right)
    \\ &
    = \nu_{\bar \ra}\left( \bigcup_{j\in T} \{\psi_j < \varepsilon\} \right)
    = \mu(C_T).
  \end{align*}
  By \fref{fct:ProbabilityHall} there are events $D_j \subseteq C_j$ such that
  $w_T = \mu(D_T)$ for all $T \subseteq k$.
  Since the total weight is one, $\{D_j\}_{j<k}$ is a partition.
  By \fref{lem:RandomVarQuantifier} and saturation of $\rcM$ there are
  $\rb_j \in \rM$ such that
  $\[\inf_y \psi_j(\bar \ra,y)\]
  = \[ \psi_j(\bar \ra,\rb_j) \]$.
  Notice that:
  \begin{gather*}
    \[ |\varphi_i(\bar \ra,\rb_j) - t_{j,i}| \] \mathbf{1}_{D_j}
    \leq \[\inf_y \psi_j(\bar \ra,y)\] \mathbf{1}_{C_j}
    < \varepsilon.
  \end{gather*}
  Let
  $\rb = \big\langle D_0,\rb_0,\langle D_1,\rb_1,\ldots\rangle \big\rangle$,
  i.e., $\rb(\omega) = \rb_j(\omega)$ when $\omega \in D_j$.
  Now:
  \begin{align*}
    \left|
      \sum_{j<k} w_jt_{j,i}
      - E\[\varphi_i(\bar \ra,\rb)\]
    \right| &
    \leq \sum_{j<k} \left|
      w_jt_{j,i}
      - E\bigl(
      \[\varphi_i(\bar \ra,\rb)\] \mathbf{1}_{D_j}
      \bigr)
    \right|
    \\ &
    = \sum_{j<k} \left|
      E\Bigl[ \bigl(
      t_{j,i} - \[\varphi_i(\bar \ra,\rb_j)\]
      \bigr) \mathbf{1}_{D_j} \Bigr]
    \right|
    \\ &
    \leq E\left[\sum_{j<k}
      \bigl|
      t_{j,i} - \[\varphi_i(\bar \ra,\rb_j)\]
      \bigr|
      \mathbf{1}_{D_j}
    \right]
    < \varepsilon.
  \end{align*}
  Thus
  $\big|
  \langle\varphi_i,\eta\rangle
  - E\[\varphi_i(\bar \ra,\rb)\]
  \big|
  < 2\varepsilon$,
  which is good enough.
\end{proof}

\begin{thm}
  \label{thm:TRTypes}
  \begin{enumerate}
  \item The theories of the form $T^R$ (and in particular
    $T_0^R$) eliminate quantifiers in the main sort down to formulae
    of the form $E\[\varphi(\bar \rx)\]$.
  \item The map $\rp \mapsto \nu_\rp$ defined by
    $\langle\varphi,\nu_\rp\rangle = E\[\varphi\]^\rp$
    induces a homeomorphism
    $\tS_{\bar \rx}(T^R) \simeq \fR(\tS_{\bar x}(T))$.
  \item Let $f\colon n \to m$ be any map.
    Let $f^*\colon \tS_m(T) \to \tS_n(T)$ be the map
    $\tp(a_0,\ldots,a_{m-1}) \mapsto \tp(a_{f(0)},\ldots,a_{f(n-1)})$
    and similarly
    $f^{*,R}\colon \tS_m(T^R) \to \tS_n(T^R)$.
    Let $\tilde f^*\colon \fR(\tS_m(T)) \to \fR(\tS_n(T))$ be the image
    measure map corresponding to $\tilde f^*$.
    Then the following diagram commutes:
    \begin{gather*}
      \begindc{\commdiag}[60]
      \obj(1,2)[SmR]{$\tS_m(T^R)$}
      \obj(1,1)[SnR]{$\tS_n(T^R)$}
      \obj(3,2)[RSm]{$\fR(\tS_m(T))$}
      \obj(3,1)[RSn]{$\fR(\tS_n(T))$}
      \mor{SmR}{SnR}{$f^{*,R}$}
      \mor{RSm}{RSn}{$\tilde f^*$}
      \mor{SmR}{RSm}{$\cong$}
      \mor{SnR}{RSn}{$\cong$}
      \enddc
    \end{gather*}
  \item The completions of $T^R$ are in bijection with
    regular Borel probability measures on the space of completions of
    $T$.
    In particular, if $T$ is complete then so is $T^R$.
  \end{enumerate}
\end{thm}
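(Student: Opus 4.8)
The plan is to prove the four parts in order, putting essentially all of the work into part~(1) and deriving parts~(2)--(4) formally from it together with the material already assembled before \fref{fct:ProbabilityHall}. For part~(1), I would invoke the standard quantifier-elimination criterion in the form of a back-and-forth between saturated models. Let $(\rcM,\sA)$ and $(\rcN,\sB)$ be $\aleph_0$-saturated models of $T^R$, and suppose $\bar \ra \in \rM^n$ and $\bar \rb \in \rN^n$ realise the same $\{E\[\varphi\]\}$-type, i.e.\ $E\[\varphi(\bar \ra)\] = E\[\varphi(\bar \rb)\]$ for every $\cL$-formula $\varphi$. Since $\nu_{\bar \ra}$ is by definition the unique measure with $\langle\varphi,\nu_{\bar \ra}\rangle = E\[\varphi(\bar \ra)\]$, this hypothesis says exactly $\nu_{\bar \ra} = \nu_{\bar \rb}$. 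Given any $\rc \in \rM$, set $\eta = \nu_{\bar \ra,\rc} \in \fR(\tS_{n+1}(T))$; by the first clause of \fref{lem:MeasureVariableExtension} its image under the restriction map $\theta$ is $\nu_{\bar \ra} = \nu_{\bar \rb}$, so the second clause, applied inside the $\aleph_0$-saturated $(\rcN,\sB)$ to the tuple $\bar \rb$ and the measure $\eta$, yields $\rd \in \rN$ with $\nu_{\bar \rb,\rd} = \eta = \nu_{\bar \ra,\rc}$. This is precisely the one-step extension the criterion requires; running it symmetrically gives the back-and-forth and hence quantifier elimination down to the predicates $E\[\varphi(\bar \rx)\]$.

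For part~(2), the map $\rp \mapsto \nu_\rp$ has already been shown, in the discussion preceding \fref{fct:ProbabilityHall}, to be well defined, continuous, and surjective onto $\fR(\tS_{\bar x}(T))$. Part~(1) supplies injectivity: a type over the main sort is determined by the values $E\[\varphi\]$, and these are exactly the data recorded by $\nu_\rp$. Thus $\rp \mapsto \nu_\rp$ is a continuous bijection between the compact Hausdorff spaces $\tS_{\bar \rx}(T^R)$ and $\fR(\tS_{\bar x}(T))$, the latter being compact Hausdorff by the classical fact recalled above, and a continuous bijection of compact Hausdorff spaces is a homeomorphism.

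Part~(3) is a naturality check that I would carry out by realisation: fix $\rp \in \tS_m(T^R)$, realise it by $\bar \ra = (\ra_0,\ldots,\ra_{m-1})$, so that $f^{*,R}(\rp)$ is realised by $(\ra_{f(0)},\ldots,\ra_{f(n-1)})$, and verify for each $\cL$-formula $\psi(x_0,\ldots,x_{n-1})$ that $\langle\psi,\nu_{f^{*,R}(\rp)}\rangle = E\[\psi(\ra_{f(0)},\ldots,\ra_{f(n-1)})\]$ coincides with $\langle\psi,\tilde f^*(\nu_\rp)\rangle$; the latter equals $\langle\psi\circ f^*,\nu_\rp\rangle$ by the definition of the image measure, and these agree after unwinding the reindexing, so the square commutes. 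Part~(4) is the case of zero variables: completions of $T^R$ are the points of $\tS_0(T^R)$, which by part~(2) is homeomorphic to $\fR(\tS_0(T))$, and $\tS_0(T)$ is the space of completions of $T$; when $T$ is complete this space is a single point carrying a unique probability measure, so $T^R$ is complete.

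The main obstacle is part~(1). The genuine technical content is already isolated in \fref{lem:MeasureVariableExtension} (which itself rests on the probabilistic Hall theorem and saturation), so the task here is organisational: recognising that the $\{E\[\varphi\]\}$-type is literally the same datum as the measure $\nu_{\bar \ra}$, ensuring $\aleph_0$-saturation is invoked on the side where the extension lemma is applied, and checking that the pushforward hypothesis of that lemma is met by $\eta = \nu_{\bar \ra,\rc}$. Once these identifications are made the back-and-forth is immediate, and parts~(2)--(4) are then essentially formal consequences.
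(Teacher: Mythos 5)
Your proposal is correct and follows essentially the same route as the paper: part~(1) by the standard back-and-forth between $\aleph_0$-saturated models using \fref{lem:MeasureVariableExtension}, part~(2) from continuity and surjectivity of $\rp \mapsto \nu_\rp$ (established before \fref{fct:ProbabilityHall}) plus injectivity from~(1) and compactness, part~(3) by direct verification, and part~(4) as the $0$-variable case of~(2). The only difference is that you spell out the details the paper leaves as ``standard,'' and you do so accurately, including the key identification of the $\{E\[\varphi\]\}$-type with the measure $\nu_{\bar\ra}$ and the correct placement of the saturation hypothesis.
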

\begin{proof}
  The first item follows from
  \fref{lem:MeasureVariableExtension} via a standard back-and-forth
  argument.
  For the second item, we have already seen that the map
  $\rp \mapsto \nu_\rp$ is continuous and surjective.
  From the first item it follows that
  it is injective.
  Since both spaces are compact and Hausdorff,
  it is a homeomorphism.
  The third item is easily verified.
  The last item is a special case of the second item for $0$-types.
\end{proof}

\begin{cor}
  Assume that $T$ eliminates quantifiers.
  Then so does $T^R$, and it is the model completion of $T^{Ra}$.
  If $T$ is merely model complete then $T^R$ is model complete as well, and is the model companion of $T^{Ra}$.
\end{cor}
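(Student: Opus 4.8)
The plan is to derive both halves from the quantifier reduction already recorded in \fref{thm:TRTypes}, which pushes every main-sort quantifier into a term of the form $E\[\varphi(\bar\rx)\]$ with $\varphi$ an $\cL$-formula, together with the companion statement of \fref{lem:StrongCompanions}.

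First I would treat the quantifier-elimination case. Suppose $T$ eliminates quantifiers, and let an arbitrary $\cL^R$-formula be given. By \fref{thm:TRTypes} it is $T^R$-equivalent to an auxiliary-sort expression in the values $E\[\varphi_i(\bar\rx)\]$ (recall that even the main-sort metric is of this shape, since $d(\rx,\ry)=E\[d(\rx,\ry)\]$ by R2). Quantifier elimination in $T$ lets me replace each $\varphi_i$ by a quantifier-free $\psi_i$; then $\[\psi_i\]$ is literally an $\cL^R$-term, built from the symbols $\[P\]$ and the $\cL_{RV}$-connectives, so $E\[\psi_i\]=d(\[\psi_i\],0)$ is quantifier-free in $\cL^R$. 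Because the auxiliary sort of a model of $T^R$ satisfies $ARV$, which eliminates quantifiers by \fref{thm:ARV}, the remaining auxiliary-sort quantifiers disappear as well, and $T^R$ eliminates quantifiers. To upgrade this to the model-completion statement I would argue that quantifier elimination makes $T^R\cup\mathrm{Diag}(A)$ complete for every substructure $A$ of a model of $T^R$; since $T^{Ra}$ and $T^R$ are companions by \fref{lem:StrongCompanions}, every $M_0\models T^{Ra}$ embeds (by a $\[\cdot\]$-embedding) into a model of $T^R$, and applying the previous sentence to $A=M_0$ shows $T^R\cup\mathrm{Diag}(M_0)$ is complete. That is exactly the assertion that $T^R$ is the model completion of $T^{Ra}$.

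For the model-complete case I would \emph{not} argue directly. After the reduction of \fref{thm:TRTypes} a formula becomes a continuous combination $G\bigl(E\[\varphi_1\],\ldots,E\[\varphi_k\]\bigr)$, and even when each $\varphi_i$ is existential—so that $E\[\varphi_i\]$ is an $\inf$-formula, using $E\[\inf_y\varphi\]=\inf_\ry E\[\varphi\]$ from \fref{lem:RandomVarQuantifier}—a non-monotone $G$ need not preserve the $\inf$-form. Instead the plan is to Morleyise: expand $\cL$ to $\cL^+$ by a predicate symbol $R_\psi$ for each inf-formula $\inf_{\bar y}\psi$, together with its defining axiom, obtaining a definitional expansion $T^+\supseteq T$ which eliminates quantifiers. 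By the first part $(T^+)^R$ eliminates quantifiers, hence is model complete. The key point is that $(T^+)^R$ is only a definitional expansion of $T^R$: its new symbol $\[R_\psi\]$ is forced by the transfer axiom to equal $\inf_\ry\[\psi(\bar\rx,\ry)\]$, which is $T^R$-definable by the quantifier construction of \fref{lem:RandomVarQuantifier}. Since model completeness passes freely between a theory and its definitional expansions, $T^R$ is model complete; combined with \fref{lem:StrongCompanions} it is the model companion of $T^{Ra}$.

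The main obstacle is exactly the last point: verifying that randomisation commutes with this Morleyisation, i.e.\ that adjoining $\inf$-defined predicates to $T$ adjoins only definable predicates to $T^R$. This rests on the fact, already isolated in \fref{lem:RandomVarQuantifier}, that $\[\inf_y\varphi\]=\inf_\ry\[\varphi\]$ is a uniformly definable local function, so that the randomised symbol introduced for $R_\psi$ coincides with a definable predicate of $T^R$ rather than enlarging the definable closure. Everything else is the routine passage between quantifier elimination, model completeness, companionship and model completion.
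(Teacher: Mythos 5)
Your treatment of the quantifier-elimination half is correct, and it is essentially the argument the paper has in mind when it calls that case ``easy'': reduction to expressions in $E\[\varphi\]$ by \fref{thm:TRTypes}, transfer of $T$-equivalences through $T \vdash \varphi \Longleftrightarrow T^{Ra}\vdash \[\varphi\]=0$, the observation that $\[\psi\]$ is an $\cL^R$-term when $\psi$ is quantifier-free, the auxiliary sort via \fref{cor:AuxiliarySortDefinability}, and the model-completion statement from quantifier elimination plus companionship (\fref{lem:StrongCompanions}).

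The model-complete half, however, contains a genuine gap at the step ``model completeness passes freely between a theory and its definitional expansions''. In the direction you need (descent from $(T^+)^R$ to $T^R$) this is false: the full Morleyisation of an \emph{arbitrary} theory is a definitional expansion which eliminates quantifiers, hence is model complete, yet the original theory need not be. The transfer fails because an embedding of models of the smaller theory need not remain an embedding after the expansion when the new symbols are defined by formulae involving quantifiers, and that is exactly what happens here: if $(\rcM,\sA)\subseteq(\rcN,\sB)$ are models of $T^R$, the two expansions interpret $\[R_\psi\](\bar\ra)$, for $\bar\ra\in\rM$, as $\inf_{\ry\in\rM}\[\psi(\bar\ra,\ry)\]$ and as $\inf_{\ry\in\rN}\[\psi(\bar\ra,\ry)\]$ respectively, and the second infimum, taken over a larger set of witnesses, may a priori be strictly smaller. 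Knowing that the two agree is precisely the assertion that $\rcM$ is existentially closed in $\rcN$ with respect to these predicates --- essentially the model completeness of $T^R$ you set out to prove --- so the detour through $(T^+)^R$ is circular. The direct argument (presumably what the paper means by ``a bit more attention to details'') runs as follows: since $T$ is model complete, every $\varphi$ is uniformly approximated modulo $T$ both by inf-formulae $\inf_{\bar y}\psi$ and, applying this to $\neg\varphi$, by sup-formulae $\sup_{\bar y}\chi$, with $\psi,\chi$ quantifier-free; these approximations transfer to $T^R$, where \fref{lem:RandomVarQuantifier} gives $E\[\inf_{\bar y}\psi\]=\inf_{\bar\ry}E\[\psi\]$ and $E\[\sup_{\bar y}\chi\]=\sup_{\bar\ry}E\[\chi\]$. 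Under an embedding $(\rcM,\sA)\subseteq(\rcN,\sB)$ the first kind of expression can only decrease while the second can only increase, so the two approximations squeeze $E\[\varphi(\bar\ra)\]^{\rcM}=E\[\varphi(\bar\ra)\]^{\rcN}$ for all $\bar\ra\in\rM$; by \fref{thm:TRTypes} and \fref{cor:AuxiliarySortDefinability} the embedding is then elementary, and \fref{lem:StrongCompanions} makes $T^R$ the model companion of $T^{Ra}$.
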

\begin{proof}
  The case where $T$ eliminates quantifiers is easy.
  The case where $T$ is model complete requires a bit more attention to details which we leave to the reader (see \cite[Appendix~A]{BenYaacov:NakanoSpaces} for basic facts regarding model completeness in continuous logic).
\end{proof}

We have described formulae and types on the main sort.
In order to handle the auxiliary sort, add to
$\cL$ a sort
$S_{[0,1]}$ for the set $[0,1]$, equipped with the tautological
predicate $\id\colon S_{[0,1]} \to [0,1]$ and with the usual distance
$d(r,s) = |\id(r)-\id(s)|$.
This is a compact structure and therefore the unique model of its
theory, and adding it to models of $T$ as a new sort does not add any
structure on the original sorts.
Call the resulting signature $\cL_+$ and the corresponding
theory $T_+$.
It is easy to check that $T_+$ eliminates quantifiers if $T$ does.
Passing to $T_+^R$, $\[ \id \]$ is an isometric
bijection between the main sort $S_{[0,1]}^R$ of $T_+^R$ and the
auxiliary sort, so questions about types and definability in the
auxiliary sort can be settled by applying \fref{thm:TRTypes}
to the sort $S_{[0,1]}^R$.

\begin{cor}
  \label{cor:AuxiliarySortDefinability}
  Every $\cL^R$-definable predicate on
  the auxiliary sort of $T^R$, possibly with parameters
  $\bar \ra$ from the main sorts, is equivalent
  to one in the pure language of the
  auxiliary sort and with parameters in
  \begin{gather*}
    \sigma(\bar \ra) =
    \sigma\bigl(
    \[\varphi(\bar \ra)\]
    \bigr)_{\varphi(\bar x)\in \cL_{\omega,\omega}}
    \subseteq \sF.
  \end{gather*}
  Consequently, the auxiliary sort is stable and stably embedded in
  models of $T^R$, and if $\bar X$ is a tuple in the auxiliary sort then
  \begin{gather*}
    \tp(\bar X/\bar \ra) \quad \equiv \quad
    \tp(\bar X/\sigma(\bar \ra)).
  \end{gather*}
\end{cor}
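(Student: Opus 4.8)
The plan is to read the statement off the main-sort quantifier elimination of \fref{thm:TRTypes}, after first turning the auxiliary sort into a main sort in the manner set up in the paragraph preceding the statement. I would work in $T_+^R$, where $\[\id\]$ identifies the auxiliary sort isometrically with the main sort $S_{[0,1]}^R$; a tuple $\bar X$ from the auxiliary sort thus corresponds to a tuple $\bar\rx_0$ from $S_{[0,1]}^R$. A definable predicate $P(\bar X,\bar\ra)$ on the auxiliary sort becomes a definable predicate in the main-sort variables $\bar\rx_0,\bar\ra$, and by the first item of \fref{thm:TRTypes} it is a uniform limit of continuous combinations of terms $E\[\varphi(\bar\rx_0,\bar\ra)\]$ for $\cL_+$-formulae $\varphi$.

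The key point is that adjoining $S_{[0,1]}$ puts no structure linking the new sort to the $\cL$-sorts, so each $\cL_+$-formula $\varphi(\bar x_0,\bar x)$ is, pointwise, a continuous function of the values $\id(\bar x_0)$ and of the values $\psi(\bar x)$ of $\cL$-formulae $\psi$. Hence each random variable $\[\varphi(\bar\rx_0,\bar\ra)\]$ is obtained from $\bar X$ and from the family $\bigl(\[\psi(\bar\ra)\]\bigr)_\psi$ by the quantifier-free functional calculus of \fref{lem:ContinuousFunctionalCalculus}; applying $E$ together with the outer continuous combinations then exhibits $P(\bar X,\bar\ra)$ as a definable predicate in the \emph{pure} auxiliary-sort language, in the variables $\bar X$, with parameters drawn from $\bigl(\[\psi(\bar\ra)\]\bigr)_\psi$, all of which lie in $L^1(\sigma(\bar\ra),[0,1])$ by the very definition of $\sigma(\bar\ra)$. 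I expect this separation step — certifying that the quantifier-eliminated normal form genuinely decouples the $\bar X$-dependence from the $\bar\ra$-dependence, with the latter confined to $\sigma(\bar\ra)$ — to be the main obstacle, and it rests entirely on $\cL_+$ carrying no mixed structure. To finish the first assertion I would invoke the computation $\dcl(\sigma(\bar\ra)) = L^1(\sigma(\bar\ra),[0,1])$ in models of $ARV$ from \fref{thm:ARV}: each parameter $\[\psi(\bar\ra)\]$ is then a definable function of $\sigma(\bar\ra)$, and substituting these functions rewrites $P(\bar X,\bar\ra)$ with parameters in $\sigma(\bar\ra)\subseteq\sF$.

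The three consequences are then formal. Stable embeddedness of the auxiliary sort is exactly the assertion just proved, that every externally parametrised definable predicate is equivalent to an internally parametrised one; its stability follows since the induced structure on it is that of $ARV$, which is $\aleph_0$-stable by \fref{thm:ARV}. For the type identity, the reduction shows that $\tp(\bar X/\bar\ra)$ is determined by the values of all pure auxiliary-sort predicates over $\sigma(\bar\ra)$, hence by $\tp(\bar X/\sigma(\bar\ra))$; conversely each $\[\psi(\bar\ra)\]$ is a definable function of $\bar\ra$ and the family generates $\sigma(\bar\ra)$, so $\sigma(\bar\ra)\subseteq\dcl(\bar\ra)$ and $\tp(\bar X/\sigma(\bar\ra))$ is a reduct of $\tp(\bar X/\bar\ra)$. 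The two therefore carry the same information, which is the claimed equivalence.
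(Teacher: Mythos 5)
Your route is essentially the paper's: pass to $T_+$, use $\[ \id \]$ to identify the auxiliary sort with the main sort $S_{[0,1]}^R$, apply the quantifier elimination of \fref{thm:TRTypes}, split the basic content into pure $S_{[0,1]}^R$-material and terms $\[ \psi(\bar \ra) \]$, and finish with $\dcl(\sigma(\bar \ra)) = L^1(\sigma(\bar \ra),[0,1])$ from \fref{thm:ARV}. The one divergence is precisely at the point you yourself flag as the main obstacle, and there your argument has a real gap. The paper opens with ``we may assume that $T$ eliminates quantifiers'' (Morleyisation is harmless: it changes neither the definable predicates nor $\sigma(\bar \ra)$); then $T_+$ and hence $T_+^R$ eliminate quantifiers outright, so the separation only ever has to be checked on \emph{atomic} $\cL_+$-formulae, where it is immediate by inspection: since no symbol links $S_{[0,1]}$ to the $\cL$-sorts, an atomic formula either involves exactly one variable of sort $S_{[0,1]}$, yielding $\[ \id(\rr_i) \]$, or involves none, yielding $\[ \psi(\bar \ra) \]$ with $\psi$ atomic in $\cL$.

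You skip this reduction and instead assert the separated form $\varphi = F\bigl(\id(\bar x_0),\bar\psi(\bar x)\bigr)$ for \emph{arbitrary} $\cL_+$-formulae, justified only by the absence of mixed structure. That justification is insufficient once quantifiers enter. The claim is true, but it needs an induction on formulae in which: quantifiers over $S_{[0,1]}$ collapse to suprema over the compact interval (using that $S_{[0,1]}$ is the unique, compact model of its theory), so $\sup_{x_0} F(\id(x_0),\dots)$ is again of separated form with a continuous outer function; and quantifiers over $\cL$-sorts require an approximation step, since $\sup_y F\bigl(\id(\bar x_0),\bar\psi(y,\bar x)\bigr)$ is not literally separated --- one approximates it uniformly by $\sum_i u_i(\id(\bar x_0))\,\rho_i(\bar x)$, where $\{u_i\}$ is a partition of unity on $[0,1]^{|\bar x_0|}$ subordinate to a net fine for the modulus of $F$ and each $\rho_i(\bar x) = \sup_y F\bigl(\bar t_i,\bar\psi(y,\bar x)\bigr)$ is an $\cL$-definable predicate. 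So separation holds only up to uniform limits and with countably many $\psi$'s; this still suffices for your purposes, since your functional-calculus step (\fref{lem:ContinuousFunctionalCalculus}, together with \fref{thm:Satisfaction} to know $\[ \varphi \]$ is computed pointwise) and the application of $E$ pass through uniform limits, but the argument must actually be made. The cleanest repair is simply to insert the paper's reduction and Morleyise first, after which all the $\varphi$'s may be taken quantifier-free. Everything else --- the parameters $\[ \psi(\bar \ra) \]$ lying in $L^1(\sigma(\bar \ra),[0,1])$, the substitution via \fref{thm:ARV}, and the formal derivation of stability, stable embeddedness and the type identity --- matches the paper's proof and is fine.
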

\begin{proof}
  We may assume that $T$ eliminates
  quantifiers, in which case so does $T_+$ and therefore $T_+^R$.
  It is therefore enough to show that for a tuple of variables
  $\bar \rr$ in the sort $S_{[0,1]}^R$ and for any possible additional
  parameters $\bar \ra$, any atomic formula
  in $\bar \rr$ and $\bar \ra$ is equivalent to a formula entirely in
  $S_{[0,1]}^R$, possibly using parameters in
  $\[ \id \]^{-1}(\sigma(\bar \ra))$.
  Given the minimalistic structure we put on $S_{[0,1]}$,
  such an atomic formula can either involve precisely one
  free variable $\rr_i$ or some of the parameters but no free
  variable.
  In the first case we have $\[ \id(\rr_i)\]$ which is
  as desired.
  In the second we have
  $\[ \varphi(\bar \ra) \]$ where
  $\varphi(\bar x)$ is an atomic $\cL$-formula.
  In this case let
  $X = \[ \id \]^{-1}
  \circ \[ \varphi(\bar \ra) \] \in S_{[0,1]}^R$,
  so
  $\[ \varphi(\bar \ra) \] =
  \[ \id(X) \]$, and the latter is again as
  desired.
\end{proof}

\subsection{Types in $T^R$ when $T$ is incomplete}
\label{sec:TypesIncomleteT}

\fref{thm:TRTypes} provides us with a complete description of types in
$T^R$, whether $T$ is complete or not.
In various situations we shall encounter later on, this description
turns out to be much more useful when $T$ is complete.
What follows here is a brief discussion of the general case and a
reduction of sorts to the special case of a complete theory.

Let $T$ be an incomplete theory and let $\rp \in \tS_n(T^R)$.
By \fref{thm:TRTypes} we may identify $\rp$ with a regular Borel
probability measure $\nu_\rp \in \fR(\tS_n(T))$.
Let $\sA_\rp = L^1\bigl( (\tS_n(T), \nu_\rp),[0,1] \bigr)$ and let
$(\Omega_\rp,\mu_\rp)$ be its Stone space.
We have a natural identification of
$C(\Omega_\rp,[0,1])$ with $\sA_\rp$, so in particular every
$n$-ary $\cL$-formula $\varphi(\bar x)$ gives rise to a continuous function
$\varphi\colon \tS_n(T) \to [0,1]$ with image
$\overline \varphi_\rp \in \sA_\rp = C(\Omega_\rp,[0,1])$.
Thus for every $\omega \in \Omega_\rp$ we may define a complete type
$\pi_\rp\omega \in \tS_n(T)$ by
$\varphi^{\pi_\rp\omega} = \overline\varphi_\rp(\omega)$.
We obtain a map
$\pi_\rp\colon (\Omega_\rp,\mu_\rp)\to(\tS_n(T),\nu_\rp)$ which is
continuous and sends $\mu_\rp$ to $\nu_\rp$ (as an image measure).
It follows that the image of
$\pi_\rp$ is precisely the support of
$\nu_\rp$ there, i.e., the smallest closed measure one set.
This discussion holds in particular when $n = 0$, i.e.,
when $\rT = \rp \in \tS_0(T^R)$ is a completion of $T^R$.

Let now $\rT$ be a completion of $T^R$ and
$\rp(\bar \rx) \in \tS_n(\rT)$.
There is a natural $\cL_{RV}$-inclusion $\sA_\rT \subseteq \sA_\rp$
giving rise to a projection
$(\Omega_\rp,\mu_\rp) \to (\Omega_\rT,\mu_\rT)$
where $\mu_\rT$ is the image of $\mu_\rp$.
As in \cite[Section~5]{BenYaacov-Keisler:MetricRandom} this projection
gives rise to a conditional expectation map
$\bE[\cdot|\rT]\colon \sA_\rp \to \sA_\rT$.
In particular, to every formula $\varphi(\bar x)$ we associated
$\overline \varphi_\rp$ which in turn gets sent to
$\bE[\overline \varphi_\rp|\rT] \in \sA_\rT = C(\Omega_\rT,[0,1])$.
Let us fix $\omega \in \Omega_\rT$.
It is not difficult to check that
$\varphi \mapsto \bE[\overline \varphi_\rp|\rT](\omega)$
is an integration functional on $\tS_n(T)$.
Therefore there exists a unique type $\rp_\omega \in \tS_n(T^R)$
verifying for all $\varphi(\bar x)$:
\begin{gather*}
  E\[ \varphi \]^{\rp_\omega}
  = \langle \varphi,\nu_{\rp_\omega} \rangle
  =
  \bE[\overline \varphi_\rp|\rT](\omega).
\end{gather*}
The map $\omega \mapsto \rp_\omega$ has the following properties:
\begin{enumerate}
\item It is determined by $\rp$ (in particular, the completion $\rT$
  is determined by $\rp$).
\item Conversely, it determines $\rp$ as follows:
  \begin{gather*}
    E\[ \varphi \]^\rp
    =
    \int_{\Omega_\rT} E\[ \varphi \]^{\rp_\omega}
    \,d\mu_\rT(\omega).
  \end{gather*}
\item Let $\pi_\rT\colon \Omega_\rT \to \tS_0(T)$ be as in the
  previous paragraph, associating to each $\omega \in \Omega_\rT$ a
  completion $\pi_\rT\omega$ of $T$.
  Then
  $\rp_\omega \in \tS_n\bigl( (\pi_\rT\omega)^R \bigr)$.
\item The map $\omega \mapsto \rp_\omega$ is continuous in the
  appropriate weak topology.
  Specifically, for every formula $\varphi(\bar x)$ the map
  $\omega \mapsto \langle \varphi,\nu_{\rp_\omega} \rangle$
  is continuous on $\Omega_\rT$.
\end{enumerate}
We therefore write
\begin{gather*}
  \rp = \int_{\Omega_\rT} \rp_\omega\,d\mu_\rT(\omega),
\end{gather*}
saying this is an integral of a continuous family.
(Conversely, every integral of a continuous or even measurable
family of $T^R$-types gives rise to a $T^R$-type.)

A special case of this situation is a type over parameters.
Let $(\rcM,\sA) \vDash T^R$, $\rA \subseteq \rM$ and
$\rp \in \tS_n(\rA)$.
Let us enumerate $\rA = \{\ra_\alpha\}_{\alpha\in I}$.
Let $A = \{a_\alpha\}_{\alpha\in I}$ be a set of new constant symbols
and let $\cL_A = \cL \cup A$.
Even if $T$ is a complete $\cL$-theory it is incomplete as an
$\cL_A$-theory.
We view $(\rcM,\sA)$ as an $\cL_A^R$-structure naming $\rA$ by $A$.
It is then the model of a complete $\cL_A^R$-theory $\rT$ and
$\rp \in \tS_n(\rT)$.
Each $\omega \in \Omega_\rT$ gives rise to an $\cL_A$-completion
$\pi_\rT\omega$ of $T$.
In other words, each $\omega$ determines, so to speak, the $\cL$-type
of the constants $A$.
Let $A_\omega$ be an actual set in a model of $T$ realising this type.
Then
$\rp_\omega \in \fR\bigl(\tS_n\bigl( \pi_\rT\omega \bigr)\bigr)
= \fR(\tS_n(A_\omega))$.

\section{Preservation and non-preservation results}
\label{sec:Preservation}

\subsection{Categoricity}

The theory $ARV$ is $\aleph_0$-categorical but not uncountably
categorical, so this is the most we can hope for from $T^R$.
We shall use the following criterion for $\aleph_0$-categoricity.

\begin{fct}[Ryll-Nardzewski Theorem for metric structures]
  A complete countable theory $T$ is $\aleph_0$-categorical if and
  only if $\tS_n(T)$ is metrically compact for all $n$,
  if and only if the logic topology on $\tS_n(T)$ coincides with the
  metric for all $n$.
\end{fct}
\begin{proof}
  This was originally stated and proved by C.\ Ward Henson for Banach
  space structures.
  For the proof in continuous logic see
  \cite[Fact~1.14]{BenYaacov-Usvyatsov:dFiniteness}.
  Notice that it is customary to exclude the case of a complete theory
  with a compact model (which is its unique model)
  from the definition of $\aleph_0$-categoricity, as well as from the
  statement of this theorem, but the theorem holds as stated if we do
  not.
\end{proof}

\begin{thm}
  Assume $T$ is a complete $\aleph_0$-categorical theory in a
  countable language.
  Then so it $T^R$.
\end{thm}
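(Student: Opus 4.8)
The plan is to verify the Ryll-Nardzewski criterion stated above for $T^R$. Since $T$ is complete, $T^R$ is complete by \fref{thm:TRTypes}, and $\cL^R$ is countable whenever $\cL$ is; so it suffices to show that $\tS_n(T^R)$ is metrically compact for every $n$. I will do this by showing that the metric topology and the logic topology on $\tS_n(T^R)$ coincide. The logic topology is always coarser than the metric one (metric convergence of types forces logic convergence), so the only thing to prove is that logic convergence implies metric convergence; this, together with compactness of the logic topology, then gives metric compactness.

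Write $\rho$ for the metric on $\tS_n(T)$. Applying the Ryll-Nardzewski criterion to the $\aleph_0$-categorical theory $T$, the space $(\tS_n(T),\rho)$ is compact metric and $\rho$ induces the logic topology on it. By \fref{thm:TRTypes} the map $\rp \mapsto \nu_\rp$ is a homeomorphism $\tS_n(T^R) \cong \fR(\tS_n(T))$ carrying the logic topology to the weak topology on measures. The heart of the argument is to identify the metric on $\tS_n(T^R)$ with the transportation (Wasserstein) metric on $\fR(\tS_n(T))$. By definition $d(\rp,\rq) = \inf d(\bar \ra,\bar \rb)$ over joint realisations, and by \fref{thm:TRTypes} applied to $\tS_{2n}(T^R)$ such realisations correspond exactly to types $\rp' \in \tS_{2n}(T^R)$ restricting to $\rp$ and $\rq$; using axiom R2 and \fref{thm:Satisfaction} one gets
\begin{gather*}
  d(\rp,\rq) = \inf_\eta \int_{\tS_{2n}(T)} d(\bar x,\bar y)\,d\eta,
\end{gather*}
where $\eta$ ranges over the measures in $\fR(\tS_{2n}(T))$ whose two marginals are $\nu_\rp$ and $\nu_\rq$.

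Since the continuous function $d(\bar x,\bar y)$ on $\tS_{2n}(T)$ projects to $\rho$ on pairs of $n$-types — that is, $\rho(p,q)$ is the least value of $d(\bar x,\bar y)$ among $2n$-types projecting to $(p,q)$, the minimum being attained by compactness — this infimum equals $\inf_\pi \int \rho\,d\pi$ over couplings $\pi$ of $\nu_\rp$ and $\nu_\rq$, i.e.\ the Wasserstein distance $W_1(\nu_\rp,\nu_\rq)$. Granting this, the conclusion is immediate: $W_1$ metrizes the weak topology on $\fR(X)$ for any compact metric space $X$, so the metric on $\tS_n(T^R)$ induces exactly the weak, hence logic, topology. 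Thus the two topologies coincide, $\tS_n(T^R)$ is metrically compact, and the Ryll-Nardzewski criterion yields that $T^R$ is $\aleph_0$-categorical.

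The main obstacle is the inequality $d(\rp,\rq) \le W_1(\nu_\rp,\nu_\rq)$, namely that an (almost) optimal coupling $\pi$ of $\nu_\rp,\nu_\rq$ can be lifted to a measure $\eta \in \fR(\tS_{2n}(T))$ with the prescribed marginals and with $\int d\,d\eta$ arbitrarily close to $\int \rho\,d\pi$. This amounts to choosing, $\pi$-measurably in $(p,q)$, a $2n$-type realising the pair $(p,q)$ at distance close to $\rho(p,q)$ — a measurable-selection argument based on compactness of $\tS_{2n}(T)$ and continuity of $d$. The reverse inequality is routine: any joint realisation pushes $\mu$ forward to a coupling of $\nu_\rp,\nu_\rq$, and $d(\bar \ra(\omega),\bar \rb(\omega)) \ge \rho(\tp(\bar \ra(\omega)),\tp(\bar \rb(\omega)))$ pointwise on $\Omega$, so $d(\rp,\rq) = E\[ d(\bar \ra,\bar \rb)\] \ge \int \rho\,d\pi \ge W_1(\nu_\rp,\nu_\rq)$.
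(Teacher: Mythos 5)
Your proof is correct in its essentials, but it takes a genuinely different route from the paper's. The paper verifies the Ryll-Nardzewski criterion by proving total boundedness of $\tS_n(T^R)$ directly: it covers $\tS_n(T)$ by finitely many $\varepsilon$-balls around types $p_i$, forms the finitely many discrete random types $\rp_{\bar m} = \sum_i \frac{m_i}{N} p_i$, and shows every $\rq$ lies within $2\varepsilon$ of some $\rp_{\bar m}$ by partitioning $\Omega$ according to which ball $\tp(\ra(\omega))$ falls into (for $\ra$ a realisation of $\rq$), adjusting that partition to one with measures $m_i/N$, and invoking \fref{cor:RandomFamilyExtension} to build a nearby realisation of $\rp_{\bar m}$. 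In transport language, the paper only ever lifts couplings of $\nu_\rq$ with a \emph{finitely supported} measure, where the lifting is a finite case distinction and no measurable selection is needed. You instead prove a general comparison with the Wasserstein metric and quote the classical fact that $W_1$ metrises weak convergence on $\fR(X)$ for $X$ compact metric; the cost is the measurable-selection step, which you only sketch but which does go through here (the language is countable, so $\tS_{2n}(T)$ is compact metrisable, the set of $2n$-types extending a pair $(p,q)$ and realising $\rho(p,q)$ is closed with closed graph in $(p,q)$, and a Borel selection exists). Your approach buys a transport-theoretic description of the metric on $\tS_n(T^R)$, which is of independent interest; the paper's is more elementary and self-contained.

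One caveat: the exact identity $d(\rp,\rq) = W_1(\nu_\rp,\nu_\rq)$ is false for $n>1$ under the usual convention that the distance between $n$-tuples is the \emph{maximum} of the coordinate distances. With that convention the distance between tuples in $T^R$ is $\max_i E\[ d(\rx_i,\ry_i) \]$, not $E\[ \max_i d(\rx_i,\ry_i) \]$, and your ``routine'' reverse inequality, which starts from $d(\rp,\rq) = E\[ d(\bar \ra,\bar \rb) \]$, conflates $\max_i E$ with $E\max_i$. For example, if $T$ says there are exactly two elements, $\rp$ concentrates on the type of an equal pair and $\rq$ on the type of an unequal pair, then $W_1(\nu_\rp,\nu_\rq)=1$ while $d(\rp,\rq) \leq 1/2$, since realisations can disagree in the two coordinates on complementary events of measure $1/2$. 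Fortunately your argument never needs that direction: the inequality $d(\rp,\rq) \leq W_1(\nu_\rp,\nu_\rq)$ --- precisely the one you isolate as the main obstacle, and which your lifting argument does establish, since $\max_i E\[ \cdot \] \leq E\[ \max_i \cdot \]$ --- already shows that the logic (equivalently weak) topology refines the metric topology; as the metric topology always refines the logic topology, the two coincide and metric compactness follows. So the claimed equality should be weakened to an inequality, and then your proof stands.
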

\begin{proof}
  It is enough to show that $\tS_n(T^R)$ is totally bounded, i.e.,
  that it can be covered by finitely many $\varepsilon$-balls
  for every $\varepsilon > 0$.
  Let us therefore fix $\varepsilon > 0$.
  By assumption we can cover $\tS_n(T)$ with finitely many
  $\varepsilon$-balls, say
  $\tS_n(T) = \bigcup_{i<k} B(p_i,\varepsilon)$.
  Fix $N > \frac{k}{\varepsilon}$, and let
  $R = \{ \bar m \in \bN^k\colon \sum m_i = N\}$.
  Then $R$ is finite, and for every $\bar m \in R$ we may define
  $\rp_{\bar m} = \sum \frac{m_i}{N}p_i \in \tS_n(T^R)$.
  Let also
  $C_i =
  B(p_i,\varepsilon) \setminus \bigcup_{j<i} B(p_j,\varepsilon)$,
  so $\tS_n(T) = \bigcup_i C_i$ is a partition of $\tS_n(T)$ into a
  finite disjoint union of Borel sets of diameter $\leq \varepsilon$.

  Now let $\rq \in \tS_n(T^R)$ be any type.
  Find a tuple $\bar m \in R$ such that
  $E = \| \bar m/N - (\nu_\rq(C_i))_{i<k} \|_1$ is minimal.
  We can do this so that at each coördinate the difference is at most
  $\frac{1}{N}$, so $E < \frac{k}{N} < \varepsilon$.
  We claim that $d(\rq,\rp_{\bar m}) < 2\varepsilon$, which will
  conclude the proof.

  Let $\ra \in \rcM$ realise $\rq$, and as usual let us identify
  $\rcM$ with its canonical representation, based on
  $\sM_{(\Omega,\mu)}$.
  Let
  $D_i = \{\omega \in \Omega\colon \tp(\ra(\omega)) \in C_i\}$,
  so $\mu(D_i) = \nu_\rq(C_i)$,
  and $\Omega = \bigcup D_i$ is a partition of $\Omega$ into disjoint
  Borel sets.
  We can now choose another such partition
  $\Omega = \bigcup D_i'$ such that
  each $D_i'$ is comparable with $D_i$
  (i.e., either $D_i \subseteq D_i'$ or $D_i' \subseteq D_i$)
  and $\mu(D_i') = \frac{m_i}{N}$,
  so
  $\mu( D_i \triangle D_i' ) = | \frac{m_i}{N} - \nu_\rq(C_i) |$.
  For each $\omega \in D_i'$ choose
  $\cM'_\omega \succeq \cM_\omega$ and
  $\rb(\omega) \in \cM'_\omega$ realising $p_i$.
  If $\omega \in D_i \cap D_i'$ then we arrange that in addition
  $d\bigl( \rb(\omega), \ra(\omega) ) < \varepsilon$.
  Apply \fref{cor:RandomFamilyExtension} to obtain an elementary
  extension $\rcM' \succeq \rcM$ and
  $\rb \in \rcM'$
  such that $\tp(\rb) = \rp_{\bar m}$ and
  $d(\rb,\ra) \leq \varepsilon(1-E) + E < 2\varepsilon$,
  as desired.
\end{proof}

\begin{cor}
  Assume $T$ is a countable theory, possibly incomplete, with
  countably many completions, all of which are
  $\aleph_0$-categorical.
  Then every completion of $T^R$ is $\aleph_0$-categorical.
\end{cor}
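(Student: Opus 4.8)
The plan is to reduce the incomplete case to the complete one treated in the previous theorem, using the measure-theoretic description of types in \fref{thm:TRTypes} together with the integral decomposition of types developed in \fref{sec:TypesIncomleteT}. First I would fix an arbitrary completion $\rT$ of $T^R$. By the last item of \fref{thm:TRTypes}, $\rT$ corresponds to a regular Borel probability measure $\mu$ on the space of completions $\tS_0(T)$. Since $T$ has only countably many completions, $\tS_0(T)$ is countable, so each singleton is closed (hence Borel) and countable additivity gives directly that $\mu$ is purely atomic: $\mu = \sum_i \lambda_i \delta_{T_i}$, where the $T_i$ range over the completions, $\lambda_i = \mu(\{T_i\}) \geq 0$ and $\sum_i \lambda_i = 1$. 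Each $T_i$ is $\aleph_0$-categorical by hypothesis, so by the previous theorem $T_i^R$ is $\aleph_0$-categorical, and by the Ryll--Nardzewski criterion the type spaces $\tS_n(T_i^R) \cong \fR(\tS_n(T_i))$ are metrically compact for every $n$.

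Next I would identify $\tS_n(\rT)$ with a countable product of these component spaces. By \fref{thm:TRTypes} a type $\rp \in \tS_n(\rT)$ is determined by $\nu_\rp \in \fR(\tS_n(T))$, which must project to $\mu$ under the restriction map $\tS_n(T) \to \tS_0(T)$; hence $\nu_\rp = \sum_i \lambda_i \eta_i$ with $\eta_i \in \fR(\tS_n(T_i))$. Conversely every such family $(\eta_i)_i$ arises: the countable convex combination $\sum_i \lambda_i \eta_i$ is again a regular Borel probability measure, being a total-variation limit of its partial sums, and it projects to $\mu$. This yields a bijection $\tS_n(\rT) \cong \prod_i \tS_n(T_i^R)$. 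The key point is metric rather than topological. Realizing a model of $\rT$ by its canonical representation based on $(\Omega_\rT,\mu_\rT)$, the fibre index space splits modulo null sets into pieces $\Omega_i = \pi_\rT^{-1}(T_i)$ of measure $\lambda_i$, over each of which $\cM_\omega \models T_i$. A realization $\ra$ then restricts over $\Omega_i$ to a realization of the $i$th component, and $d(\ra,\rb) = \sum_i \lambda_i\, d_i(\ra|_{\Omega_i}, \rb|_{\Omega_i})$; in particular, given component types at fibrewise distance at most $d_i$, fullness and \fref{cor:RandomFamilyExtension} let me glue them into a common realization, giving the upper bound $d(\rp,\rq) \leq \sum_i \lambda_i\, d_i(\rp_i,\rq_i)$.

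With this in hand I would prove metric compactness of $\tS_n(\rT)$ by total boundedness. Given $\varepsilon > 0$, choose $N$ with $\sum_{i > N} \lambda_i < \varepsilon$, cover each of the finitely many factors $\tS_n(T_i^R)$ for $i \leq N$ by finitely many $\varepsilon$-balls, and form the corresponding finite product net on these coordinates while leaving the tail coordinates free. For any $\rp \in \tS_n(\rT)$, matching the components $i \leq N$ to within $\varepsilon$ and using the weighted bound above, the distance to the net is at most $\varepsilon \sum_{i \leq N}\lambda_i + \sum_{i>N}\lambda_i < 2\varepsilon$, the tail being controlled because the component metrics are bounded. Thus $\tS_n(\rT)$ is totally bounded, and being complete it is metrically compact. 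The Ryll--Nardzewski criterion then shows $\rT$ is $\aleph_0$-categorical, and since $\rT$ was arbitrary the corollary follows.

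I expect the main obstacle to be the metric statement in the second paragraph: verifying that fibrewise gluing of realizations (through fullness and \fref{cor:RandomFamilyExtension}) realizes an arbitrary family of component types with the additive weighted distance $\sum_i \lambda_i d_i$, rather than merely reproducing the weak-topology product decomposition. It is precisely this control on the metric, combined with completeness of $\tS_n(\rT)$, that upgrades the easy topological identification into the metric compactness that Ryll--Nardzewski requires; the purely atomic form of $\mu$ is what makes the tail of the product negligible and the truncation argument go through.
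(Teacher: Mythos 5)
Your proof is correct, but it takes a genuinely different route from the paper's. The paper argues at the level of models: it observes that every model $\rcM$ of a completion of $T^R$ (given by weights $\bar\lambda$ on the countably many completions $\{T_n\}$) decomposes as a combination $\sum_n \lambda_n \rcM_n$ with $\rcM_n \models (T_n)^R$, uniquely determined where $\lambda_n > 0$; separability of $\rcM$ forces each $\rcM_n$ to be separable, and $\aleph_0$-categoricity of each $(T_n)^R$ (from the preceding theorem) then pins down each component, hence $\rcM$ itself, up to isomorphism. That is, the paper verifies $\aleph_0$-categoricity directly from its definition in a few lines, at the cost of asserting the model decomposition without detailed proof. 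You instead stay at the level of type spaces: the atomic decomposition $\mu = \sum_i \lambda_i \delta_{T_i}$, the induced identification of $\tS_n(\rT)$ with the product of the $\tS_n(T_i^R)$ over indices with $\lambda_i > 0$ (one small point: the components with $\lambda_i = 0$ must be discarded, as they are undetermined), the weighted gluing estimate $d(\rp,\rq) \leq \sum_i \lambda_i d_i(\rp_i,\rq_i)$, and a truncation argument giving total boundedness, followed by Ryll--Nardzewski. This parallels the proof of the preceding theorem rather than the corollary, and what it buys is self-containedness: the only construction you need is the realization of a prescribed family of component types by gluing fibrewise over a disjoint union of atomless bases, which is available from the random-family machinery (\fref{cor:RandomFamily} and \fref{cor:RandomFamilyExtension}); you correctly identify this metric gluing estimate, not the topological product identification, as the real content. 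The cost is that your argument is substantially longer and requires the (routine but nontrivial) verification that countable convex combinations of component realizations have the intended type and the additive weighted distance, together with the implicit bound $d \leq 1$ on the metric used to control the tail.
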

\begin{proof}
  Let $\{T_n\}_{n\in\alpha}$, denote the set of completions of $T$, where $\alpha \leq \aleph_0$.
  The completions of $T$ are in bijection with measures on $\alpha$, namely with sequences $\bar \lambda \in [0,1]^\alpha$ such that $\sum \lambda_n = 1$.
  Every model $\rcM$ of such a completion can be identified with a combination $\sum \lambda_n \rcM_n$ where $\rcM_n \vDash (T_n)^R$, and is uniquely determined by $\rcM$ except where $\lambda_n = 0$.
  If $\rcM$ is separable then so is $\rcM_n$ (when $\lambda_n > 0$), whence the uniqueness of $\rcM$.
\end{proof}

Of course, in this case $T^R$ may admit continuum many completions,
and yet it is not too difficult to see that every completion of
$(T^R)^R$ is still $\aleph_0$-categorical.
On the other hand, the are theories $T$ with uncountably many
completions, all of which are $\aleph_0$-categorical, such that
$T^R$ admits a non $\aleph_0$-categorical completion.
Indeed, let $T$ be the classical theory saying that there exist
precisely $2$ elements, in a language with constants
$a$ and $b_n$ for $n \in \bN$.
Using $a$ as reference, a completion of $T$ is determined by whether
$b_n = a$ or not for each $n$, so the space of completions of $T$ is
homeomorphic to $2^\bN$.
Let $\rT$ be the completion of $T^R$ saying that
$\[ b_n = a \]$ are independent events all of measure $\half$.
Let $\rp_n \in \tS_1(\rT)$ be the type $\rx = b_n$.
Then $d(\rp_n,\rp_m) = \half$ for all $n \neq m$ and $\tS_1(\rT)$ is
not metrically compact.

\subsection{Stability}
\label{sec:PreservationStability}

For all facts regarding stability in continuous logic, and in particular local stability, we refer the reader to \cite{BenYaacov-Usvyatsov:CFO}.
For topometric Cantor-Bendixson ranks see \cite{BenYaacov:TopometricSpacesAndPerturbations}.

When proving the preservation of stability in \cite{BenYaacov-Keisler:MetricRandom} we considered $\varphi$-types over arbitrary sets in models of $T$ and of $T^R$, calculating
averages over the finite set of non forking extensions of such types.
In doing so we proved not only that the randomisation of a stable theory is stable, but also that in such a randomised theory all types over sets (in sorts of the original theory) were stationary.

In continuous logic the situation is, at least on the surface, much
more complicated.
Assume $A \subseteq M$, $p \in \tS_\varphi(A)$, and let
$P \subseteq \tS_\varphi(M)$ be the set of non forking extensions of
$p$.
Rather than being a finite set, as in classical logic, $P$ is merely
known to be a transitive compact metric space (in the standard
metric on $\tS_\varphi(M)$, namely
$d(q,q') =
\sup \bigl\{|\varphi(x,b)^q-\varphi(x,b)^{q'}|\colon
b \in M \bigr\}$).
By \emph{transitive} we mean that the action of the isometry group of
$P$ is transitive, which leads to the existence of a canonical
probability measure on $P$ and thus to a canonical notion of an
average value of a function on $P$.
With this notion of average we could, in principle,
translate the entire argument of
\cite{BenYaacov-Keisler:MetricRandom} to the case where $T$ is
continuous.
However, calculating averages over a transitive compact metric space
is significantly more involved than merely averaging over a finite
set, rendering the translated argument quite difficult to follow.

We therefore choose to split the argument in two, and at a first time
restrict our attention to types over models, in which case the non
forking extension is unique and no averaging is required.
In \fref{sec:LascarTypes} we prove quite independently that for any
theory $T$ (stable or not), types in $T^R$ coincide with Lascar types.
It follows that if $T$ is stable then all types in $T^R$ are
stationary.

As in \cite{BenYaacov-Keisler:MetricRandom} we shall use Shelah ranks,
this time adapted to continuous logic.
Let us fix for the time being a monster model
$\fM$ containing all the parameters under consideration.
We define the \emph{$(k,\varphi)$-rank}
of a partial type $\pi(x)$,
denoted $R_k(\pi,\varphi)$, and its
\emph{multiplicity at rank $s$}, denoted
$M_k(\pi,\varphi,s)$:
\begin{enumerate}
\item If $\pi$ is consistent then $R_k(\pi,\varphi) \geq 0$.
\item Having defined when
  $R_k(\pi,\varphi) \geq s$ we define
  $M_k(\pi,\varphi,s)$.
  We say that $M_k(\pi,\varphi,s) \geq M$
  if there are types
  $\pi(x) \subseteq \pi_n(x)$ for $n < M$ such that for every
  $n < m < M$ there exists $b_{nm}$ for which
  \begin{gather*}
    \pi_n(x) \cup \pi_m(x')
    \vdash  |\varphi(x,b_{nm}) - \varphi(x',b_{nm})| \geq 2^{-k},
  \end{gather*}
  and in addition $R_k(\pi_n,\varphi) \geq s$ for all
  $i < M$.
\item If $M_k(\pi,\varphi,s) = \infty$ then
  $R_k(\pi,\varphi) \geq s+1$.
\end{enumerate}
It is not difficult to see that if $[\pi]_\varphi$ denotes the closed set
$\pi$ defines in $\tS_\varphi(\fM)$, then:
\begin{gather*}
  R_k(\pi,\varphi)
  = \CB_{f,2^{-k}}([\pi]_\varphi)
  = \CB_{b,2^{-k}}([\pi]_\varphi),
\end{gather*}
where $\CB_{f,\varepsilon}$ and $\CB_{b,\varepsilon}$ are the
topometric Cantor-Bendixson ranks defined in
\cite[Section~3]{BenYaacov:TopometricSpacesAndPerturbations}.
(Or almost: these are the ranks we would obtain if we replaced
there
``$\leq \varepsilon$'' with ``$<\varepsilon$'' and
``$> \varepsilon$'' with ``$\geq \varepsilon$''.
Since we consider ranks for all $\varepsilon > 0$ this makes no
difference.)

Let $W$ denote a possibly infinite tuple of variables,
$\pi(x,W)$ a partial type and $k,s \in \bN$.
Then $R_k(\pi(x,W),\varphi) \geq s$ is a property of
$W$, holding for $A$ (of the appropriate size)
if $R_k(\pi(x,A),\varphi) \geq s$.
We may think of $R_k(\cdot,\varphi(x,y))$ as a quantifier
binding the variable $x$.
Let also
$R_k(x/W,\varphi) \geq s$ be the property of $xW$ which holds for
$aA$ if $R_k(\tp(a/A),\varphi) \geq s$.

\begin{fct}
  \label{fct:TypeDefinableRank}
  The properties $R_k(\pi(x,W),\varphi) \geq s$
  and $R_k(x/W,\varphi) \geq s$ are type-definable
  (in $W$ and in $xW$, respectively).
\end{fct}
\begin{proof}
  Both are shown using a standard
  ``there exists a tree such that\ldots'' argument.
  The second can be deduced from the first since it may be re-written
  as $R_k( x' \equiv_W x,\varphi(x',y)) \geq s$ where
  $x'$ is the bound variable and $xW$ the parameter variables.
\end{proof}

For $\alpha \leq \omega$ let
$R_{<\alpha}(\pi,\alpha)$ denote the (finite or infinite) sequence
$\bigl( R_k(\pi,\varphi) \bigr)_{k < \alpha}$.
Given a sequence $\sigma \in \bN^\alpha$ and a partial type
$\pi(x)$ let
\begin{gather*}
  \tS_\varphi(\fM)^{(\sigma)}
  \begin{aligned}[t]
    & = \bigl\{ q \in \tS_\varphi(\fM) \colon
    R_{<\alpha}(q, \varphi) \geq \sigma \bigr\}
    \\
    & = \bigl\{ q \in \tS_\varphi(\fM) \colon
    R_k( q, \varphi) \geq \sigma(k) \text{ for all } k < \alpha \bigr\},
  \end{aligned}
  \\
  [\pi]_\varphi^{(\sigma)}
  = [\pi]_\varphi \cap \tS_\varphi(\fM)^{(\sigma)}
  =
  \bigl\{
  q \in \tS_\varphi(\fM) \colon
  R_{<\alpha}( q \cup \pi, \varphi) \geq \sigma
  \bigr\}.
\end{gather*}
We observe that $\tS_\varphi(\fM)^{(\sigma)}$
and therefore $[\pi]_\varphi^{(\sigma)}$ are closed sets
(either directly or using properties of the topometric
Cantor-Bendixson ranks).

Before going further let us recall (e.g., from Bourbaki \cite[Chapter~IV.6]{Bourbaki:Topology1}) that for a topological space $X$, a map $f\colon X \to \bR$ is \emph{upper (respectively, lower) semi-continuous} if $\inf f(A) = \inf f(\overline A)$ (respectively, $\sup f(A) = \sup f(\overline A)$) for every non empty $A \subseteq X$.
We shall require the following easy classical result (see for example Katětov \cite{Katetov:OnRealValuedFunctions}).

\begin{fct}
  \label{fct:SemiContinuity}
  Let $X$ be a compact Hausdorff space.
  \begin{enumerate}
  \item
    A function $f\colon X \to \bR$ is lower semi-continuous if and only if it can be written as the point-wise supremum of a family of continuous functions on $X$.
  \item
    Assume that $f\colon X \to \bR$ is lower semi-continuous, $g\colon X \to \bR$ is upper semi-continuous, and $g < f$.
    Then there is a continuous function $h \colon X \to \bR$ such that $g < h < f$.
  \end{enumerate}
\end{fct}

At this point let us fix $\varepsilon > 0$.
For a finite sequence $\sigma \in \bN^{<\omega}$ we define $\Xi_{\sigma}$ to be the set of all formulae $\xi(x,\bar w)$ such that for any $\bar a \in M$ the diameter of $[\xi(x,\bar a) < 1]^{(\sigma)} \subseteq \tS_\varphi(M)$ is smaller than $\varepsilon$ (analogous to $\Xi_{s,2}$ as defined in
\cite{BenYaacov-Keisler:MetricRandom}).

\begin{lem}
  \label{lem:LocalUniformApproximateDefinition}
  Let $\sigma \in \bN^k$,
  $\xi(x,\bar w) \in \Xi_\sigma$.
  Then there exists a formula
  $\hat \xi_\sigma(y,\bar w)$ such that:
  \begin{gather*}
    \{ \xi(x,\bar w) \leq \half \}
    \cup
    \{ R_{<k}(x/\bar wy,\varphi) \geq \sigma \}
    \vdash |\hat \xi_\sigma(y,\bar w)-\varphi(x,y)| < \varepsilon.
  \end{gather*}
\end{lem}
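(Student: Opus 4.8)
The plan is to obtain $\hat\xi_\sigma$ by eliminating the variable $x$ from $\varphi(x,y)$ over the set cut out by the two hypotheses, using the interpolation provided by \fref{fct:SemiContinuity}. Write $\Pi(x,y,\bar w)$ for the partial type $\{\xi(x,\bar w)\leq\half\}\cup\{R_{<k}(x/\bar w y,\varphi)\geq\sigma\}$, and let $Z$ be the set it defines in the space $\tS_{xy\bar w}$ of types in the variables $x,y,\bar w$. This set is closed: the condition $\xi(x,\bar w)\leq\half$ is closed, and $R_{<k}(x/\bar w y,\varphi)\geq\sigma$ is type-definable by \fref{fct:TypeDefinableRank} (with $\bar w y$ playing the role of the parameter variables $W$). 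Let $D$ be the image of $Z$ under the projection forgetting $x$, a closed hence compact subset of $\tS_{y\bar w}$, and on $D$ set
\[
  G(y,\bar w)=\sup\{\varphi(x,y):\Pi\},\qquad
  g(y,\bar w)=\inf\{\varphi(x,y):\Pi\}.
\]

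First I would record the two facts that drive the argument. The \emph{diameter bound}: for fixed $(y,\bar w)=(b,\bar a)$, every $x$ admissible in $\Pi$ has $\varphi$-type (over the relevant parameters) lying in $[\xi(x,\bar a)\leq\half]^{(\sigma)}\subseteq[\xi(x,\bar a)<1]^{(\sigma)}$, whose $\varphi$-diameter is $<\varepsilon$ since $\xi\in\Xi_\sigma$; evaluating the $\varphi$-metric at the single parameter $b$ gives $G(b,\bar a)-g(b,\bar a)\leq\diam<\varepsilon$. This is the only place the hypothesis $\xi\in\Xi_\sigma$ enters, and the slack between $\leq\half$ and $<1$ is what lets the rank-$\geq\sigma$ part stay inside the small-diameter set. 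The \emph{semicontinuity}: by the standard fact that an infimum (respectively supremum) of a continuous function over the fibres of the projection of a compact set is lower (respectively upper) semicontinuous, $g$ is lower semicontinuous and $G$ is upper semicontinuous on $D$.

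Now $G-g$ is upper semicontinuous on the compact set $D$, hence attains a maximum $c<\varepsilon$; fix $\varepsilon'$ with $c/2<\varepsilon'<\varepsilon$. Then $G-\varepsilon'$ is upper semicontinuous, $g+\varepsilon'$ is lower semicontinuous, and $G-\varepsilon'<g+\varepsilon'$ because $G-g\leq c<2\varepsilon'$. Applying \fref{fct:SemiContinuity}(2) to these two functions produces a continuous $h$ on $D$ with $G-\varepsilon'<h<g+\varepsilon'$; extending $h$ to a continuous function on the ambient type space yields a definable predicate $\hat\xi_\sigma(y,\bar w)$ (which may be replaced by a genuine formula at the cost of a negligible error). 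For any realization of $\Pi$ one has $g\leq\varphi(x,y)\leq G$, whence $h-\varphi(x,y)<\varepsilon'$ and $\varphi(x,y)-h<\varepsilon'$, so $|\hat\xi_\sigma(y,\bar w)-\varphi(x,y)|<\varepsilon'<\varepsilon$ with a uniform gap, giving the asserted entailment.

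The main obstacle is orientational: $G$ sits above $g$, whereas \fref{fct:SemiContinuity}(2) interpolates a lower semicontinuous function lying \emph{above} an upper semicontinuous one. The remedy is the shift by $\varepsilon'$, which crosses the two bounds so that the upper semicontinuous function $G-\varepsilon'$ becomes the lower bound and the lower semicontinuous function $g+\varepsilon'$ the upper bound; this is exactly the configuration \fref{fct:SemiContinuity}(2) handles. Securing a uniform rather than merely pointwise slack below $\varepsilon$ — needed to entail the strict inequality — is what compactness of $D$ buys through $\max(G-g)=c<\varepsilon$.
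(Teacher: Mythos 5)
Your proposal is correct and follows essentially the same route as the paper's proof: project the closed set defined by the two hypotheses onto the $(y,\bar w)$-type space, take fibrewise supremum and infimum of $\varphi$, bound their gap by $\varepsilon$ using the diameter hypothesis $\xi \in \Xi_\sigma$ (exploiting the slack between $\leq \half$ and $<1$), interpolate a continuous function between the semicontinuous envelopes via \fref{fct:SemiContinuity}, and extend by Tietze to obtain the definable predicate. Your only divergence is harmless but superfluous: there is in fact no ``orientational obstacle,'' since \fref{fct:SemiContinuity}(2) applies directly with the upper semicontinuous $G$ below the lower semicontinuous $g+\varepsilon$ (the pointwise strict inequality $G<g+\varepsilon$ suffices), so the symmetric shift by $\varepsilon'$ and the compactness argument for a uniform gap can be dropped.
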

\begin{proof}
  Let $Y \subseteq \tS_{x,y,\bar w}(T)$ consist of all types
  $q(x,y,\bar w)$ for which the left hand side holds.
  Let $X \subseteq \tS_{x,\bar w}(T)$ consist of all types verifying
  $\xi(x,\bar w) \leq \half$ and
  $R_{<k}(x/\bar w,\varphi) \geq \sigma$.
  The restriction map $\pi\colon Y \to X$ is surjective.

  For $p(x,\bar z) \in X$ let
  $a,\bar c \vDash p$
  and define
  \begin{gather*}
    \overline f(p)
    = \max \bigl\{ \varphi(x,y)^q\colon q \in \pi^{-1}(p) \bigr\}, \\
    \underline f(p)
    = \min \bigl\{ \varphi(x,y)^q\colon q \in \pi^{-1}(p) \bigr\}.
  \end{gather*}
  Let us make a few remarks regarding this definition.
  Since $\pi$ is surjective the set $\pi^{-1}(p)$ is non empty
  and compact.
  The maximum and minimum are therefore attained and
  $\underline f(p) \leq \overline f(q)$.
  Moreover, there are types (not necessarily uniquely determined)
  $\overline p(x), \underline p(x) \in [p(x,\bar c)]^{(\sigma)}$
  such that
  $\varphi(x,b)^{\overline p} = \overline f(p)$
  and
  $\varphi(x,b)^{\underline p} = \underline f(p)$.
  By hypothesis $\xi(x,\bar z)^p \leq \half$,
  so $d(\overline p,\underline p) < \varepsilon$ and thus
  $\overline f(p)  <  \underline f(p) + \varepsilon$.

  Letting $p$ vary over $X$ it is easy to check that
  $\overline f(p)$ is upper semi-continuous
  and similarly $\underline f$ is lower semi-continuous.
  Thus there is a continuous function
  $h\colon X \to [0,1]$ verifying
  $\overline f < h < \underline f + \varepsilon$.
  By Tietze's Extension Theorem there exists a continuous function
  $\tilde h\colon \tS_{x,\bar w}(T) \to [0,1]$
  extending $h$ and we may identify $\tilde h$ with a definable
  predicate $\hat \xi_\sigma(x,\bar w)$ (or, if we insist on having
  an actual formula, we take $\hat \xi_\sigma(x,\bar w)$ to be a
  formula close enough to $h$ so that
  $\overline f < \hat \xi_\sigma(x,\bar w)
  < \underline f + \varepsilon$ on $X$).

  It is left to show that $\hat \xi_\sigma(y,\bar w)$ is as
  desired.
  Indeed, assume that $a,b,\bar c \vDash q \in Y$.
  Then
  $\underline f(q\rest_{x,\bar w})
  \leq \varphi(a,b)
  \leq \overline f(q\rest_{x,\bar w})$,
  whereby
  \begin{gather*}
    \varphi(a,b)
    \leq \overline f(q\rest_{x,\bar w})
    < \hat \xi_\sigma(a,\bar c)
    < \underline f(q\rest_{x,\bar w}) + \varepsilon
    \leq \varphi(a,b) + \varepsilon.
    \qedhere
  \end{gather*}
\end{proof}

We now turn to showing that members of $\Xi_\sigma$ are, in a sense,
plenty enough.

\begin{lem}
  \label{lem:CaptureFewExtensions}
  Let $\cM$ be a model of $T$, $p \in \tS_x(M)$ a type,
  and let $\eta = R_{<\omega}(p,\varphi)$.
  \begin{enumerate}
  \item Let $\fM \succeq \cM$ be a very homogeneous and
    saturated extension and let
    $[p]^{(\eta)} \subseteq \tS_\varphi(\fM)$
    be defined as above.
    Then $[p]^{(\eta)} = \{q\}$ where $q$ is the
    unique non forking $\varphi$-extension of $p$.
  \item There are $k \in \bN$,
    $\xi(x,\bar w) \in \Xi_{\eta\rest_k}$
    and $\bar c \in M$ such that
    $\xi(x,\bar c) \in p$.
    Moreover, for any dense subset $M_0 \subseteq M$ we may arrange
    our choices so that $\bar c \subseteq M_0$.
  \end{enumerate}
\end{lem}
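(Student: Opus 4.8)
The plan for the first item is to identify $[p]^{(\eta)}$ with the set of non-forking $\varphi$-extensions of $p$ and to quote stationarity of $\varphi$-types over models. Two facts of local stability in continuous logic are needed. First, any point $q\in[p]_\varphi$ satisfies $R_k(q,\varphi)\leq R_k(p,\varphi)$ for every $k$, since the $2^{-k}$-Cantor--Bendixson rank of a point cannot exceed that of a closed set containing it, i.e.\ $\CB_{f,2^{-k}}(\{q\})\leq\CB_{f,2^{-k}}([p]_\varphi)$. Second, for $q\in[p]_\varphi$, the type $q$ does not fork over $\cM$ if and only if $R_k(q,\varphi)=R_k(p,\varphi)$ for all $k$ (the rank characterisation of forking, with the full sequence of scales $2^{-k}$ taking the place of the single classical $\varphi$-rank). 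Combining these, $q\in[p]^{(\eta)}$, i.e.\ $R_k(q,\varphi)\geq\eta(k)=R_k(p,\varphi)$ for all $k$, holds exactly when $q$ is a non-forking extension of $p$; and since $p$ is a type over a model of the stable theory $T$, there is a unique such $q$. Hence $[p]^{(\eta)}=\{q\}$.

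For the second item the target is an approximate local $\varphi$-definition of $p$ living inside $p$ with parameters from $M$. The starting observation is that $\bigl([p]^{(\eta\rest_k)}\bigr)_{k}$ is a decreasing sequence of logic-closed subsets of $\tS_\varphi(\fM)$ whose intersection is the single point $\{q\}$ produced by item~(1). The engine for converting this infinitary information into a single formula is \fref{fct:TypeDefinableRank}: writing $\rho_k(x,\bar w)$ for the partial type defining $R_{<k}(x/\bar w,\varphi)\geq\eta\rest_k$, a non-negative formula $\xi(x,\bar w)$ belongs to $\Xi_{\eta\rest_k}$ (after a harmless rescaling of the cut-off) precisely when the partial type
\begin{gather*}
  \rho_k(x,\bar w)\cup\rho_k(x',\bar w)
  \cup\{\xi(x,\bar w)=0,\ \xi(x',\bar w)=0\}
  \cup\{|\varphi(x,y)-\varphi(x',y)|\geq\varepsilon\}
\end{gather*}
is inconsistent, the free variable $\bar w$ delivering the quantification over all parameters $\bar a$ demanded by the definition of $\Xi_{\eta\rest_k}$.

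I would then run a compactness argument. By item~(1), the version of the displayed type in which $\bar w$ is specialised to the parameters of $p$, the two rank conditions are taken at the full sequence $\eta$, and $\xi(x,\bar w)=0$ is replaced by all of $p(x)$ and $p(x')$, is inconsistent: two realisations of full $\varphi$-rank of the complete type $p$ must both equal $q$, hence lie at $d_\varphi$-distance $0<\varepsilon$. Compactness then uses only finitely many conditions of $p$ and a single truncation level $k$, which I package into one formula $\xi(x,\bar c)$ with $\bar c\in M$ and $\xi(x,\bar c)\in p$. The crux --- and the step I expect to be the main obstacle --- is to obtain the conclusion uniformly in $\bar w$, that is, genuine membership in $\Xi_{\eta\rest_k}$ rather than a statement about the single $\bar c$; this is why the compactness must be organised with $\bar w$ free throughout and with the uniformly type-definable conditions $\rho_k(x,\bar w)$, and it is also where one needs that the diameters of $[p]^{(\eta\rest_k)}$ actually shrink to $0$ even though $d_\varphi$ is only lower semicontinuous for the logic topology. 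This last point is precisely what the topometric Cantor--Bendixson calculus of \cite{BenYaacov:TopometricSpacesAndPerturbations} is designed to provide, and is the reason the whole sequence of ranks, rather than a single rank, is carried along.

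Finally, for the moreover clause I would place the parameters in a prescribed dense $M_0\subseteq M$ by perturbation: $\xi$ is uniformly continuous in $\bar w$, so replacing $\bar c$ by a sufficiently close $\bar c'\in M_0$ preserves both $\xi(x,\bar c')\in p$ (enlarging the cut-off slightly if needed) and membership of $\xi$ in $\Xi_{\eta\rest_k}$, the latter being a property of the formula alone.
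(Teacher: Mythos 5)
There is a genuine gap in your item (2), and it sits exactly where the lemma does its real work: your rank conditions are computed over the wrong base. You define $\rho_k(x,\bar w)$ to say $R_{<k}(x/\bar w,\varphi)\geq\eta\rest_k$, and in the specialised type you impose $R_{<\omega}(x/M,\varphi)\geq\eta$ together with $p(x)$. But if $a\models p$ then $\tp(a/M)=p$, so $R_{<\omega}(\tp(a/M),\varphi)=\eta$ holds automatically: on realisations of $p$ your rank conditions are vacuous, and your specialised type is equivalent to
\begin{gather*}
  p(x)\cup p(x')\cup\bigl\{|\varphi(x,y)-\varphi(x',y)|\geq\varepsilon\bigr\},
\end{gather*}
which is in general consistent (for instance, $T$ the theory of an infinite set, $\varphi(x,y)=d(x,y)$, $p$ the unique non-realised type over $M$, $a\neq a'$ realising $p$ and $y:=a$, where $|\varphi(a,a)-\varphi(a',a)|=1$). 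So the inconsistency you want to feed into compactness is false, and the argument never starts. The root of the problem is visible in your justification ``two realisations of full $\varphi$-rank of the complete type $p$ must both equal $q$'': item (1) speaks of complete $\varphi$-types over $\fM$ of full rank, and the hypothesis ``$a\models p$ and $R(\tp(a/M),\varphi)\geq\eta$'' gives no control whatsoever on the rank of $\tp_\varphi(a/\fM)$, nor on the rank of any completion preserving the value $\varphi(a,b)$.

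The repair is precisely the paper's formulation: the ranks must be taken over a base that includes the test variable $y$, namely $R_{<\omega}(x/My,\varphi)\geq\eta$ in the specialised type and $R_{<k}(x/\bar wy,\varphi)\geq\eta\rest_k$ once parameters are replaced by variables (compare \fref{lem:LocalUniformApproximateDefinition}, which is also stated with base $\bar wy$). Including $y$ is what makes item (1) applicable: if $R(\tp(a/Mb),\varphi)\geq\eta$ then, the rank of a partial type being the maximal rank of its completions, $\tp(a/Mb)$ admits a completion $q\in\tS_\varphi(\fM)$ of rank $\geq\eta$ which, because $b$ is among the parameters, still satisfies $\varphi(x,b)^q=\varphi(a,b)$; doing this for both $a$ and $a'$ produces two points of $[p]^{(\eta)}$ at distance $\geq\varepsilon$, contradicting item (1). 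This also explains why your ``precisely when'' characterisation of membership in $\Xi_{\eta\rest_k}$ holds in only one direction; fortunately ``inconsistent $\Rightarrow$ member'' is the only direction the proof needs. The rest of your architecture --- replacing the parameters of $p$ by free variables for uniformity in $\bar w$, compactness to extract a single $k$ and a single formula, a perturbation into $M_0$ for the moreover clause --- agrees with the paper and works once the base is corrected. A final remark on your item (1): the equivalence ``non-forking $\Longleftrightarrow$ all ranks preserved'' that you quote is not an off-the-shelf fact in continuous logic; it is essentially the content of item (1) itself, so invoking it is circular. The paper argues directly: $[p]^{(\eta)}$ is topologically closed, non-empty and totally bounded, hence metrically compact, and it is $M$-invariant, so each of its points is definable over $M$ and must therefore follow the definition of $p$.
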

\begin{proof}
  The argument for the first item essentially appears in
  \cite{BenYaacov-Usvyatsov:CFO}, although the Cantor-Bendixson ranks
  used there are different.
  It goes through the following steps.
  The set $[p]^{(\eta)}$ is topologically and
  therefore metrically closed.
  By construction it is non empty and totally bounded,
  and therefore metrically compact.
  Clearly it is also $M$-invariant, and it follows that every
  $q \in [p]^{(\eta)}$
  is definable over $\acl^{eq}(M) = M$.
  We conclude there is a unique such $q$ which follows the definition
  of $p$.

  For the second item consider the following partial type over $M$:
  \begin{multline*}
    p(x)\cup p(x')
    \cup \{|\varphi(x,y)-\varphi(x',y)| \geq \varepsilon\}
    \cup \{R_{<\omega}(x/My,\varphi) \geq \eta\}
    \cup \{R_{<\omega}(x'/My,\varphi) \geq \eta\}.
  \end{multline*}
  By the first item this type is contradictory.
  Let us re-write $p(x)$ as $p(x,M)$ where
  $p(x,W) \in \tS_{x,W}(T)$ is a complete type.
  Then the following is inconsistent:
  \begin{multline*}
    p(x,W)\cup p(x',W)
    \cup \{|\varphi(x,y)-\varphi(x',y)| \geq \varepsilon\}
    \cup \{R_{<\omega}(x/Wy,\varphi) \geq \eta\}
    \cup \{R_{<\omega}(x'/Wy,\varphi) \geq \eta\}.
  \end{multline*}
  Thus by compactness there are $k \in \bN$ and
  $\xi_0(x,\bar w) \in p(x,W)$ such that the following is inconsistent:
  \begin{multline*}
    \{\xi(x,\bar w) \leq 2^{-k}\}
    \cup \{\xi(x',\bar w) \leq 2^{-k}\}
    \cup \{|\varphi(x,y)-\varphi(x',y)| \geq \varepsilon\} \\
    \cup \{R_{<k}(x/\bar w y,\varphi) \geq \eta\rest_k\}
    \cup \{R_{<k}(x'/\bar w y,\varphi) \geq \eta\rest_k\}.
  \end{multline*}
  Let $\xi = \xi_0 \dotplus \cdots \dotplus \xi_0$ ($2^k$ many times)
  and let $\bar c \subseteq M$ correspond to $\bar w \subseteq W$.
  Then $\xi$, $k$ and $\bar c$ are as desired.

  For the moreover part first notice that $p$ is equivalent to its
  restriction to $M_0$ (where $M_0 \subseteq M$ is any dense subset),
  so the argument above holds just as well with $M_0$ in place of
  $M$.
\end{proof}

\begin{lem}
  \label{lem:MeasureOneElementarySubstructure}
  Let $T$ be any theory in a countable language.
  Let $\rcM \vDash T^R$ be a model based on $(\Omega,\mu)$
  and let $\rcM_0 \preceq \rcM$ be a countable elementary
  pre-sub-structure.
  For $\omega \in \Omega$ let
  $\rM_0(\omega) = \{\ra(\omega)\}_{\ra \in \rM_0} \subseteq
  \rM(\omega)$.
  Then
  $\rcM_0(\omega) \preceq \rcM(\omega)$
  as $\cL$-pre-structures
  for all $\omega$ outside a null measure set.
\end{lem}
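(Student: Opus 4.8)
The plan is to verify the Tarski--Vaught test fiberwise and then to assemble the resulting exceptional null sets over the countably many instances. Working in the $\cL$-pre-structure $\cM_\omega = \rM(\omega)$, it suffices to show that for each $\cL$-formula $\varphi(x,\bar y)$ and each tuple $\bar\ra$ from $\rM_0$, one has for almost every $\omega$
\begin{gather*}
  \inf_{c\in\rM(\omega)}\varphi^{\cM_\omega}(c,\bar\ra(\omega))
  = \inf_{\rc\in\rM_0}\varphi^{\cM_\omega}(\rc(\omega),\bar\ra(\omega)),
\end{gather*}
the right-hand infimum ranging over the countable set $\rM_0(\omega)$. Since $\cL$ and $\rM_0$ are both countable, there are only countably many such pairs $(\varphi,\bar\ra)$; granting the displayed equality for each of them outside a null set, the union of these null sets is null, and off it the test holds simultaneously for all formulas and parameters, whence $\rcM_0(\omega)\preceq\cM_\omega$. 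It is enough to treat $\inf$, since $\sup_x\varphi=\neg\inf_x\neg\varphi$ and substructures respect $\neg$. That $\rM_0(\omega)$ is closed under the $\cL$-function symbols, and so is a genuine pre-substructure, is immediate from the coordinatewise action of function symbols on $\rM_0\subseteq\rM$.

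The heart of the matter is that both sides of the displayed equality are two readings of a single element of $\sA$. Fix $\varphi$ and $\bar\ra$ and set $g=\[\inf_x\varphi(x,\bar\ra)\]$, which by construction is the definable infimum $\sinf_\ry\[\varphi(\ry,\bar\ra)\]$ of \fref{lem:RandomVarQuantifier}. Computed inside $\rcM_0$, this is the lattice infimum of the countable family $\{\[\varphi(\rc,\bar\ra)\]\colon\rc\in\rM_0\}$; and since the condition ``$X=\sinf_\ry\[\varphi(\ry,\bar\ra)\]$'' is a definable predicate of $X$, elementarity $\rcM_0\preceq\rcM$ forces the element so obtained to coincide, as a member of $\sA$, with $g$ as computed in $\rcM$. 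Now in $\sA=L^1(\mu,[0,1])$ the lattice infimum of a countable family is a decreasing $L^1$-limit of finite meets and so agrees almost everywhere with the pointwise infimum, giving for almost every $\omega$
\begin{gather*}
  g(\omega)=\inf_{\rc\in\rM_0}\[\varphi(\rc,\bar\ra)\](\omega).
\end{gather*}
On the other hand \fref{thm:Satisfaction} yields $g=\[\inf_x\varphi(x,\bar\ra)\]=\<\inf_x\varphi(x,\bar\ra)\>$ as functions on $\Omega$, and the right-hand side is by definition $\omega\mapsto\inf_{c\in\rM(\omega)}\varphi^{\cM_\omega}(c,\bar\ra(\omega))$. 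A further application of \fref{thm:Satisfaction} identifies $\[\varphi(\rc,\bar\ra)\](\omega)$ with $\varphi^{\cM_\omega}(\rc(\omega),\bar\ra(\omega))$, so equating the two readings of $g(\omega)$ produces the fiberwise Tarski--Vaught equality for almost every $\omega$.

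The main obstacle, and the only place where ``almost every'' genuinely enters, is the passage from the lattice infimum in $\sA$ to the pointwise infimum over $\rM_0$: this is exactly where countability of $\rM_0$ is indispensable, since for an uncountable family the lattice infimum need not agree pointwise with the naive pointwise infimum even up to measure zero. Everything else is either an exact identity on all of $\Omega$ (the two applications of \fref{thm:Satisfaction}, which hold as equalities of functions rather than merely almost everywhere) or a consequence of elementarity together with the definability of the auxiliary-sort quantifier furnished by \fref{lem:RandomVarQuantifier}. Once the displayed equality is established for each of the countably many pairs $(\varphi,\bar\ra)$, intersecting the co-null sets completes the proof.
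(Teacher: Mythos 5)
Your proposal is correct and follows essentially the same route as the paper's proof: identify the fiberwise quantifier with the auxiliary-sort element $\[\inf_x\varphi\]$ via \fref{thm:Satisfaction}, use elementarity together with the definable quantifier of \fref{lem:RandomVarQuantifier} to rewrite that element as the $L^1$-lattice infimum over the countable dense set $\rM_0$, identify this lattice infimum almost everywhere with the pointwise infimum, and then intersect the countably many co-null sets to get the Tarski--Vaught criterion fiberwise. The only differences are cosmetic (you work with $\inf$ where the paper works with $\sup$, and you spell out the lattice-versus-pointwise step and the graph-definability of $\sinf_\ry$ slightly more explicitly).
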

\begin{proof}
  Let us fix a formula $\varphi(x,\bar w)$ and $\bar \rb \in \rM_0$.
  By \fref{thm:Satisfaction} we have
  $\ssup_x \varphi(x,\bar \rb(\omega))^{\rcM(\omega)}
  =
  \[ \ssup_x \varphi(x,\bar \rb) \]^\rcM(\omega)$
  for all $\omega \in \Omega$,
  where
  $\[ \ssup_x \varphi(x,\bar \rb) \]^\rcM$ is viewed as a continuous
  function $\Omega \to [0,1]$.
  On the other hand we have
  \begin{multline*}
    \[ \ssup_x \varphi(x,\bar \rb) \]^\rcM
    =
    \bigl(
    \ssup_\rx\[ \varphi(\rx,\bar \rb) \]
    \bigr)^{\widehat{\rcM_0}} \\
    =
    \ssup^{L_1} \left\{
      \[ \varphi(\ra,\bar \rb) \]^{\widehat{\rcM_0}}
      \colon \ra \in \widehat{\rM_0}
    \right\}
    =
    \ssup^{L_1} \left\{
      \[ \varphi(\ra,\bar \rb) \]^\rcM
      \colon \ra \in \rM_0
    \right\}.
  \end{multline*}
  Thus we have outside a null measure set
  \begin{gather*}
    \[ \ssup_x \varphi(x,\bar \rb) \]^\rcM(\omega)
    = \sup \left\{
      \varphi(\ra(\omega),\bar \rb(\omega))^{\rcM(\omega)}
      \colon \ra \in \rM_0
    \right\}.
  \end{gather*}
  There are countably many formulae $\varphi(x,\bar \rb)$ to be
  considered, so outside a null measure set the Tarski-Vaught Criterion
  holds and $\rcM_0(\omega) \preceq \rcM(\omega)$.
\end{proof}

\begin{thm}
  Let $\varphi(x,y)$ be a stable formula for a theory $T$.
  Then the formula $E\[\varphi(\rx,\ry)\]$ is stable for $T^R$.
  If $T$ is stable the so is $T^R$.
\end{thm}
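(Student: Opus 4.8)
The plan is to show that $E\[\varphi(\rx,\ry)\]$-types over models of $T^R$ are definable, which by the local stability theory of \cite{BenYaacov-Usvyatsov:CFO} is equivalent to stability of the formula. So fix a model $(\rcM,\sA)\models T^R$, identified with its canonical representation based on $(\Omega,\mu)$, a type $\rp\in\tS_{E\[\varphi\]}(\rcM)$ realised by some $\ra$ in an elementary extension, and a countable elementary pre-sub-structure $\rcM_0\preceq\rcM$. By \fref{lem:MeasureOneElementarySubstructure} we have $\rcM_0(\omega)\preceq\rcM(\omega)$ for almost every $\omega$, so fibrewise we are in a genuine elementary-extension situation over which the local stability of $\varphi$ may be applied.

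Next I would produce the definition fibrewise. For almost every $\omega$ the $\varphi$-type $\tp_\varphi(\ra(\omega)/\rcM(\omega))$ is a type over a model, hence has a finite rank sequence $\eta_\omega=R_{<\omega}(\cdot,\varphi)$ (finiteness being exactly the rank characterisation of stability of $\varphi$). Since $\varphi$ is stable each $R_k(\cdot,\varphi)$ is bounded over all types, so for a fixed cut-off $k$ only finitely many truncations $\sigma=\eta_\omega\rest_k$ occur; this partitions $\Omega$ into finitely many events $\Omega_\sigma$. On $\Omega_\sigma$, \fref{lem:CaptureFewExtensions} supplies $\xi\in\Xi_\sigma$ and parameters $\bar c(\omega)\in\rcM(\omega)$ with $\xi(\ra(\omega),\bar c(\omega))=0$, and then \fref{lem:LocalUniformApproximateDefinition} yields a single $\cL$-formula $\hat\xi_\sigma(\ry,\bar w)$ with $|\varphi(\ra(\omega),\rb(\omega))-\hat\xi_\sigma(\rb(\omega),\bar c(\omega))|<\varepsilon$ for all $\rb$. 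The hypothesis $R_{<k}(\ra(\omega)/\bar c(\omega)\rb(\omega),\varphi)\ge\sigma$ of that lemma is automatic, since rank only increases when parameters are removed and $\bar c(\omega)\rb(\omega)\subseteq\rcM(\omega)$.

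I would then integrate. Using fullness of the canonical representation together with a measurable selection over the countably many sections coming from $\rcM_0$, assemble the fibrewise parameters into genuine sections $\bar\rc$ of $\rcM$ (one family per stratum). For $\rb\in\rcM$,
\begin{gather*}
  E\[\varphi(\ra,\rb)\]
  = \int_\Omega \varphi(\ra(\omega),\rb(\omega))\,d\mu(\omega)
  \approx \sum_\sigma E\bigl(\[\hat\xi_\sigma(\rb,\bar\rc)\]\wedge\mathbbm{1}_{\Omega_\sigma}\bigr),
\end{gather*}
the right-hand side being an $\cL^R$-definable predicate over $\rcM$, correct to within $\varepsilon$. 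Letting $\varepsilon\to0$ along a fast enough sequence and passing to a forced limit removes the error, so $\rp$ is definable and $E\[\varphi\]$ is stable. For the global statement, recall that by \fref{thm:TRTypes} every formula of $T^R$ is, in the main sort, a continuous combination of formulae $E\[\varphi\]$; since the stable formulae are closed under continuous connectives and uniform limits, stability of $T$ (i.e.\ of every $\varphi$) yields stability of $T^R$.

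The main obstacle is the coherence of the fibrewise construction: one must verify that the rank stratification $\Omega=\bigsqcup_\sigma\Omega_\sigma$ is by honest events and that the fibrewise parameters $\bar c(\omega)$ can be chosen measurably and glued into a single section $\bar\rc$ of $\rcM$, so that the resulting defining predicate is genuinely $\cL^R$-definable over the base model rather than merely a pointwise recipe, and that the $\varepsilon$-errors are controlled uniformly enough to survive the forced limit. This is precisely the place where \fref{lem:MeasureOneElementarySubstructure}, the uniformity built into $\Xi_\sigma$ and $\hat\xi_\sigma$, and fullness must be combined with care.
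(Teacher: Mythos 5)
Your overall route is the paper's: realise $\rp$ by $\ra$, pass to the canonical representation, invoke \fref{lem:MeasureOneElementarySubstructure} to get $\rcM_0(\omega) \preceq \rcM(\omega)$ almost everywhere, apply the local machinery of \fref{lem:CaptureFewExtensions} and \fref{lem:LocalUniformApproximateDefinition} fibrewise, and integrate. But the combinatorial heart of your argument has a genuine gap: the finite stratification does not exist. \fref{lem:CaptureFewExtensions} is a \emph{per-type} statement: for the type of $\ra(\omega)$ over $M_\omega$ it produces a cut-off $k_\omega$, a formula $\xi_\omega \in \Xi_{\eta_\omega\rest_{k_\omega}}$ and parameters, all depending on $\omega$ (the cut-off comes from a compactness argument applied to that particular type, and $\xi_\omega$ is built from finitely many formulas of that type). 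You cannot fix $k$ in advance: two fibres with the same truncation $\eta_\omega\rest_k$ may require arbitrarily long cut-offs $k_\omega > k$, and since $\Xi_{\eta\rest_k} \subseteq \Xi_{\eta\rest_{k_\omega}}$ (and not conversely), the formula the lemma yields for such a fibre need not lie in $\Xi_\sigma$ at all; nor is there any single $\xi$ serving a whole stratum, since $\xi$ encodes type-specific information. What is true, and all that is needed, is that almost every $\omega$ is covered by \emph{some} triplet $(\sigma,\xi,\bar\rc)$; the paper therefore indexes by the countable set $\Upsilon$ of all triplets with $\sigma \in \bN^{<\omega}$ arbitrary, $\xi \in \Xi_\sigma$, and $\bar\rc$ a tuple from the countable dense $\rM_0$, disjointifies the resulting Borel sets into countably many pieces $D_m$, and sums the series $\sum_m E\bigl[ X_m \[ \hat\xi^m_{\sigma^m}(\ry,\bar\rc^m) \] \bigr]$ with $X_m = \bP[D_m|\sF^\rcM]$, which converges because $\sum X_m = 1$.

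The second gap is the one you flag as the ``main obstacle'', but it is not a verification that can be carried out within your setup: an arbitrary fibrewise choice $\omega \mapsto \bar c(\omega) \in M_\omega$ is merely a point of $\prod \sM$, and there is no reason it should be (or be approximable by) an element of $\rM$; fullness only glues finitely or countably many \emph{existing} sections along events of $\sF^\rcM$, so no ``measurable selection'' will manufacture the required global parameters. The resolution is not to select at all: in the paper's argument the parameter tuples $\bar\rc$ range over tuples from $\rM_0$ as part of the index set, so every object in the definition is a global section from the start, and coverage of almost every fibre is exactly what \fref{lem:CaptureFewExtensions} (via its ``moreover'' clause on dense subsets, applied in the fibres where $\rcM_0(\omega) \preceq \rcM(\omega)$) guarantees. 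This reorganisation -- which also makes essential use of the reduction to a countable language that you use only implicitly -- is what makes the stratification by honest Borel sets and the definability of the resulting predicate come for free. Once it is in place, your integration step, the passage to the limit as $\varepsilon \to 0$, and the closing deduction (quantifier elimination down to $E\[\varphi\]$-formulas plus closure of stable formulas under continuous combinations) are exactly the paper's.
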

\begin{proof}
  We may assume that the language of $T$ is countable (for if not, we may restrict to a sub-language containing just the symbols appearing in $\varphi$).
  It will therefore be enough to show that for every separable model $\rcM \vDash T^R$, every type $\rp \in \tS_\rx(\rM)$ is $E\[\varphi\]$-definable.
  For this purpose let $\ra \in \rcM' \succeq \rcM$ realise $\rp$.
  Let also $\rM_0 = \{\rc_n\}_{n\in\bN} \subseteq \rM$ be a dense pre-sub-structure.
  By \fref{lem:MeasureOneElementarySubstructure} there is a measure one set $\Omega_0 \subseteq \Omega$ such that $\rcM_0(\omega) \preceq \rcM(\omega)$ for all $\omega \in \Omega_0$.

  Let $\Upsilon$ consist of all triplets $(\sigma,\xi,\bar \rc)$ where $\sigma\in \bN^{<\omega}$, $\xi(x,\bar w) \in \Xi_\sigma$ and $\bar \rc \in \rM_0$ with $|\bar w| = |\bar \rc|$.
  For $(\sigma,\xi,\bar \rc) \in \Upsilon$ define subsets of $\Omega$ as follows:
  \begin{gather*}
    A_\sigma = \{\omega\colon
    R_{<|\sigma|}(\ra(\omega)/M_\omega,\varphi) \geq \sigma\},
    \\
    B_{\sigma,\xi,\bar \rc} = \{\omega \in A_\sigma\colon
    \xi(\ra(\omega),\bar \rc(\omega)) < 1\}.
  \end{gather*}
  The set $A_\sigma$ is closed and each $B_{\sigma,\xi,\bar \rc}$ is relatively open in $A_\sigma$, so in particular Borel.
  Moreover, by \fref{lem:CaptureFewExtensions} every $\omega \in \Omega_0$ belongs to some $B_{\sigma,\xi,\bar \rc}$.

  Given all our countability assumptions we may enumerate $\Upsilon = 
    \{(\sigma^m,\xi^m,\bar \rc^m)\}_{m\in\bN}$.
  Let us also write $\xi^m$ explicitly as $\xi^m(x,\bar w^m)$.
  By \fref{lem:LocalUniformApproximateDefinition} there is a formula $\hat \xi^m_{\sigma^m}(y,\bar w^m)$ such that
  \begin{gather*}
    \{ \xi^m(x,\bar w^m) < 1 \}
    \cup
    \{ R_{<|\sigma^m|}(x/\bar w^my,\varphi) \geq \sigma^m \}
    \vdash |\hat \xi^m_{\sigma^m}(y,\bar w^m)-\varphi(x,y)| < \varepsilon.
  \end{gather*}
  For $m \in \bN$ let $D_m = B_{\sigma^m,\xi^m,\bar \rc^m}
  \setminus \bigcup_{k<m} B_{\sigma^k,\xi^k,\bar \rc^k}$.
  Then $\{D_m\}_{m\in \bN}$ is a family of disjoint Borel sets and $\mu\left(\bigcup D_m \right) = \mu(\Omega_0) = 1$.
  In addition, for all $\omega \in D_m \subseteq B_{\sigma^m,\xi^m,\bar \rc^m}$ and $\rb \in \rM$ we have
  \begin{gather*}
    |\hat \xi^m_{\sigma^m}(\rb(\omega),\bar \rc^m(\omega))
    -
    \varphi(\ra(\omega),\rb(\omega))|
    < \varepsilon.
  \end{gather*}
  For each $m$ let $X_m  = \bP[D_m|\sF^\rcM] \in \sA^\rcM$ and let
  \begin{gather*}
    \psi(\ry) = \sum_m E \bigl[
    X_m \[ \hat \xi^m_{\sigma^m}(\ry,\bar \rc^m) \]
    \bigr].
  \end{gather*}
  Since $\sum X_m = 1$ this infinite sum converges uniformly to an $\rM$-definable predicate.
  We now claim that $\psi$ is $\varepsilon$-close to a $E\[\varphi\]$-definition for $\rp$.
  Indeed, for $\rb \in \rM$ we have $\[ \hat \xi^m_{\sigma^m}(\ry,\bar \rc^m) \] \in \sA^\rcM$, whereby
  \begin{gather*}
    \psi(\rb) =
    \sum_m \int_{D_m} \[ \hat \xi^m_{\sigma^m}(\rb,\bar \rc^m) \]
    \,d\mu.
  \end{gather*}
  We obtain
  \begin{align*}
    \bigl| \psi(\rb) - E\[\varphi(\ra,\rb)\] \bigr|
    &
    \leq \sum_m \int_{D_m}
    | \hat \xi^m_{\sigma^m}(\rb,\bar \rc^m)
    -
    \varphi(\ra,\rb) | \, d\mu
    < \varepsilon.
  \end{align*}
  We have shown that the predicate $\rb \mapsto E\[\varphi(\rx,\rb)\]^\rp$ is arbitrarily well approximated on $\rM_0$, and therefore on $\rM_0$, by an $\rM_0$-definable predicate.
  It follows that $\rp$ admits an $E\[\varphi\]$-definition.
  Since this holds for every type $\rp$ over a model the formula $E\[\varphi(\rx,\ry)\]$ is stable.

  The second assertion follows from the first using quantifier elimination down to formulae of the form $E\[\varphi\]$ (\fref{thm:TRTypes}), since continuous combinations of stable formulae are stable.
\end{proof}

\subsection{Dependence}

Recall that a formula $\varphi(\bar x,\bar y)$ is \emph{$\varepsilon$-independent} in a theory $T$ for some $\varepsilon > 0$ one can find in some model of $T$ an indiscernible sequence $(\bar b_n)_{n \in \bN}$ and $\bar a$ such that:
\begin{gather*}
  \bigvee_n \varphi(\bar a,\bar b_{2n}) + \varepsilon < \bigwedge_n \varphi(\bar a,\bar b_{2n+1}).
\end{gather*}
The formula $\varphi$ is \emph{independent} if it independent for some $\varepsilon > 0$.
The theory $T$ is \emph{dependent} if every formula is dependent, i.e., if every formula is $\varepsilon$-dependent for every $\varepsilon > 0$.

\begin{thm}
  A theory $T$ is dependent if and only if its randomisation $T^R$ is.
\end{thm}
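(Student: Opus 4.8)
The plan is to prove the two implications separately; the direction ``$T^R$ dependent $\Longrightarrow T$ dependent'' is routine, while ``$T$ dependent $\Longrightarrow T^R$ dependent'' carries the real content. For the routine direction I would argue contrapositively. Suppose $\varphi(\bar x,\bar y)$ is $\varepsilon$-independent in $T$, witnessed in each cardinality $n$ by tuples $\bar a_i$ ($i<n$) and $\bar b_w$ ($w\subseteq n$) in some model $\cM_n\models T$. I build a random family over an atomless probability space with every fibre equal to $\cM_n$ and take the \emph{constant} sections $\bar\ra_i(\omega)=\bar a_i$, $\bar\rb_w(\omega)=\bar b_w$; by \fref{exm:AssociatedModelTR} the associated structure models $T^R$. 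Since the integrand $\<\varphi(\bar\ra_i,\bar\rb_w)\>$ is constant, \fref{cor:RandomFamily} gives $E\llbracket\varphi(\bar\ra_i,\bar\rb_w)\rrbracket=\varphi(\bar a_i,\bar b_w)$, so the very same configuration witnesses that $E\llbracket\varphi(\bar\rx,\bar\ry)\rrbracket$ is $\varepsilon$-independent in $T^R$, and $T^R$ is not dependent.

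For the substantial direction I would first reduce to formulae of the randomised shape. By \fref{thm:TRTypes} every $\cL^R$-formula in the main sort is a uniform limit of continuous combinations of formulae of the form $E\llbracket\varphi(\bar\rx,\bar\ry)\rrbracket$ with $\varphi$ an $\cL$-formula. Since $\varepsilon$-dependence is preserved under uniform limits and under continuous combinations (exactly as in the preservation of stability, a formula being dependent iff its $\varepsilon$-fat-shattering configurations are bounded for every $\varepsilon$), it suffices to show that $E\llbracket\varphi\rrbracket$ is dependent in $T^R$ whenever $\varphi$ is dependent in $T$.

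This last implication is the main obstacle, and I would again argue by contraposition. From a size-$n$ $\varepsilon$-independent configuration $\{\bar\ra_i\}_{i<n}$, $\{\bar\rb_w\}_{w\subseteq n}$ for $E\llbracket\varphi\rrbracket$, I pass to the canonical representation based on $(\Omega,\mu)$, so that each value $E\llbracket\varphi(\bar\ra_i,\bar\rb_w)\rrbracket=\int_\Omega\varphi(\bar\ra_i(\omega),\bar\rb_w(\omega))\,d\mu$ is the $\mu$-average of the pointwise $[0,1]$-valued function $\omega\mapsto\varphi(\bar\ra_i(\omega),\bar\rb_w(\omega))$ evaluated in the fibres $\cM_\omega\models T$. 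The averaged family is thus an integration, i.e.\ a convex combination, of the family $\{\varphi(\cdot,\bar b)\}$, and the crux is the quantitative Vapnik--Chervonenkis fact that the $\varepsilon$-fat-shattering dimension of such a randomised/convexified family is bounded in terms of the VC data of $\varphi$ alone. Because $\varphi$ is dependent these invariants are finite, bounding $n$ and contradicting the existence of arbitrarily large $\varepsilon$-independent configurations for $E\llbracket\varphi\rrbracket$. This combinatorial transfer — that averaging a dependent $\varphi$ against measures cannot manufacture unbounded shattering — is precisely the content of the companion analysis of random Vapnik--Chervonenkis classes \cite{BenYaacov:RandomVC}, which I would invoke (or reprove via a uniform law of large numbers for VC classes, extracting from the separated integrals a single fibre $\cM_\omega$ in which $\varphi$ shatters a large set with a fixed margin) to close the argument.
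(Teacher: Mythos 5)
Your proposal is correct and takes essentially the same route as the paper: the direction ``$T$ independent $\Rightarrow$ $T^R$ independent'' is exactly the paper's ``immediate'' check (your constant-section construction via \fref{cor:RandomFamily} simply spells it out), and the substantial converse is delegated, just as in the paper, to \cite[Theorem~5.3]{BenYaacov:RandomVC}. Your intermediate reduction through quantifier elimination to formulae of the form $E\llbracket\varphi\rrbracket$, together with preservation of dependence under continuous combinations and uniform limits, is sound and is implicitly subsumed in the cited result.
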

\begin{proof}
  It is immediate to check that if $\varphi(\bar x,\bar y)$ is $\varepsilon$-independent in $T$ then $E\[\varphi(\bar \rx,\bar \ry)\]$ is $\varepsilon$-independent in $T^R$.
  The converse is \cite[Theorem~5.3]{BenYaacov:RandomVC}.
\end{proof}

For the converse, let us extend the so-called $TP_2$ to continuous logic:
\begin{dfn}
  We say that a theory $T$ has the \emph{tree property of the second kind}, or $TP_2$, if there exists a formula $\varphi(\bar x,\bar y)$, and in a model of $T$ an array $(\bar b_{n,m})_{n,m \in \bN}$, such that:
  \begin{enumerate}
  \item The sequences $I_n = (b_{n,m})_{m \in \bN}$ are mutually indiscernible, i.e., each is indiscernible over the others.
  \item The sequence of sequences $(I_n)_{n \in \bN}$ is indiscernible.
  \item The set of conditions $\{\varphi(\bar x,\bar b_{n,m}) = 0\}_{m \in \bN}$ is inconsistent for one, or equivalently all, $n$.
  \item The set of conditions $\{\varphi(\bar x,\bar b_{n,f(n)}) = 0\}_{n \in \bN}$ is consistent for one, or equivalently every, map $f\colon \bN \to \bN$.
  \end{enumerate}
\end{dfn}

Let us also recall that a theory is \emph{simple} (Shelah \cite{Shelah:SimpleUnstableTheories}) if every complete type over a set $A$ does not divide over a subset $A_0 \subseteq A$ with $|A_0| \leq |T|$.
Shelah \cite[Chapter~III.7]{Shelah:ClassificationTheory} proves that $TP_2$ implies both independence and non simplicity (and more), via the study of the relations between associated cardinal invariants.
Since this is proved for classical logic, let us give a quick argument why the same is true in continuous logic (since there is no treatment of simplicity in continuous logic as such, we refer the reader to \cite{BenYaacov:SimplicityInCats} for a treatment of simplicity in the even larger context of compact abstract theories).

\begin{prp}
  Assume $T$ has $TP_2$.
  Then $T$ is neither simple nor dependent.
\end{prp}
\begin{proof}
  By compactness we may extend the indiscernible sequence of lines to length $\kappa = |T|^+$, and find $\bar a$ such that $\varphi(\bar a,\bar b_{i,0}) = 0$ for all $i < \kappa$.
  Then $\tp(\bar a/\bar b_{<\kappa,0})$ divides over every subset of size $\leq |T|$, so $T$ is not simple.
  By compactness, there exists $\varepsilon > 0$ such that $\{\varphi(\bar x,\bar b_{n,m}) \leq \varepsilon\}_{m \in \bN}$ is inconsistent for all $n \in \bN$.
  Let $\bar c_n = \bar b_{n,0}$ for even $n$, and for odd $n$ let $\bar c_n = \bar b_{n,m}$ such that $\varphi(\bar a,\bar b_{n,m}) > \varepsilon$.
  Then $J = (\bar c_n)$ is indiscernible, and along with $\bar a$ witnesses that $T$ is independent.
\end{proof}

\begin{thm}
  \label{thm:IndependentRandomisation}
  Assume $T$ is independent.
  Then $T^R$ has the tree property of the second kind, and is in particular neither dependent nor simple.
\end{thm}
\begin{proof}
  After a few standard manipulations of the independent formula $\varphi$, we may assume that there is a model $\cM \vDash T$ and in there an indiscernible sequence $(b_m)$ as well as $a$ such that $\varphi(a,b_{2m}) = 0$ and $\varphi(a,b_{2m+1}) = 1$.
  Assuming $\cM$ to be saturated enough, it follows that for every $w \subseteq \bN$ there exists $a_w \in M$ such that $\varphi(a_w,b_m) = 0$ if $m \in w$ and $= 1$ otherwise.
  Let $\sA \vDash ARV$ and $\rcM = \cM^\sA \vDash T^R$, and let us identify members of $M$ with constant random variables in $\rM$.
  In $\sA$ we may find an array $(A_{n,m})$ of independent events of measure $\half$, which in fact forms an indiscernible set, and by \fref{cor:AuxiliarySortDefinability} it is an indiscernible set over $M$ (the constants).
  Let us consider the array $(b_nA_{n,m})_{n,m}$ in $\rM$, in which $b_n$ occurs repeatedly throughout the $n$th line $I_n = (b_nA_{n,m})_m$.
  Then the lines are mutually indiscernible, and form an indiscernible sequence $(I_n)_n$.

  Consider now the formula $\psi(\rx,\ry U) = d\bigl( \[\varphi(\rx,\ry)\], U \bigr)$.
  For $n \in \bN$ and $w \subseteq n$ let $B_{w,n} = \bigwedge_{i < n} A_{i,0}^{i \in w}$, where $A^\top = A$ and $A^\bot = \neg A$.
  By fullness we may construct $\ra_n$ such that for each $w \subseteq n$, $\ra_n = a_w$ on $B_{w,n}$, and observe that $\psi(\ra_n,b_i A_{i,0}) = 0$ for all $i < n$.
  In a saturated elementary extension we may therefore find $\ra$ such that $\psi(\ra,b_n A_{n,0}) = 0$ for all $n$.
  On the other hand, if $\psi(\ra',b_0 A_{0,0}) = 0$ then $\psi(\ra',b_0 A_{0,1}) = \half$.
  Thus $T^R$ has $TP_2$.
\end{proof}

\begin{qst}
  Say that a continuous theory $T$ has the \emph{strict order property (SOP)} if there exists a formula $\varphi(\bar x,\bar y)$ which defines a continuous pre-ordering with infinite $\varepsilon$-chains for some $\varepsilon > 0$, i.e., satisfies:
  \begin{itemize}
  \item \emph{Reflexivity:} $\varphi(\bar a,\bar a) = 0$.
  \item \emph{Transitivity:} $\varphi(\bar a,\bar c) \leq \varphi(\bar a,\bar b) + \varphi(\bar b,\bar c)$.
  \item \emph{Infinite $\varepsilon$-chain:}
    There exists $\varepsilon > 0$ and a sequence $(\bar a_n)_{n\in\bN}$ in a model of $T$ such that:
    \begin{gather*}
      \bigvee_{n<m} \varphi(\bar a_n,\bar a_m) + \varepsilon < \bigwedge_{n>m} \varphi(\bar a_n,\bar a_m).
    \end{gather*}
  \end{itemize}
  One can show that $T$ is unstable if and only if it is independent or has the strict order property.
  Indeed, a straightforward translation of the proof for classical first order theories, as can be found in Poizat \cite{Poizat:Cours}, would work, keeping in mind that every formula of the form $\varphi(x,x') = \sup_y \bigl( \psi(x,y) \dotminus \psi(x',y) \bigr)$ defines a continuous pre-ordering, in analogy with formulae of the form $\forall y \,\bigl( \psi(x,y) \to \psi(x',y) \bigr)$ in classical logic.

  \begin{enumerate}
  \item Assume $T$ is independent.
    Does $T^R$ has the strict order property?
  \item Alternatively, is it true that if $T$ does not have the strict order property then neither does $T^R$?
  \end{enumerate}
\end{qst}

\begin{cor}
  Randomisation cannot produce simple unstable theories: if $T^R$ is simple then it is in fact stable.
\end{cor}
\begin{proof}
  As in classical logic, the strict order property implies non simplicity, so a simple dependent theory is stable.
\end{proof}

\section{Lascar types}
\label{sec:LascarTypes}

\begin{dfn}
  Let $a$ and $b$ be two tuples, possibly infinite, in a structure
  $\cM$.
  We say that $d^L(a,b) \leq 1$ if in some (every) sufficiently
  saturated elementary extension $\cN \succeq \cM$ there exists an
  elementary sub-structure $\cN_0 \preceq \cN$ such that
  $a \equiv_{\cN_0} b$.
  We say that $d^L(a,b) \leq n$ if in some (every) sufficiently
  saturated elementary extension there
  exist $a_0 = a, a_1, \ldots, a_n = b$ such that
  $d^L(a_k,a_{k+1}) \leq 1$ for $k < n$.

  If $d^L(a,b) < \infty$, i.e.,
  if $d^L(a,b) \leq n$ for some $n$, then we say that
  $a$ and $b$ have the same \emph{Lascar type},
  in symbols $a \equiv^L b$.
\end{dfn}

The following is by now essentially folklore, and in any case quite easy.
\begin{fct}
  \label{fct:LascarType}
  \begin{enumerate}
  \item For every $n$, the relation $d^L(x,y) \leq n$ is
    a reflexive, symmetric type-definable relation.
  \item The relation $\equiv^L$ is the transitive closure of
    $d^L(x,y) \leq n$ for any $n > 0$.
    It is the finest bounded automorphism-invariant equivalence
    relation on the sort in question.
  \item If $d^L(a,b) \leq n$ in some sufficiently saturated structure
    $\cM$ then for every other tuple $a'$ there exists $b'$ such that
    $d^L(a'a,b'b) \leq n$.
  \end{enumerate}
\end{fct}

\begin{dfn}
  Let $(\rcM,\sA) \vDash T^R$.
  An \emph{$\sA$-type} in $(\rcM,\sA)$ is a complete type
  over a subset of $\sA$ which has a unique extension to a type over
  $\sA$.
  We define the \emph{$\sA$-type} of $\bar \ra \in \rM^n$ to be
  $\tp_\sA(\bar \ra) = \tp(\bar \ra/\sigma(\bar \ra))$.
\end{dfn}

\begin{lem}
  \label{lem:AType}
  A type $\rp(\bar \rx)$ over a subset of $\sA$ is an $\sA$-type if and only if it is equivalent to $\tp_\sA(\bar \ra)$ for some $\bar \ra$, if and only if it determines $\[ \varphi(\bar \ra) \]$ for every formula $\varphi(\bar x)$.
  It is then axiomatised by the set of all conditions of the form $\[ \varphi(\bar \rx) \]
  = \[ \varphi(\bar \ra) \]$.
  Moreover, $\tp_\sA(\bar \ra) = \tp_\sA(\bar \rb)$ if and only if, in the canonical representation, $\bar \ra(\omega) \equiv \bar \rb(\omega)$ for all $\omega$.
\end{lem}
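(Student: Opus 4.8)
The plan is to push everything down to the auxiliary sort, where the theory $ARV$ behaves well (\fref{thm:ARV}), using as bridges quantifier elimination down to the formulae $E\[\varphi\]$ (\fref{thm:TRTypes}) and the stable embeddedness of the auxiliary sort (\fref{cor:AuxiliarySortDefinability}). Write (a), (b), (c) for the three properties in the statement: being an $\sA$-type, being equivalent to $\tp_\sA(\bar\ra)$ for some $\bar\ra$, and determining every $\[\varphi(\bar\rx)\]$. The first thing I would record is a reduction: for any main-sort tuple $\bar\ra$ and any set of auxiliary parameters $\sA' \subseteq \sA$, the type $\tp(\bar\ra/\sA')$ is determined by the auxiliary-sort type of the tuple $\bar c = (\[\varphi(\bar\ra)\])_\varphi$ over $\sA'$. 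Indeed, by \fref{thm:TRTypes} any condition over $\sA'$ in the variables $\bar\rx$ is equivalent to an $\cL_{RV}$-condition on the terms $\[\varphi(\bar\rx)\]$ with parameters in $\sA'$, and evaluating at $\bar\ra$ turns these terms into the coordinates of $\bar c$.

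Given this reduction, the equivalence of (b) and (c) together with the axiomatisation claim is immediate. Since $\sigma(\bar\ra)$ is generated by the coordinates of $\bar c$, each $\[\varphi(\bar\ra)\]$ lies in $\sigma(\bar\ra) = \dcl(\sigma(\bar\ra))$ (\fref{thm:ARV}), so over $\sigma(\bar\ra)$ the auxiliary type of $\bar c$ is pinned to its value and is axiomatised by the conditions $\[\varphi(\bar\rx)\] = \[\varphi(\bar\ra)\]$; by the reduction these conditions axiomatise $\tp_\sA(\bar\ra) = \tp(\bar\ra/\sigma(\bar\ra))$. This is the axiomatisation claim, and it shows that a type is of the form $\tp_\sA(\bar\ra)$ exactly when it forces each $\[\varphi(\bar\rx)\]$ to equal a fixed element of the auxiliary sort, which is (c).

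For (c) $\Rightarrow$ (a) I would note that if $\rp$ over $B \subseteq \sA$ forces $\[\varphi(\bar\rx)\] = b_\varphi \in \sA$ for every $\varphi$, then by the reduction any extension of $\rp$ to $\sA$ is determined by the auxiliary type of $\bar c$ over $\sA$, which must say that each coordinate equals the fixed $b_\varphi$; this type is unique, so $\rp$ is an $\sA$-type. The converse (a) $\Rightarrow$ (c) is the delicate point and the main obstacle. Arguing contrapositively, if $\rp$ fails to determine some $\[\varphi_0(\bar\rx)\]$ then, realising $\rp$ by $\bar\ra$, the auxiliary element $c_0 = \[\varphi_0(\bar\ra)\]$ does not lie in $\dcl(B) = L^1(\sigma(B),[0,1])$ (\fref{thm:ARV}); since the auxiliary sort models the atomless theory $ARV$, the type $\tp(c_0/B)$ then admits at least two extensions over $\sA$. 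The real work is to lift two such auxiliary extensions to two distinct extensions of $\rp$ over $\sA$ in the main sort: this is exactly the sort of prescribed realisation furnished, over the parameter set $\sA$, by the marriage/measure argument of \fref{lem:MeasureVariableExtension} and the surjectivity construction in \fref{thm:TRTypes}, together with fullness. Once this lifting is in hand, the two extensions contradict uniqueness, proving (a) $\Rightarrow$ (c).

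The moreover clause then follows by combining the reduction with \fref{thm:Satisfaction}. For $\bar\ra, \bar\rb \in \rM$ every $\[\varphi(\bar\ra)\]$ lies in $\sA$, so by the reduction and the axiomatisation, $\bar\ra \equiv_\sA \bar\rb$ holds if and only if $\[\varphi(\bar\ra)\] = \[\varphi(\bar\rb)\]$ in $\sA$ for every $\varphi$. By \fref{thm:Satisfaction} we have $\[\varphi(\bar\ra)\] = \<\varphi(\bar\ra)\>$ as functions on $\Omega$, that is $\[\varphi(\bar\ra)\](\omega) = \varphi^{\cM_\omega}(\bar\ra(\omega))$ for every $\omega$; hence the equality of these functions for all $\varphi$ is exactly the assertion that $\bar\ra(\omega) \equiv \bar\rb(\omega)$ in $\cM_\omega$ for every $\omega$, as required.
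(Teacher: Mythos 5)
Your architecture is the intended one: the paper's entire proof is ``Easy, using \fref{cor:AuxiliarySortDefinability}'', i.e., push everything to the auxiliary sort, and your reduction (the type of $\bar \ra$ over auxiliary parameters is governed by the $\cL_{RV}$-type of $\bar c = (\[ \varphi(\bar \ra) \])_\varphi$ over those parameters), your treatment of (b)$\Leftrightarrow$(c) and of the axiomatisation, the direction (c)$\Rightarrow$(a), and the moreover clause via \fref{thm:Satisfaction} are all correct.

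The gap sits exactly at the step you yourself flag as ``the real work'', namely (a)$\Rightarrow$(c): you assert that two distinct auxiliary-sort extensions of $\tp(c_0/B)$ lift to two distinct extensions of $\rp$ over $\sA$, citing \fref{lem:MeasureVariableExtension}, the surjectivity construction of \fref{thm:TRTypes}, and fullness. Those tools do not deliver this lifting: they realise prescribed \emph{measures on type spaces}, that is, types over the empty set (or over named constants). In \fref{lem:MeasureVariableExtension} the datum is a measure $\eta$ lying over $\nu_{\bar \ra}$, and the construction gives no control whatsoever over how the witness $\rb$ correlates with a fixed subset of $\sA$; so it cannot produce a realisation of $\rp$ whose bracket $\[ \varphi_0(\bar \rx) \]$ has a prescribed type over $\sA$. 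Fortunately no lifting machinery is needed, because $\bar c$ is a definable function of $\bar \ra$ and one should move $\bar \ra$ rather than try to hit a prescribed $\bar c$: working in a sufficiently saturated and homogeneous ambient model, \fref{cor:AuxiliarySortDefinability} says the induced structure on the auxiliary sort is pure $\cL_{RV}$, so any tuple $\bar c'$ with $\bar c' \equiv_B \bar c$ in the sense of $ARV$ satisfies $\bar c' \equiv_B \bar c$ in the full structure, and by homogeneity some automorphism fixing $B$ carries $\bar \ra$ to a realisation $\bar \ra'$ of $\rp$ with $\[ \varphi(\bar \ra') \] = c'_\varphi$ for all $\varphi$. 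Now take $\bar c'$ realising $\tp(\bar c/B)$ with $c'_0 \in \sA$, and $\bar c''$ realising its nonforking extension, $\bar c'' \ind_B \sA$; since $c_0 \notin \dcl(B)$ one has $c''_0 \notin \sA$ (an element of $\sA$ independent from $\sF$ over $\sigma(B)$ is $\sigma(B)$-measurable, hence in $\dcl(B)$), so the two corresponding realisations of $\rp$ are separated over $\sA$ by the predicate $d\bigl( \[ \varphi_0(\bar \rx) \], c'_0 \bigr)$, contradicting uniqueness. With this replacement your proof closes, and this soft automorphism argument, not the marriage-lemma apparatus, is what the paper's ``easy'' points at.
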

\begin{proof}
  Easy, using \fref{cor:AuxiliarySortDefinability}.
\end{proof}

We may therefore write $\bar \ra \equiv_\sA \bar \rb$
to say that $\bar \ra$ and $\bar \rb$ have the same
$\sA$-type.
Similarly, if $\rp(\bar \rx)$ is an $\sA$-type we may write
$\[ \varphi(\bar \rx) \]^\rp \in \sA$ for the value
of $\[ \varphi(\bar \rx) \]$ as determined by $\rp$.

\begin{lem}
  \label{lem:ReductionToSameAType}
  Let $T$ be any theory, $(\rcM,\sA)$ a sufficiently saturated model
  of $T^R$.
  Let $\ra,\rb \in \rM$, $\ra \equiv \rb$.
  Then there exists $\rc \in \rM$ such that
  $d^L(\ra,\rc) \leq 1$ (so $\ra \equiv^L \rc$) and
  $\rc \equiv_\sA \rb$.
\end{lem}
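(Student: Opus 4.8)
The plan is to use the definition of $d^L(\ra,\rc)\le 1$ directly: working inside the given saturated model, it suffices to produce an elementary sub-structure $\cN_0\preceq\rcM$ and an element $\rc\in\rM$ with $\ra\equiv_{\cN_0}\rc$ and $\rc\equiv_\sA\rb$. By saturation of $(\rcM,\sA)$ such an $\rc$ exists as soon as the partial type $\tp(\ra/\cN_0)\cup\tp_\sA(\rb)$ is consistent, where by \fref{lem:AType} the $\sA$-type $\tp_\sA(\rb)$ is axiomatised by the conditions $\[\varphi(\rx)\]=\[\varphi(\rb)\]$ for all $\cL$-formulae $\varphi$. So the entire problem reduces to choosing $\cN_0$ for which this consistency holds, and the element $\rc$ it produces is automatically at Lascar distance $\le 1$ from $\ra$ and has the prescribed $\sA$-type.

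First I would record the inputs coming from $\ra\equiv\rb$. By \fref{thm:TRTypes} they share the same associated measure $\nu:=\nu_{\tp(\ra)}=\nu_{\tp(\rb)}$ on $\tS_1(T)$; and since the auxiliary values $\bigl(\[\varphi(\ra)\]\bigr)_\varphi$ are themselves part of $\tp(\ra)$, the generating families of $\sigma(\ra)$ and $\sigma(\rb)$ have the same type over $\emptyset$ in the auxiliary sort. That sort is a model of $APr$, hence stable, stably embedded, with all types over sets stationary, and by (the reasoning behind) \fref{cor:AuxiliarySortDefinability} together with quantifier elimination down to the predicates $E\[\varphi\]$ (\fref{thm:TRTypes}), the type $\tp(\ra/\cN_0)$ is exactly the auxiliary-sort type, over $\sF_0:=\sF^{\cN_0}$, of the family $\bigl(\[\varphi(\ra,\bar e)\,\]\bigr)_{\varphi,\ \bar e\in\cN_0}$.

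Next I would choose $\cN_0$ \emph{generically}. Using saturation and stationarity in the stable algebra $\sF$, take $\cN_0\preceq\rcM$ whose auxiliary algebra satisfies $\sF_0\ind\sigma(\ra)\sigma(\rb)$ over $\emptyset$; stationarity then upgrades $\sigma(\ra)\equiv\sigma(\rb)$ to $\sigma(\ra)\equiv_{\sF_0}\sigma(\rb)$. Imposing $\tp_\sA(\rb)$ fixes only the sub-family with $\bar e$ empty, namely $\sigma(\rb)$, and since $\sigma(\rb)$ realises $\tp(\sigma(\ra)/\sF_0)$ there is an $\sF_0$-elementary map carrying the whole family $\bigl(\[\varphi(\ra,\bar e)\,\]\bigr)$ to a realisation of $\tp(\ra/\cN_0)$ whose empty part is exactly $\sigma(\rb)$. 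Provided $\cN_0$ is generic enough that $\tp(\ra/\cN_0)$ depends on $\cN_0$ only through $\sF_0$ and $\nu$ — i.e. that in the disintegration $\tp(\ra/\cN_0)=\int_{\Omega_0}\rp_\omega\,d\mu_0$ of \fref{sec:TypesIncomleteT} the fibre measure $\nu_{\rp_\omega}$ is essentially constant equal to $\nu$ — this swap is unobstructed and yields the desired consistency.

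The heart of the matter, and the step I expect to be hardest, is exactly this genericity, together with the final passage from an auxiliary realisation back to an honest main-sort element $\rc\in\rM$ with $\sigma(\rc)=\sigma(\rb)$. I would secure the genericity by building $\cN_0$ through \fref{cor:RandomFamilyExtension} over a factor of the base probability space chosen independent from $\ra$ and $\rb$, so that the sections of $\cN_0$ are probabilistically independent of those of $\ra$ and $\rb$ and the only coupling between $\ra$ and $\cN_0$ passes through $\sF_0$ and the empty type. The lift to $\rc$ is then a measurable coupling assigning to $\rc$ the pointwise types of $\rb$ while respecting the fibre measures $\nu_{\rp_\omega}$, whose existence is a marriage-type argument in the spirit of \fref{fct:ProbabilityHall} and \fref{lem:MeasureVariableExtension}. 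Once $\rc$ is produced we have $\ra\equiv_{\cN_0}\rc$, hence $d^L(\ra,\rc)\le 1$, and $\rc\equiv_\sA\rb$ by construction, completing the proof.
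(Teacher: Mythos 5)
Your first half is, in substance, the paper's own proof. With $\bar X = \bigl( \[ \varphi(\ra) \] \bigr)_\varphi$ and $\bar Y = \bigl( \[ \varphi(\rb) \] \bigr)_\varphi$, the hypothesis $\ra \equiv \rb$ gives $\bar X \equiv \bar Y$; the paper then produces an elementary sub-structure $(\rcM_2,\sA_2) \preceq (\rcM,\sA)$ with $\sA_2 \ind_{\sA_0} \bar X \bar Y$ (where $\sA_0 \subseteq \dcl(\emptyset)$ is generated by the $\[ \varphi \]$ for sentences $\varphi$) by first taking a small $(\rcM_1,\sA_1)$, moving $\sA_1$ to an independent conjugate $\sA_2$ \emph{inside the stable auxiliary sort}, and only then using \fref{cor:AuxiliarySortDefinability} plus saturation to supply a main part $\rcM_2$ over $\sA_2$. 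Your one-line ``take $\cN_0$ with $\sF_0 \ind \sigma(\ra)\sigma(\rb)$'' hides exactly this two-step manoeuvre, and your alternative construction via \fref{cor:RandomFamilyExtension} goes the wrong way: that corollary embeds $\rcM$ into a larger structure, whereas you need an elementary \emph{sub-structure} of $\rcM$. Granting the choice of $\cN_0$, stationarity of types over sets in $ARV$ yields $\bar X \equiv_{\sA_2} \bar Y$, and stable embeddedness upgrades this to $\bar X \equiv_{\cN_0} \bar Y$ in the full structure, i.e., $d^L(\bar X,\bar Y) \leq 1$. Up to here you and the paper agree.

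The genuine gap is in how you try to finish. From $\bar X \equiv_{\cN_0} \bar Y$ the element $\rc$ comes for free: by \fref{fct:LascarType} (extension of witnesses to Lascar distance; concretely, homogeneity of a monster model) there is $\rc$ with $d^L(\ra\bar X, \rc\bar Y) \leq 1$, and since the maps $\[ \varphi(\cdot) \]$ are definable functions, $\ra\bar X \equiv \rc\bar Y$ forces $\[ \varphi(\rc) \] = Y_\varphi = \[ \varphi(\rb) \]$ for all $\varphi$, i.e., $\rc \equiv_\sA \rb$ by \fref{lem:AType}. So the consistency of $\tp(\ra/\cN_0) \cup \tp_\sA(\rb)$, which you correctly identified as the whole problem, requires nothing about $\tp(\ra/\cN_0)$ beyond its parameter-free auxiliary part. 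Your proposed closing — demanding that $\cN_0$ be so generic that the fibre measures $\nu_{\rp_\omega}$ in the disintegration of \fref{sec:TypesIncomleteT} are essentially constant equal to $\nu$, and then lifting to $\rc$ by a Hall-type measurable coupling — is both unnecessary and unobtainable. The independence $\sF_0 \ind \sigma(\ra)\sigma(\rb)$ constrains only the auxiliary algebras and says nothing about the fibre-wise interaction of $\ra$ with the \emph{main-sort} elements of $\cN_0$: the measures $\nu_{\rp_\omega}$ live on type spaces over the varying parameter sets $A_\omega$, so they cannot literally equal $\nu \in \fR(\tS_1(T))$, and already for $T$ the theory of an infinite set the fibre type of $\ra(\omega)$ over $\cN_0(\omega)$ records equalities with elements of $\cN_0(\omega)$ and genuinely varies with $\omega$. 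Deleting the genericity requirement and the coupling (which is solving a problem saturation has already solved), and closing instead with \fref{fct:LascarType} as above, turns your sketch into the paper's proof.
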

\begin{proof}
  Let $\sA_0 \subseteq \sA$ be the sub-structure
  generated by $\[ \varphi \]$ where $\varphi$
  varies over all sentences.
  Then $\sA_0 \subseteq \dcl(\emptyset)$ in
  $(\rcM,\sA)$.
  Let $\Phi$ be the collection of all formulae $\varphi(x)$ with
  the appropriate variable.
  For $\varphi \in \Phi$ let
  $X_\varphi = \[ \varphi(\ra) \]$ and let
  $\bar X = (X_\varphi)_{\varphi\in\Phi}$.
  Define
  $\bar Y = (Y_\varphi)_{\varphi\in\Phi}
  = (\[ \varphi(\rb) \])_{\varphi\in\Phi}$
  similarly.
  Then by assumption $\bar X \equiv \bar Y$ in $(\rcM,\sA)$, whereby
  $\bar X \equiv_{\sA_0} \bar Y$.

  Let $(\rcM_1,\sA_1) \preceq (\rcM,\sA)$ be a small elementary
  sub-structure.
  Then necessarily $\sA_0 \subseteq \sA_1$.
  By our saturation assumption we may find
  $\sA_2 \subseteq \sA$ such that
  $\sA_2 \equiv_{\sA_0} \sA_1$ and
  $\sA_2 \ind_{\sA_0} \bar X,\bar Y$, both in the sense of
  $\sA$ (as a model of $ARV$).
  By \fref{cor:AuxiliarySortDefinability}
  we have $\sA_2 \equiv \sA_1$ in the structure
  $(\rcM,\sA)$ so again by saturation
  there is $\rcM_2 \subseteq \rcM$ such that
  $(\rcM_2,\sA_2) \equiv (\rcM_1,\sA_1)$, and in particular
  $(\rcM_2,\sA_2) \preceq (\rcM,\sA)$.

  By construction we have
  $\bar X \equiv_{\sA_2} \bar Y$ in the sense of $\sA$
  and applying \fref{cor:AuxiliarySortDefinability} again
  we obtain
  $\bar X \equiv_{(\rcM_2,\sA_2)} \bar Y$.
  Thus $d^L(\bar X,\bar Y) \leq 1$.
  By \fref{fct:LascarType}
  there is $\rc$ such that
  $d^L(\ra\bar X,\rc\bar Y) \leq 1$.
  In particular $\ra\bar X \equiv \rc\bar Y$
  whereby $Y_\varphi = \[ \varphi(\rc) \]$
  for all $\varphi(x)$.
  Thus $\rc \equiv_\sA \rb$ as desired.
\end{proof}

We now turn to consider the case where $\ra \equiv_\sA \rb$.
We shall require an additional technical result.

\begin{lem}
  \label{lem:InvariantIntegration}
  Let $\Omega$ be a set, $\tau\colon \Omega \to \Omega$ a bijection.
  Then there exists an integration functional
  $E$ on $\sA = [0,1]^\Omega$ which is moreover invariant under
  $\tau$:
  $E[X] = E[X \circ \tau]$ for all $X \in \sA$.
\end{lem}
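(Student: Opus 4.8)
The plan is to obtain $E$ as a $\tau$-invariant mean on the bounded functions on $\Omega$ and then restrict it to $[0,1]^\Omega$. First I would pass from $[0,1]^\Omega$ to the real Banach space $\ell^\infty(\Omega)$ of bounded real-valued functions: it is enough to produce a \emph{positive} linear functional $E\colon \ell^\infty(\Omega) \to \bR$ with $E(1) = 1$ and $E(X \circ \tau) = E(X)$ for all $X$. Indeed, positivity together with $E(1) = 1$ forces $0 \leq E(X) \leq 1$ whenever $0 \leq X \leq 1$, so such an $E$ maps $[0,1]^\Omega$ into $[0,1]$, and linearity gives in particular $E(X+Y) = E(X)+E(Y)$ whenever $X,Y,X+Y \in [0,1]^\Omega$; thus the restriction is an integration functional in the sense of \fref{dfn:IntegrationSpace}, and it is $\tau$-invariant by construction.

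The existence of such an invariant functional is exactly the assertion that the cyclic group $\langle\tau\rangle$ admits an invariant mean, which holds because $\bZ$ is amenable. Rather than quote this, I would reduce it to a shift-invariant mean on $\ell^\infty(\bZ)$ by an orbit pull-back. Fix any $\omega_0 \in \Omega$ and define $\pi\colon \ell^\infty(\Omega) \to \ell^\infty(\bZ)$ by $\pi(X)(k) = X(\tau^k\omega_0)$; this is well defined, positive and normalised ($\pi(1) = 1$), regardless of whether the orbit of $\omega_0$ is infinite or the action of $\tau$ has fixed points. The organizing observation is that $\pi$ intertwines precomposition by $\tau$ with the shift $S$ on $\ell^\infty(\bZ)$, since $\pi(X\circ\tau)(k) = X(\tau^{k+1}\omega_0) = \pi(X)(k+1) = (S\pi(X))(k)$. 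Consequently, if $m$ is any shift-invariant mean on $\ell^\infty(\bZ)$, then $E = m \circ \pi$ is positive, satisfies $E(1) = 1$, and $E(X\circ\tau) = m(S\pi(X)) = m(\pi(X)) = E(X)$, as required.

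It remains to construct the shift-invariant mean $m$ on $\ell^\infty(\bZ)$, where $Sf(k) = f(k+1)$. Fixing a non-principal ultrafilter $\cU$ on $\bN$, I would set
\begin{gather*}
  m(f) = \lim_{N\to\cU} \frac{1}{2N+1} \sum_{k=-N}^{N} f(k),
\end{gather*}
an ultralimit of a bounded sequence, which is plainly linear, positive, and sends $1$ to $1$. The one point requiring an argument, and hence the real crux of the proof, is invariance; it comes from the fact that the symmetric Cesàro averages are asymptotically shift-invariant. Indeed the differences telescope,
\begin{gather*}
  \frac{1}{2N+1}\sum_{k=-N}^{N} (Sf - f)(k) = \frac{f(N+1) - f(-N)}{2N+1} \longrightarrow 0,
\end{gather*}
so $m(Sf) = m(f)$. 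Everything else is formal; the only hypotheses genuinely used are that $\tau$ is a bijection (so that $\tau^k$ is defined for negative $k$, as needed for the two-sided averages) and the Følner estimate above that delivers invariance.
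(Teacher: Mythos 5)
Your proof is correct, and it takes essentially the same route as the paper: the paper's proof consists of invoking the general fact that an action of an amenable group (here $(\bZ,+)$, acting through powers of $\tau$) admits an invariant probability integration functional. Your argument is precisely a self-contained unwinding of that citation — the orbit pull-back $\pi(X)(k) = X(\tau^k\omega_0)$ together with a shift-invariant mean on $\ell^\infty(\bZ)$ obtained from symmetric Ces\`aro averages and an ultralimit is the standard proof of the cited fact, and each step (positivity, normalisation, the telescoping F\o lner estimate, and the intertwining of $\tau$ with the shift) is verified correctly.
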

\begin{proof}
  This is a special case of a general fact that if an amenable group
  $G$ (in our case, $(\bZ,+)$) acts on a space $\Omega$ then $\Omega$
  admits a $G$-invariant probability integration functional.
\end{proof}

\begin{lem}
  \label{lem:LascarAType}
  Let $T$ be any theory, $(\rcM,\sA) \vDash T^R$.
  Let $\ra,\rb \in \rM$, $\ra \equiv_\sA \rb$.
  Then $d^L(\ra,\rb) \leq 1$ (so in particular $\ra \equiv^L \rb$).
\end{lem}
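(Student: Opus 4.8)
The plan is to realise $d^L(\ra,\rb)\le1$ by producing, inside a suitable elementary extension, an elementary submodel $\cN_0$ and an automorphism fixing $\cN_0$ pointwise and carrying $\ra$ to $\rb$; this yields $\ra\equiv_{\cN_0}\rb$, which is exactly the content of $d^L(\ra,\rb)\le1$. Throughout I identify $(\rcM,\sA)$ with its canonical representation over $(\Omega,\mu)$. By \fref{lem:AType} the hypothesis $\ra\equiv_\sA\rb$ says precisely that $\ra(\omega)\equiv\rb(\omega)$ in $\cM_\omega$ for every $\omega$, so each fibre carries an automorphism taking one value to the other once we pass to homogeneous fibres.

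First I would build a convenient elementary extension on the doubled index set $\Omega^*=\Omega\times\bZ$, with shift $\tau(\omega,n)=(\omega,n+1)$ and projection $\pi(\omega,n)=\omega$. The $\tau$-invariant functions on $\Omega^*$ form a copy of $\sA=L^1(\sF,[0,1])$, an atomless integration space; combining the amenability argument of \fref{lem:InvariantIntegration} with the \emph{moreover} part of \fref{fct:PartialIntegrationFunctional}, I extend $E$ to a $\tau$-invariant \emph{atomless} integration functional $E^*$ on $[0,1]^{\Omega^*}$ with $E^*(X\circ\pi)=E(X)$. Taking fibres $\fM_\omega\succeq\cM_\omega$ sufficiently saturated and homogeneous and the full randomisation $\rcM^*=\prod\sM^*$, \fref{cor:RandomFamilyExtension} makes $\pi$ a morphism of integration spaces and the constant-in-$n$ map a $\[\cdot\]$-embedding of $\rcM$ into $\rcM^*$; as both model $T^R$, quantifier elimination down to $E\[\cdot\]$ (\fref{thm:TRTypes}) upgrades it to an elementary embedding. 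Identifying $\rcM\preceq\rcM^*$ along it, $\ra,\rb$ become the constant-in-$n$ sections $\ra(\omega),\rb(\omega)$.

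Next I would set up the two automorphisms whose composite does the work. Since $E^*$ is $\tau$-invariant, precomposition $\hat\tau\rc=\rc\circ\tau^{-1}$, i.e. $\hat\tau\rc(\omega,n)=\rc(\omega,n-1)$, preserves $E^*$ and all predicate values, hence is an automorphism of $\rcM^*$. Choosing $\theta_\omega\in\mathrm{Aut}(\fM_\omega)$ with $\theta_\omega(\ra(\omega))=\rb(\omega)$ and letting $\Theta$ act fibrewise, $\Theta\rc=\bigl((\omega,n)\mapsto\theta_\omega(\rc(\omega,n))\bigr)$, we get $\[\varphi(\Theta\bar\rc)\]=\[\varphi(\bar\rc)\]$ pointwise (automorphisms preserve formula values), so $\Theta$ is also an automorphism. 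Put $\sigma=\hat\tau\circ\Theta$; then $\Theta\ra=\rb$ (constant) and $\hat\tau$ fixes constants, so $\sigma\ra=\rb$. A section is $\sigma$-fixed iff $\rc(\omega,n)=\theta_\omega^{\,n}\rc(\omega,0)$, so $\mathrm{Fix}(\sigma)$ is exactly the image of the $\theta$-twisted map $c_0\mapsto\bigl((\omega,n)\mapsto\theta_\omega^{\,n}c_0(\omega)\bigr)$ of $\rcM$ into $\rcM^*$. On such fixed tuples, automorphism-invariance gives $\[\varphi(\bar\rc)\](\omega,n)=\varphi(\bar c_0(\omega))$, independent of $n$, whence $E^*\[\varphi(\bar\rc)\]=E\[\varphi(\bar c_0)\]^{\rcM}$; thus the twisted map is again a $\[\cdot\]$-embedding of a model of $T^R$ and so elementary by \fref{thm:TRTypes}. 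Hence $\cN_0=\mathrm{Fix}(\sigma)\preceq\rcM^*$, the automorphism $\sigma$ fixes $\cN_0$ pointwise and sends $\ra$ to $\rb$, giving $\ra\equiv_{\cN_0}\rb$ and $d^L(\ra,\rb)\le1$ (passing to a more saturated extension if the definition demands it, which is harmless since by \fref{fct:LascarType} the relation $d^L\le1$ is invariant).

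The main obstacle, and the real content, is the choice of the conjugating automorphism. The shift $\hat\tau$ alone fixes both $\ra$ and $\rb$, while the fibrewise $\Theta$ alone fixes no elementary submodel pointwise, so neither suffices; the point is that their twist $\sigma=\hat\tau\circ\Theta$ moves $\ra$ to $\rb$ yet has as its fixed set a full $\theta$-twisted copy of $\rcM$, and this fixed set is elementary only because $\[\varphi\]$ evaluated on $\sigma$-fixed tuples collapses to a $\tau$-invariant function with the correct $E^*$-value. Reconciling the $\tau$-invariance of $E^*$ with atomlessness (so that $\rcM^*\models T^R$) is the one technical point that I would handle carefully, via the combination of \fref{lem:InvariantIntegration} and \fref{fct:PartialIntegrationFunctional} indicated above.
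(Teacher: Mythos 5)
Your proof is correct and is essentially the paper's own argument: pass to the canonical representation, pick automorphisms $h_\omega$ of elementary extensions of the fibres with $h_\omega\ra(\omega)=\rb(\omega)$, enlarge the index set by a direction carrying a shift-invariant integration functional (\fref{lem:InvariantIntegration} combined with \fref{fct:PartialIntegrationFunctional}), and play the constant embedding of $\rcM$ against the twisted one, both made elementary via \fref{cor:RandomFamilyExtension} and quantifier elimination — your $\Omega\times\bZ$ is exactly the $\bar h$-orbit inside the paper's index set $\Omega\times\prod_\omega\Aut(\cM'_\omega)$, and your twisted copy is the paper's embedding $\eta$ restricted to that orbit. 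The only genuine divergence is packaging: the paper reads off $\sigma\ra\equiv_{\eta\rM}\sigma\rb$ directly from the invariance computation $E'[X]=E'[Y]$, which survives a further $\[\cdot\]$-embedding into a model of $T^R$ and so never requires the extended functional to be atomless, whereas your explicit automorphism $\hat\tau\circ\Theta$ lives on the big structure itself and therefore makes the atomlessness of $E^*$ (which you rightly flag) a real obligation — it can be discharged, but only if $E^*$ is built as the composition of an atomless extension of $E$ to $[0,1]^\Omega$ with a Banach limit on $[0,1]^{\bZ}$ (then any $X^*$ is split by a set of the form $Y_0\circ\pi$), not by invariant averaging of an arbitrary atomless extension, where the splitting witness would have to work uniformly in the limit.
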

\begin{proof}
  Let $(\rM,\sA)$ be the canonical representation of $(\rcM,\sA)$, based on $\sM = \{\cM_\omega\}_{\omega \in \Omega}$.
  Then for every $\omega \in \Omega$ we have $\ra(\omega) \equiv \rb(\omega)$ in $\cM_\omega$, so there exists an elementary extension $\cM'_\omega \succeq \cM_\omega$ and $h_\omega \in G_\omega = \Aut(\cM'_\omega)$ such that $h_\omega \ra(\omega) = \rb(\omega)$.
  Let $\bar G = \prod G_\omega$, $\bar h = (h_\omega)_\omega \in \bar G$.
  Let $\Omega' = \Omega \times \bar G$, and let $\pi\colon \Omega' \to \Omega$ be the projection on the first coördinate.
  By \fref{fct:PartialIntegrationFunctional} there exists an integration functional $E_1$ on $\sA_1 = [0,1]^\Omega$ which extends integration of Borel functions.
  The left action of $\bar h$ on $\bar G$ is bijective, so $[0,1]^{\bar G}$ admits an integration functional $E_G$ which is invariant under the left action of $\bar h$.
  Let $\sA' = [0,1]^{\Omega'}$, and for $X' \in \sA'$ define $E'[X'] = E\bigl[ \omega \mapsto E_G[ X'(\omega,\cdot) ]$, which we may also write as $E^\omega[ E_G^{\bar g}[ X'(\omega,\bar g) ]]$ or simply $E[ E_G[X] ]$.
  Then $E'$ is easily checked to be an integration functional.

  For $(\omega,\bar g) \in \Omega'$ let $\cM'_{(\omega,\bar g)} = \cM'_\omega$, thus obtaining a family $\sM'_{\Omega'} = \{\cM'_{\omega'}\}_{\omega'\in\Omega'}$ with $\cM_{\pi\omega'} \preceq \cM'_{\omega'}$.
  Let $\sigma_{\omega'}\colon \cM_{\pi\omega'} \hookrightarrow \cM'_{\omega'}$ denote this elementary inclusion.
  For $(\omega,\bar g) = (\omega,g_\zeta)_{\zeta \in \Omega} \in \Omega'$ define $\eta_{(\omega,\bar g)} = g_\omega \circ \sigma_{(\omega,\bar g)}\colon \cM_\omega \hookrightarrow \cM'_{(\omega,\bar g)}$, which is another elementary embedding.
  With $\rM' = \prod \sM'_{\Omega'}$, we obtain two maps $\sigma,\eta\colon \rM \to \rM'$, given by
  \begin{align*}
    (\sigma \rc)(\omega,\bar g) & = \sigma_{(\omega,\bar g)}(\rc(\omega)) = \rc(\omega), \\
    (\eta \rc)(\omega,\bar g) & = \eta_{(\omega,\bar g)}(\rc(\omega)) = g_\omega(\rc(\omega)).
  \end{align*}

  We are now in the situation described in \fref{cor:RandomFamilyExtension}.
  In particular, the triplet $(\rM',\sA',E')$ is a randomisation, and we obtain \emph{two} $\[\cdot\]$-embeddings, $[\sigma],[\eta]\colon
  (\rcM,\sA) \to (\widehat {\rcM'},\widehat \sA')$, where $[\sigma \rc]$ and $[\eta \rc]$ are the images in $\widehat{\rcM'}$ of $\sigma\rc$ and $\eta\rc$ defined above, and $[\sigma X] = [\eta X] = [X \circ \pi]$.
  By quantifier elimination for $T^R$ these embeddings are elementary.

  We claim that $[\sigma \ra] \equiv_{[\eta \rM]} [\sigma \rb]$.
  Indeed, let $\bar \rc \in \rM$, $\varphi(x,\bar y)$ any formula, and let $X = \< \varphi(\sigma\ra,\eta\bar \rc) \>$ and $Y = \< \varphi(\sigma\rb,\eta\bar \rc) \>$, both members of $[0,1]^{\Omega'}$.
  Fix $\omega \in \Omega$, and let $\bar g \in \bar G$ vary freely.
  Then:
  \begin{align*}
    X(\omega,\bar g)
    &
    = \varphi\bigl( \ra(\omega), \eta_{(\omega,\bar g)}\bar \rc(\omega) \bigr)
    \\ &
    = \varphi\bigl( h_\omega\ra(\omega), h_\omega\eta_{(\omega,\bar g)}\bar \rc(\omega) \bigr)
    \\ &
    = \varphi\bigl( \rb(\omega), \eta_{(\omega,\bar h\bar g)}\bar \rc(\omega) \bigr)
    \\ &
    = Y(\omega,\bar h\bar g).
  \end{align*}
  Since $E_G$ was chosen invariant under the left action of $\bar h$ on $\bar G$ we obtain that $E_G[ X(\omega,\cdot) ] = E_G[ Y(\omega,\cdot) ]$ for all $\omega$, whereby $E'[X] = E'[Y]$.
  We obtain
  \begin{gather*}
    E\[ \varphi([\sigma\ra],[\eta\bar \rc]) \]
    =
    E' \<\varphi(\sigma\ra,\eta\bar \rc) \>
    =
    E' \<\varphi(\sigma\rb,\eta\bar \rc) \>
    =
    E\[ \varphi([\sigma\rb],[\eta\bar \rc]) \],
  \end{gather*}
  proving our claim.
  Since $[\eta]$ is an elementary embedding we have $d^L([\sigma\ra],[\sigma\rb]) \leq 1$, and since $[\sigma]$ is an elementary embedding we conclude that $d^L(\ra,\rb) \leq 1$.
\end{proof}

\begin{thm}
  Let $T$ be any theory, $(\rcM,\sA) \vDash T^R$, $\ra,\rb \in \rM$, and let $\rA \subseteq \rM$ be any set of parameters.
  Then the following are equivalent:
  \begin{enumerate}
  \item $\ra \equiv^L_\rA \rb$.
  \item $\ra \equiv_\rA \rb$.
  \item $d^L_\rA(\ra,\rb) \leq 2$, where $d^L_\rA$ is defined as $d^L$, over parameters in $\rA$.
  \end{enumerate}
\end{thm}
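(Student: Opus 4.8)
The plan is to prove the cycle (2)$\Rightarrow$(3)$\Rightarrow$(1)$\Rightarrow$(2), the only substantial implication being (2)$\Rightarrow$(3), for which the two preceding lemmas do essentially all the work. First I would reduce everything to the case $\rA = \emptyset$ by naming the parameters: enumerate $\rA = \{\ra_\alpha\}$, adjoin constants $A$, and pass to the language $\cL_A = \cL \cup A$ exactly as in \fref{sec:TypesIncomleteT}. Naming main-sort elements leaves the auxiliary sort untouched and leaves $(\rcM,\sA)$ a model of $(T_\rA)^R$ for an appropriate $\cL_A$-theory $T_\rA$: the base theory $T_0^{Ra}$ is preserved because the $\cL_A$-structure is still a randomisation (R1 being vacuous on the new $0$-ary symbols), and atomlessness is preserved since $\sA$ is unchanged. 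Under this translation $\equiv_\rA$ becomes $\equiv$ over $\emptyset$ in $\cL_A$, the relation $\equiv_\sA$ is unaffected, and $d^L_\rA$ becomes $d^L$; so it suffices to prove the implications with $\rA = \emptyset$, which I may do inside a sufficiently saturated model.

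For (2)$\Rightarrow$(3), assume $\ra \equiv \rb$ in $\cL_A$. Since \fref{lem:ReductionToSameAType} requires saturation, I first pass to a sufficiently saturated elementary extension. That lemma then produces $\rc \in \rM$ with $d^L(\ra,\rc) \leq 1$ and $\rc \equiv_\sA \rb$. Feeding $\rc \equiv_\sA \rb$ into \fref{lem:LascarAType} yields $d^L(\rc,\rb) \leq 1$. Concatenating the two witnessing one-step chains gives the chain $\ra, \rc, \rb$ and hence $d^L(\ra,\rb) \leq 2$, that is $d^L_\rA(\ra,\rb) \leq 2$ back in $\cL$, which is exactly (3).

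The remaining implications are formal. (3)$\Rightarrow$(1) is immediate from the definition of Lascar type: $d^L_\rA(\ra,\rb) \leq 2 < \infty$ means precisely $\ra \equiv^L_\rA \rb$. For (1)$\Rightarrow$(2) I would observe that a single step $d^L_\rA(x,y) \leq 1$ already preserves the type over $\rA$: if $x \equiv_{\cN_0} y$ for an elementary substructure $\cN_0$ containing $\rA$, then in particular $x \equiv_\rA y$; hence $\equiv^L_\rA$, being by \fref{fct:LascarType} the transitive closure of the relation $d^L_\rA(\cdot,\cdot) \leq 1$, refines $\equiv_\rA$. Thus (1) gives (2) and the cycle closes.

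The only delicate point is the reduction by naming parameters, namely checking that the named structure is again of the form $(T_\rA)^R$ so that \fref{lem:ReductionToSameAType} and \fref{lem:LascarAType} apply verbatim. Once that is in place the theorem is simply the composition of these two lemmas via the triangle-type inequality for $d^L$; in particular the bound $2$ rather than $1$ is exactly the cost of chaining the ``reduction to the same $\sA$-type'' step with the ``same $\sA$-type implies $d^L \leq 1$'' step.
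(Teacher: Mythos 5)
Your proposal is correct and follows essentially the same route as the paper: name the parameters $\rA$ to reduce to $\rA = \emptyset$ (exactly as in the paper's treatment of types over parameters), obtain (2)$\Rightarrow$(3) by applying \fref{lem:ReductionToSameAType} followed by \fref{lem:LascarAType} and concatenating the two one-step chains, and note that the remaining implications are standard. Your write-up merely makes explicit the details the paper leaves implicit (saturation for the first lemma, the verification that the named structure is again of the form $(T_\rA)^R$, and the formal implications (3)$\Rightarrow$(1)$\Rightarrow$(2)).
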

\begin{proof}
  First of all we may name $\rA$ in the language (at no point did we assume that $T$ was complete), so we may assume that $\rA = \emptyset$.
  For the implication (ii) $\Longrightarrow$ (iii), just apply \fref{lem:ReductionToSameAType} followed by \fref{lem:LascarAType}.
  The implications (iii) $\Longrightarrow$ (i) $\Longrightarrow$ (ii) are standard and holds in arbitrary structures.
\end{proof}

\begin{cor}
  Let $\rcM \vDash T^R$, $\rA \subseteq \rM$.
  Let $\dcl^{eq,R}$ denote the definable closure in the sense of $(T^R)^{eq}$, and similarly for $\acl^{eq,R}$.
  Then $\dcl^{eq,R}(\rA) = \acl^{eq,R}(\rA)$ in $\rcM$.
\end{cor}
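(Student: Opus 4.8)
The plan is to deduce this purely formally from the previous theorem, which identifies types over $\rA$ with Lascar types over $\rA$, together with the standard relationship between Lascar strong automorphisms and the equality $\acl^{eq} = \dcl^{eq}$. First I would fix a monster model $\fM \models T^R$ containing $\rA$ and reduce to proving $\acl^{eq,R}(\rA) \subseteq \dcl^{eq,R}(\rA)$, the reverse inclusion being trivial. Throughout I work with $\Aut(\fM/\rA)$ and its subgroup $\operatorname{Autf}(\fM/\rA)$ of Lascar strong automorphisms, generated by the pointwise stabilisers of small elementary sub-structures containing $\rA$; by \fref{fct:LascarType} the orbit equivalence relation of $\operatorname{Autf}(\fM/\rA)$ is exactly $\equiv^L_\rA$.

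The heart of the matter is the following. Let $e \in \acl^{eq,R}(\rA)$. By construction of $(T^R)^{eq}$ every imaginary is definable over a (possibly infinite) tuple from the main sort, so fix $\bar \ra \in \rM^I$ with $e \in \dcl^{eq,R}(\rA\bar \ra)$. Take an arbitrary $\sigma \in \Aut(\fM/\rA)$. Then $\sigma \bar \ra \equiv_\rA \bar \ra$, so by the previous theorem (applied to the tuple $\bar\ra$; see below) $\bar \ra \equiv^L_\rA \sigma \bar \ra$, and hence there is $\tau \in \operatorname{Autf}(\fM/\rA)$ with $\tau \bar \ra = \sigma \bar \ra$. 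Consequently $\tau^{-1}\sigma$ fixes $\rA\bar \ra$ pointwise and therefore fixes $e$, i.e.\ $\sigma e = \tau e$. Since $\tau$ is a Lascar strong automorphism it fixes $\acl^{eq,R}(\rA)$ pointwise: a generator of $\operatorname{Autf}(\fM/\rA)$ fixes some small $N \preceq \fM$ with $\rA \subseteq N$ pointwise, hence fixes $N^{eq} = \acl^{eq,R}(N) \supseteq \acl^{eq,R}(\rA)$ pointwise. Thus $\tau e = e$ and so $\sigma e = e$; as $\sigma$ was arbitrary, $e \in \dcl^{eq,R}(\rA)$.

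Two points need care. First, the previous theorem is stated for single elements $\ra, \rb \in \rM$, whereas here I apply it to an arbitrary tuple $\bar \ra$; but its proof proceeds through \fref{lem:ReductionToSameAType} and \fref{lem:LascarAType}, whose arguments are insensitive to the length of the tuple — they already manipulate the infinite auxiliary-sort tuple $(\[\varphi(\bar \ra)\])_\varphi$ and invoke \fref{fct:LascarType}, which is formulated for tuples, possibly infinite — so the implication ``$\equiv_\rA \Rightarrow \equiv^L_\rA$'' holds verbatim for $\bar \ra$ and $\sigma\bar\ra$. Second, I am using the standard facts that every imaginary lies in the definable closure of a real tuple and that Lascar strong automorphisms fix $\acl^{eq}$ pointwise; the latter rests on $\acl^{eq}(N) = \dcl^{eq}(N) = N^{eq}$ for an elementary sub-structure $N$, which holds because $N^{eq} \preceq \fM^{eq}$ forces all realisations of an algebraic type over $N$ into $N^{eq}$. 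I expect the only genuine obstacle to be bookkeeping: verifying that these classical facts about $\operatorname{Autf}$ and about imaginaries transfer without change to the continuous, metric setting of $(T^R)^{eq}$, where $\acl^{eq}$ is read as ``compact orbit''; modulo that, the corollary is immediate.
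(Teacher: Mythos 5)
You take the route the paper intends (it states this corollary without proof): reduce to $\acl^{eq,R}(\rA) \subseteq \dcl^{eq,R}(\rA)$; for $e \in \acl^{eq,R}(\rA)$ supported on a tuple $\bar \ra$, use the preceding theorem to convert an arbitrary $\sigma \in \Aut(\fM/\rA)$ into a Lascar strong automorphism agreeing with $\sigma$ on $\bar \ra$; then use that Lascar strong automorphisms fix $\acl^{eq,R}(\rA)$ pointwise. That skeleton, and the two classical facts you invoke (including $\acl^{eq}(N) = \dcl^{eq}(N)$ for models $N$, which does survive in the metric setting via the definable-distance/compact-orbit argument), are fine. The genuine gap is your reduction step: it is \emph{not} true that every imaginary of $(T^R)^{eq}$ is definable over a tuple from the main sort. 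The real sorts of $T^R$ comprise the main sorts \emph{and} the auxiliary sort, so an imaginary is in general only definable over a tuple mixing elements of $\rM$ and of $\sA$, and $\sA$ is not contained in $\dcl^{eq,R}(\rM)$: for instance, if $T$ is the theory of a one-point structure, then $\rM$ is a single point while $\sA$ is an atomless model of $ARV$, so no element of $\sA$ beyond the constants is definable over any main-sort tuple. Hence your tuple $\bar \ra$ must be allowed auxiliary coordinates, and the theorem -- stated for elements of $\rM$ over parameters $\rA \subseteq \rM$ -- does not apply to it as it stands; your remark about insensitivity to tuple \emph{length} addresses length but not the \emph{sorts} from which the tuple is drawn.

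The repair is available in the paper itself. Pass to $T_+$, obtained by adding to $\cL$ the compact sort $S_{[0,1]}$, as in the discussion preceding \fref{cor:AuxiliarySortDefinability}: then $\[ \id \]$ is a definable isometric bijection between the main sort $S_{[0,1]}^R$ of $T_+^R$ and the auxiliary sort, a model of $T^R$ is canonically a model of $T_+^R$ with the same automorphism group and the same imaginaries, and the parameter set $\rA$ remains a set of main-sort elements. Running your argument for $T_+$ in place of $T$, every imaginary of $(T_+^R)^{eq}$ is now genuinely definable over a main-sort tuple of $T_+^R$, and the theorem for $T_+$ (same proof, insensitive to tuple length, as you note) applies to such tuples. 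Alternatively, one can verify directly that \fref{lem:ReductionToSameAType} and \fref{lem:LascarAType} accommodate tuples with auxiliary coordinates -- their proofs already manipulate auxiliary tuples, and the two embeddings $[\sigma],[\eta]$ in \fref{lem:LascarAType} agree on the auxiliary sort -- but this is a genuine (if routine) extension, not a formal consequence of the theorem as stated. With that correction your proof is sound.
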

Notice that even though $\dcl^{eq,R}(\rA)$ and $\acl^{eq,R}(\rA)$ may contain imaginary elements in the sense $T^R$, the set $\rA$ is required to consist of real elements, i.e., elements coming from sorts of $T$.

\begin{cor}
  For every theory $T$, the theory $T^R$ is $G$-compact, which means that for every set of parameters $\rA$ and for every tuple length $\alpha$, the relation $\bar \ra \equiv^L_\rA \bar \rb$ between tuples of length $\alpha$ is type-definable over $\rA$.
\end{cor}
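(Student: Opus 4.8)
The plan is to read off $G$-compactness directly from the preceding theorem, which identifies Lascar equality over $\rA$ with ordinary equality of types over $\rA$. Recall that $G$-compactness asks precisely that the relation $\bar \ra \equiv^L_\rA \bar \rb$ be type-definable over $\rA$. By that theorem this relation coincides with $\bar \ra \equiv_\rA \bar \rb$ (equivalently, with $d^L_\rA(\bar \ra,\bar \rb) \leq 2$), and having the same type over $\rA$ is trivially a type-definable condition; so the conclusion will be immediate, \emph{once} we know that the theorem applies to tuples of the given length $\alpha$ and not merely to single elements of $\rM$.

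The one point requiring attention is therefore the passage from single elements to arbitrary tuples. I would check that \fref{lem:ReductionToSameAType}, \fref{lem:LascarAType} and the preceding theorem hold verbatim for tuples $\bar \ra,\bar \rb \in \rM^\alpha$. In \fref{lem:LascarAType} the only fibrewise input is the existence, for each $\omega$, of $h_\omega \in \Aut(\cM'_\omega)$ with $h_\omega \bar \ra(\omega) = \bar \rb(\omega)$; this needs only $\bar \ra(\omega) \equiv \bar \rb(\omega)$ in $\cM_\omega$, which is exactly what $\bar \ra \equiv_\sA \bar \rb$ furnishes via \fref{lem:AType}. The construction of the family $\sM'_{\Omega'}$, of the $\bar h$-invariant functional $E_G$, and of the two $\[\cdot\]$-embeddings $[\sigma],[\eta]$ is insensitive to the length of the tuple, so the argument goes through unchanged; the reduction step \fref{lem:ReductionToSameAType} is likewise fibrewise and extends in the same fashion (taking the ambient model saturated relative to $\alpha$). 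Hence for $\bar \ra,\bar \rb \in \rM^\alpha$ we still obtain $\bar \ra \equiv_\rA \bar \rb \Longrightarrow d^L_\rA(\bar \ra,\bar \rb) \leq 2 \Longrightarrow \bar \ra \equiv^L_\rA \bar \rb$, the reverse implications being formal.

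With this in hand I would finish as follows. Name $\rA$ by constants, reducing to $\rA = \emptyset$ (no completeness of $T$ is ever used, so this is harmless). Then on $\alpha$-tuples the relation $\equiv^L_\emptyset$ coincides with $\equiv_\emptyset$, which is cut out by the type-definable family of conditions $\bigl\{ |\varphi(\bar x) - \varphi(\bar y)| = 0 \bigr\}$ as $\varphi$ ranges over $\cL^R$-formulae; equivalently it coincides with $d^L_\emptyset(\bar x,\bar y) \leq 2$, type-definable by \fref{fct:LascarType} (which already covers possibly infinite tuples). Either description exhibits $\equiv^L_\rA$ as type-definable over $\rA$, which is exactly the assertion of $G$-compactness. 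The main obstacle is the tuple extension of the second paragraph, and I expect it to be genuinely routine precisely because every construction in the relevant lemmas is performed coordinatewise over $\omega$ while none of the integration-theoretic ingredients depends on the length of the tuple.
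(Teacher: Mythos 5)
Your proposal is correct and follows essentially the same route as the paper, whose entire proof is the observation that $d^L_\rA(\bar x,\bar y) \leq n$ is type-definable for fixed $n$, combined with the preceding theorem identifying $\equiv^L_\rA$ with $d^L_\rA \leq 2$. Your explicit verification that the two lemmas and the theorem extend fibrewise to tuples of arbitrary length is a point the paper leaves implicit, but it is the same argument, not a different one.
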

\begin{proof}
  Since the relation $d^L_\rA(\bar x,\bar y) \leq 2$ is type-definable.
\end{proof}

\bibliographystyle{begnac}
\bibliography{begnac}

\end{document}